\definecolor{darkgreen}{rgb}{0,0.5,0}
\definecolor{darkblue}{rgb}{0,0,0.7}
\definecolor{darkred}{rgb}{0.9,0.1,0.1}
\newtheorem{theorem}{Theorem}
\newtheorem{proposition}[theorem]{Proposition}
\newtheorem{lemma}[theorem]{Lemma}
\newtheorem{corollary}[theorem]{Corollary}
\theoremstyle{definition}
\newtheorem{remark}[theorem]{Remark}
\newtheorem{definition}[theorem]{Definition}
\newcommand{\tref}[1]{Theorem~\ref{t.#1}}
\newcommand{\pref}[1]{Proposition~\ref{p.#1}}
\newcommand{\lref}[1]{Lemma~\ref{l.#1}}
\newcommand{\cref}[1]{Corollary~\ref{c.#1}}
\newcommand{\sref}[1]{Section~\ref{s.#1}}
\newcommand{\eref}[1]{(\ref{e.#1})}
\numberwithin{equation}{section}
\numberwithin{theorem}{section}
\newcommand{\N}{\mathbb{N}}
\newcommand{\R}{\mathbb{R}}
\renewcommand{\a}{\bar{a}_{ij}}
\newcommand{\cm}{\bar{c}}
\newcommand{\eps}{\varepsilon}
\newcommand{\test}[1][]{%
\ifthenelse{\equal{#1}{}}{omitted}{given}%
}
\newcommand{\derv}[3]{\partial_x^{\alpha{#1}}\partial_v^{\beta{#2}}Y^{\sigma{#3}}}
\newcommand{\der}{\derv{}{}{}}
\renewcommand{\d}[1]{\ensuremath{\operatorname{d}\!{#1}}}
\DeclarePairedDelimiter{\norm}{\lVert}{\rVert}
\DeclareMathOperator{\sm}{small}
\DeclareMathOperator{\boot}{Boot}
\DeclareMathOperator{\ini}{in}
\newcommand{\jap}[1]{\langle {#1} \rangle}
\renewcommand{\bar}{\overline}
\renewcommand{\tilde}{\widetilde}
\renewcommand{\part}{\partial}
\begin{document}
\title[Stability of vacuum for the Landau Equation with hard potentials]{Stability of vacuum for the Landau Equation\break with hard potentials}
\begin{abstract}
We consider the spatially inhomogeneous Landau equation with Maxwellian and hard potentials (i.e with $\gamma\in[0,1)$) on the whole space $\R^3$. We prove that if the initial data $f_{\ini}$ are close to the vacuum solution $f_{\text{vac}}=0$ in an appropriate weighted norm then the solution $f$ exists globally in time. This work builds up on the author's earlier work on local existence of solutions to Landau equation with hard potentials.

Our proof uses $L^2$ estimates and exploits the null-structure established by Luk [Stability of vacuum for the Landau equation with moderately soft potentials, Annals of PDE (2019) 5:11]. To be able to close our estimates, we have to couple the weighted energy estimates, which were established by the author in a previous paper [Local existence for the Landau equation with hard potentials, https://arxiv.org/abs/1910.11866], with the null-structure and devise new weighted norms that take this into account.
 \end{abstract}
\author[S. Chaturvedi]{Sanchit Chaturvedi}
\address[Sanchit Chaturvedi]{450 Jane Stanford way, Bldg 380, Stanford, CA 94305}
\email{sanchat@stanford.edu}
\keywords{}
\subjclass[2010]{}
\date{\today}

\maketitle
\section{Introduction}
We study the Landau equation for the particle density $f(t,x,v)\geq 0$ in the whole space $\R^3$. $t\in \R_{\geq 0}, x\in \R^3$ and $v \in \R^3$. The Landau equation is as follows\\
\begin{equation}\label{e.landau_collision}
\part_t f+\part_{v_i}f=Q(f,f),
\end{equation}
where $Q(f,f)$ is the collision kernel given by\\
\begin{equation*}
Q(f,f)(v):=\part_{v_i}\int a_{ij}(v-v_*)(f(v_*)(\part_{v_j}f)(v)-f(v)(\part_{v_j}f)(v_*))\d v_*
\end{equation*}
and $a_{ij}$ is the non-negative symmetric matrix defined by
\begin{equation}\label{e.matrix_a}
a_{ij}(z):=\left(\delta_{ij}-\frac{z_iz_j}{|z|^2}\right)|z|^{\gamma+2}.
\end{equation}

In all the expressions above (and in the rest of the paper), we use the convention that repeated lower case Latin indices are summed over $i,j=1,2,3.$

The quantity $\gamma$ encodes the type of interaction potential between the particles. Physically, the relevant regime is $\gamma\in[-3,1]$. We will be concerned with the regime $\gamma\in[0,1]$, that is the Maxwellian ($\gamma=0$) and all of the hard potentials ($\gamma\in (0,1]$).

For us it will be convenient to work with a slightly modified but equivalent version of \eref{landau_collision} which is as follows,
\begin{equation}\label{e.landau}
\part_t f+v_i\part_{x_i}f=\a\part^2_{v_iv_j}f-\bar{c}f,
\end{equation}
where $c:=\part^2_{z_iz_j}a_{ij}(z)$, $\a:=a_{ij}*f$ and $\bar{c}:=c*f$, where $*$ is the convolution operator.

Our main result is that for sufficiently regular and sufficiently small initial data, we have global existence and uniqueness for \eref{landau}. That is we have a unique, non-negative solution to the Cauchy problem for all times.

\begin{theorem}\label{t.global}
Let $\gamma \in [0,1)$, $0<\delta<\frac{1}{8}$ and $d_0>0$. There exists an $\eps_0=\eps_0(\gamma,d_0)>0$ such that if
\begin{equation*}
\sum\limits_{|\alpha|+|\beta|+|\sigma|\leq 10}\norm{\jap{v}^{20-\frac{3}{2}|\alpha|-\frac{1}{2}|\beta|-\frac{3}{2}|\sigma|}\jap{x-v}^{20-\frac{3}{2}|\sigma|-\frac{1}{2}|\beta|-\frac{1}{2}|\alpha|}\part_x^\alpha\part_v^\beta(\part_x+\part_v)^\sigma(e^{2d_0\jap{v}}f_{\ini})}_{L^2_xL^2_v}^2<\eps
\end{equation*}
for some $\eps\in[0,\eps_0]$ then there exists a global and non-negative solution $f$ to \eref{landau} with $f(0,x,v)=f_{\ini}(x,v)$. In addition, $fe^{(d_0(1+(1+t)^{-\delta})\jap{v}}$, is unique in the energy space $\tilde E_T^4\cap C^0([0,T);\tilde Y^4_{x,v})$ for all $T\in (0,\infty).$

Moreover, $fe^{(d_0(1+(1+t)^{-\delta})\jap{v}}\in E_T\cap C^0([0,T);Y_{x,v})$ for any $T\in(0,+\infty)$.
\end{theorem}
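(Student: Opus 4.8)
The plan is to prove global existence by a continuity/bootstrap argument built on top of the local existence theory from the author's previous work, with the key new ingredient being a weighted energy functional engineered to be compatible with the null structure discovered by Luk. Concretely, I would introduce an energy $\mathsf{E}_T$ and a dissipation $\mathsf{D}_T$ that sum, over multi-indices $|\alpha|+|\beta|+|\sigma|\le 10$, the $L^2_xL^2_v$ norms (and the corresponding $a_{ij}$-weighted $v$-gradient norms for the dissipation) of $\langle v\rangle^{p(\alpha,\beta,\sigma)}\langle x-v\rangle^{q(\alpha,\beta,\sigma)}\partial_x^\alpha\partial_v^\beta(\partial_x+\partial_v)^\sigma$ applied to $f$ times the time-dependent Gaussian-in-$\langle v\rangle$ weight $e^{d_0(1+(1+t)^{-\delta})\langle v\rangle}$. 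The exponents $p,q$ decrease as the order of differentiation increases, exactly as in the statement of \tref{global}; the role of the $\langle x-v\rangle$ weight is that $x-v$ is (approximately) transported by the free flow, so $v\cdot\nabla_x\langle x-v\rangle$-type commutators are favorable, and this is precisely where the null structure enters: the ``bad'' quadratic terms come paired with a factor that is integrable along characteristics.

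The first step is to commute $\partial_x^\alpha\partial_v^\beta(\partial_x+\partial_v)^\sigma$ and the weights through the equation \eref{landau}, producing a transport equation for each weighted derivative $g_{\alpha\beta\sigma}$ of the form $\partial_t g + v\cdot\nabla_x g = \bar a_{ij}\partial^2_{v_iv_j} g + (\text{lower order / commutator terms})$. One then tests with $g_{\alpha\beta\sigma}$ in $L^2_xL^2_v$. The transport term $v\cdot\nabla_x$ is skew and drops; the principal diffusion term yields, after integration by parts, a good coercive dissipation term $\int \bar a_{ij}\partial_{v_i}g\,\partial_{v_j}g$ (using $\bar a_{ij}\ge 0$, with quantitative lower bounds on its ellipticity in the directions transverse to $v$, coming from the smallness of $f$ and hence of $\bar a_{ij}$ being close to the corresponding convolution with a small function — here the $\gamma\in[0,1)$ hard-potential growth of $a_{ij}$ is controlled by the Gaussian weight). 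The time derivative of the time-dependent part of the weight, $\partial_t(d_0(1+(1+t)^{-\delta})\langle v\rangle) = -\frac{\delta d_0}{(1+t)^{1+\delta}}\langle v\rangle$, produces the crucial negative term $-\deltime \int \langle v\rangle g_{\alpha\beta\sigma}^2$, which is exactly \exptime-type extra dissipation that will be used to absorb error terms carrying an uncontrolled power of $\langle v\rangle$.

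The heart of the matter is the estimate of the commutator and nonlinear error terms. These come in several types: (i) terms where derivatives/weights hit the coefficients $\bar a_{ij}$ and $\bar c$, which are themselves convolutions of $a_{ij}$, $c$ against $f$ — here I would use the decay of $f$ in $v$ (Gaussian weight) together with standard convolution/interpolation estimates to bound $\|\bar a_{ij}\|$ and its derivatives by the energy, with the right powers of $\langle v\rangle$; (ii) transport commutators $[\,\partial_x^\alpha\partial_v^\beta(\partial_x+\partial_v)^\sigma, v\cdot\nabla_x\,]$, which lose one $\partial_x$ but gain structure — this is where writing things in the $(x, x-v)$ geometry and using the $\langle x-v\rangle$ weight lets one trade the lost $x$-derivative for a gain of $\langle x-v\rangle^{-1}$, i.e. the null-structure gain; (iii) commutators between the $v$-weights and the diffusion, which produce first-order terms in $\partial_v g$ absorbed into the dissipation by Cauchy–Schwarz with a small constant. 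After summing over all multi-indices, I expect an inequality of the schematic form $\frac{d}{dt}\mathsf{E}_T + \mathsf{D}_T + \deltime(\text{weighted }\langle v\rangle\text{ energy}) \lesssim \mathsf{E}_T^{1/2}\,\mathsf{D}_T + (\text{time-integrable})\cdot\mathsf{E}_T$, where the time-integrable factor is precisely $\frac{\delta d_0}{(1+t)^{1+\delta}}$ arising from the null structure plus the extra weight dissipation; a Grönwall/continuity argument then closes the bootstrap provided $\eps_0$ is small enough, giving $\mathsf{E}_T$ bounded uniformly in $T$ and hence, by the local theory, global existence, while the qualitative membership $fe^{d_0(1+(1+t)^{-\delta})\langle v\rangle}\in E_T\cap C^0([0,T);Y_{x,v})$ follows from the finiteness of the energy on each $[0,T]$ together with the continuity-in-time statement already available at local level.

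The main obstacle I anticipate is bookkeeping the interplay of the three weights — the $\langle v\rangle$ polynomial weight, the $\langle x-v\rangle$ polynomial weight, and the time-dependent Gaussian $e^{d_0(1+(1+t)^{-\delta})\langle v\rangle}$ — so that every error term is either absorbed by the $a_{ij}$-dissipation, by the $\deltime\langle v\rangle$-term, or produces a genuinely time-integrable coefficient; in particular the hard-potential growth $|z|^{\gamma+2}$ in $a_{ij}$ with $\gamma\in[0,1)$ means the coefficients $\bar a_{ij}$ grow like $\langle v\rangle^{\gamma+2}$, so controlling the ``too many powers of $\langle v\rangle$'' is only possible because differentiating the Gaussian weight hands back an extra $\langle v\rangle$ with a small-in-time coefficient — making the choice of the exponents $p(\alpha,\beta,\sigma)$, $q(\alpha,\beta,\sigma)$ and the value $\delta<\tfrac18$ delicate, and this is exactly the ``new weighted norms'' the abstract refers to.
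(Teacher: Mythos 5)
Your overall skeleton (weighted $L^2$ energy estimates on $g=e^{d(t)\jap{v}}f$, a bootstrap, and continuation via the local theory of \cite{Cha19}) is the same as the paper's, but two of your central mechanisms would fail as stated. First, you commute with the time-independent weight $\jap{x-v}$ and the time-independent vector field $\partial_x+\partial_v$. But $x-v$ is not transported by the free flow: $(\partial_t+v_i\partial_{x_i})\jap{x-v}=\frac{(x-v)\cdot v}{\jap{x-v}}\neq 0$, and $[\partial_t+v_i\partial_{x_i},\partial_{x_j}+\partial_{v_j}]=-\partial_{x_j}\neq 0$, so neither object interacts favorably with the transport operator. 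The quantities in the hypothesis of \tref{global} are only the $t=0$ values of the time-dependent objects the proof actually needs: the weight $\jap{x-(t+1)v}$, which satisfies $(\partial_t+v_i\partial_{x_i})\jap{x-(t+1)v}=0$, and $Y=(t+1)\partial_x+\partial_v$, which commutes with transport. It is the time-dependent weight that produces the dispersive estimate $\norm{h}_{L^1_v}\lesssim (1+t)^{-3/2}\norm{\jap{x-(t+1)v}^2h}_{L^2_v}$ (\lref{L1_to_L2}), which is the sole source of the time decay of $\a$ and $\cm$ and of the null-structure gains; with a static $\jap{x-v}$ weight you obtain no decay at all, and the quadratic error terms cannot be closed.

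Second, your schematic inequality and Gr\"onwall closure presuppose a uniformly bounded energy with time-integrable error coefficients, and that is precisely what fails for hard potentials. Even granting the linear decay rates, a term such as $\partial_x^\alpha\a\,\partial^2_{v_iv_j}f\,\partial_x^\alpha f$ grows like $(1+t)^{1/2}$ unless the null structure is exploited through the hierarchy of $\jap{x-(t+1)v}$ weights, and even then uniform bounds on $\partial_v$-derivatives cannot be propagated: the transport commutator $[\partial_t+v_i\partial_{x_i},\der]$ creates a term with one extra $\partial_x$ and one fewer $\partial_v$, which in the $\jap{v}$-hierarchy carries one fewer $\jap{v}$ weight and must be absorbed by the $\frac{\delta d_0}{(1+t)^{1+\delta}}\jap{v}$ term at the cost of a factor $(1+t)^{1+\delta}$ (see the term $T_1^{\alpha,\beta,\sigma}$ in \eref{T1}). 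This is why the bootstrap norm $E_T$ carries the factors $(1+T)^{-|\beta|(1+\delta)}$ — each $\partial_v$ is allowed to cost $(1+t)^{1+\delta}$, strictly weaker than the linear estimate — and why the closure is a finite induction on $|\beta|+|\sigma|$ (\pref{ene_est}) rather than a Gr\"onwall argument with an integrable coefficient. Relatedly, your appeal to quantitative ellipticity of $\a$ coming from the smallness of $f$ is backwards: near vacuum $\a$ is small, so there is no usable coercivity; the paper only uses $\a\geq 0$ to discard the good term, and every absorption goes through the $\frac{\delta d_0}{(1+t)^{1+\delta}}\jap{v}$ dissipation. (Minor: $e^{d(t)\jap{v}}$ is an exponential, not a Gaussian; the paper notes that a Gaussian weight cannot close the estimates in the hard-potential case.)
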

See \sref{notation} for definitions of the relevant energy spaces.

\begin{remark}
The global existence problem for $\gamma=1$, can also be done in a similar way. Since the local result for the same requires a slightly different  hierarchy, we only mention global existence result for $\gamma\in [0,1)$ for a consice presentation. 
\end{remark}
\begin{remark}
In an identical manner as in \cite{Lu18}, we can use \tref{global} to show that the long time limit exists and that the long time dynamics of the solution is governed by the transport part when considered in a weaker topology.\\
 To formalize this define $f^\sharp(t,x,v)=f(t,x+tv,v)$. With this definition we have that $f$ satisfies the tranport equation, $$\part_t f+v_i\part_{x_i}f=0,$$ if and only if $f^\sharp(t,x,v)$ is independent of $t$. Our claim then, is that there is a unique $f^\sharp_\infty:\R^3\times\R^3\to \R$ such that $f^\sharp$  converges to $f^\sharp_\infty$ in an appropriate topology.

In addition, the long time asymptotics of macroscopic quantities, such as the density $\rho=\int_{\R^3}f(t,x,v)\d v$, the mass $m_i(t,x)=\int_{\R^3}v_i f(t,x,v)\d v$ and the energy $e(t,x)=\frac{1}{2}\int_{\R^3}|v|^2f(t,x,v)$, is governed by the macroscopic quantities associated to $f^\sharp(x-tv,v).$

Finally, it can also be proved that there exist initial data $f_{\ini}$ such that $f^\sharp_\infty$ is neither $0$ nor a traveling global Maxwellian. This happens despite Boltzmann's H-theorem, which stipulates that the entropy functional $H[f]=\int_{\R^3}\int_{\R^3} f\log f\d v\d x$ is decreasing along the flow of the Landau equation. Further it is known that the solutions to \eref{landau} for which the H-functional is constant are the traveling global Maxwellians. We do not present the proof here and refer the reader to \cite{Levermore}, \cite{To87}, \cite{BaGaGoLe16} and \cite{Lu18} for more details on such issues.
\end{remark}
In the case of inhomogeneous equations, this is the first global existence result for Landau equation with hard potentials such that the solutions are not close to Maxwellians. We note that a major difficulty in getting a global result for hard potentials was the velocity weight issue which was already present in the local existence problem. Hence the present work builds up on the author's past work on local existence \cite{Cha19}. That said, the global problem not only requires us to overcome the moment loss issue but also to gain enough time decay to be able to close the estimates.
\subsection{Related Works} In this section we give a rather inexhaustive list of pertinent results that deal
with a long range interaction potential. 
\begin{enumerate}
\item \textbf{Stability of vacuum for collisional kinetic models:} The earlier works were concerned with the Boltzmann equation with an angular cut-off. The first work was by Illner-Shinbrot \cite{IlSh84}. There were many other follow up works of \cite{IlSh84}; see for instance \cite{Ar11,BaDeGo84,BeTo85,Guo01,Ha85,HeJi17,Po88,To86,ToBe84}. Perturbations to travelling global Maxwellians were studied in \cite{AlGa09,Go97,To88, BaGaGoLe16} and it was shown that the long-time dynamics is governed by dispersion.

The first stability problem for vacuum result with long-range interactions was only recently obtained by Luk, in \cite{Lu18}, who proved the result in the case of moderately soft potentials ($\gamma \in (-2,0)$). Luk combined $L^\infty$ and $L^2$ methods to prove global existence of solutions near vacuum. Moreover, Luk proves that the main mechanism is dispersion and showed that the long-time limit of the solution to Landau equation solves the transport equation. Furthermore, Luk also showed the solution does not necessarily approach global Maxwellains. An interesting feature of Luk's paper is the use of a hierarchy in his norm which is necessary for gaining enough time decay. We use a similar weighted norm inspired from his techniques.
\item \textbf{Spatially homogenous solutions to Landau Equation:} The theory is rather well developed in the spatially homogeneous case.
\begin{itemize}
\item \textbf{Soft-Potentials, $\gamma\in[-3,0)$:} Global existence was studied in \cite{ArPe77,De15,Vi98.2} and uniqueness in \cite{Fo10,FoGu09}. The problem of global uniqueness remains a notable open problem for $\gamma\in [-3,-2]$. See \cite{AlLiLi15,De15,Si17,Wu14} for more a priori estimates.
\item \textbf{Maxwellian molecule, $\gamma=0$:} See \cite{Vi98} for results on this case.
\item \textbf{Hard potentials, $\gamma>0$:} This case was studied in detail by Desvillettes--Villani in \cite{DeVi00,DeVi00.2}, who showed existence and smoothness of solution with suitable initial data, as well as the appearance and propagation of various moments and lower bounds. 
\end{itemize}
\item \textbf{Spatially inhomogenous solutions to Landau Equation:} Whether regular solutions give rise to globally regular solutions remains an outstanding open problem . There are perturbative results to this end but the general result still seems elusive.
\begin{itemize}
\item \textbf{Global nonlinear stability of Maxwellians,} The stability problem of\\ Maxwellians on a periodic box was established in Guo's seminal work, \cite {Guo02.1}. Since then Guo's nonlinear method has seen considerable success in understanding near Maxwellian regime \cite{Guo02,Guo03,Guo03.2,StGu04,StGu06,Guo12,StZh13}. Remarkably,this regime is well understood in the case of non-cutoff Boltzmann equation as well see, \cite{GrSt11,AMUXY12,AMUXY12.2,AMUXY12.3}.
\item \textbf{Conditional regularity,} Recently there has been some results that concern the regularity of solutions to Landau equation assuming a priori pointwise control of the mass density, energy density and entropy density; see \cite{CaSiSn18,Si17, Sn18, HeSn17, HeSnTa17}.
\item \textbf{Soft Potentials, $\gamma\in [-3,0)$:} The case of soft-potentials is fairly well developed thanks to the recent works \cite{GoImMoVa16,CaSiSn18,HeSn17,HeSnTa17,Si17}. Local well-posedness results are available for the Landau equation due to works \cite{HeSnTa17,HeSnTa19.1}. We also have local existence results for Boltzmann in the soft potential regime thanks to \cite{AMUXY13,HeSnTa19.2}. Other than near Maxwellian regime, global existence is also available near vacuum for $\gamma \in(-2,0)$ due to Luk's recent work \cite{Lu18}.
\item \textbf{Hard potentials and Maxwellian molecule case, $\gamma\in [0,1]$:} The literature is somewhat sparse for these cases. Snelson proved in \cite{Sn18} that the solution to \eref{landau} with hard potentials (under assumptions of upper and lower bounded mass, energy and entropy) satisfies gaussian upper and lower bounds. The main feature of the paper is the appearance of these bounds which is reminiscent of the spatially homogenous case.

$\quad$ The author proved in \cite{Cha19} a local existence result assuming that the initial data is in a weighted tenth-order Sobolev space and has exponential decay in the velocity variable. This local result is a starting point for the present work and the present result relies on a bootstraping argument to prove global existence result from a local one.
\end{itemize}
\item \textbf{Dispersion and stability for collisionless models:} The dispersion properties of the transport operator, which we leverage to prove our global existence result, have been instrumental in proving stability results for close-to-vacuum regime in collisionless models; see \cite{BaDe85,GlSc88,GlSt87} for some early results. Relations between these results and the stability of vacuum for the Boltzmann equation with angular cutoff is discussed in \cite{BaDeGo84}. More recent results can be found in \cite{Bi17, Bi18,Bi19.1, Bi19.2, Bi19.3, FaJoSm17.1,Sm16,Wa18,Wa18.1,Wa18.2,Wo18}. See also \cite{FaJoSm17,LiTa17,Ta17} for proof of the stability of the Minkowski spacetime for the Einstein-Vlasov system.
\end{enumerate}
\subsection{Proof Strategy:} The idea is to start with the local existence result from \cite{Cha19} and use bootstraping to get global existence for small enough initial data.
\begin{enumerate}
\renewcommand{\labelenumi}{(\theenumi)}
\item \textbf{Local existence:} We would need to use a variant of the local existence result etablished in \cite{Cha19} which we explain in \sref{local}. That said we highlight three ingredients from \cite{Cha19} that would still be handy in this set-up.
\begin{itemize}
\item \label{1} Use of a time dependent exponential\footnote{In contrast to \cite{HeSnTa17}, we use an exponential in \cite{Cha19} and this is crucial as we are not able to close the estimates with a Gaussian due to the moment loss issue for the hard potentials.} in $\jap{v}$: As $t$ increases, one only aims for an upper bound by a weaker exponential in the $v-$variables. This lets one to control the $v-$weights in the coefficient $\a$ and in $\cm$.

\quad We thus use a time-dependent exponential weight in $\jap{v}$. Our choice of weights will decrease as $t$ increases, but it needs to decay in a sufficiently slow manner so that it is non-degenerate as $t\to \infty$. More precisely, we define 
\begin{equation}\label{e.g}
g:=e^{d(t)\jap{v}}, \hspace{.5em} d(t):=d_0(1+(1+t)^{-\delta})
\end{equation}
for appropriate $d_0>0$, $\delta>0$ and estimate $g$ instead of $f$, which satisfies the following equation,
\begin{equation} \label{e.rough_eq_for_g}
\begin{split}
\partial_tg+v_i\partial_{x_i}g+\frac{\delta d_0}{(1+t)^{1+\delta}}\jap{v}g&=\a [f]\partial^2_{v_iv_j}g-\cm [f]g+\text{other terms}.
\end{split}
\end{equation}
\item Use of $L^2$ estimates: This lets an integration by parts argument to go through without a loss of derivatives. $L^2$ estimates were already used to establish local existence for Landau equation with soft potentials in \cite{HeSnTa17}.
\item Use of weighted $L^2$ norms and a hierarchy of $\jap{v}$ weights: This facilitates us in overcoming the velocity weight issue in the coefficient $\a$ and is crucial for closing the argument.

\quad As in \cite{Cha19}, a purely $L^2$ approach helps us take care of the velocity weight issue. To see this multiply \eref{landau} by $f$ and integrate in velocity and space. With the help of integration by parts we get a term of the form, $\norm{\part^2_{v_iv_j}\a f^2}_{L^1_xL^1_v}$. Ignoring the time decay issue for now, we have that $$|\part^2_{v_iv_j}\a|\lesssim_t \jap{v}^\gamma$$ (see \pref{pointwise_estimates_a} for more details).

\quad Next note that in a $L^2$ approach, we are guaranteed to have at least two derivatives falling on $\a$ thanks to integration by parts. When at least two of them are $\part_v$ we are fine by above observations, but when there are less than two velocity derivatives, we still have the velocity weight issue. This is where we use the weight heirarchy that was introduced by the author in \cite{Cha19}.
\end{itemize}

For the global problem we still only employ $L^2$ estimates with a weighted norm and a hierarchy of weights but in this setting, we need additional ingredients to handle the large time behavior of $g$.
\item \textbf{Why $L^2$ estimates? Decay and heuristics:} \label{2}In the near vacuum region, one expects that the solutions to the Landau equation approach solutions to the transport equation,
\begin{equation}\label{e.lin_transport}
\part_t f+v_i\part_{x_i}f=0,
\end{equation}
as $t\to \infty$.

We thus expect that the collision term on the rhs of the Landau equation, \eref{landau} to decay as $t\to \infty$. It turns out, though, that the linear decay is insufficient and one has to exploit the null structure of the equation (see point (\ref{3}) for more details). This null structure was first observed and used by Luk in \cite{Lu18}. Luk combined $L^\infty$ bounds with $L^2$ bounds to be able to gain enough decay without loss of derivatives.

This $L^2-L^\infty$ approach would almost work in the hard potential case except there is one wrinkle, the moment loss issue! This issue was already present in the local problem and forced us to use a weighted $L^2$ approach as was outlined in point \ref{1} above. Since we crucially use the fact that we have at least two derivatives hitting $\a$, we cannot use an $L^\infty$ apporach as in \cite{Lu18}. 
%
%We give a heuristic argument as to why we expect in the near-vacuum regime that the solutions to Landau equation approach solutions to the linear transport equation in the long time limit. In contrast to \cite{Lu18}, we just use $L^2$ estimates. The main reason is that we can leverage the null structure recognized by Luk in a more efficient way in the hard potential setting. Thus we have enough time decay available to us to close the bootstraping argument and prove stability of vacuum. 
%
%A more subtle reason is that even though $L^\infty$ estimates have a better time decay, the moment loss issue is hard to deal with. Indeed,we have the following expected bound for a solution of \eref{landau} satisfying a decay estimate like that of a solution to linear transport 
%\begin{equation}\label{e.L_inf_decay}
%|\a\part^2_{v_iv_j}f|\lesssim \jap{v}^{2+\gamma}(1+t)^{-1}.
%\end{equation}
%For more details see \cite{Lu18}.
%
%Exploiting the null structure we have hope of taking care of the borderline time-decay but the velocity weights on the RHS are more than we can handle with a time dependent exponential.

Thus we need to be able to squeeze enough time decay to close the estimates, whilst only using an $L^2$ based approach. First note that a sufficiently localized in $x$ and $v$ solution  $f_{\text{free}}$ to the transport equation \eref{lin_transport} satisfies  for all $l\in \N\cup \{0\}$,

\begin{equation}\label{e.lin_L2}
\norm{\jap{v}^l\part^\alpha_x\part^\beta_v f_{\text{free}}}_{L^2_xL^1_v}\lesssim (1+t)^{-\frac{3}{2}+|\beta|},\hspace{1em} \norm{\jap{v}^l\part^\alpha_x\part^\beta_v f_{\text{free}}}_{L^2_xL^2_v}\lesssim (1+t)^{|\beta|}.
\end{equation}

Indeed, since $f_{\text{free}}=f_{\text{data}}(x-tv,v)$ we have by Cauchy-Schwarz, 
\begin{align*}
\left(\int_{\R^3}\left(\int_{\R^3}\jap{v}^lf_{\text{free}}\d v\right)^2\d x\right)^{\frac{1}{2}}&\lesssim\left(\int_{\R^3}\int_{\R^3}\jap{v}^{2l+4}f^2_{\text{free}}\d v\d x\right)^{\frac{1}{2}}\\
&=\left(\int_{\R^3}\int_{\R^3}\jap{v}^{2l+4}f^2_{\text{data}}(x-tv,v)\d v\d x\right)^{\frac{1}{2}}.
\end{align*}
Now if $f_{\text{data}}$ is sufficiently localized in $x$ and $v$, we have that
$$\int_{\R^3}\int_{\R^3}\jap{v}^{2l+4} f^2_{\text{data}}(x-tv,v)\d v\d x\lesssim (1+t)^{-3}.$$

Next, note that each $\part_v$ worsens the decay estimate by $t$ but $\part_x$ leaves the decay estimate untouched. Thus, 
$$\int_{\R^3}\int_{\R^3}\jap{v}^{2l+4} (\part^\alpha_x\part^\beta_vf)^2_{\text{data}}(x-tv,v)\d v\d x\lesssim (1+t)^{-3+2|\beta|},$$
and hence \eref{lin_L2} follows.

We now motivate the need for optimally exploiting null structure in our case. Assume that the solution $f$ to \eref{landau} satisfies the linear decay estimate, \eref{lin_L2}. We will overlook the velocity weight issue in the sequel, having already taken care of it using a weighted hierarchy as in point (\ref{1}) above.\\
First consider the term, $\norm{\part^2_{v_iv_j}\a f^2}_{L^1_xL^1_v}$, which arises due to integration by parts. The linear decay as in \eref{lin_L2} is enough to deal with the term . Indeed, using \eref{lin_L2} and Sobolev embedding, we have that,
$$|\part^2_{v_iv_j}\a|\lesssim (1+t)^{-\frac{3}{2}},$$
see \pref{pointwise_estimates_a} for more details. 

But now consider the term $$\norm{\part^{\alpha}_x \a \part^2_{v_iv_j} f\part^\alpha_x f}_{L^1_xL^1_v},$$
for $|\alpha|\geq 2$. Again by \eref{lin_L2} and Sobolev embedding we have that $$|\part^\alpha_x\a|\lesssim (1+t)^{-\frac{3}{2}}.$$ But \eref{lin_L2} dictates that $$\norm{\part^2_{v_iv_j}f}\lesssim (1+t)^{2}.$$
The above imply that $\norm{\part^{\alpha}_x \a \part^2_{v_iv_j} f\part^\alpha_x f}_{L^1_xL^1_v}$ actually grows like $(1+t)^{\frac{1}{2}}$! In comparison to \cite{Lu18}, where the linear decay estimate without the null structure gave a borderline non-integrable decay of $(1+t)^{-1}$, our linear estimate does not even decay. However, in \cite{Lu18}, both the decay estimate and the exploitation of the null structure rely heavily on $L^\infty$ estimate. In our case, as we explained in point (\ref{1}), we have at least two derivatives hitting $\a$. Our weighted hierarchy crucially uses this to overcome the moment loss issue. Hence, we need to come up with another weighted hierarchy that lets us exploit the null structure which is suited to our $L^2$ approach; see point (\ref{3}) for more details on this hierarchy. 
\item \textbf{Null structure and hierarchy of weights:} \label{3} We review the null structure that was recognized by Luk in \cite{Lu18}. For a sufficiently localized and regular data, the decay estimates \eref{lin_L2} are sharp only when $\frac{x}{t}\sim v$. For $\left|\frac{x}{t}-v\right|\geq t^{-\alpha}$ with $\alpha\in (0,1)$, we have better time decay.

This observation helps us leverage some extra decay due to the structure of $a_{ij}$. There are three scenarios possible:
\begin{itemize}
\item $\frac{x}{t}$ is not too close to $v$.
\item $\frac{x}{t}$ is not too close to $v_*$.
\item $|v-v_*|$ is small.
\end{itemize}  
For the first two cases we get extra decay by the observation above. For the last case we get extra decay for $\a$ as $|v-v_*|$ is small. This null structure is captured by using $\jap{x-(t+1)v}$ weights in the norm as in \cite{Lu18}. Following \cite{Lu18}, we use three vector fields namely $\part_v, \part_x$ and $Y=(t+1)\part_x+\part_v$. $Y$ helps us exploit the null structure in conjunction with a hierarchy of weights and we can write $\part_x=(t+1)^{-1}(Y-\part_v)$ to gain time decay for $\part_x$. 

As we already pointed out in points (\ref{1}) and (\ref{2}), we have two major issues in treating the global problem for hard potentials. One is the moment loss issue, which we overcome by using a hierarchy of $\jap{v}$ weights and the other is time decay issue which we overcome by using a hierarchy of $\jap{x-(t+1)v}$ weights. The idea to use a hierarchy of norms works since we always have at least two derivatives hitting $\a$, thanks to our $L^2$ approach. In addition, when we have two velocity derivatives on $\a$ then by points (\ref{1}) and (\ref{2}) above we not only have the required decay but also we can overcome the moment loss issue. A more important but somewhat trivial observation is that when we have $Y$ or $\part_x$ derivatives hitting $\a$ then we have one less of the corresponding derivative hitting $\part^2_{v_iv_j}g$  by the Leibnitz rule. 

To get decay when $Y$ hits $\a$, we use a hierarchy of $\jap{x-(t+1)v}$ weights which not only helps us get decay but also helps us take care of the moment loss issue. When $\part_x$ hits $\a$, we can get decay by writing $\part_x=(t+1)^{-1}(Y-\part_v)$ but this doesn't take care of the moment loss issue which is problematic in the hard potential case. But we note that this problem was already present in the local existence problem which was treated in \cite{Cha19}. Thus inspired from that work, we also define a hierarchy of $\jap{v}$ weights. Presence of three vector fields complicate our hierarchy but the core idea is the same as in \cite{Lu18} and \cite{Cha19}.
 
We now motivate our choice of weighted norm via the weights on $\jap{v}$. For the sake of discussion here, we again consider \eref{rough_eq_for_g}. First we differentiate \eref{rough_eq_for_g} by $\der$ to get
\begin{equation} \label{e.rough_diff_eq_for_g}
\begin{split}
&\partial_t \der g+v_i\partial_{x_i}\der g+\frac{\delta d_0}{(1+t)^{1+\delta}}\jap{v}\der g\\
&\quad=[\partial_t+v_i\partial_{x_i},\der ]g+\der (\a \part^2_{v_iv_j}g)-\der(\cm g)+\text{other terms}
\end{split}
\end{equation}
Then we integrate in velocity, space and time after multiplying by\break $\jap{v}^{2\nu_{\alpha,\beta,\sigma}}\jap{x-(t+1)v}^{2\omega_{\alpha,\beta,\sigma}}\der g$. There are a few cases that will illuminate the need of our choice of hierarchy:
\begin{itemize}
\item First we look at the case when we have a term of the form 
$$\int_0^T\int_x\int_v \der g\part_x^2\a\part^2_{v_iv_j}\derv{'}{}{}g.$$ Then we need each $\part_x$ to cost at least one $\jap{v}$. So an obvious candidate for $\nu_{\alpha,\beta,\sigma}=20-|\alpha|$.
\item But just as in \cite{Cha19}, we need $\nu_{\alpha,\beta,\sigma}$ to have some non-trivial dependence on $\beta$. This is because of the term $$\int_0^T\int_x\int_v \der g\part_x^9\a\part^2_{v_iv_j}\derv{'}{}{}g.$$ 

Since we need to estimate $\part^2_{v_iv_j}\derv{'}{}{}f$ in $L^\infty_xL^2_v$ and use Sobolev embedding to get to $L^2_xL^2_v$ at the cost of two spatial derivatives. So a more practical candidate is $\nu_{\alpha,\beta,\sigma}=20-\frac{3}{2}|\alpha|-\frac{1}{2}|\beta|$.
\item Finally the term of the form $$\int_0^T\int_x\int_v \der g(\part_x Y\a)\part^2_{v_iv_j}\derv{'}{}{'}g$$ forces us to incorporate $\sigma$ in our weights. Indeed, with $\tilde\nu_{\alpha,\beta,\sigma}=20-\frac{3}{2}|\alpha|-\frac{1}{2}|\beta|$, $$\tilde\nu_{\alpha',\beta',\sigma'}=\tilde\nu_{\alpha,\beta,\sigma}+\frac{1}{2}$$
where $|\beta'|=|\beta|+2$. Thus this hierarchy doesn't let us knock down the velocity growth of $\a$ adequately. But $\nu_{\alpha,\beta,\sigma}=20-\frac{3}{2}|\alpha|-\frac{1}{2}|\beta|-\frac{3}{2}|\sigma|$ does the trick and we claim that this hierarchy works for all other cases (see \sref{errors} for more details).
\end{itemize}

Using the terms $$\int_0^T\int_x\int_v\der g(Y^2\a)\part^2_{v_iv_j}\derv{}{}{'}g$$
and 
$$\int_0^T\int_x\int_v\der g(Y^9\a)\part^3_{v_iv_jv_l}g$$
we deduce in a similar way as above that $\omega_{\alpha,\beta,\sigma}$ must depend on both $\sigma$ and $\beta$.

Finally we remark that although a hierarchy that only depends on $\sigma$ and $\beta$ as $\tilde \omega_{\sigma,\beta}=20-\frac{3}{2}|\sigma|-\frac{1}{2}|\beta|$ is good enough to take care of the moment loss issue in conjuction with our definition of $\nu_{\alpha,\beta,\sigma}$. But it is not good enough to use the null structure optimally and doesn't give us the required time decay.

Indeed, if we use $\tilde \omega_{\sigma,\beta}$, in the term $$\int_0^T\int_x\int_v \der g(\part_x Y\a)\part^2_{v_iv_j}\derv{'}{}{'}g$$ 
we have for $|\beta'|=|\beta|+2$, $$\tilde \omega_{\alpha',\beta',\sigma'}=\tilde \omega_{\alpha,\beta,\sigma}+\frac{1}{2}.$$
Thus we are unable to optimally exploit the null structure. We conclude that our definition of weights must also depend on $\alpha.$ With this in mind, we claim that the following hierarchy works
$$\omega_{\alpha,\beta,\sigma}=20-\frac{3}{2}|\sigma|-\frac{1}{2}(|\alpha|+|\beta|).$$

Note that in this case, the dependence on $\beta$ and $\alpha$ has to be symmetric because of the commutator term arising due to the transport operator. Indeed, the first term on RHS of \eref{rough_diff_eq_for_g} is $$[\part_t+v_i\part_{x_i},\der] g=\derv{'}{'}{}g,$$
where $|\alpha'|=|\alpha|+1$ and $|\beta'|\leq |\beta|-1$. Thus we have a term of the form $$\norm{\jap{x-(t+1)v}^{\omega_{\alpha,\beta,\sigma}}\jap{v}^{\nu_{\alpha,\beta,\sigma}}\der g\cdot \derv{'}{'}{}g}_{L^2([0,T];L^2_xL^2_v)}.$$
So to make $\omega_{\alpha,\beta,\sigma}=\omega_{\alpha',\beta',\sigma}$, we need to make the dependence symmetric.

This commutator term also is the reason why we cannot do away with the vector field $\part_x$ altogether and get a global result by making appropriate changes to the local result in \cite{Cha19}. Indeed, if we used $\part_x=(t+1)^{-1}(Y-\part_v)$ then we would be generating $Y$ derivatives and our proposed hierarchy for $\jap{x-(t+1)v}$ weights would not work anymore.
\item \textbf{Discrepancy in linear time estimate and nonlinear time estimate:}\label{4} As was mentioned in point (\ref{2}), $\part_v$ costs $(1+t)$ at least at the linear estimate level; see \eref{lin_L2}. For soft potentials, Luk is able to propagate this linear estimate but in the hard potentials case we have an additional difficulty that prevents us from closing our argument with this estimate. That is we have to assume that each $\part_v$ costs us $(1+t)^{1+\delta}$ in the bootstrapping assumption instead of the linear estimate which costs $(1+t)$. The culprit is the interaction of our hierarchy of velocity weights and the term arising by commutation of $\part_v$ and the transport operator, i.e. the first term on the RHS of \eref{rough_diff_eq_for_g} (see \eref{T1} for more details).

Since we have a $\part_x$ instead of $\part_v$, the term $\derv{'}{'}{} g$ can handle one less $\jap{v}$ weight. Thus this error term needs to be taken care of by the term that arises by differentiating $e^{d(t)\jap{v}}$ by $\part_t$ (last term on lhs of \eref{rough_diff_eq_for_g}). But then we need an extra decay of $(1+t)^{-1-\delta}$ while the linear estimate only gives us $(1+t)^{-1}$ (since we have one less $\part_v$). Considering these, we bootstrap a time estimate that is weaker than the linear estimate. This implies that we get a slightly slower convergence to transport than we would have expected from the linear estimate\footnote{We don't claim this rate of convergence to be sharp.}.
\end{enumerate}
With these things in mind we define our energy norm for any $T>0$ as follows
\begin{equation*}
\begin{split}
\norm{h}^2_{E_T^m}:=&\sum \limits_{|\alpha|+|\beta|+|\sigma|\leq m} (1+T)^{-|\beta|(1+\delta)}\norm{\jap{x-(t+1)v}^{\omega_{\alpha,\beta,\sigma}}\jap{v}^{\nu_{\alpha,\beta,\sigma}}\der h}^2_{L^\infty([0,T);L^2_xL^2_v)}\\
&+\sum \limits_{|\alpha|+|\beta|+|\sigma|\leq m} (1+T)^{-|\beta|(1+\delta)}\norm{\jap{v}^{\frac{1}{2}}\jap{x-(t+1)v}^{\omega_{\alpha,\beta,\sigma}}\jap{v}^{\nu_{\alpha,\beta,\sigma}}\der h}^2_{L^2([0,T);L^2_xL^2_v)}.
\end{split}
\end{equation*}
\subsection{Paper organization} The remainder of the paper is structured as follows.\\
In \sref{notation}, we introduce some notations that will be in effect throughout the paper. In \sref{local} we review the local existence theorem in \cite{Cha19}. In \sref{bootstrap} we state the bootstrap assumptions and the theorem we wish to establish as a part of out bootstrap scheme. In \sref{coef} we establish bounds on the coefficients and their derivatives. In \sref{set-up}, we set up the energy estimates and figure out the error terms. In \sref{errors}, we estimate the errors and prove the bootstrap theorem from \sref{bootstrap}.
\subsection{Acknowledgements} I would like to thank Jonathan Luk for suggesting this problem and for numerous helpful discussions.
\section{Notations and spaces}\label{s.notation}
We introduce some notations that will be used throughout the paper.

\textbf{Norms}: We will use mixed $L^p$ norms, $1\leq p<\infty$ defined in the standard way:
$$\norm{h}_{L^p_v}:=(\int_{\R^3} |h|^p \d v)^{\frac{1}{p}}.$$
For $p=\infty$, define
$$\norm{h}_{L^\infty_v}:=\text{ess} \sup_{v\in \R^3}|h|(v).$$
For mixed norms, the norm on the right is taken first. For example,
$$\norm{h}_{L^p_xL^q_v}:=(\int_{\R^3}(\int_{\R^3} |h|^q(x,v) \d v)^{\frac{p}{q}} \d x)^{\frac{1}{p}}$$
and
$$\norm{h}_{L^r([0,T];L^p_xL^q_v)}:=(\int_0^T(\int_{\R^3}(\int_{\R^3} |h|^q(x,v) \d v)^{\frac{p}{q}} \d x)^{\frac{r}{p}})^{\frac{1}{r}}$$
with obvious modifications when $p=\infty$, $q=\infty$ or $r=\infty$. All integrals in phase space are over either $\R^3$ or $\R^3\times \R^3$ and the explicit dependence is dropped henceforth.\\

\textbf{Japanese brackets}. Define $$\jap{\cdot}:=\sqrt{1+|\cdot|^2}.$$

\textbf{Multi-indices}. Given a multi-index $\alpha=(\alpha_1,\alpha_2,\alpha_3)\in (\N\cup \{0\})^3$, we define $\part_x^\alpha=\part^{\alpha_1}_{x_1}\part^{\alpha_2}_{x_2}\part^{\alpha_3}_{x_3}$ and similarly for $\part^\beta_v$. Let $|\alpha|=\alpha_1+\alpha_2+\alpha_3$. Multi-indices are added according to the rule that if $\alpha'=(\alpha'_1,\alpha'_2,\alpha'_3)$ and $\alpha''=(\alpha''_1,\alpha''_2,\alpha''_3)$, then $\alpha'+\alpha''=(\alpha'_1+\alpha''_1,\alpha'_2+\alpha''_2,\alpha'_3+\alpha''_3)$.\\

\textbf{Velocity weights} We define the velocity weight function that shows up in the energy estimates. Let $|\alpha|+|\beta|+|\sigma|\leq 10$, then we define $$\nu_{\alpha,\beta,\sigma}=20-\frac{3}{2}(|\alpha|+|\sigma|)-\frac{1}{2}|\beta|.$$

\textbf{Space dependent weights} We now define the space dependent weight function that helps us gain time decay.  Let $|\alpha|+|\beta|+|\sigma|\leq 10$, then we define $$\omega_{\alpha,\beta,\sigma}=20-\frac{3}{2}|\sigma|-\frac{1}{2}(|\beta|+|\alpha|).$$

\textbf{Global energy norms}. For any $T>0$ the energy norm we use in $[0,T)\times\R^3\times\R^3$ is as follows 
\begin{equation}\label{e.Energy_norm}
\begin{split}
\norm{h}^2_{E_T^m}:=&\sum \limits_{|\alpha|+|\beta|+|\sigma|\leq m} (1+T)^{-|\beta|(1+\delta)}\norm{\jap{x-(t+1)v}^{\omega_{\alpha,\beta,\sigma}}\jap{v}^{\nu_{\alpha,\beta,\sigma}}\der h}^2_{L^\infty([0,T);L^2_xL^2_v)}\\
&+\sum \limits_{|\alpha|+|\beta|+|\sigma|\leq m} (1+T)^{-|\beta|(1+\delta)}\norm{\jap{v}^{\frac{1}{2}}\jap{x-(t+1)v}^{\omega_{\alpha,\beta,\sigma}}\jap{v}^{\nu_{\alpha,\beta,\sigma}}\der h}^2_{L^2([0,T);L^2_xL^2_v)}.
\end{split}
\end{equation}
When \underline{$m=10$}, it is dropped from the superscript, i.e. $E_T=E^{10}_T.$

It will also be convenient to define some other energy type norms. Namely,
$$\norm{h}_{Y^m_v}^2(t,x):=\sum \limits_{|\alpha|+|\beta|+|\sigma|\leq m} \norm{\jap{x-(t+1)v}^{\omega_{\alpha,\beta,\sigma}}\jap{v}^{\nu_{\alpha,\beta,\sigma}}\der h}^2_{L^2_v}(t,x),$$
$$\norm{h}^2_{Y^m_{x,v}}(t):=\sum \limits_{|\alpha|+|\beta|+|\sigma|\leq m} \norm{\jap{x-(t+1)v}^{\omega_{\alpha,\beta,\sigma}}\jap{v}^{\nu_{\alpha,\beta,\sigma}}\der h}^2_{L^2_xL^2_v}(t),$$
and 
$$\norm{h}^2_{Y^m_{T}}:=\sum \limits_{|\alpha|+|\beta|+|\sigma|\leq m} \norm{\jap{x-(t+1)v}^{\omega_{\alpha,\beta,\sigma}}\jap{v}^{\nu_{\alpha,\beta,\sigma}}\der h}^2_{L^\infty([0,T];L^2_xL^2_v)}.$$

Finally we define the weaker versions of the energy spaces for $m\leq 4$ that are used in the uniqueness part of \tref{global}.
\begin{equation*}
\begin{split}
\norm{h}^2_{\tilde E_T^m}:=&\sum \limits_{|\alpha|+|\beta|+|\sigma|\leq m} (1+T)^{-|\beta|(1+\delta)}\norm{\jap{x-(t+1)v}^{\tilde \omega_{\alpha,\beta,\sigma}}\jap{v}^{\tilde \nu_{\alpha,\beta,\sigma}}\der h}^2_{L^\infty([0,T);L^2_xL^2_v)}\\
&+\sum \limits_{|\alpha|+|\beta|+|\sigma|\leq m} (1+T)^{-|\beta|(1+\delta)}\norm{\jap{v}^{\frac{1}{2}}\jap{x-(t+1)v}^{\tilde \omega_{\alpha,\beta,\sigma}}\jap{v}^{\tilde \nu_{\alpha,\beta,\sigma}}\der h}^2_{L^2([0,T);L^2_xL^2_v)}
\end{split}
\end{equation*}
and 
$$\norm{h}^2_{\tilde Y^m_{x,v}}(t):=\sum \limits_{|\alpha|+|\beta|+|\sigma|\leq m} \norm{\jap{x-(t+1)v}^{\tilde \omega_{\alpha,\beta,\sigma}}\jap{v}^{\tilde \nu_{\alpha,\beta,\sigma}}\der h}^2_{L^2_xL^2_v}(t),$$
where $\tilde\omega_{\alpha,\beta}=10-\frac{3}{2}|\sigma|-\frac{1}{2}(|\beta|+|\alpha|)$ and $\tilde \nu_{\alpha,\beta,\sigma}=20-\frac{3}{2}(|\alpha|+|\sigma|)-\frac{1}{2}|\beta|.$

For two quantitites, $A$ and $B$ by $A\lesssim B$, we mean $A\leq C(d_0,\gamma)B$, where $C(d_0,\gamma)$ is a positive constant depending only on $d_0$ and $\gamma$.
\section{Local Existence}\label{s.local}
In this section, we recall the local existence result in \cite{Cha19} (and state a small varaint of it).
\begin{definition}
Define the $X^{k,l}_{x,v}$ and the $\tilde X^{k,l}_{x,v}$ norm on $\mathcal{S}(\R^3\times \R^3)$ by 
$$\norm{h}_{X^{k,l}_{x,v}}:=\sum_{|\alpha|+|\beta|\leq k}(\int_{\R^3}\int_{\R^3}\jap{v}^{40-3|\alpha|-|\beta|}\jap{v}^l(\part^\alpha_x\part^\beta_vh)^2\d v\d x)^{\frac{1}{2}}$$
and 
$$\norm{h}_{\tilde X^{k,l}_{x,v}}:=\sum_{|\alpha|+|\beta|\leq k}(\int_{\R^3}\int_{\R^3}\jap{v}^{20-3|\alpha|-|\beta|}\jap{v}^l(\part^\alpha_x\part^\beta_vh)^2\d v\d x)^{\frac{1}{2}}.$$
By abuse of notation we also let $X_{x,v}^{k,l}$ and  $\tilde{X}_{x,v}^{k,l}$ to be the completion of $\mathcal{S}(\R^3\times \R^3)$ under this norm.
\end{definition}
The following theorem is taken from \cite{Cha19}.
\begin{theorem}\label{t.local}
Fix $\gamma \in [0,1)$, $\rho_0>0$, $M_0\in \R$ and $f_{\ini}$ be such that\\
\begin{equation*}
\sum\limits_{|\alpha|+|\beta|\leq 10}\norm{\jap{v}^{20-\frac{3}{2}|\alpha|-\frac{1}{2}|\beta|}\part^\alpha_x\part_v^\beta(e^{d_0\jap{v}}f_{\ini})}_{L^2_xL^2_v}^2\leq M_0
\end{equation*}
Then for a large enough $\kappa\geq C(\rho_0,\gamma)M_0$, there exists $T=T_{\gamma,\rho_0,M_0,\kappa}>0$ such that there exists non-negative solution $f$ to \eref{landau} with initial data $f(0,x,v)=f_{\ini}(x,v)$ and satisfying 
$$\norm{e^{(\rho_0-\kappa t)\jap{v}}f}_{C^0([0,T];X^{10,0}_{x,v})\cap L^2([0,T];X^{10,1}_{x,v})}<\infty.$$

Moreover, $e^{(\rho_0-\kappa t)\jap{v}}f$ is unique in $C^0([0,T];\tilde X^{4,0}_{x,v})\cap L^2([0,T];\tilde X^{4,1}_{x,v}).$
\end{theorem}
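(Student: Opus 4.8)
The plan is to establish \tref{local} by reproducing, with minor bookkeeping changes, the weighted $L^2$ argument of \cite{Cha19}; the only genuinely new point relative to that reference is tracking how the existence time $T$ and the admissible threshold for $\kappa$ depend on $(\gamma,\rho_0,M_0)$, which is visible from the constants in that proof. Throughout I would work not with $f$ but with $g:=e^{(\rho_0-\kappa t)\jap{v}}f$, which solves
\begin{equation*}
\part_t g+v_i\part_{x_i}g+\kappa\jap{v}g=\a[f]\part^2_{v_iv_j}g-\cm[f]g+(\text{terms with }\le 1 \text{ extra }\part_v\text{ from differentiating }e^{(\rho_0-\kappa t)\jap{v}}),
\end{equation*}
the essential new feature being the damping $+\kappa\jap{v}g$ on the left: for $\kappa$ large this term both absorbs the bad velocity growth of the coefficients and \emph{is} the $L^2_t X^{10,1}$ gain appearing in the conclusion.

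The core step is the weighted energy estimate. For each $|\alpha|+|\beta|\le 10$ I would apply $\part^\alpha_x\part^\beta_v$ to the equation for $g$, test against $w_{\alpha\beta}^2\,\part^\alpha_x\part^\beta_v g$ with $w_{\alpha\beta}:=\jap{v}^{20-\frac32|\alpha|-\frac12|\beta|}$, and integrate over $x,v$. The transport term integrates to zero; the damping produces $\kappa\|\jap{v}^{1/2}w_{\alpha\beta}\part^\alpha_x\part^\beta_v g\|_{L^2_xL^2_v}^2$, which summed is comparable to $\|g\|_{X^{10,1}}^2$; the top-order part of the diffusion term, after integration by parts in $v$, yields a nonnegative dissipation $\int \a_{ij}\part_{v_i}(w_{\alpha\beta}\part^\alpha_x\part^\beta_v g)\part_{v_j}(w_{\alpha\beta}\part^\alpha_x\part^\beta_v g)$, which is coercive by the ellipticity of $\a$ (this uses only that the energy norm of $f$ is finite, so $\a$ is genuinely positive). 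It remains to catalogue and control the error terms: the Leibniz terms $\int w_{\alpha\beta}^2\,\part^\alpha_x\part^\beta_v g\,(\part^{\alpha'}_x\part^{\beta'}_v\a)\,\part^2_{v_iv_j}\part^{\alpha''}_x\part^{\beta''}_v g$ with $\alpha'+\alpha''=\alpha$, $\beta'+\beta''=\beta$; the analogous (and strictly easier, since $\cm\sim\jap{v}^\gamma$) terms with $\cm$; and commutators from $[\part^\beta_v,\jap{v}^{\cdot}]$ and from $\part_v$ hitting the exponential, all of which cost at most one order and are harmless.

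The main obstacle is exactly the moment-loss issue in the Leibniz terms above when $|\alpha'|\ge 1$ and $|\beta'|$ is small: a spatial derivative on $\a$ does \emph{not} improve its velocity decay, so $\part^{\alpha'}_x\a$ still grows like $\jap{v}^{\gamma+2}$. Here the hierarchy $\nu_{\alpha,\beta}=20-\frac32|\alpha|-\frac12|\beta|$ does the work: the factor $\part^{\alpha''}_x\part^{\beta''}_v g$ carries a weight larger by $\jap{v}^{\frac32|\alpha'|+\frac12|\beta'|}$ than that of $\part^\alpha_x\part^\beta_v g$, so when $|\alpha'|\ge 2$ this beats $\jap{v}^{\gamma+2}$ outright for $\gamma<1$, and when $|\alpha'|=1$ the residual growth $\jap{v}^{\gamma+\frac12}$ (or $\jap{v}^{\gamma+\frac32}$ when also $|\beta'|=0$) is absorbed, after Young's inequality, by the $\kappa\jap{v}$-damping together with the $\jap{v}^{\gamma/2}\nabla_v$-dissipation, provided $\kappa\ge C(\rho_0,\gamma)M_0$. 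This is also where smallness of $T$ enters: the exponent $\rho_0-\kappa t$ must stay positive and comparable to $\rho_0$ for the weight $e^{(\rho_0-\kappa t)\jap{v}}$ to be usable in bounding $\a,\cm$, forcing $T\lesssim\rho_0/\kappa$. A Grönwall argument on $[0,T]$ then closes the a priori bound $\|g\|_{C^0_tX^{10,0}\cap L^2_tX^{10,1}}\lesssim\|g(0)\|_{X^{10,0}}$.

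Finally, for genuine existence I would run a Picard iteration on the linearized equation $\part_t f^{(n+1)}+v_i\part_{x_i}f^{(n+1)}=\a[f^{(n)}]\part^2_{v_iv_j}f^{(n+1)}-\cm[f^{(n)}]f^{(n+1)}$, regularized by $\eps\Delta_{x,v}$ to make each step a nondegenerate parabolic problem with classical solutions, apply the estimate above uniformly in $n$ and $\eps$ on a common interval $[0,T]$, and obtain convergence of the iterates in the weaker norm $C^0_t\tilde X^{4,0}\cap L^2_t\tilde X^{4,1}$ via a contraction estimate for $f^{(n+1)}-f^{(n)}$ — the weaker norm is forced because estimating the difference spends derivatives and $\jap{v}$-weights on the coefficient differences $\a[f^{(n)}]-\a[f^{(n-1)}]$. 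Interpolating the uniform high-norm bound with this low-norm convergence gives a solution in the stated space, and passage to the limit in the equation and $C^0_t$-regularity are then routine. Nonnegativity is inherited along the iteration: for $\eps>0$ the linearized equation is nondegenerate parabolic with $\a[f^{(n)}]\ge 0$, so the maximum principle gives $f^{(n+1)}\ge 0$ whenever $f_{\ini}\ge 0$, and this survives the limit $\eps\to 0$. Uniqueness in $C^0_t\tilde X^{4,0}\cap L^2_t\tilde X^{4,1}$ is the same difference estimate plus Grönwall. The recurring difficulty, and the reason for every feature of the scheme, is the velocity-weight (moment-loss) issue in the dangerous Leibniz terms, defeated by the interplay of the hierarchy $\nu_{\alpha,\beta}$ with the large damping supplied by the time-dependent exponential weight.
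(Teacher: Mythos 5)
Your plan follows the same skeleton as the proof this theorem actually rests on: the paper does not reprove \tref{local} but quotes it from \cite{Cha19}, and the method there (and in the parallel global estimates of Sections \ref{s.set-up}--\ref{s.errors}) is exactly the one you describe — conjugate by the time-dependent exponential so that a damping term $\kappa\jap{v}g$ appears, run weighted $L^2$ estimates with the hierarchy $\nu_{\alpha,\beta}=20-\tfrac32|\alpha|-\tfrac12|\beta|$, build the solution by a regularized linear iteration with the maximum principle giving nonnegativity, and prove uniqueness by a difference estimate in the weaker $\tilde X^{4,\cdot}$ spaces. The problem is in the core energy estimate, where two of your claims do not survive scrutiny. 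You assert that the top-order dissipation $\int \a\,\part_{v_i}(w_{\alpha\beta}\part^\alpha_x\part^\beta_v g)\,\part_{v_j}(w_{\alpha\beta}\part^\alpha_x\part^\beta_v g)$ is coercive ``since the energy norm of $f$ is finite, so $\a$ is genuinely positive.'' Finiteness of a weighted norm of $f$ gives an \emph{upper} bound on $\a$, not a lower one; near vacuum ($f$ small, possibly vanishing on large sets) there is no quantitative ellipticity constant, and the lowest eigenvalue degenerates in $v$ in any case. This is why the scheme you are reproducing uses only the sign of this term and drops it (see the treatment of $A_1$ in the proof of \lref{energy_est_set_up}); nothing may be absorbed into it, so any part of your argument that spends ``$\jap{v}^{\gamma/2}\nabla_v$-dissipation'' is unavailable.

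The second, decisive gap is the Leibniz terms with exactly one derivative on $\a$ at top order. For $|\alpha|+|\beta|=10$ and $|\alpha'|+|\beta'|=1$ the companion factor $\part^2_{v_iv_j}\part^{\alpha''}_x\part^{\beta''}_v g$ carries eleven derivatives of $g$, so it is not controlled by the $X^{10,0}$/$X^{10,1}$ energy at all, and Cauchy--Schwarz/Young against the damping cannot even be set up. Moreover, even ignoring the derivative count, your own weight bookkeeping leaves a residual velocity growth of order $\jap{v}^{\gamma+\frac32}$ when $|\alpha'|=1$, $|\beta'|=0$ (the hierarchy gain is $\jap{v}^{3/2}$, reduced by $\jap{v}^{-1}$ because the companion factor carries two extra $v$-derivatives, against a coefficient of size $\jap{v}^{2+\gamma}$): the damping furnishes exactly one power of $\jap{v}$ in total, so $\kappa\jap{v}$ cannot dominate $\jap{v}^{\gamma+\frac32}$ for any $\kappa$, however large. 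The missing idea is the \emph{additional} integration by parts in $v_i$ performed in \cite{Cha19} and reproduced here for the analogous global terms in the proof of \lref{J_term} (cf.\ \eref{a_dx_1}--\eref{a_dx_5}): after it, every term has either two derivatives on $\a$ — so the hierarchy gain $\jap{v}^{3}$ (or the improved bounds \eref{pw_bound_der_a}, \eref{pw_bound_2der_a}) beats $\jap{v}^{2+\gamma}$ for $\gamma<1$ — or one derivative on $\a$ with residual growth at most one power of $\jap{v}$, which is precisely what the $\kappa\jap{v}$ damping can absorb for $\kappa\geq C(\rho_0,\gamma)M_0$. Without this step your estimate loses both a derivative and $\jap{v}$-moments, and the a priori bound, hence the whole scheme, does not close.
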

For the global result, we will need a slight variant of \tref{local}. It can be proven in a very similar manner as in \cite{Cha19} and so we state it as a corollary and omit the proof.
\begin{corollary}\label{c.local_cor}
Fix $\gamma \in [0,1)$, $\rho_0>0$, $\lambda\in \R-\{0\}$, $M_0>0$, $N_{\alpha,\beta}\in \N$ be a hierarchy of weights with $N_{\alpha,\beta}\geq 2$ and $f_{\ini}$ be such that\\
\begin{equation*}
\sum\limits_{|\alpha|+|\beta|\leq 10}\norm{\jap{x-\lambda v}^{N_{\alpha,\beta}}\jap{v}^{20-\frac{3}{2}|\alpha|-\frac{1}{2}|\beta|}\part^\alpha_x\part_v^\beta(e^{d_0\jap{v}}f_{\ini})}_{L^2_xL^2_v}^2\leq M_0.
\end{equation*}
Then for a large enough $\kappa\geq C(\rho_0,\gamma)M_0$, there exists $T=T_{\gamma,\lambda,\rho_0,M_0,N,\kappa}>0$ such that there exists non-negative solution $f$ to \eref{landau} with initial data $f(0,x,v)=f_{\ini}(x,v)$ and satisfying 
\begin{equation}\label{e.local_est}
\begin{split}
\sum\limits_{|\alpha|+|\beta|\leq 10}&(\norm{\jap{x-(\lambda+t)v}^{N_{\alpha,\beta}}\jap{v}^{20-\frac{3}{2}|\alpha|-\frac{1}{2}|\beta|}\part^\alpha_x\part_v^\beta(e^{(d_0-\kappa t)\jap{v}}f)}_{C^0([0,T];L^2_xL^2_v)}\\
&+\norm{\jap{x-(\lambda+t)v}^{N_{\alpha,\beta}}\jap{v}^{\frac{1}{2}}\jap{v}^{20-\frac{3}{2}|\alpha|-\frac{1}{2}|\beta|}\part^\alpha_x\part_v^\beta(e^{(d_0-\kappa t)\jap{v}}f)}_{L^2([0,T];L^2_xL^2_v)})<\infty
\end{split}
\end{equation}

In addition, $e^{(\rho_0-\kappa t)\jap{v}}f$ is unique in $C^0([0,T];\tilde Y^{4,0}_{x,v})\cap L^2([0,T];\tilde Y^{4,1}_{x,v}).$

Moreover, given fixed $\gamma$, $\rho_0$, $M_0$, $N_{\alpha,\beta}$ and $\kappa$, for any compact interval $K\subset \R$, $T=T_{\gamma,\lambda,\rho_0,M_0,N_{\alpha,\beta},\kappa}>0$ can be chosen uniformly for all $\lambda\in K$
\end{corollary}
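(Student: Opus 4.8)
The plan is to follow the construction of \tref{local} in \cite{Cha19} essentially line by line, the only new feature being that we propagate the extra weight $\jap{x-(\lambda+t)v}^{N_{\alpha,\beta}}$ alongside the $\jap v$-hierarchy. The point that makes this painless is that $\jap{x-(\lambda+t)v}$ is transported by the free flow: a direct computation gives $(\partial_t+v_i\partial_{x_i})\jap{x-(\lambda+t)v}=0$, so this weight is constant along the characteristics $\dot x=v,\ \dot v=0$, which at time $0$ emanate from $x-\lambda v$. Concretely, I would run the same iteration scheme as in \cite{Cha19} — at each step solving the linear equation with coefficients $\a[f^{(n)}]$, $\cm[f^{(n)}]$ frozen at the previous iterate, with the time-dependent exponential weight $e^{(\rho_0-\kappa t)\jap v}$ and $\kappa$ large — but carry out the energy estimates in the norm on the left-hand side of \eref{local_est}. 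Differentiating \eref{landau} by $\partial_x^\alpha\partial_v^\beta$, multiplying by $\jap{x-(\lambda+t)v}^{2N_{\alpha,\beta}}\jap v^{2(20-\frac32|\alpha|-\frac12|\beta|)}\partial_x^\alpha\partial_v^\beta g$ and integrating in $x$ and $v$, the transport operator acting against the weight contributes nothing — exactly, and with no boundary term — since $(\partial_t+v_i\partial_{x_i})$ annihilates $\jap{x-(\lambda+t)v}^{2N_{\alpha,\beta}}\jap v^{2(\cdots)}$ and $\nabla_x\cdot v=0$; the only surviving $\partial_t$-contribution is the favorable damping $\sim-\kappa\int\jap v\,(\cdot)^2$ coming from the time derivative of the exponential, exactly as in \cite{Cha19}.

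What remains is to check that the two places where the extra weight genuinely meets the structure are harmless on a short interval. First, the commutators of $\partial_v^\beta$ with $\jap{x-(\lambda+t)v}^{N_{\alpha,\beta}}$: each $\partial_v$ that falls on the weight produces a factor $(\lambda+t)$ and lowers the power by one, and since at most two velocity derivatives can land on it the power drops by at most two, which is why the hypothesis $N_{\alpha,\beta}\ge 2$ gives enough slack (one then dominates $\jap{x-(\lambda+t)v}^{N_{\alpha,\beta}-j}\le\jap{x-(\lambda+t)v}^{N_{\alpha,\beta}}$). Over $t\in[0,T]$ with $\lambda$ in a compact set $K$ one has $|\lambda+t|\le\sup_{\lambda\in K}|\lambda|+T$, so all such terms are absorbed into the energy with constants depending on $\lambda$ only through this bounded quantity. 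Second, in the collision terms $\jap{x-(\lambda+t)v}^{N_{\alpha,\beta}}$ acts as an extra polynomial weight, and since $\a[f](x,v)=\int a_{ij}(v-v_*)f(x,v_*)\,\d{v_*}$ and likewise for $\cm$, one redistributes it via
\[
\jap{x-(\lambda+t)v}\lesssim\jap{x-(\lambda+t)v_*}\,\jap{(\lambda+t)(v-v_*)}\le C(K,T)\,\jap{x-(\lambda+t)v_*}\,\jap{v-v_*},
\]
moving the $x$-weight onto $f$ inside the convolution; the surplus power of $\jap{v-v_*}$ is controlled by the exponential decay of $f$ in $v$, and the assumed $\jap{x-\lambda v}^{N_{\alpha,\beta}}$ decay of $f_{\ini}$ (propagated along the scheme) keeps the weighted moments of $f$ finite. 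After these reductions every term is identical in form to the corresponding one in \cite{Cha19}, so the $\jap v$-hierarchy together with the largeness of $\kappa$ closes the a priori estimate on a time $T=T_{\gamma,\lambda,\rho_0,M_0,N,\kappa}>0$; passing to the limit in the iteration, propagation of non-negativity, and uniqueness in the lower-order space $C^0([0,T];\tilde Y^{4,0}_{x,v})\cap L^2([0,T];\tilde Y^{4,1}_{x,v})$ then go through verbatim, since the new weight only strengthens the norm in which the solution is built.

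For the uniformity statement, one simply inspects the $\lambda$-dependence of the constants in the energy inequality: by the two observations above, $\lambda$ enters only through factors bounded by $\sup_{\lambda\in K}|\lambda|+T$ and through the fixed bound $M_0$, so the continuity argument producing the existence time can be run with constants uniform over $\lambda\in K$, yielding a single $T=T_{\gamma,\rho_0,M_0,N,\kappa}>0$ valid for all $\lambda\in K$. I expect the only step requiring genuine care to be the redistribution of the $\jap{x-(\lambda+t)v}$ weight inside $\a[f]$ and $\cm[f]$ while preserving the ``two velocity derivatives always hit $\a$'' structure on which the $L^2$ scheme of \cite{Cha19} relies; once the displayed inequality above is in hand this is pure bookkeeping, and everything else is a direct transcription of \cite{Cha19}.
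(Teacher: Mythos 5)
There is a genuine gap, and it sits exactly at the step you dismiss as bookkeeping. After integrating by parts the diffusion term $\a\,\part^2_{v_iv_j}(\part_x^\alpha\part_v^\beta g)$, the $v$-derivatives that land on the new weight produce terms of the schematic form $(\lambda+t)\jap{x-(\lambda+t)v}^{2N_{\alpha,\beta}-1}\jap{v}^{2\nu}\,\part_{v}\a\,(\part_x^\alpha\part_v^\beta g)^2$ and $(\lambda+t)^2\jap{x-(\lambda+t)v}^{2N_{\alpha,\beta}-2}\jap{v}^{2\nu}\,\a\,(\part_x^\alpha\part_v^\beta g)^2$. Your point (i) treats these as harmless (``absorbed into the energy'' after bounding $(\lambda+t)$ and dominating the lowered weight by the full one), but that ignores the velocity growth of the coefficients: $|\part_v\a|\lesssim \jap{v}^{1+\gamma}\cdot(\text{moment of }f)$ and $|\a|\lesssim\jap{v}^{2+\gamma}\cdot(\text{moment of }f)$, while the only damping available is the single power $\kappa\jap{v}$ coming from the time-dependent exponential. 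For $\gamma>0$ the first term already exceeds the damping, and the second does so even at $\gamma=0$; the $\jap{v}$-hierarchy cannot help here because both factors of $\part_x^\alpha\part_v^\beta g$ carry the same multi-indices. This is precisely the hard-potential moment-loss issue, and the paper's proof of the corollary resolves it by \emph{trading the lost weight powers for powers of $|v-v_*|$}: $|v-v_*|\leq(\lambda+t)^{-1}(|x-(\lambda+t)v|+|x-(\lambda+t)v_*|)$, which converts $\jap{x-(\lambda+t)v}^{-1}\part_v\a$ and $\jap{x-(\lambda+t)v}^{-2}\a$ into quantities of size $\jap{v}^\gamma$ times weighted moments $\int\jap{x-(\lambda+t)v_*}^{j}\jap{v_*}f\,\d{v_*}$, $j\le2$. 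This is also where the hypotheses you never use actually enter: $\lambda\neq0$ (and $\lambda$ ranging in a compact set) is needed to control $(\lambda+t)^{-1}$ at small times, and $N_{\alpha,\beta}\geq2$ is needed so that up to two weight powers are available to trade and so that the resulting moments are controlled by the propagated norm --- not for the trivial domination $\jap{\cdot}^{N-j}\le\jap{\cdot}^{N}$ you invoke. A proof in which $\lambda=0$ would work equally well cannot be complete, since the statement excludes it.

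Your point (ii) does not repair this: the inequality $\jap{x-(\lambda+t)v}\lesssim\jap{x-(\lambda+t)v_*}\jap{(\lambda+t)(v-v_*)}$ goes in the opposite direction --- it moves the spatial weight onto $v_*$ at the cost of an \emph{extra} factor $\jap{v-v_*}$, i.e.\ it worsens the velocity growth in $v$ (the part $\jap{v}$ of $\jap{v-v_*}\lesssim\jap{v}\jap{v_*}$ is not killed by the exponential decay of $f$ in $v_*$), and in any case no such redistribution of the weight onto $\a$ is needed in the main terms, where the weight simply multiplies $(\part_x^\alpha\part_v^\beta g)^2$. The rest of your outline (the weight is annihilated by the transport operator, at most two $v$-derivatives can hit it, uniqueness and non-negativity carry over, uniformity in $\lambda\in K$) matches the paper, but without the reverse trade $|v-v_*|\le(\lambda+t)^{-1}(\ldots)$ the energy estimate does not close for any $\gamma\in[0,1)$, so the argument as written fails at its central step.
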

\begin{proof}[Comments on proof]
The proof is more or less the same as in \cite{Cha19}. The only difference being the extra weights $\jap{x-(t+\lambda)v}$. This affects the proof minimally as $$(\part_t+v_i\part_{x_i})\jap{x-(\lambda+t)v}=0.$$

Thus all terms other than the ones that arise due to integration by parts when $\part_v$ or $\part^2_{v_iv_j}$ hit $\jap{x-(t+\lambda)v}^N$ are unaffected. The following fact takes care of one of the issues, 
\begin{equation}\label{e.lamb}
|\part_{v_j}\jap{x-(\lambda+t)v}|\lesssim_\lambda 1.
\end{equation}

The other issue that how do we tame the velocity dependence of $\a$ or $\part_{v_i}\a$ when the velocity derivative hits $\jap{x-(t+\lambda)v}^{N_{\alpha,\beta}}$, seems more perilous. But now, we use the fact that $$|v-v_*|\leq (\lambda+t)^{-1}(|x-(\lambda+t)v|+|x-(\lambda+t)v_*|), \hspace{1em} |v-v_*|^2\leq (\lambda+t)^{-2}(|x-(\lambda+t)v|^2+|x-(\lambda+t)v_*|^2).$$

Thus, when $\part_{v_j}$ (or $\part^2_{v_iv_j}$ resp.) hits $\jap{x-(\lambda+t)v}^N$, we use the fact that we have one (or two resp.) less $\jap{x-(t+\lambda)v}$ weights to worry about. More precisely we have $$|\jap{x-(t+\lambda)v}^{-1}\part_{v_i}\a|\lesssim_\lambda \jap{v}^\gamma\int_{\R^3}\jap{v_*} \jap{x-(t+\lambda)v_*}f(v_*)\d v_*$$
and $$|\jap{x-(t+\lambda)v}^{-2}\part_{v_i}\a|\lesssim_\lambda \jap{v}^\gamma\int_{\R^3} \jap{x-(t+\lambda)v_*}^2f(v_*)\d v_*.$$

Since we have our $\lambda$ chosen away from $0$, we can close the energy estimates.

Finally for $\lambda\in K$ and $K\subset \R$ a compact interval, $T$ can be chosen to only depend on $K$ and not the specific value of $\lambda$. This is because the constant in \eref{lamb} can be chosen uniformly for all $\lambda\in K$.
\end{proof}
\section{Bootstrap assumption and the bootstrap theorem}\label{s.bootstrap}
Fix a $\delta>0$ be such that
\begin{equation}\label{e.delta}
\delta<\frac{1}{8}.
\end{equation}
Instead of directly controlling $f$, we define $g:=e^{d(t)\jap{v}}f$.
where $d(t)=d_0(1+(1+t)^{-\delta})$. Substituting $f=e^{-d(t)\jap{v}} g$ in \eref{landau}, we get
\begin{equation} \label{e.eq_for_g}
\begin{split}
\partial_tg+v_i\partial_{x_i}g+\frac{\delta d_0}{(1+t)^{1+\delta}}\jap{v}g&=\a [f]\partial^2_{v_iv_j}g-\cm [f]g-2d(t)\a [f]\frac{v_i}{\jap{v}}\partial_{v_j} g
\\&\quad-d(t)\left(\frac{\delta_{ij}}{\jap{v}}-\left(d(t)+\frac{1}{\jap{v}}\right)\frac{v_iv_j}{\jap{v}^2}\right)\a [f]g.
\end{split}
\end{equation}

Now we introduce the bootstrap assumption for $E$ norm, defined as in \eref{Energy_norm}.
\begin{equation}\label{e.boot_assumption_1}
E_T(g)\leq \eps^{\frac{3}{4}}
\end{equation}
Our goal from now until \sref{errors} would be to improve the bootstrap assumption \eref{boot_assumption_1} with $\eps^{\frac{3}{4}}$ replaced by $C\eps$ for some constant $C$ depending only on $d_0$ and $\gamma$. This is presented as a theorem below,
\begin{theorem}\label{t.boot}
Let $\gamma$, $d_0$ and $f_{\ini}$ be as in \tref{global} and let $\delta>0$ be as in \eref{delta}. There exists $\eps_0=\eps_0(d_0,\gamma)>0$ and $C_0=C_0(d_0,\gamma)>0$ with $C_0\eps_0\leq \frac{1}{2}\eps^{\frac{3}{4}}_0$ such that the following holds for $\eps \in [0,\eps_0]$:\\
Suppose there exists $T_{\boot}>0$ and a solution $f:[0,T_{\boot})\times\R^3\times\R^3$ with $f(t,x,v)\geq 0$ and $f(0,x,v)=f_{\ini}(x,v)$ such that the estimate \eref{boot_assumption_1} holds for $T\in[0,T_{\boot})$, then the estimate in fact holds for all $T\in[0,T_{\boot})$ with $\eps^{\frac{3}{4}}$ replaced by $C\eps.$
\end{theorem}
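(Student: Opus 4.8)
The plan is to run a standard bootstrap/continuity argument: differentiate the equation \eref{eq_for_g} by every $\der$ with $|\alpha|+|\beta|+|\sigma|\leq 10$, multiply by $\jap{v}^{2\nu_{\alpha,\beta,\sigma}}\jap{x-(t+1)v}^{2\omega_{\alpha,\beta,\sigma}}\der g$ and integrate in $x$, $v$ and $t\in[0,T]$. The left-hand side produces, after integration by parts in $t$, the $L^\infty_t L^2_{x,v}$ part of $E_T(g)$ (up to the weight $(1+T)^{-|\beta|(1+\delta)}$), contributes the data term at $t=0$, and — crucially — the term coming from $\frac{\delta d_0}{(1+t)^{1+\delta}}\jap{v}|\der g|^2$ gives, together with the $\jap{v}^{1/2}$ coming from integrating by parts against $\a\part^2_{vv}g$ in the main collision term, the $L^2_t L^2_{x,v}$ part of $E_T(g)$. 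The right-hand side and the commutators with the transport operator produce the error terms, all of which I would show are bounded by $C(d_0,\gamma)\,E_T(g)^{3/2}$ using the coefficient bounds of \sref{coef} and the algebra of the hierarchies $\nu_{\alpha,\beta,\sigma}$, $\omega_{\alpha,\beta,\sigma}$. Since $f\geq 0$ is already given, there is no issue with $\a[f]$, $\cm[f]$ being the coefficients of a parabolic operator, and the sign of the parabolic term $\int \a\,\part_{v_i}(\der g)\part_{v_j}(\der g)$ is favourable by ellipticity of $a_{ij}$ (one keeps a good $L^2_t$ gradient term which one can either discard or use).

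Concretely, I would first (Step 1) record the energy identity: for fixed $T<T_{\boot}$,
\[
\|h\|_{E_T^{10}}^2 \lesssim \|h(0,\cdot)\|_{Y^{10}_{x,v}}^2 + \sum (\text{error terms}),
\]
where $h=g$ and the data norm is controlled by $\eps$ thanks to the hypothesis of \tref{global} (note $e^{2d_0\jap{v}}f_{\ini}=e^{d(0)\jap{v}}f_{\ini}\cdot e^{d_0\jap{v}}=g(0)\cdot e^{d_0\jap{v}}$, and one absorbs the extra $e^{d_0\jap{v}}$ harmlessly; the mixed $(\part_x+\part_v)^\sigma$ derivatives in the hypothesis are exactly the $Y$-derivatives at $t=0$ since $Y=(t+1)\part_x+\part_v$ equals $\part_x+\part_v$ at $t=0$). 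Step 2 is to enumerate the error terms: the commutator $[\part_t+v_i\part_{x_i},\der]g=\derv{'}{'}{}g$ with $|\alpha'|=|\alpha|+1$, $|\beta'|\leq|\beta|-1$; the terms from distributing $\der$ over $\a\part^2_{v_iv_j}g$, $\cm g$, $\a\frac{v_i}{\jap{v}}\part_{v_j}g$ and the lower-order $\a g$ term; and the terms where $\part_v$ or $\part^2_{vv}$ falls on the spatial weight $\jap{x-(t+1)v}^{\omega}$. Step 3 bounds each using \pref{pointwise_estimates_a}-type estimates: derivatives of $\a$ are estimated in $L^\infty_x$ via Sobolev embedding at the cost of two $\part_x$'s, $\part_x^{\alpha'}$ on $\a$ costs $|\alpha'|$ powers of $\jap{v}$ which are paid by the drop in $\nu$, $Y$-derivatives on $\a$ cost powers of $\jap{x-(t+1)v}$ paid by the drop in $\omega$, each $\part_x$ on $g$ can be traded via $\part_x=(t+1)^{-1}(Y-\part_v)$ for time decay, and the crucial weak-time bootstrap $(1+T)^{-|\beta|(1+\delta)}$ is what lets the commutator term (which trades a $\part_v$ for a $\part_x$, i.e. loses a $\jap{v}$) be absorbed into the $\jap{v}^{1/2}$ dissipation term produced by the $\frac{\delta d_0}{(1+t)^{1+\delta}}\jap{v}$ factor. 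Step 4: combining, one gets $E_T(g)\leq C_1\eps + C_2 E_T(g)^{3/2}$; feeding in the bootstrap assumption $E_T(g)\leq \eps^{3/4}$ gives $C_2 E_T(g)^{3/2}\leq C_2\eps^{3/4}E_T(g)\leq$ (for $\eps_0$ small) $\tfrac12 E_T(g)$, hence $E_T(g)\leq 2C_1\eps=:C\eps$, and choosing $\eps_0$ so that $C\eps_0\leq\tfrac12\eps_0^{3/4}$ closes the argument.

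The main obstacle — and the place where essentially all the work of \sref{errors} goes — is the bookkeeping for the worst error terms, namely those of the schematic form $\int_0^T\!\!\int\!\!\int \der g\,(\part_x^{\alpha'} Y^{\sigma'}\a)\,\part^2_{v_iv_j}\derv{''}{}{''}g$ where many derivatives land on $\a$. One must verify that the \emph{net} weight budget works: the loss of $\jap{v}^{|\alpha'|}$ from $\part_x^{\alpha'}\a$, of $\jap{x-(t+1)v}^{|\sigma'|}$ from $Y^{\sigma'}\a$, together with the two derivatives lost for Sobolev embedding and the $+2$ in $|\beta''|$ from $\part^2_{v_iv_j}$, must all be covered by the differences $\nu_{\alpha,\beta,\sigma}+\nu_{\alpha'',\beta'',\sigma''}$ versus $2\cdot 20$ and similarly for $\omega$ — this is precisely why $\nu$ needs the $-\frac32|\alpha|-\frac12|\beta|-\frac32|\sigma|$ dependence and $\omega$ the $-\frac32|\sigma|-\frac12(|\alpha|+|\beta|)$ dependence, and why the time-weight exponent must be $(1+\delta)$ per $\part_v$ rather than the linear $1$. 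A secondary subtlety is that the $Y$-derivatives of the coefficients $\a[f]$ must themselves be estimated in terms of the $E_T$ norm of $g$ (equivalently of $f$, after accounting for the bounded-above exponential $e^{-d(t)\jap{v}}$), for which one uses the representation $\part^2_{z_iz_j}a_{ij}$-type convolution bounds and the inequality $|v-v_*|\leq(1+t)^{-1}(|x-(t+1)v|+|x-(t+1)v_*|)$ exactly as in the comments on the proof of \cref{local_cor}. Once the weight arithmetic is checked case by case, every error term carries at least one spare power of $\jap{v}^{1/2}\jap{x-(t+1)v}^{\text{something}}$ and at least a factor of the smallness $E_T(g)^{1/2}$, and the estimate closes.
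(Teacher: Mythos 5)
Your overall strategy (weighted $L^2$ energy estimates with the $\nu,\omega$ hierarchies, coefficient bounds via the $(1+t)^{-3/2}$ dispersive decay of $\jap{x-(t+1)v}$-weighted $L^1_v$ norms, dropping the good sign of the $\a$-term, trading $\part_x=(t+1)^{-1}(Y-\part_v)$, absorbing part of the commutator into the dissipation produced by $\exptime\jap{v}$) is the same as the paper's. But there is a genuine gap in how you close the argument. Your Step 4 claims the energy identity yields $E_T(g)\le C_1\eps+C_2E_T(g)^{3/2}$, i.e.\ that \emph{all} error terms are at least cubic in the solution. That is false for the two \emph{linear} inhomogeneities: the transport commutator $[\part_t+v_i\part_{x_i},\der]g$ (the term \eref{T1}) and the term coming from $\der$ hitting $\jap{v}$ in $\exptime\jap{v}g$ (the term \eref{T2}). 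In the energy identity these are quadratic in $g$, so they contribute a term of size $\sim E_T$ with a constant that is \emph{not} small in $\eps$; they cannot be bounded by $E_T^{3/2}$ and only an arbitrarily small multiple $\eta$ of the \emph{same-index} dissipation can be absorbed. What remains after Young's inequality is the dissipation at a \emph{different} index ($|\alpha|+1$, $|\beta|-1$, with a large prefactor $(1+T)^{2(1+\delta)}$, cf.\ \pref{T1}), and if you estimate that piece using only the bootstrap assumption \eref{boot_assumption_1} (via \lref{L2_time_in_norm}) you get $\eps^{3/2}(1+T)^{2|\beta|(1+\delta)}$, which does \emph{not} improve the bootstrap constant $\eps^{3/4}$ at all.

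The paper closes this loop with an induction on $|\beta|+|\sigma|$ (\pref{ene_est_1} and \pref{ene_est}): the base case $|\beta|+|\sigma|=0$ has no such linear terms, and in the inductive step the lower-order dissipation norms have \emph{already} been improved to $\eps^2(1+T)^{2|\beta'|(1+\delta)}$, so that the prefactor $(1+T)^{2(1+\delta)}$ is exactly compensated by the time-weight hierarchy — this is the real reason the $(1+\delta)$-per-$\part_v$ weight is chosen, not merely "absorption into the $\jap{v}^{1/2}$ dissipation" as you state. Your proposal contains no induction (or any ordering of the estimates in the number of $\part_v$/$Y$ derivatives), so as written the final inequality does not follow and the bootstrap is not improved. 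A secondary omission: to use the bootstrap assumption on factors carrying a different $\beta$-weight inside the time integrals you need the dyadic-in-time conversion of \lref{L2_time_in_norm} (the bootstrap norm weights by $(1+T)^{-|\beta|(1+\delta)}$ at the endpoint only); this lemma, or some substitute, should be stated. Minor points: the data term is $g(0)=e^{2d_0\jap{v}}f_{\ini}$ exactly (since $d(0)=2d_0$), with no leftover exponential to absorb, and the $\jap{v}^{1/2}$-weighted $L^2_t$ piece of $E_T$ comes solely from the $\exptime\jap{v}$ term, not from integrating by parts in the collision term.
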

From now until \sref{errors}, we will prove \tref{boot}. \textbf{In succeeding sections, we always work under the assumptions of \tref{boot}}.
\section{Estimates for the coefficients}\label{s.coef}
\begin{proposition}\label{p.pointwise_estimates_a}
We assume $|\alpha|+|\beta|+|\sigma|\leq 10$.
The coefficient $\a$ and its higher derivatives satisfy the following pointwise bounds:
\begin{equation}\label{e.pw_bound_a}
\max_{i,j}|\derv{}{}{} \a|(t,x,v)\lesssim \int |v-v_*|^{2+\gamma}|\der f|(t,x,v_*)\d v_*,
\end{equation}
\begin{equation}\label{e.pw_bound_a_v}
\max_{j}\left|\der \left(\a \frac{v_i}{\jap{v}}\right)\right|\lesssim  \int |v-v_*|^{1+\gamma}\jap{v_*}|\der f|(t,x,v_*)\d v_*,
\end{equation}
\begin{equation}\label{e.pw_bound_a_2v}
\left|\der\left(\a \frac{v_iv_j}{\jap{v}^2}\right)\right|(t,x,v)\lesssim \jap{v}^\gamma\int\jap{v_*}^4 |\der f|(t,x,v_*)\d v_* .
\end{equation}
The first v-derivatives of $\a$ and the corresponding higher derivatives satisfy:
\begin{equation}\label{e.pw_bound_der_a}
\max_{i,j,k}\left|\der \part_{v_k}\a\right|(t,x,v)\lesssim \int |v-v_*|^{1+\gamma}|\der f|(t,x,v_*)\d v_*,
\end{equation}
\begin{equation}\label{e.pw_bound_der_a_v}
\max_{i,j,k}|\der \part_{v_k}\left(\a(t,x,v)\frac{v_i}{\jap{v}}\right)|\lesssim \jap{v}^\gamma\int \jap{v_*}^{2+\gamma}|\der f|(t,x,v_*)\d v_*.
\end{equation}
Finally, the second derivatives of $\a$ and its higher derivatives follow:
\begin{equation}\label{e.pw_bound_2der_a}
\max_{i,j,k,l}|\der \partial^2_{v_kv_l}\a|(t,x,v)\lesssim \int |v-v_*|^\gamma |\der f|(t,x,v_*)\d v_*.
\end{equation}
\end{proposition}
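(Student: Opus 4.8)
The plan is to reduce each of the six bounds to an elementary pointwise estimate on the kernel $a_{ij}$ (or its first two $v$--derivatives), after moving every differentiation onto $f$. Writing $\a(t,x,v)=\int a_{ij}(v-v_*)f(t,x,v_*)\d v_*$ and substituting $v_*=v-w$ gives the representation $\a(t,x,v)=\int a_{ij}(w)f(t,x,v-w)\d w$, in which the kernel no longer depends on $(x,v)$. For fixed $t$ the three vector fields $\part_x$, $\part_v$ and $Y=(t+1)\part_x+\part_v$ are constant-coefficient in $(x,v)$, pairwise commuting, and each obeys the Leibniz rule, so they all commute past the $w$--integral and act only on $f$; the one identity worth checking is the hybrid one, $Y_k[f(t,x,v-w)]=(t+1)(\part_{x_k}f)(t,x,v-w)+(\part_{v_k}f)(t,x,v-w)=(Y_kf)(t,x,v-w)$, which holds because $(t+1)$ is a constant in $(x,v)$. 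Iterating (differentiation under the integral sign being justified by the rapid $v_*$--decay built into the energy norm), one obtains
\[
\der\a(t,x,v)=\int a_{ij}(v-v_*)\,(\der f)(t,x,v_*)\d v_*,
\]
and the analogous identities with $a_{ij}$ replaced by $\part_{v_k}a_{ij}$ or $\part^2_{v_kv_l}a_{ij}$ — there one keeps that velocity derivative on the kernel before commuting, which is legitimate since $\part_v a_{ij}$ and $\part^2_{vv}a_{ij}$ remain locally integrable for $\gamma\geq0$. Estimates \eqref{e.pw_bound_a}, \eqref{e.pw_bound_der_a}, \eqref{e.pw_bound_2der_a} then follow from the kernel bounds $|a_{ij}(z)|\leq|z|^{\gamma+2}$ (the matrix $\delta_{ij}-z_iz_j/|z|^2$ is an orthogonal projection), $|\part_v a_{ij}(z)|\lesssim|z|^{\gamma+1}$ and $|\part^2_{vv}a_{ij}(z)|\lesssim|z|^{\gamma}$, obtained by differentiating \eqref{e.matrix_a}.

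For the weighted bounds \eqref{e.pw_bound_a_v}, \eqref{e.pw_bound_a_2v}, \eqref{e.pw_bound_der_a_v} I would expand $\der(\a\,\frac{v_i}{\jap v})$, and likewise with $\frac{v_iv_j}{\jap v^2}$, by Leibniz. Any term in which a derivative hits the rational weight is harmless: $\frac{v_i}{\jap v}$ and $\frac{v_iv_j}{\jap v^2}$ are $x$--independent with all $v$--derivatives bounded, so such terms collapse to finitely many lower-order expressions $\part_x^\alpha\part_v^{\beta'}Y^{\sigma'}\a$ already covered by the unweighted cases. For the leading term one exploits $a_{ij}(z)z_j=0$: writing $v_i=(v_i-v_{*i})+v_{*i}$ annihilates the $z$--part of $\sum_i a_{ij}(v-v_*)v_i$, leaving $\sum_i a_{ij}(v-v_*)v_{*i}$ (and $v_iv_j\mapsto v_{*i}v_{*j}$ in \eqref{e.pw_bound_a_2v}); this converts factors of $\jap v^{-1}$ into factors of $\jap{v_*}$, after which one distributes the remaining powers of $|v-v_*|$ using $|v-v_*|\lesssim\jap v\jap{v_*}$ to land in the claimed form. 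Case \eqref{e.pw_bound_der_a_v} is the same once one differentiates $a_{ij}(z)z_j=0$ to get $\sum_i(\part_{v_k}a_{ij})(z)z_i=-a_{kj}(z)$, which supplies the analogous cancellation.

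The only step I expect to require real care — and which I would package as a small lemma — is the commutation of the hybrid field $Y$, with its $t$--dependent coefficient, past the velocity convolution, i.e. that in the $w$--variable representation $Y$ acts on $\int a_{ij}(w)f(t,x,v-w)\d w$ exactly as painlessly as $\part_x$ does; once this is in place, what remains is the bookkeeping of polynomial velocity weights through the projection identity $a_{ij}(z)z_j=0$ and Young-type inequalities.
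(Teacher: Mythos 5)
Your overall strategy is essentially the paper's: pass $\part_x$, $\part_v$ and $Y$ through the velocity convolution onto $f$ (the paper opens its proof with exactly this ``fact''), bound the resulting kernels pointwise by homogeneity, and exploit the projection identity $a_{ij}(z)z_j=0$ -- the paper's computation \eref{int_pw_bound_a_v}, $a_{ij}(v-v_*)v_i=|v-v_*|^\gamma(v-v_*)_l\,(v_l(v_*)_j-v_j(v_*)_l)$, is precisely that cancellation written out. Your explicit handling of the Leibniz terms in which derivatives from $\der$ fall on $v_i/\jap{v}$ is in fact more careful than the paper's blanket identity; the price, which you should state, is that your right-hand sides then contain lower-order derivatives $\part_x^\alpha\part_v^{\beta'}Y^{\sigma'}f$ with $\beta'\leq\beta$, $\sigma'\leq\sigma$, rather than exactly $\der f$, and slightly larger powers of $\jap{v_*}$ (e.g.\ $\jap{v_*}^2$ in \eref{pw_bound_a_v}). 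Both discrepancies are harmless downstream, since \lref{L1_v_bound_boot} absorbs any fixed polynomial $\jap{v}$-weight and its time factor only improves for smaller $|\beta|$.

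There is, however, one concrete step where the plan as written does not deliver the stated estimate: \eref{pw_bound_der_a_v}, whose whole point is that the kernel $\part_{v_k}\bigl(a_{ij}(v-v_*)\tfrac{v_i}{\jap{v}}\bigr)$ grows only like $\jap{v}^{\gamma}$. If you estimate the two Leibniz pieces separately -- the differentiated identity $(\part_{v_k}a_{ij})(z)z_i=-a_{kj}(z)$ on the piece where $\part_{v_k}$ hits the kernel, and a crude ``harmless lower-order'' bound on the piece where it hits $v_i/\jap{v}$ -- then each piece contains a term of size $|v-v_*|^{2+\gamma}/\jap{v}$, which for bounded $v_*$ and large $|v|$ is of order $\jap{v}^{1+\gamma}$: one full power of $\jap{v}$ worse than \eref{pw_bound_der_a_v}, and no power of $\jap{v_*}$ repairs it. The two offending terms are $-a_{kj}(z)/\jap{v}$ and $+a_{kj}(z)/\jap{v}$ and cancel exactly, so you must either keep the two Leibniz contributions together and use this cancellation, or do what the paper does: rewrite $a_{ij}(v-v_*)v_i$ via \eref{int_pw_bound_a_v} \emph{first} and only then differentiate, so that every resulting term is manifestly $\lesssim\jap{v}^{\gamma}\jap{v_*}^{2+\gamma}$. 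This is not a cosmetic point for hard potentials: the $\jap{v}^{\gamma}$ growth is what makes \lref{av_bound_dv} true with the weight $\jap{v}^{-\gamma}$, and that lemma is consumed in \pref{T5_1} with only a factor $\jap{v}^{-1}$ to spare, so losing an extra $\jap{v}^{\gamma}$ (for $\gamma>0$) would break the weight hierarchy exactly at the moment-loss bottleneck the paper is designed to overcome.
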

\begin{proof}
We use the following facts for convolutions with $|\beta'|\leq 2$\\
$$\der \part^{\beta'}_v \a[h]=\int (\part_v^{\beta'}a_{ij}(v-v_*))(\der h)(t,x,v_*) \d v_*$$
and 
$$\der \part^{\beta'}_v \left(\a[h]\frac{v_i}{\jap{v}}\right)=\int (\part_v^{\beta'}\left(a_{ij}\frac{v_i}{\jap{v}}\right)(v-v_*))(\der h)(t,x,v_*) \d v_*.$$
\emph{Proof of \eref{pw_bound_a}:} For this we just notice via the form of matrix $a$ in \eref{matrix_a} that $$|a_{ij}(v-v_*)|\leq |v-v_*|^{2+\gamma}.$$
\emph{Proof of \eref{pw_bound_a_v}:} For \eref{pw_bound_a_v} we use \eref{matrix_a} to get
\begin{equation}\label{e.int_pw_bound_a_v}
\begin{split}
a_{ij}(v-v_*)v_i&=|v-v_*|^{2+\gamma}\left(v_j-\frac{(v\cdot (v-v_*))(v-v_*)_j}{|v-v_*|^2}\right)\\
&=|v-v_*|^\gamma\left(v_j(v-v_*)_l(v-v_*)_l-v_l(v-v_*)_l (v-v_*)_j\right)\\
&=|v-v_*|^\gamma (v-v_*)_l(v_l(v_*)_j-v_j(v_*)_l).
\end{split}
\end{equation}
Thus we have using triangle inequality that $\left|a_{ij}(v-v_*)\frac{v_i}{\jap{v}}\right| \lesssim |v-v_*|^{1+\gamma}\jap{v_*}$.

\emph{Proof of \eref{pw_bound_a_2v}:} For \eref{pw_bound_a_2v}, we again begin with \eref{matrix_a} and see using triangle inequality\\
\begin{equation}
\begin{split}
a_{ij}(v-v_*)v_iv_j&=|v-v_*|^{2+\gamma}\left(|v|^2-\frac{(v\cdot (v-v_*))^2}{|v-v_*|^2}\right) \\
&=|v-v_*|^{\gamma}(|v|^2(|v|^2+|v_*|^2-2(v\cdot v_*))-|v|^4+2|v|^2(v\cdot v_*)-(v\cdot v_*)^2)\\
&=|v-v_*|^\gamma(|v|^2|v_*|^2-(v\cdot v_*)^2)\\
&\lesssim |v-v_*|^\gamma|v|^2|v_*|^2\\
&\lesssim \jap{v}^{2+\gamma}\jap{v_*}^{4+\gamma}.
\end{split}
\end{equation}
Hence we have that $\left|\frac{a_{ij}v_iv_j}{\jap{v}^2}\right|\lesssim \jap{v}^\gamma\jap{v_*}^{4+\gamma}$.

\emph{Proof of \eref{pw_bound_der_a}:} Using homogeneity we have
$$|\part_{v_k}a_{ij}|\lesssim |v-v_*|^{1+\gamma}$$ which implies \eref{pw_bound_der_a}.

\emph{Proof of \eref{pw_bound_der_a_v}:} To prove \eref{pw_bound_der_a_v}, we use \eref{int_pw_bound_a_v} to get
\begin{equation*}
\begin{split}
\part_{v_k}\left(a_{ij}(v-v_*)\frac{v_i}{\jap{v}}\right)&=\part_{v_k}\left(\frac{1}{\jap{v}}|v-v_*|^\gamma (v-v_*)_l(v_l(v_*)_j-v_j(v_*)_l)\right).
\end{split}
\end{equation*}

Using the product rule we get mutiple terms which we treat one by one now,\\
When $\partial_{v_k}$ hit $\frac{1}{\jap{v}}$ we get upto a constant $\frac{v_k}{\jap{v}^3}$, thus we have using triangle inequality $$\frac{v_k}{\jap{v}^3}|v-v_*|^\gamma (v-v_*)_l(v_l(v_*)_j-v_j(v_*)_l)\lesssim \jap{v}^\gamma\jap{v_*}^{2+\gamma}.$$
When $\partial_{v_k}$ hit $|v-v_*|^\gamma$ we get upto a constant $(v-v_*)_k|v-v_*|^{\gamma-2}$, hence using the fact that $\frac{(v-v_*)_k(v-v_*)l}{|v-v_*|^2}\lesssim 1$ and triangle inequality we get\\
$$\frac{1}{\jap{v}}|v-v_*|^\gamma\frac{(v-v_*)_k(v-v_*)l}{|v-v_*|^2}(v_l(v_*)_j-v_j(v_*)_l) \lesssim \jap{v}^\gamma\jap{v_*}^{\gamma+1}.$$
Finally when $\part_{v_k}$ hits $(v_l(v_*)_j-v_j(v_*)_l)$ we get $(\delta_{lk}(v_*)_j-\delta_{jk}(v_*)_l)$. Again, by triangle inequality we get 
$$\frac{1}{\jap{v}}|v-v_*|^\gamma (v-v_*)_l(\delta_{lk}(v_*)_j-\delta_{jk}(v_*)_l)\lesssim \jap{v}^\gamma\jap{v_*}^{2+\gamma}.$$

Hence in total we have $$\part_{v_k}\left(a_{ij}(v-v_*)\frac{v_i}{\jap{v}}\right) \lesssim \jap{v}^{\gamma}\jap{v_*}^{2+\gamma}.$$

\emph{Proof of \eref{pw_bound_2der_a}:} Finally, for the second derivatives of $a_{ij}$ we obtain by homogeneity
$$|\part_{v_l}\part_{v_k}a_{ij}(v-v_*)|\lesssim |v-v_*|^\gamma$$ which implies \eref{pw_bound_2der_a}.
\end{proof}
\begin{lemma}\label{l.L1_to_L2}
Let $h:[0,T_{\boot})\times \R^3\times \R^3$ be a smooth function. Then
$$\norm{h}_{L^1_v}(t,x)\lesssim (1+t)^{-\frac{3}{2}}\norm{\jap{x-(t+1)v}^2h(t,x,v)}_{L^2_v}.$$ 
\end{lemma}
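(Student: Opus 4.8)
The plan is to obtain this as a weighted Cauchy--Schwarz inequality in the velocity variable, with all of the time decay produced by a single change of variables. For fixed $(t,x)$ I would first write
$$\norm{h}_{L^1_v}(t,x)=\int_{\R^3}\jap{x-(t+1)v}^{-2}\,\jap{x-(t+1)v}^{2}\,|h|(t,x,v)\d v,$$
and apply Cauchy--Schwarz in $v$ to the two factors $\jap{x-(t+1)v}^{-2}$ and $\jap{x-(t+1)v}^{2}|h|$, which gives
$$\norm{h}_{L^1_v}(t,x)\leq\left(\int_{\R^3}\jap{x-(t+1)v}^{-4}\d v\right)^{1/2}\norm{\jap{x-(t+1)v}^{2}h(t,x,v)}_{L^2_v}.$$

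The second step is to evaluate the first factor. Since $t\geq 0$ we have $t+1>0$, so the substitution $w=x-(t+1)v$ has Jacobian $(1+t)^{-3}$ and yields
$$\int_{\R^3}\jap{x-(t+1)v}^{-4}\d v=(1+t)^{-3}\int_{\R^3}\jap{w}^{-4}\d w.$$
The integral $\int_{\R^3}\jap{w}^{-4}\d w=\int_{\R^3}(1+|w|^2)^{-2}\d w$ is a finite absolute constant, because the integrand is continuous and decays like $|w|^{-4}$ at infinity while $4>3$. Hence the first factor is bounded by $C(1+t)^{-3/2}$, and combining the two displays gives the claim.

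There is essentially no obstacle in this argument; the only points that deserve a moment's attention are that the change of variables is legitimate because $1+t>0$ throughout $[0,T_{\boot})$, and that the exponent $2$ on the weight is exactly the threshold that makes $\jap{w}^{-4}$ integrable in three dimensions. It is worth noting that this lemma is the device that converts $L^2_v$ control of $h$ weighted by $\jap{x-(t+1)v}$ into the $L^1_v$-type bounds needed to estimate the collision coefficients via \pref{pointwise_estimates_a}, and the factor $(1+t)^{-3/2}$ it produces is precisely the linear dispersive decay discussed in the introduction.
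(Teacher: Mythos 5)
Your proof is correct. It differs from the paper's argument in how the decay factor is extracted: the paper splits the $v$-integral into the near region $\{|v-\frac{x}{t+1}|\leq (1+t)^{-1}\}$, where Cauchy--Schwarz against the indicator yields the volume factor $(1+t)^{-\frac{3}{2}}$, and the far region, where it inserts the homogeneous weight $|v-\frac{x}{t+1}|^{\mp 2}$, uses $|v-\frac{x}{t+1}|=(1+t)^{-1}|x-(t+1)v|\leq (1+t)^{-1}\jap{x-(t+1)v}$, and pays $(1+t)^{\frac12}$ for the tail integral of $|v-\frac{x}{t+1}|^{-4}$, again totalling $(1+t)^{-\frac32}$. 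You instead perform a single global Cauchy--Schwarz against $\jap{x-(t+1)v}^{-2}$ and compute $\int_{\R^3}\jap{x-(t+1)v}^{-4}\d v=(1+t)^{-3}\int_{\R^3}\jap{w}^{-4}\d w$ by the substitution $w=x-(t+1)v$, which is legitimate since $1+t>0$. The Japanese bracket is bounded below, so no cutoff near $v=\frac{x}{t+1}$ is needed; in the paper's version the near region exists precisely because the homogeneous weight $|v-\frac{x}{t+1}|^{-4}$ fails to be integrable at the origin. Both routes use the same essential input, namely that the exponent $2$ on the bracket makes $\jap{w}^{-4}$ integrable in $\R^3$, and they produce identical decay; your version is a bit shorter and packages the paper's two regions into one change-of-variables computation, while the paper's splitting makes explicit where the bound is saturated (near $v\sim\frac{x}{t+1}$), which is the heuristic emphasized in its discussion of the null structure.
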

\begin{proof}
We break the region of integration to obtain decay from the $\jap{x-(t+1)v}$ weights.
\begin{align*}
\int_{\R^3}|h|(t,x,v)\d v &\lesssim \int_{\R^3\cap\{|v-\frac{x}{t+1}|\leq (1+t)^{-1}\}}|h| \d v+\int_{\R^3\cap\{|v-\frac{x}{t+1}|\geq (1+t)^{-1}\}}|h| \d v\\
&\lesssim (\int_{\R^3\cap\{|v-\frac{x}{t+1}|\leq (1+t)^{-1}\}}h^2 \d v)^{\frac{1}{2}}(\int_{\R^3\cap\{|v-\frac{x}{t+1}|\leq (1+t)^{-1}\}}1 \d v)^{\frac{1}{2}}\\
&\quad+ (\int_{\R^3\cap\{|v-\frac{x}{t+1}|\geq (1+t)^{-1}\}}|v-\frac{x}{t+1}|^{-4}h^2 \d v)^{\frac{1}{2}}(\int_{\R^3\cap\{|v-\frac{x}{t+1}|\geq (1+t)^{-1}\}}|v-\frac{x}{t+1}|^{-4} \d v)^{\frac{1}{2}}\\
&\lesssim (1+t)^{-\frac{3}{2}}(\int_{\R^3}h^2\d v)^{\frac{1}{2}}+(1+t)^{-2}(\int_{\R^3}\jap{x-(t+1)v}^4h^2\d v)^{\frac{1}{2}}(1+t)^{\frac{1}{2}}\\
&\lesssim (1+t)^{-\frac{3}{2}}(\int_{\R^3}\jap{x-(t+1)v}^4h^2\d v)^{\frac{1}{2}}.
\end{align*}
\end{proof}
We now use \lref{L1_to_L2} to get $L^2_xL^1_v$ and $L^\infty_xL^1_v$ estimates.
\begin{lemma}\label{l.L2_x}
Let $h:[0,T_{\boot})\times \R^3\times \R^3$ be a smooth function. Then
$$\norm{h}_{L^2_xL^1_v}(t,x)\lesssim (1+t)^{-\frac{3}{2}}\norm{\jap{x-(t+1)v}^2h(t,x,v)}_{L^2_xL^2_v}.$$ 
\end{lemma}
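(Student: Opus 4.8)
The plan is to upgrade the pointwise-in-$x$ bound of \lref{L1_to_L2} to an $L^2_x$ bound by squaring and integrating in $x$; there is no new idea needed beyond \lref{L1_to_L2} itself.

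First I would apply \lref{L1_to_L2} at each fixed $(t,x)$ and square, which gives
$$\left(\int_{\R^3}|h|(t,x,v)\,\d v\right)^2 \lesssim (1+t)^{-3}\int_{\R^3}\jap{x-(t+1)v}^4|h|^2(t,x,v)\,\d v,$$
with an implied constant independent of both $x$ and $t$. Then I would integrate this inequality in $x$ over $\R^3$ and apply Tonelli to rewrite the right-hand side as a double integral over $\R^3\times\R^3$:
$$\int_{\R^3}\left(\int_{\R^3}|h|(t,x,v)\,\d v\right)^2\d x \lesssim (1+t)^{-3}\int_{\R^3}\int_{\R^3}\jap{x-(t+1)v}^4|h|^2(t,x,v)\,\d v\,\d x.$$
Taking square roots and identifying the left-hand side with $\norm{h}^2_{L^2_xL^1_v}(t)$ and the right-hand side with $(1+t)^{-3}\norm{\jap{x-(t+1)v}^2h}^2_{L^2_xL^2_v}(t)$ then gives the claimed estimate.

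The only point worth verifying is that the constant in \lref{L1_to_L2} is genuinely uniform in $x$, so that the pointwise inequality may be integrated against $\d x$ with no loss; this is immediate from the proof of \lref{L1_to_L2}, where the decay is extracted purely by splitting the velocity integral into the region $\{|v-\tfrac{x}{t+1}|\le (1+t)^{-1}\}$ and its complement and estimating volumes, an argument whose constants do not see the center $\tfrac{x}{t+1}$ of the split. Hence I expect no real obstacle: all the work has already been done in \lref{L1_to_L2}, and this lemma is essentially a one-line corollary obtained by squaring and integrating.
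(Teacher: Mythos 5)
Your proof is correct and is exactly the argument the paper has in mind: the paper states this lemma without proof as an immediate consequence of Lemma~\ref{l.L1_to_L2}, obtained precisely by squaring the pointwise-in-$x$ bound (whose constant is indeed uniform in $x$ and $t$) and integrating in $x$.
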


\begin{lemma}\label{l.L_infty_x_L2_v}
Let $h:[0,T_{\boot})\times \R^3\times \R^3$ be a smooth function. Then
$$\norm{h}_{L^\infty_xL^2_v}(t,x)\lesssim \sum_{|\alpha|\leq 2}\norm{\part^\alpha_xh(t,x,v)}_{L^2_xL^2_v}$$ 
where $|\alpha|\leq 2$.
\end{lemma}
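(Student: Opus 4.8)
This is the $L^2_v$-valued analogue of the Sobolev embedding $H^2(\R^3)\hookrightarrow L^\infty(\R^3)$, and the plan is to run the standard Fourier-analytic proof of that embedding while carrying the $v$-variable along as an inert Hilbert-space parameter. Fix $t\in[0,T_{\boot})$ and let $\hat h(t,\xi,v)$ denote the Fourier transform of $h$ in the $x$-variable only. By Fourier inversion, $h(t,x,v)=c\int_{\R^3}\hat h(t,\xi,v)e^{ix\cdot\xi}\,\d\xi$, so by Minkowski's integral inequality taken in the $L^2_v$ norm,
\[
\norm{h(t,x,\cdot)}_{L^2_v}\lesssim \int_{\R^3}\norm{\hat h(t,\xi,\cdot)}_{L^2_v}\,\d\xi ,
\]
and the right-hand side is already independent of $x$, so the eventual supremum over $x$ is harmless.

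Next I would insert the splitting $1=\jap{\xi}^{-2}\jap{\xi}^{2}$ and apply Cauchy--Schwarz in $\xi$:
\[
\int_{\R^3}\norm{\hat h(t,\xi,\cdot)}_{L^2_v}\,\d\xi\le\Big(\int_{\R^3}\jap{\xi}^{-4}\,\d\xi\Big)^{1/2}\Big(\int_{\R^3}\jap{\xi}^{4}\norm{\hat h(t,\xi,\cdot)}_{L^2_v}^2\,\d\xi\Big)^{1/2}.
\]
The first factor is finite precisely because the number of derivatives $2$ satisfies $2\cdot 2>3=\dim$, so that $\jap{\xi}^{-2}\in L^2(\R^3_\xi)$; this is the only place the hypothesis $|\alpha|\le 2$ enters. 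For the second factor I would use $\jap{\xi}^{4}\lesssim\sum_{|\alpha|\le 2}|\xi^{\alpha}|^2$ together with $\widehat{\part_x^\alpha h}(t,\xi,v)=(i\xi)^\alpha\hat h(t,\xi,v)$, then integrate in $\xi$ and apply Plancherel in the $x$-variable for each fixed $v$ together with Fubini to obtain
\[
\int_{\R^3}\jap{\xi}^{4}\norm{\hat h(t,\xi,\cdot)}_{L^2_v}^2\,\d\xi\lesssim\sum_{|\alpha|\le 2}\norm{\part_x^\alpha h(t,\cdot,\cdot)}_{L^2_xL^2_v}^2 .
\]
Combining the three displays and taking the supremum over $x\in\R^3$ gives
\[
\norm{h}_{L^\infty_xL^2_v}(t)\lesssim\Big(\sum_{|\alpha|\le 2}\norm{\part_x^\alpha h(t,\cdot,\cdot)}_{L^2_xL^2_v}^2\Big)^{1/2}\lesssim\sum_{|\alpha|\le 2}\norm{\part_x^\alpha h}_{L^2_xL^2_v},
\]
which is the assertion.

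I do not expect any genuine obstacle here: this is a routine Sobolev embedding, and the only point requiring a little care is that it is performed for an $L^2_v$-valued function, which is handled by Minkowski's integral inequality and Plancherel/Fubini exactly as in the scalar case. Note also that the weights $\jap{v}^{\nu_{\alpha,\beta,\sigma}}$ and $\jap{x-(t+1)v}^{\omega_{\alpha,\beta,\sigma}}$ appearing in the energy norm play no role in this lemma; only the number of $x$-derivatives matters. Alternatively one could deduce the bound from a Gagliardo--Nirenberg inequality applied to the scalar function $x\mapsto\int h^2(t,x,v)\,\d v$, but the Fourier argument above avoids any discussion of differentiating a square root and is cleaner to write down.
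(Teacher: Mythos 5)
Your proof is correct, and it takes a mildly but genuinely different route from the paper. The paper's argument is the ``physical-side'' one: it invokes the scalar Sobolev embedding $\norm{h}_{L^\infty_x}(v)\lesssim\sum_{|\alpha|\le 2}\norm{\part^\alpha_x h}_{L^2_x}(v)$ for each fixed $v$, takes the $L^2_v$ norm with Fubini, and then concludes via the elementary norm interchange $\norm{h}_{L^\infty_xL^2_v}\lesssim\norm{h}_{L^2_vL^\infty_x}$. You instead rerun the Fourier proof of $H^2_x\hookrightarrow L^\infty_x$ with the $v$-variable carried as an $L^2_v$-valued parameter: Fourier inversion in $x$, Minkowski's integral inequality to pull the $L^2_v$ norm inside the $\xi$-integral, Cauchy--Schwarz with $\jap{\xi}^{-2}\in L^2_\xi(\R^3)$ (this is where $|\alpha|\le 2$ versus dimension $3$ enters, exactly as in the paper's implicit use of $H^2\hookrightarrow L^\infty$), and Plancherel plus $\jap{\xi}^4\lesssim\sum_{|\alpha|\le2}|\xi^\alpha|^2$. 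The underlying mechanism is the same Minkowski-type interchange, but your version is self-contained, avoids quoting the scalar embedding as a black box, and delivers the $x$-uniformity automatically since the bound after Minkowski no longer depends on $x$; the paper's version is shorter and defers all analysis to the standard embedding. One pedantic point common to both proofs: for a merely smooth $h$ one should note that the inequality is vacuous when the right-hand side is infinite, and when it is finite the pointwise Fourier inversion (or the pointwise scalar embedding) is justified for a.e.\ $v$ by the finiteness of $\int\jap{\xi}^4\norm{\hat h(t,\xi,\cdot)}_{L^2_v}^2\,\d\xi$; this is standard and does not affect the argument.
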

\begin{proof}
Using Sobolev embedding in $x$, we have that
$$\norm{h}_{L^\infty_x}(v)\lesssim \sum_{|\alpha|\leq 2}\norm{\part^\alpha_x h}_{L^2_x}(v).$$

Since this is true for all $v$, taking the $L^2_v$ norm and using Fubini for the RHS, we get
$$\norm{h}_{L^2_vL^\infty_x}\lesssim \sum_{|\alpha|\leq 2}\norm{\part^\alpha_x h}_{L^2_xL^2_v}.$$

Finally the following trivial inequality gives us the required lemma, $$\norm{h}_{L^\infty_xL^2_v}\lesssim \norm{h}_{L^2_vL^\infty_x}.$$
\end{proof}
\begin{lemma}\label{l.L_infty_x_L1_v}
Let $h:[0,T_{\boot})\times \R^3\times \R^3$ be a smooth function. Then
$$\norm{h}_{L^\infty_xL^1_v}(t,x)\lesssim \sum_{|\alpha|\leq 2}(1+t)^{-\frac{3}{2}}\norm{\jap{x-(t+1)v}^2\part^\alpha_xh(t,x,v)}_{L^2_xL^2_v}.$$ 
\end{lemma}
\begin{proof}
Again, by \lref{L1_to_L2}, we have
\begin{align*}
\norm{h}_{L^\infty_xL^1_v}&\lesssim (1+t)^{-\frac{3}{2}}\norm{\jap{x-(t+1)v}^2h}_{L^\infty_xL^2_v}.
\end{align*}
Now using \lref{L_infty_x_L2_v} we get,
$$\norm{h}_{L^\infty_xL^1_v}\lesssim \sum_{|\alpha|\leq 2}(1+t)^{-\frac{3}{2}}\norm{\part^\alpha_x(\jap{x-(t+1)v}^2h)}_{L^\infty_xL^2_v}.$$

Since $\part_{x_i} \jap{x-(t+1)v}^2=2(x_i-(t+1)v_i)\lesssim \jap{x-(t+1)v}$ and $\part^2_{x_ix_j} \jap{x-(t+1)v}^2=2\delta_{ij}$, we are done.
\end{proof}
\begin{lemma}\label{l.exp_bound}
For every $l\in \N$ and $m\in \N\cup\{0\}$, the following estimate holds with a constant depending on $l,\gamma$ and $d_0$ for any $(t,x,v) \in [0,T_{\boot})\times \R^3\times \R^3$
$$\jap{v}^l\jap{x-(t+1)v}^m|\der f|(t,x,v)\lesssim \sum \limits_{|\beta'|\leq |\beta|}\jap{x-(t+1)v}^m|\derv{}{'}{} g|(t,x,v).$$
\end{lemma}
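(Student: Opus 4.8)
The statement is the pointwise comparison
$$\jap{v}^l\jap{x-(t+1)v}^m|\der f|(t,x,v)\lesssim \sum_{|\beta'|\leq|\beta|}\jap{x-(t+1)v}^m|\derv{}{'}{}g|(t,x,v),$$
where $f=e^{-d(t)\jap{v}}g$. The right strategy is to expand $\der f=\der(e^{-d(t)\jap{v}}g)$ by the Leibniz rule and absorb the resulting exponential prefactor into the polynomial weight $\jap{v}^l$. The key structural point is that differentiating $e^{-d(t)\jap{v}}$ in $v$ produces $e^{-d(t)\jap{v}}$ times a bounded rational function of $v$ (each $\partial_{v_i}$ brings down a factor $-d(t)\tfrac{v_i}{\jap{v}}$, which is bounded uniformly by $d(t)\leq 2d_0$, plus lower-order corrections from repeated differentiation that are also $O(1)$ in $v$ with constants depending only on $d_0$). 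Crucially, $\partial_x$ derivatives do not touch the exponential at all. So $\der(e^{-d(t)\jap{v}}g)$ is a sum of terms of the form $e^{-d(t)\jap{v}}\,P_{\beta''}(t,v)\,\partial_x^\alpha\partial_v^{\beta'}Y^\sigma g$ with $|\beta'|\leq|\beta|$ and $P_{\beta''}$ a function bounded uniformly in $(t,v)$ by a constant depending on $|\beta|,d_0$.

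\textbf{Steps in order.} First, record the elementary bound $|\partial_v^{\beta''}(e^{-d(t)\jap{v}})|\lesssim_{|\beta''|,d_0} e^{-d(t)\jap{v}}$, valid because $d(t)\in[d_0,2d_0]$ for all $t\geq0$ and all $v$-derivatives of $\jap{v}$ are bounded; this is a routine induction on $|\beta''|$ using $\partial_{v_i}\jap{v}=v_i/\jap{v}$ and $|v_i/\jap{v}|\leq1$. Second, apply the Leibniz rule: since $\der=\partial_x^\alpha\partial_v^\beta Y^\sigma$ and $Y=(t+1)\partial_x+\partial_v$, every monomial in the expansion of $\der$ acts on the product $e^{-d(t)\jap{v}}g$; the $x$-derivatives (including the $\partial_x$ part of each $Y$) pass through the exponential, and the $v$-derivatives (including the $\partial_v$ part of each $Y$) distribute between the exponential and $g$. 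The total number of $v$-derivatives landing on $g$ is at most the number originally in $\partial_v^\beta$ plus the $v$-parts of $Y^\sigma$, but one must be careful: we want the bound in terms of $\derv{}{'}{}g$ with the \emph{same} $\sigma$. The cleanest route is to note $\der g$ itself already has exactly this form, and only the $v$-derivatives hitting $e^{-d(t)\jap{v}}$ reduce the $\partial_v$-count; hence the surviving $g$-factor is $\partial_x^\alpha\partial_v^{\beta'}Y^\sigma g$ with $|\beta'|\leq|\beta|$, multiplied by $e^{-d(t)\jap{v}}$ times a uniformly bounded function of $(t,v)$. Third, multiply through by $\jap{v}^l\jap{x-(t+1)v}^m$; the $\jap{x-(t+1)v}^m$ factor is common to both sides and just comes along for the ride, while $\jap{v}^l e^{-d(t)\jap{v}}\lesssim_{l,d_0}1$ since $d(t)\geq d_0>0$. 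Summing over the finitely many terms in the Leibniz expansion yields the claim.

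\textbf{Main obstacle.} There is no serious analytic difficulty; the only point requiring genuine care is bookkeeping with the vector field $Y=(t+1)\partial_x+\partial_v$, whose coefficient $(t+1)$ is time-dependent. One must check that the factors of $(t+1)$ produced when expanding $Y^\sigma$ do not spoil the estimate. Since $\der g$ on the right-hand side carries \emph{exactly} the same $Y^\sigma$, these $(t+1)$ factors are identical on both sides — we never expand $Y$ into $\partial_x,\partial_v$ on the $g$-side — so they cancel in the comparison. Concretely, one applies the identity $\der(e^{-d(t)\jap{v}}g)=\sum_{\beta''\leq\beta}\binom{\beta}{\beta''}(\partial_v^{\beta''}e^{-d(t)\jap{v}})\,\partial_x^\alpha\partial_v^{\beta-\beta''}Y^\sigma g$, which holds because $e^{-d(t)\jap{v}}$ is annihilated by $\partial_x$ and by the $\partial_x$-component of $Y$, so only the plain $\partial_v^\beta$ can differentiate it. This makes the reduction transparent and confines any $(t+1)$-growth harmlessly inside $Y^\sigma g$. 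The remaining estimates are the two elementary facts $|\partial_v^{\beta''}e^{-d(t)\jap{v}}|\lesssim e^{-d(t)\jap{v}}$ and $\jap{v}^l e^{-d(t)\jap{v}}\lesssim 1$, both immediate from $d_0\leq d(t)\leq 2d_0$.
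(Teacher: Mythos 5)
Your reduction hinges on the identity $\der(e^{-d(t)\jap{v}}g)=\sum_{\beta''\leq\beta}\binom{\beta}{\beta''}\bigl(\partial_v^{\beta''}e^{-d(t)\jap{v}}\bigr)\,\partial_x^\alpha\partial_v^{\beta-\beta''}Y^\sigma g$, justified by the claim that the exponential ``is annihilated by $\partial_x$ and by the $\partial_x$-component of $Y$, so only the plain $\partial_v^\beta$ can differentiate it.'' That claim is false: $Y_i=(t+1)\partial_{x_i}+\partial_{v_i}$ does not commute with multiplication by $e^{-d(t)\jap{v}}$, because its $\partial_v$-component does hit the weight, $[Y_i,e^{-d(t)\jap{v}}]=\partial_{v_i}\bigl(e^{-d(t)\jap{v}}\bigr)=-d(t)\tfrac{v_i}{\jap{v}}e^{-d(t)\jap{v}}\neq 0$. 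Already for $\alpha=\beta=0$, $|\sigma|=1$ one has $Y_i(e^{-d(t)\jap{v}}g)=e^{-d(t)\jap{v}}\bigl(Y_ig-d(t)\tfrac{v_i}{\jap{v}}g\bigr)$, whereas your identity retains only the first term. In general the correct expansion produces, in addition to the terms you list, terms of the form $e^{-d(t)\jap{v}}c(t,v)\,\partial_x^{\alpha}\partial_v^{\beta'}Y^{\sigma'}g$ with $\beta'\leq\beta$, $\sigma'\leq\sigma$, $|\sigma'|<|\sigma|$ and $c$ bounded (each $Y$ whose $\partial_v$-part lands on the exponential is used up). Such terms cannot be absorbed pointwise into $\sum_{|\beta'|\leq|\beta|}|\derv{}{'}{}g|$ with the full $Y^\sigma$: take $g$ with $Y_ig=0$ but $g\neq 0$ at a point. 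So this step genuinely fails; it is not a harmless bookkeeping point, and your ``main obstacle'' paragraph resolves the obstacle by an incorrect assertion.

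The remaining ingredients of your argument — the induction giving $|\partial_v^{\beta''}e^{-d(t)\jap{v}}|\lesssim e^{-d(t)\jap{v}}$, the absorption $\jap{v}^{l}e^{-d(t)\jap{v}}\lesssim_{l,d_0}1$ since $d(t)\geq d_0$, and the observation that $\jap{x-(t+1)v}^m$ rides along — coincide with the paper's one-line proof and are fine. The repair is to carry out the commutation honestly: $\der\bigl(e^{-d(t)\jap{v}}g\bigr)=e^{-d(t)\jap{v}}\sum c_{\beta',\sigma'}(t,v)\,\partial_x^\alpha\partial_v^{\beta'}Y^{\sigma'}g$ with $|\beta'|\leq|\beta|$, $|\sigma'|\leq|\sigma|$ and uniformly bounded coefficients, which proves the estimate with the sum on the right additionally running over $|\sigma'|\leq|\sigma|$. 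That weaker-looking version is exactly what is used downstream (see the application in the proof of \lref{L1_v_bound_boot}, where the sum is taken over $|\beta'|\leq|\beta|$ and $|\sigma'|\leq|\sigma|$), so it suffices for the rest of the argument; but as literally stated with $Y^\sigma$ fixed, the inequality cannot be reached by your route.
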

\begin{proof}
This is immediate from differentiating $g$ and using that $\jap{v}^ne^{-d(t)\jap{v}}\lesssim_n 1$ for all $n\in \N$.
\end{proof}
\begin{lemma}\label{l.L1_v_bound_boot}
If $|\alpha|+|\beta|+|\sigma|\leq 8$, then $$\norm{\jap{v}^3\jap{x-(t+1)v}^{2}\der f}_{L^\infty_xL^1_v}\leq \eps^{\frac{3}{4}}(1+t)^{-\frac{3}{2}+|\beta|(1+\delta)}.$$
If $9\leq |\alpha|+|\beta|+|\sigma|\leq 10$, then $$\norm{\jap{v}^3\jap{x-(t+1)v}^{2}\der f}_{L^2_xL^1_v}\leq \eps^{\frac{3}{4}}(1+t)^{-\frac{3}{2}+|\beta|(1+\delta)}.$$
\end{lemma}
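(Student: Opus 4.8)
The plan is to bound the $L^1_v$ norm pointwise in $x$ (or after an $L^2_x$ integration) by converting it to a weighted $L^2_v$ norm via \lref{L1_to_L2}, then recognizing the resulting quantity as part of the energy norm $E_T(g)$ controlled by the bootstrap assumption \eref{boot_assumption_1}. The first step is to move the exponential weight: by \lref{exp_bound} with $l=3$ and $m=2$,
\begin{equation*}
\jap{v}^3\jap{x-(t+1)v}^2|\der f|(t,x,v)\lesssim \sum_{|\beta'|\leq|\beta|}\jap{x-(t+1)v}^2|\derv{}{'}{}g|(t,x,v),
\end{equation*}
so it suffices to estimate $\jap{x-(t+1)v}^2\derv{}{'}{}g$ in $L^\infty_xL^1_v$ (resp.\ $L^2_xL^1_v$). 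Next I would apply \lref{L1_to_L2} (resp.\ \lref{L2_x}) to trade the $L^1_v$ norm for a weighted $L^2_v$ norm, picking up a factor $(1+t)^{-3/2}$ and two extra $\jap{x-(t+1)v}$ weights; after one more application of \lref{L_infty_x_L2_v} in the $L^\infty_x$ case (which costs up to two extra $\part_x$ derivatives and, by the Leibniz argument already used in the proof of \lref{L_infty_x_L1_v}, converts $\jap{x-(t+1)v}^4$ into at worst the same power times lower-order polynomial factors), one is left with
\begin{equation*}
\norm{\jap{v}^3\jap{x-(t+1)v}^2\der f}_{L^\infty_xL^1_v}\lesssim (1+t)^{-3/2}\sum_{|\alpha'|\leq|\alpha|+2,\ |\beta'|\leq|\beta|,\ |\sigma'|\leq|\sigma|}\norm{\jap{x-(t+1)v}^4\derv{'}{'}{'}g}_{L^2_xL^2_v}.
\end{equation*}

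The bookkeeping step is to check that the weight $\jap{x-(t+1)v}^4\jap{v}^3$ (after pulling out the $\jap{v}^3$ against the exponential, this is $\jap{x-(t+1)v}^4$) is dominated by the energy weight $\jap{x-(t+1)v}^{\omega_{\alpha',\beta',\sigma'}}\jap{v}^{\nu_{\alpha',\beta',\sigma'}}$ for the relevant range of indices. Since $|\alpha'|+|\beta'|+|\sigma'|\leq 10$ throughout (we start with $|\alpha|+|\beta|+|\sigma|\leq 8$ and add at most two $\partial_x$ derivatives), we have $\omega_{\alpha',\beta',\sigma'}=20-\tfrac32|\sigma'|-\tfrac12(|\beta'|+|\alpha'|)\geq 20-\tfrac32\cdot 10 = 5 \geq 4$ and $\nu_{\alpha',\beta',\sigma'}\geq 5\geq 0$, so the energy weight is strictly stronger and the Japanese-bracket factors are harmless. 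Therefore the right-hand side is $\lesssim (1+t)^{-3/2}\norm{g}_{Y^{10}_{x,v}}(t)$. Finally, reading off the definition of the energy norm \eref{Energy_norm}, the $L^\infty_t$ part gives $\norm{g}^2_{Y^{10}_{x,v}}(t)\lesssim \sum_{|\beta'|\leq|\beta|}(1+t)^{|\beta'|(1+\delta)}E_T(g)^2$, and using the bootstrap bound $E_T(g)\leq\eps^{3/4}$ we obtain $\norm{g}_{Y^{10}_{x,v}}(t)\lesssim \eps^{3/4}(1+t)^{|\beta|(1+\delta)/2}$; wait — more carefully, since the sum over $|\beta'|\leq|\beta|$ is dominated by its top term, this yields exactly the claimed factor $(1+t)^{|\beta|(1+\delta)}$ after combining with the $(1+t)^{-3/2}$ from the $L^1_v\to L^2_v$ conversion.

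For the high-order case $9\leq|\alpha|+|\beta|+|\sigma|\leq 10$, the same argument runs but one uses \lref{L2_x} instead of \lref{L1_to_L2}, so no extra $\partial_x$ derivatives are spent and the indices $(\alpha',\beta',\sigma')$ stay within $|\alpha'|+|\beta'|+|\sigma'|\leq 10$; the weight comparison is even easier ($\omega\geq 5$, $\nu\geq 5$ still). I expect the main obstacle to be purely organizational: tracking that the two extra $\jap{x-(t+1)v}$ weights from \lref{L1_to_L2}, combined with the original $\jap{x-(t+1)v}^2$, and the up-to-two $\partial_x$'s from Sobolev embedding (each of which, by Leibniz, can land on the $\jap{x-(t+1)v}^4$ weight producing a factor $\lesssim\jap{x-(t+1)v}$) never push the required exponent above $\omega_{\alpha',\beta',\sigma'}$ — which, as computed, it does not since $\omega\geq 5 > 4$. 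No genuinely hard estimate is involved; everything is a consequence of \lref{exp_bound}, \lref{L1_to_L2}, \lref{L2_x}, \lref{L_infty_x_L2_v}, and the bootstrap assumption.
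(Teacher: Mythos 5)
Your proposal is correct and follows essentially the same route as the paper: convert $f$ to $g$ via \lref{exp_bound}, trade $L^1_v$ for weighted $L^2_v$ with the $(1+t)^{-3/2}$ gain (\lref{L1_to_L2}/\lref{L2_x}, plus Sobolev in $x$ for the low-order case, which is exactly the content of \lref{L_infty_x_L1_v}), check that $\jap{x-(t+1)v}^4$ is dominated by the energy weights since $\omega_{\alpha',\beta',\sigma'},\nu_{\alpha',\beta',\sigma'}\geq 5$, and invoke the bootstrap assumption; the only blemish is the loose intermediate appeal to the full $Y^{10}_{x,v}$ norm, which is immaterial because your displayed reduction already restricts to $|\beta'|\leq|\beta|$ (with $|\alpha'|\leq|\alpha|+2$, $|\sigma'|\leq|\sigma|$), and the bootstrap then gives $(1+t)^{|\beta'|(1+\delta)/2}\leq(1+t)^{|\beta|(1+\delta)}$ as required.
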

\begin{proof}
For $|\alpha|+|\beta|+|\sigma|\leq 8$ we have by \lref{L_infty_x_L1_v} and \lref{exp_bound}, we obtain
\begin{align*}
\norm{\jap{v}^3\jap{x-(t+1)v}^{2}\der f}_{L^\infty_xL^1_v}&\lesssim (1+t)^{-\frac{3}{2}}\norm{\jap{v}^3\jap{x-(t+1)v}^2\part^\alpha_x (\jap{x-(t+1)v}^{2}\der f)}_{L^2_xL^2_v}\\
&\lesssim  (1+t)^{-\frac{3}{2}}\norm{\jap{v}^3\jap{x-(t+1)v}^4\part^\alpha_x \der f}_{L^2_xL^2_v}\\
&\lesssim  (1+t)^{-\frac{3}{2}}\sum_{\substack{|\alpha'|\leq |\alpha|+2, |\beta'|\leq|\beta|\\ |\sigma'|\leq |\sigma|}}\norm{\jap{x-(t+1)v}^4 \derv{'}{'}{'} g}_{L^2_xL^2_v}.
\end{align*}

For $9\leq |\alpha|+|\beta|+|\sigma|\leq 10$, we use \lref{L2_x} to get $$\norm{\jap{v}^3\jap{x-(t+1)v}^{2}\der f}_{L^2_xL^1_v}\lesssim  (1+t)^{-\frac{3}{2}}\sum_{\substack{|\beta'|\leq|\beta|, |\sigma'|\leq |\sigma|}}\norm{\jap{x-(t+1)v}^4 \derv{'}{'}{'} g}_{L^2_xL^2_v}.$$
The conclusion now follows from \eref{boot_assumption_1}.
\end{proof}
We are finally in a position to be able to bound the various coefficients.
\begin{lemma}\label{l.a_bound}
If $|\alpha|+|\beta|+|\sigma|\leq 8$, then $$\norm{\jap{v}^{-2-\gamma}\der \a}_{L^\infty_xL^\infty_v}\leq \eps^{\frac{3}{4}}(1+t)^{-\frac{3}{2}+|\beta|(1+\delta)}.$$
If $9\leq |\alpha|+|\beta|+|\sigma|\leq 10$, then $$\norm{\jap{v}^{-2-\gamma}\der \a}_{L^2_xL^\infty_v}\leq \eps^{\frac{3}{4}}(1+t)^{-\frac{3}{2}+|\beta|(1+\delta)}.$$
\end{lemma}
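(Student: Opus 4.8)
The plan is to derive both estimates directly from the pointwise bound \eref{pw_bound_a} in \pref{pointwise_estimates_a} together with the weighted $L^1_v$ control of \lref{L1_v_bound_boot}. Nothing beyond these two ingredients and elementary weight manipulations is needed.

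First I would start from \eref{pw_bound_a} and use the elementary inequality $|v-v_*|^{2+\gamma}\lesssim \jap{v}^{2+\gamma}\jap{v_*}^{2+\gamma}$, which is valid because $\gamma\geq 0$, to obtain
\[
\jap{v}^{-2-\gamma}\max_{i,j}|\der \a|(t,x,v)\lesssim \int \jap{v_*}^{2+\gamma}|\der f|(t,x,v_*)\d v_*.
\]
Since we are in the hard potential regime $\gamma<1$ we have $2+\gamma\leq 3$, so $\jap{v_*}^{2+\gamma}\leq \jap{v_*}^3\leq \jap{v_*}^3\jap{x-(t+1)v_*}^2$, and hence the right-hand side is bounded by $\norm{\jap{v}^3\jap{x-(t+1)v}^2\der f}_{L^1_v}(t,x)$, which no longer depends on $v$.

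Taking $\esssup_v$ is therefore free, and then taking the $L^\infty_x$ norm (in the case $|\alpha|+|\beta|+|\sigma|\leq 8$) or the $L^2_x$ norm (in the case $9\leq |\alpha|+|\beta|+|\sigma|\leq 10$) and invoking the corresponding case of \lref{L1_v_bound_boot} gives precisely the stated bound with the factor $\eps^{3/4}(1+t)^{-3/2+|\beta|(1+\delta)}$.

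I do not expect a genuine obstacle here; the only point requiring attention is that the weight $\jap{v}^{-2-\gamma}$ on the left is exactly what is needed to absorb the worst growth $|v-v_*|^{2+\gamma}$ of the kernel $a_{ij}$, and that the leftover weight $\jap{v_*}^{2+\gamma}$ on the integration variable is dominated by the $\jap{v}^3$ weight available in \lref{L1_v_bound_boot} precisely because $\gamma<1$ — this is where the hard potential restriction enters. The same scheme, applied instead to \eref{pw_bound_a_v}--\eref{pw_bound_2der_a} and again combined with \lref{L1_v_bound_boot}, will produce the analogous bounds for the remaining coefficients ($\a v_i/\jap{v}$, $\a v_iv_j/\jap{v}^2$, $\part_{v_k}\a$, and so on), so I would state those as a sequence of parallel lemmas proved by the identical argument.
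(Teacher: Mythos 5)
Your proposal is correct and follows essentially the same route as the paper: apply \eref{pw_bound_a}, split $|v-v_*|^{2+\gamma}\lesssim \jap{v}^{2+\gamma}\jap{v_*}^{2+\gamma}$, dominate $\jap{v_*}^{2+\gamma}$ by $\jap{v}^3$ (and harmlessly insert the $\jap{x-(t+1)v}^2$ weight since it is $\geq 1$), and conclude with \lref{L1_v_bound_boot} in the $L^\infty_x$ or $L^2_x$ case according to the number of derivatives. No gap to report.
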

\begin{proof}
Using \eref{pw_bound_a}, we have that,
$$|\der \a|(t,x,v)\lesssim \jap{v}^{2+\gamma}\int \jap{v_*}^{2+\gamma}\der f(t,x,v_*)\d v_*.$$
Thus, $$\norm{\jap{v}^{-2-\gamma}\der \a}_{L^\infty_xL^\infty_v}\lesssim \norm{ \jap{v}^{3}\der f}_{L^\infty_xL^1_v}$$
and $$\norm{\jap{v}^{-2-\gamma}\der \a}_{L^2_xL^\infty_v}\lesssim \norm{ \jap{v}^{3}\der f}_{L^2_xL^1_v}.$$
Now \lref{L1_v_bound_boot} gives us the desired result.
\end{proof}
\begin{lemma}\label{l.a_bound_dv}
Let $\underline{|\beta|\geq 1}$. If $|\alpha|+|\beta|+|\sigma|\leq 8$, then $$\norm{\jap{v}^{-1-\gamma}\der \a}_{L^\infty_xL^\infty_v}\leq \eps^{\frac{3}{4}}(1+t)^{-\frac{5}{2}-\delta+|\beta|(1+\delta)}.$$
If $9\leq |\alpha|+|\beta|+|\sigma|\leq 10$, then $$\norm{\jap{v}^{-1-\gamma}\der \a}_{L^2_xL^\infty_v}\leq \eps^{\frac{3}{4}}(1+t)^{-\frac{5}{2}-\delta+|\beta|(1+\delta)}.$$
\end{lemma}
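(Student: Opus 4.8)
The plan is to mimic the proof of Lemma~\ref{l.a_bound} but to exploit the extra velocity derivative. Since $|\beta|\geq 1$, at least one of the derivatives hitting $\a$ in $\der\a$ is a $\part_v$. I would split according to how the $v$-derivatives are distributed. Write $\der = \part_x^\alpha\part_v^\beta Y^\sigma$ and pull out one genuine velocity derivative, say $\part_v^\beta = \part_{v_k}\part_v^{\beta''}$ with $|\beta''| = |\beta|-1$. Using the convolution structure from Proposition~\ref{p.pointwise_estimates_a}, specifically \eref{pw_bound_der_a}, the kernel picks up $|v-v_*|^{1+\gamma}$ rather than $|v-v_*|^{2+\gamma}$, so
\begin{equation*}
|\der\a|(t,x,v)\lesssim \jap{v}^{1+\gamma}\int \jap{v_*}^{1+\gamma}|\derv{}{''}{}f|(t,x,v_*)\,\d v_*,
\end{equation*}
where now only $|\beta''| = |\beta|-1$ velocity derivatives fall on $f$. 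Hence
\begin{equation*}
\norm{\jap{v}^{-1-\gamma}\der\a}_{L^\infty_xL^\infty_v}\lesssim \norm{\jap{v}^{2}\derv{}{''}{}f}_{L^\infty_xL^1_v}
\end{equation*}
(and similarly with $L^2_x$ in place of $L^\infty_x$ when $9\leq|\alpha|+|\beta|+|\sigma|\leq 10$).

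Next I would apply Lemma~\ref{l.L1_v_bound_boot} to $\derv{}{''}{}f$, noting that $|\alpha|+|\beta''|+|\sigma| = |\alpha|+|\beta|+|\sigma|-1$, so the index constraint is still met and the bound reads
\begin{equation*}
\norm{\jap{v}^{3}\jap{x-(t+1)v}^2\derv{}{''}{}f}_{L^\infty_xL^1_v}\leq \eps^{\frac{3}{4}}(1+t)^{-\frac{3}{2}+|\beta''|(1+\delta)} = \eps^{\frac{3}{4}}(1+t)^{-\frac{3}{2}+(|\beta|-1)(1+\delta)}.
\end{equation*}
Since $(|\beta|-1)(1+\delta) = |\beta|(1+\delta) - (1+\delta)$, this is exactly
\begin{equation*}
\eps^{\frac{3}{4}}(1+t)^{-\frac{3}{2}-(1+\delta)+|\beta|(1+\delta)} = \eps^{\frac{3}{4}}(1+t)^{-\frac{5}{2}-\delta+|\beta|(1+\delta)},
\end{equation*}
which is the claimed decay. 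The extra $\jap{v}^3$ and $\jap{x-(t+1)v}^2$ weights in Lemma~\ref{l.L1_v_bound_boot} are more than enough to absorb the $\jap{v_*}^{1+\gamma}$ (recall $\gamma<1$) and the trivial $\jap{x-(t+1)v}$ factors, so no further work is needed there.

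The only point requiring a little care—and what I would regard as the main (minor) obstacle—is bookkeeping the multi-index shuffle when the single $\part_{v_k}$ that I peel off is combined with the $Y^\sigma$ vector fields: a priori $Y = (t+1)\part_x + \part_v$ also contains a $\part_v$, so one should be careful that "$|\beta|\geq 1$" genuinely gives a bare $\part_v$ acting on $\a$ and not merely a $Y$. But here $\der = \part_x^\alpha\part_v^\beta Y^\sigma$ has the $\part_v^\beta$ written out explicitly, so when $|\beta|\geq 1$ there is indeed at least one true $\part_{v_k}$ available, and the homogeneity gain in \eref{pw_bound_der_a} applies directly. With that observed, the proof is a direct chaining of Proposition~\ref{p.pointwise_estimates_a} and Lemma~\ref{l.L1_v_bound_boot}, exactly parallel to Lemma~\ref{l.a_bound}.
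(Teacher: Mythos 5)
Your proposal is correct and follows essentially the same route as the paper: peel off one genuine $\part_v$ using $|\beta|\geq 1$, apply the pointwise bound \eref{pw_bound_der_a} with the improved kernel $|v-v_*|^{1+\gamma}$, and then invoke \lref{L1_v_bound_boot} at the level $|\beta|-1$, which yields exactly the stated decay $(1+t)^{-\frac{5}{2}-\delta+|\beta|(1+\delta)}$. The extra remark on the $Y$-bookkeeping is a harmless clarification; nothing further is needed.
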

\begin{proof}
Using \eref{pw_bound_der_a}, we have,
$$|\der \a|(t,x,v)\lesssim \jap{v}^{1+\gamma}\int \jap{v_*}^{1+\gamma}\derv{}{'}{} f(t,x,v_*)\d v_*$$
where $|\beta'|= |\beta|-1$.
Now we use \lref{L1_v_bound_boot} to get the desired result.
\end{proof}
\begin{lemma}\label{l.a_bound_d2v}
Let $\underline{|\beta|\geq 2}$. If $|\alpha|+|\beta|+|\sigma|\leq 8$, then $$\norm{\jap{v}^{-\gamma}\der \a}_{L^\infty_xL^\infty_v}\leq \eps^{\frac{3}{4}}(1+t)^{-\frac{7}{2}-2\delta+|\beta|(1+\delta)}.$$
If $9\leq |\alpha|+|\beta|+|\sigma|\leq 10$, then $$\norm{\jap{v}^{-\gamma}\der \a}_{L^2_xL^\infty_v}\leq \eps^{\frac{3}{4}}(1+t)^{-\frac{7}{2}-2\delta+|\beta|(1+\delta)}.$$
\end{lemma}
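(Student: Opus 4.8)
The plan is to repeat verbatim the strategy used in the proofs of \lref{a_bound} and \lref{a_bound_dv}, now starting from the pointwise bound \eref{pw_bound_2der_a} for the second velocity derivatives of $a_{ij}$, and to feed the resulting $L^1_v$ quantity into \lref{L1_v_bound_boot}. Concretely, since $|\beta|\geq 2$, write $\part_v^\beta = \part_v^{\beta''}\part_v^{\beta'}$ with $|\beta'|=2$ and $|\beta''|=|\beta|-2$; using the convolution identity from the proof of \pref{pointwise_estimates_a} (the formula $\der\part_v^{\beta'}\a[h]=\int(\part_v^{\beta'}a_{ij}(v-v_*))(\der h)(t,x,v_*)\,\d v_*$ with $|\beta'|\leq 2$), the two velocity derivatives land on $a_{ij}$ and the remaining $\part_x^\alpha\part_v^{\beta''}(\part_x+\part_v)^\sigma$ lands on $f$. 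By \eref{pw_bound_2der_a},
\begin{equation*}
|\der \a|(t,x,v)\lesssim \int |v-v_*|^\gamma |\derv{}{''}{} f|(t,x,v_*)\,\d v_*,
\end{equation*}
where I abuse the $\der$ notation so that the $f$ on the right carries $|\beta''|=|\beta|-2$ velocity derivatives. Bounding $|v-v_*|^\gamma\lesssim \jap{v}^\gamma\jap{v_*}^\gamma$ (using $\gamma\in[0,1)$) and pulling $\jap{v}^\gamma$ out gives
\begin{equation*}
\norm{\jap{v}^{-\gamma}\der\a}_{L^\infty_xL^\infty_v}\lesssim \norm{\jap{v_*}^\gamma\derv{}{''}{} f}_{L^\infty_xL^1_v}\lesssim \norm{\jap{v}^{3}\derv{}{''}{} f}_{L^\infty_xL^1_v}
\end{equation*}
since $\gamma<1<3$, and similarly with $L^2_x$ in place of $L^\infty_x$ in the range $9\leq|\alpha|+|\beta|+|\sigma|\leq 10$. (One must check $|\alpha|+|\beta''|+|\sigma|\leq 8$, resp.\ $\leq 10$, which holds because $|\beta''|=|\beta|-2<|\beta|$; in the boundary case where dropping two derivatives moves us from the $L^2_x$ range into the $L^\infty_x$ range, the $L^\infty_xL^1_v$ bound of \lref{L1_v_bound_boot} only helps, so this is harmless.)

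Now apply \lref{L1_v_bound_boot} to $\derv{}{''}{} f$: it yields $\norm{\jap{v}^3\jap{x-(t+1)v}^2\derv{}{''}{} f}\lesssim \eps^{3/4}(1+t)^{-3/2+|\beta''|(1+\delta)}$ in the appropriate mixed norm, and dropping the (nonnegative-power) $\jap{x-(t+1)v}^2$ weight only strengthens the left side. Since $|\beta''|=|\beta|-2$, the time exponent becomes $-\tfrac32+(|\beta|-2)(1+\delta)=-\tfrac32-2(1+\delta)+|\beta|(1+\delta)=-\tfrac72-2\delta+|\beta|(1+\delta)$, which is exactly the claimed rate. Thus both displayed inequalities of the lemma follow.

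I do not expect any genuine obstacle here; this lemma is the $|\beta|\geq 2$ analogue of the pattern \lref{a_bound} ($|\beta|\geq 0$, two fewer factors of $\jap{v}^{-1}$, decay $-\tfrac32$) and \lref{a_bound_dv} ($|\beta|\geq 1$, decay $-\tfrac52-\delta$), with each additional required velocity derivative on $f$ costing one more factor of $(1+t)^{1+\delta}$ in the bootstrap norm. The only points needing a line of care are the bookkeeping of which multi-indices survive on $f$ (so that \lref{L1_v_bound_boot} is applicable) and the elementary estimate $|v-v_*|^\gamma\lesssim \jap{v}^\gamma\jap{v_*}^\gamma$ together with $\gamma<3$ to absorb $\jap{v_*}^\gamma$ into the $\jap{v_*}^3$ weight available in \lref{L1_v_bound_boot}.
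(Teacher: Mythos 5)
Your proof is correct and follows essentially the same route as the paper's: distribute the derivatives via the convolution identity so that exactly two velocity derivatives fall on $a_{ij}$, apply \eref{pw_bound_2der_a}, and invoke \lref{L1_v_bound_boot} with $|\beta''|=|\beta|-2$, which produces precisely the exponent $-\tfrac{7}{2}-2\delta+|\beta|(1+\delta)$. One small caveat: in the range $9\leq|\alpha|+|\beta|+|\sigma|\leq 10$ your parenthetical claim that the $L^\infty_xL^1_v$ bound "only helps" is loose, since $L^\infty_x$ does not dominate $L^2_x$ on $\R^3$; the correct (and easy) fix is that the $L^2_xL^1_v$ bound of \lref{L1_v_bound_boot} in fact holds for any derivative count up to $10$, as its proof uses only \lref{L2_x}, \lref{exp_bound} and the bootstrap assumption without Sobolev embedding — which is also what the paper implicitly relies on.
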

\begin{proof}
Using \eref{pw_bound_2der_a}, we have,
$$|\der \a|(t,x,v)\lesssim \jap{v}^{\gamma}\int \jap{v_*}^{1+\gamma}\derv{}{''}{} f(t,x,v_*)\d v_*$$
where $|\beta'|= |\beta|-2$.
Using \lref{L1_v_bound_boot} gets us the desired result.
\end{proof}
\begin{lemma}\label{l.a_bound_dx}
Let $\underline{|\alpha|\geq 1}$. If $|\alpha|+|\beta|+|\sigma|\leq 8$, then $$\norm{\jap{v}^{-2-\gamma}\der \a}_{L^\infty_xL^\infty_v}\leq \eps^{\frac{3}{4}}(1+t)^{-\frac{5}{2}+|\beta|(1+\delta)}.$$
If $9\leq |\alpha|+|\beta|+|\sigma|\leq 10$, then $$\norm{\jap{v}^{-2-\gamma}\a}_{L^2_xL^\infty_v}\leq \eps^{\frac{3}{4}}(1+t)^{-\frac{5}{2}+|\beta|(1+\delta)}.$$
\end{lemma}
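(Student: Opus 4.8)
The plan is to derive this from the already-established Lemmas~\ref{l.a_bound} and \ref{l.a_bound_dv} by spending one of the $\partial_x$-derivatives hitting $\a$ (there is at least one, since $|\alpha|\ge1$) to gain a factor of $(1+t)^{-1}$. The mechanism is the identity
\begin{equation*}
\partial_{x_k}=(1+t)^{-1}(Y_k-\partial_{v_k}),
\end{equation*}
which is exactly why the vector field $Y$ was introduced, together with the observation that the operators $\partial_x$, $\partial_v$ and $Y$ all commute with one another because the coefficient $t+1$ depends only on $t$.

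Concretely, I would fix an index $k$ with $\alpha_k\ge1$, peel off one $\partial_{x_k}$ from $\partial_x^\alpha$, substitute the identity above, and use commutativity to reach
\begin{equation*}
\der\a=(1+t)^{-1}\Big(\partial_x^{\alpha'}\partial_v^{\beta}Y^{\sigma'}\a-\partial_x^{\alpha'}\partial_v^{\beta'}Y^{\sigma}\a\Big),
\end{equation*}
where $|\alpha'|=|\alpha|-1$ in both terms, while $|\sigma'|=|\sigma|+1$ in the first and $|\beta'|=|\beta|+1$ in the second. In either term the total number of derivatives equals that of $\der$, so it is $\le 8$ (resp.\ lies in $\{9,10\}$) exactly when the original is. To the first term I would apply Lemma~\ref{l.a_bound}, which bounds $\|\jap{v}^{-2-\gamma}\partial_x^{\alpha'}\partial_v^{\beta}Y^{\sigma'}\a\|_{L^\infty_xL^\infty_v}$ (resp.\ the $L^2_xL^\infty_v$ norm) by $\eps^{3/4}(1+t)^{-3/2+|\beta|(1+\delta)}$. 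To the second term, since now $|\beta'|=|\beta|+1\ge1$, I would apply Lemma~\ref{l.a_bound_dv}, obtaining for $\|\jap{v}^{-1-\gamma}\partial_x^{\alpha'}\partial_v^{\beta'}Y^{\sigma}\a\|$ the bound $\eps^{3/4}(1+t)^{-5/2-\delta+|\beta'|(1+\delta)}$; since $\jap{v}^{-2-\gamma}\le\jap{v}^{-1-\gamma}$ and $|\beta'|=|\beta|+1$, this is again $\eps^{3/4}(1+t)^{-3/2+|\beta|(1+\delta)}$. Multiplying both contributions by the prefactor $(1+t)^{-1}$ produces $\eps^{3/4}(1+t)^{-5/2+|\beta|(1+\delta)}$, which is the claim; the two ranges of $|\alpha|+|\beta|+|\sigma|$ are handled in parallel using the $L^\infty_x$ and $L^2_x$ versions of the two lemmas.

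There is no genuine obstacle here --- the statement is essentially a corollary of the two preceding lemmas --- and the only thing needing care is bookkeeping: expanding $\partial_{x_k}$ creates a $\partial_v$, which is why the second piece must be estimated with the $|\beta|\ge1$ lemma (Lemma~\ref{l.a_bound_dv}) rather than Lemma~\ref{l.a_bound}, and one must verify that the powers of $(1+t)$ balance, i.e.\ that $-5/2-\delta+(|\beta|+1)(1+\delta)=-3/2+|\beta|(1+\delta)$. That this holds identically is not an accident: it is precisely the feature of the $\jap{v}$-hierarchy discussed in point~(4) of the introduction, where each $\partial_v$ is charged a factor $(1+t)^{1+\delta}$.
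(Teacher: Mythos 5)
Your proposal is correct and follows exactly the paper's own argument: the paper likewise writes $\part_x=(1+t)^{-1}(Y-\part_v)$ and applies \lref{a_bound} to the $Y$-term and \lref{a_bound_dv} to the $\part_v$-term, with the same weight and time-exponent bookkeeping you carry out explicitly. The only difference is that you spell out the exponent cancellation $-\frac{5}{2}-\delta+(|\beta|+1)(1+\delta)=-\frac{3}{2}+|\beta|(1+\delta)$, which the paper leaves implicit.
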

\begin{proof}
We write $\part_x=(1+t)^{-1}(Y-\part_v)$. Now using \lref{a_bound} for the first term and \lref{a_bound_dv} for the second term, gives us the required lemma.
\end{proof}
\begin{lemma}\label{l.a_bound_d2x}
Let $\underline{|\alpha|\geq 2}$. If $|\alpha|+|\beta|+|\sigma|\leq 8$, then $$\norm{\jap{v}^{-2-\gamma}\der \a}_{L^\infty_xL^\infty_v}\leq \eps^{\frac{3}{4}}(1+t)^{-\frac{7}{2}+|\beta|(1+\delta)}.$$
If $9\leq |\alpha|+|\beta|+|\sigma|\leq 10$, then $$\norm{\jap{v}^{-2-\gamma}\der \a}_{L^2_xL^\infty_v}\leq \eps^{\frac{3}{4}}(1+t)^{-\frac{7}{2}+|\beta|(1+\delta)}.$$
\end{lemma}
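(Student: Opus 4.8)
The plan is to iterate the argument of \lref{a_bound_dx} a second time, which is possible precisely because $|\alpha|\geq 2$. Since $\part_x$, $\part_v$ and $Y$ all commute as differential operators at fixed $t$, I single out two of the $\part_x$ factors appearing in $\der=\part_x^\alpha\part_v^\beta Y^\sigma$ and replace each of them by $(1+t)^{-1}(Y-\part_v)$, exactly as in the proof of \lref{a_bound_dx}. This rewrites $\der\a$ as $(1+t)^{-2}$ times a finite sum of terms of the form $\derv{''}{''}{''}\a$ with $|\alpha''|=|\alpha|-2$, $|\beta''|=|\beta|+j$ and $|\sigma''|=|\sigma|+2-j$ for $j\in\{0,1,2\}$. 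In particular the total order $|\alpha''|+|\beta''|+|\sigma''|$ equals $|\alpha|+|\beta|+|\sigma|$, so each resulting term still lies in the same regime ($\leq 8$, respectively $9$--$10$) as the hypothesis, and one may invoke the appropriate sub-case of the earlier lemmas.

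I then estimate the three families of terms separately: for $j=0$ apply \lref{a_bound}; for $j=1$ (where $|\beta''|=|\beta|+1\geq 1$) apply \lref{a_bound_dv}; for $j=2$ (where $|\beta''|=|\beta|+2\geq 2$) apply \lref{a_bound_d2v}. Because $\jap{v}\geq 1$ we have $\jap{v}^{-2-\gamma}\leq \jap{v}^{-1-\gamma}\leq \jap{v}^{-\gamma}$, so the bounds of those lemmas hold a fortiori with the weight $\jap{v}^{-2-\gamma}$ in front, and collecting the time factors gives, for $j=0,1,2$ respectively, $(1+t)^{-2}(1+t)^{-\frac{3}{2}+|\beta|(1+\delta)}$, $(1+t)^{-2}(1+t)^{-\frac{5}{2}-\delta+(|\beta|+1)(1+\delta)}$ and $(1+t)^{-2}(1+t)^{-\frac{7}{2}-2\delta+(|\beta|+2)(1+\delta)}$. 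Each of these simplifies to $(1+t)^{-\frac{7}{2}+|\beta|(1+\delta)}$, which is exactly the claimed rate: the extra $\part_v$'s generated by the substitution are paid for by the $(1+\delta)$ of additional admissible growth carried in the $|\beta''|(1+\delta)$ exponents, while the two substitutions produce the $(1+t)^{-2}$ that upgrades $(1+t)^{-\frac{3}{2}}$ to $(1+t)^{-\frac{7}{2}}$. Summing the finitely many terms and absorbing the combinatorial constants into $\lesssim$ yields the $L^\infty_xL^\infty_v$ bound when $|\alpha|+|\beta|+|\sigma|\leq 8$ and the $L^2_xL^\infty_v$ bound when $9\leq |\alpha|+|\beta|+|\sigma|\leq 10$, using in the latter range the corresponding $L^2_x$ versions of \lref{a_bound}, \lref{a_bound_dv} and \lref{a_bound_d2v}.

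I do not expect a genuine obstacle here; the entire content is the bookkeeping already carried out for \lref{a_bound_dx}, performed twice. The one point deserving care is verifying that the substitution $\part_x=(1+t)^{-1}(Y-\part_v)$, applied twice, produces precisely the factor $(1+t)^{-2}$ needed and does not push any term outside the order range of the lemmas invoked, i.e.\ that $|\alpha''|+|\beta''|+|\sigma''|\leq 10$ and that the $|\beta''|\geq 1$, respectively $|\beta''|\geq 2$, constraints of \lref{a_bound_dv} and \lref{a_bound_d2v} are satisfied in the cases $j=1$ and $j=2$. Both are immediate from the description of the decomposition above.
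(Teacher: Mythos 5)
Your proposal is correct and is essentially the paper's own proof: the paper likewise writes two of the spatial derivatives as $(1+t)^{-1}(Y-\part_v)$, obtaining $(1+t)^{-2}$ times terms with $(|\alpha|-2,|\beta|+j,|\sigma|+2-j)$ for $j=0,1,2$, and then invokes \lref{a_bound}, \lref{a_bound_dv} and \lref{a_bound_d2v} respectively. Your bookkeeping of the time exponents and the observation that $\jap{v}^{-2-\gamma}\leq\jap{v}^{-1-\gamma}\leq\jap{v}^{-\gamma}$ match what the paper implicitly uses, so there is nothing to add.
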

\begin{proof}
\begin{align*}
|\der \a|&\lesssim (1+t)^{-2}\sum_{\substack{|\alpha'|=|\alpha|-2\\ |\sigma'|=|\sigma|+2}}|\derv{'}{}{'}\a|+(1+t)^{-2}\sum_{\substack{|\alpha'|=|\alpha|-2\\|\beta'|=|\beta|+1\\ |\sigma'|=|\sigma|+1}}|\derv{'}{}{'}\a|\\
&\quad+(1+t)^{-2}\sum_{\substack{|\alpha'|=|\alpha|-2\\ |\beta'|=|\beta|+2}}|\derv{'}{'}{}\a|.
\end{align*}
 Now using \lref{a_bound} for the first term, \lref{a_bound_dv} for the second term and \lref{a_bound_d2v} for the third term, gives us the desired result.
\end{proof}
\begin{lemma}\label{l.a_bound_Y}
If $|\alpha|+|\beta|+|\sigma|\leq 8$, then $$\norm{\jap{x-(1+t)v}^{-1}\jap{v}^{-1-\gamma}\der \a}_{L^\infty_xL^\infty_v}\leq \eps^{\frac{3}{4}}(1+t)^{-\frac{5}{2}+|\beta|(1+\delta)}.$$
If $9\leq |\alpha|+|\beta|+|\sigma|\leq 10$, then $$\norm{\jap{x-(1+t)v}^{-1}\jap{v}^{-1-\gamma}\der \a}_{L^2_xL^\infty_v}\leq \eps^{\frac{3}{4}}(1+t)^{-\frac{5}{2}+|\beta|(1+\delta)}.$$
\end{lemma}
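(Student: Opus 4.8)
The plan is to reduce to the single case $|\beta|=0$, and then to extract exactly one power of $|v-v_*|$ from the Landau kernel, trading it for the decay-producing weight $\jap{x-(1+t)v}^{-1}$ by means of the null-structure inequality already exploited in the proof of \cref{local_cor}.

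First I would dispose of the range $|\beta|\geq 1$, where the asserted bound is an immediate consequence of \lref{a_bound_dv}: since $\jap{x-(1+t)v}^{-1}\leq 1$ and $(1+t)^{-\frac52-\delta}\leq (1+t)^{-\frac52}$, that lemma is already stronger. So from now on $|\beta|=0$, i.e. $\der=\part^\alpha_x Y^\sigma$ with $|\alpha|+|\sigma|\leq 8$ (resp. $9\leq|\alpha|+|\sigma|\leq 10$), and the target decay is simply $(1+t)^{-5/2}$ with no $|\beta|$-growth.

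Next, starting from \eref{pw_bound_a}, which gives $|\der\a|(t,x,v)\lesssim\int|v-v_*|^{2+\gamma}|\der f|(t,x,v_*)\,\d v_*$, I would split $|v-v_*|^{2+\gamma}=|v-v_*|\cdot|v-v_*|^{1+\gamma}$. For the first factor I use $(1+t)(v-v_*)=(x-(1+t)v_*)-(x-(1+t)v)$, hence $|v-v_*|\leq (1+t)^{-1}(\jap{x-(1+t)v}+\jap{x-(1+t)v_*})$; for the second the crude bound $|v-v_*|^{1+\gamma}\lesssim\jap{v}^{1+\gamma}\jap{v_*}^{1+\gamma}$. Multiplying by the prefactor $\jap{x-(1+t)v}^{-1}\jap{v}^{-1-\gamma}$, the $\jap{v}^{1+\gamma}$ cancels, one has $\jap{x-(1+t)v}^{-1}(\jap{x-(1+t)v}+\jap{x-(1+t)v_*})\lesssim\jap{x-(1+t)v_*}^2$, and $\jap{v_*}^{1+\gamma}\leq\jap{v_*}^3$, which yields the pointwise bound
$$\jap{x-(1+t)v}^{-1}\jap{v}^{-1-\gamma}|\der\a|(t,x,v)\lesssim(1+t)^{-1}\norm{\jap{v}^3\jap{x-(1+t)v}^2\der f}_{L^1_v}(t,x),$$
whose right-hand side is free of $v$. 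Taking $L^\infty_v$ (trivial) and then $L^\infty_x$ when $|\alpha|+|\sigma|\leq 8$, or $L^2_x$ when $9\leq|\alpha|+|\sigma|\leq 10$, and invoking \lref{L1_v_bound_boot} with $|\beta|=0$, the last norm is $\lesssim\eps^{3/4}(1+t)^{-3/2}$, so the whole quantity is $\lesssim\eps^{3/4}(1+t)^{-5/2}=\eps^{3/4}(1+t)^{-\frac52+|\beta|(1+\delta)}$, as desired.

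The only delicate point — and the real content of the lemma — is that one must extract \emph{precisely one} factor of $|v-v_*|$: there is only a single $\jap{x-(1+t)v}^{-1}$ available, so two powers cannot be absorbed, whereas extracting none recovers merely the $(1+t)^{-3/2}$ of \lref{a_bound} and misses the needed extra $(1+t)^{-1}$. Being crude on the leftover $|v-v_*|^{1+\gamma}$ is harmless here exactly because the target velocity weight is only $\jap{v}^{-1-\gamma}$, one power weaker than in \lref{a_bound}, so no further null-structure gain is required to control the $\jap{v}$-growth. (For $|\beta|\geq 1$ the same argument run from \eref{pw_bound_der_a} would in fact gain an extra $(1+t)^{-1-\delta}$, but \lref{a_bound_dv} already suffices.)
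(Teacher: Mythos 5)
Your proposal is correct and follows essentially the same route as the paper: split $|v-v_*|^{2+\gamma}=|v-v_*|^{1+\gamma}\cdot|v-v_*|$, convert the single factor $|v-v_*|$ into $(1+t)^{-1}\bigl(\jap{x-(t+1)v}+\jap{x-(t+1)v_*}\bigr)$, absorb the weights, and conclude with \lref{L1_v_bound_boot}. The only cosmetic difference is your preliminary reduction to $|\beta|=0$ via \lref{a_bound_dv}; the paper skips this and runs the same estimate for general $\beta$, since \lref{L1_v_bound_boot} already supplies the $(1+t)^{|\beta|(1+\delta)}$ growth.
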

\begin{proof}
Using \eref{pw_bound_a} in \pref{pointwise_estimates_a} and the triangle inequality, we get
\begin{align*}
|\der \a|(t,x,v)&\lesssim \int |v-v_*|^{2+\gamma}|\der f|(t,x,v_*)\d v_*\\
&\lesssim \int|v-v_*|^{1+\gamma}\left(\left|v-\frac{x}{t+1}\right|+\left|v_*-\frac{x}{t+1}\right|\right)|\der f|\d v_*.
\end{align*}
Thus we have,
\begin{align*}
&\max_{i,j}\norm{\jap{x-(t+1)v}^{-1}\jap{v}^{-1-\gamma}\der \a}_{L^p_xL^\infty_v}\\
&\lesssim (1+t)^{-1}\norm{\int\jap{v_*}^{2}\jap{x-(t+1)v_*}|\der f|(t,x,v_*)\d v_*}_{L^p_xL^\infty_v}.
\end{align*}
where $p=2$ or $p=\infty$, depending on the number of derivatives falling on $\a$.

We finish off by using \lref{L1_v_bound_boot}.
\end{proof}
\begin{lemma}\label{l.a_bound_2Y}
If $|\alpha|+|\beta|+|\sigma|\leq 8$, then $$\norm{\jap{x-(1+t)v}^{-2}\jap{v}^{-\gamma}\der \a}_{L^\infty_xL^\infty_v}\leq \eps^{\frac{3}{4}}(1+t)^{-\frac{7}{2}+|\beta|(1+\delta)}.$$
If $9\leq |\alpha|+|\beta|+|\sigma|\leq 10$, then $$\norm{\jap{x-(1+t)v}^{-2}\jap{v}^{-\gamma}\der \a}_{L^2_xL^\infty_v}\leq \eps^{\frac{3}{4}}(1+t)^{-\frac{7}{2}+|\beta|(1+\delta)}.$$
\end{lemma}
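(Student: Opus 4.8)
The plan is to follow the proof of \lref{a_bound_Y} almost verbatim, the only change being that we extract \emph{two} powers of $|v-v_*|$ from the pointwise bound \eref{pw_bound_a} instead of one. Extracting an extra factor of $|v-v_*|$ and converting it into a distance to $\tfrac{x}{t+1}$ costs an extra $(1+t)^{-1}$, which is exactly what improves the time decay from $(1+t)^{-5/2}$ to $(1+t)^{-7/2}$ and lets us lower the velocity normalization from $\jap{v}^{-1-\gamma}$ to $\jap{v}^{-\gamma}$, while producing the second $\jap{x-(t+1)v}$ weight that the $\jap{x-(t+1)v}^{-2}$ on the left is designed to absorb.

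First I would start from \eref{pw_bound_a} of \pref{pointwise_estimates_a}, $\max_{i,j}|\der \a|(t,x,v)\lesssim \int |v-v_*|^{2+\gamma}|\der f|(t,x,v_*)\,\d v_*$, and apply the triangle inequality $|v-v_*|\le |v-\tfrac{x}{t+1}|+|v_*-\tfrac{x}{t+1}|$ together with $(a+b)^2\lesssim a^2+b^2$ to get $|v-v_*|^{2+\gamma}\lesssim |v-v_*|^{\gamma}\bigl(|v-\tfrac{x}{t+1}|^2+|v_*-\tfrac{x}{t+1}|^2\bigr)$. Then I would bound $|v-v_*|^{\gamma}\lesssim \jap{v}^{\gamma}\jap{v_*}^{\gamma}\le \jap{v}^{\gamma}\jap{v_*}$ using $\gamma<1$, and write $|v-\tfrac{x}{t+1}|^2=(1+t)^{-2}|x-(t+1)v|^2$ and likewise for $v_*$. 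This produces $\jap{v}^{-\gamma}|\der \a|\lesssim (1+t)^{-2}\int \jap{v_*}\bigl(|x-(t+1)v|^2+|x-(t+1)v_*|^2\bigr)|\der f|(t,x,v_*)\,\d v_*$.

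Next I would multiply through by $\jap{x-(t+1)v}^{-2}$, distributing the two weight powers: one is used to absorb the first term via $\jap{x-(t+1)v}^{-2}|x-(t+1)v|^2\le 1$, and for the second term one simply discards the weight, $\jap{x-(t+1)v}^{-2}\le 1$, leaving $|x-(t+1)v_*|^2\le \jap{x-(t+1)v_*}^2$ inside the $v_*$ integral. This gives $\jap{x-(t+1)v}^{-2}\jap{v}^{-\gamma}|\der \a|\lesssim (1+t)^{-2}\int \jap{v_*}\jap{x-(t+1)v_*}^2|\der f|(t,x,v_*)\,\d v_*$. Finally I would take $L^\infty_xL^\infty_v$ norms in the range $|\alpha|+|\beta|+|\sigma|\le 8$ and $L^2_xL^\infty_v$ norms in the range $9\le |\alpha|+|\beta|+|\sigma|\le 10$, bound $\jap{v_*}\le \jap{v_*}^3$ to match the moment appearing in \lref{L1_v_bound_boot}, and invoke that lemma in its $L^\infty_xL^1_v$ (resp. $L^2_xL^1_v$) form to produce $\eps^{3/4}(1+t)^{-3/2+|\beta|(1+\delta)}$; combined with the $(1+t)^{-2}$ prefactor this is exactly the claimed bound $\eps^{3/4}(1+t)^{-7/2+|\beta|(1+\delta)}$.

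I do not expect a genuine obstacle: this lemma is the ``two $Y$-derivatives'' sibling of \lref{a_bound_Y}, and the whole content is the identity $|v-v_*|^{2+\gamma}=|v-v_*|^{\gamma}|v-v_*|^2$ plus the observation that each converted factor of $|v-v_*|$ yields one $(1+t)^{-1}$ and one $\jap{x-(t+1)v_*}$. The only spot that needs care is the weight bookkeeping in the step just above (keeping one $\jap{x-(t+1)v}^{-2}$ to kill $|x-(t+1)v|^2$ and throwing the other away for the $v_*$ term, rather than attempting to trade $\jap{x-(t+1)v}^{-1}\jap{x-(t+1)v_*}$), and checking that $\gamma<1$ is precisely what lets $\jap{v_*}^{\gamma}$ be replaced by $\jap{v_*}$ so that the total $v_*$-moment stays at $\jap{v_*}^3$, within reach of \lref{L1_v_bound_boot}.
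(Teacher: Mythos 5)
Your proposal is correct and follows essentially the same route as the paper: split $|v-v_*|^{2+\gamma}\lesssim |v-v_*|^{\gamma}\bigl(|v-\tfrac{x}{t+1}|^2+|v_*-\tfrac{x}{t+1}|^2\bigr)$ via the triangle inequality, convert each factor into $(1+t)^{-2}$ times $\jap{x-(t+1)v}$ or $\jap{x-(t+1)v_*}$ weights, and close with \lref{L1_v_bound_boot} in the $L^\infty_x$ or $L^2_x$ form depending on the number of derivatives. The weight bookkeeping and the resulting bound $(1+t)^{-2}\cdot(1+t)^{-3/2+|\beta|(1+\delta)}$ match the paper's argument.
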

\begin{proof}
Using \eref{pw_bound_a} in \pref{pointwise_estimates_a} and the triangle inequality we have,
\begin{align*}
|\der \a|(t,x,v)&\lesssim \int |v-v_*|^{2+\gamma}|\der f|(t,x,v_*)\d v_*\\
&\lesssim \int|v-v_*|^{\gamma}\left(\left|v-\frac{x}{t+1}\right|^2+\left|v_*-\frac{x}{t+1}\right|^2\right)|\der f|\d v_*.
\end{align*}
Thus we have,
\begin{align*}
&\max_{i,j}\norm{\jap{x-(t+1)v}^{-2}\jap{v}^{-\gamma}\der \a}_{L^p_xL^\infty_v}\\
&\lesssim (1+t)^{-2}\norm{\int\jap{v_*}^{1}\jap{x-(t+1)v_*}^2|\der f|(t,x,v_*)\d v_*}_{L^p_xL^\infty_v}.
\end{align*}
where $p=2$ or $p=\infty$, depending on the number of derivatives falling on $\a$.
We finish off by using \lref{L1_v_bound_boot}.
\end{proof}
Now we collect some lemmas which bound the matrix $\der \a$ when we gain decay by mixing two different mechanisms. The proofs follow by appropriate combinations of \lref{a_bound_dv}, \lref{a_bound_dx} and \lref{a_bound_Y}.\\
\begin{lemma}\label{l.a_bound_dv_dx}
Let $\underline{|\alpha|\geq 1}$ and $\underline{|\beta|\geq 1}$. If $|\alpha|+|\beta|+|\sigma|\leq 8$, then $$\norm{\jap{v}^{-1-\gamma}\der \a}_{L^\infty_xL^\infty_v}\leq \eps^{\frac{3}{4}}(1+t)^{-\frac{7}{2}-\delta+|\beta|(1+\delta)}.$$
If $9\leq |\alpha|+|\beta|+|\sigma|\leq 10$, then $$\norm{\jap{v}^{-1-\gamma}\der \a}_{L^2_xL^\infty_v}\leq \eps^{\frac{3}{4}}(1+t)^{-\frac{7}{2}-\delta+|\beta|(1+\delta)}.$$
\end{lemma}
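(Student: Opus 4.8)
The plan is to reduce everything to Lemmas~\ref{l.a_bound_dv} and~\ref{l.a_bound_d2v} by exploiting the hypothesis $|\alpha|\geq 1$ exactly as in the proof of \lref{a_bound_dx}: peel off one of the spatial derivatives in $\part_x^\alpha$ and rewrite it via $\part_x=(1+t)^{-1}(Y-\part_v)$, which is legitimate precisely because $|\alpha|\geq 1$. This produces a sum of two types of terms, each carrying a prefactor $(1+t)^{-1}$: one in which the peeled $\part_x$ has been replaced by a $Y$ (so the multi-indices become $(\alpha',\beta,\sigma')$ with $|\alpha'|=|\alpha|-1$, $|\sigma'|=|\sigma|+1$), and one in which it has been replaced by a $\part_v$ (so the multi-indices become $(\alpha',\beta',\sigma)$ with $|\alpha'|=|\alpha|-1$, $|\beta'|=|\beta|+1$). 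In both cases the total order $|\alpha|+|\beta|+|\sigma|$ is unchanged, so each term stays in the same integrability regime ($L^\infty_x$ when the total order is $\leq 8$, $L^2_x$ when it is $9$ or $10$), and it suffices to treat the two types separately.

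For the first type the number of velocity derivatives is still $|\beta|\geq 1$, so \lref{a_bound_dv} applies verbatim and gives (in the $\leq 8$ case) the bound $\eps^{3/4}(1+t)^{-5/2-\delta+|\beta|(1+\delta)}$ for $\norm{\jap{v}^{-1-\gamma}\,\cdot\,}_{L^\infty_xL^\infty_v}$; multiplying by the prefactor $(1+t)^{-1}$ yields exactly $\eps^{3/4}(1+t)^{-7/2-\delta+|\beta|(1+\delta)}$, as required. For the second type the velocity-derivative count has increased to $|\beta|+1\geq 2$, so now \lref{a_bound_d2v} applies and yields $\norm{\jap{v}^{-\gamma}\,\cdot\,}_{L^\infty_xL^\infty_v}\lesssim \eps^{3/4}(1+t)^{-7/2-2\delta+(|\beta|+1)(1+\delta)}$. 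The single arithmetic identity to check is $-\tfrac72-2\delta+(|\beta|+1)(1+\delta)=-\tfrac52-\delta+|\beta|(1+\delta)$, so the prefactor $(1+t)^{-1}$ again brings this term to $\eps^{3/4}(1+t)^{-7/2-\delta+|\beta|(1+\delta)}$; and since $\jap{v}\geq 1$ we have $\jap{v}^{-1-\gamma}\leq\jap{v}^{-\gamma}$, so the stronger weight coming out of \lref{a_bound_d2v} is more than enough. Summing the two contributions gives the claimed estimate, and the case $9\leq|\alpha|+|\beta|+|\sigma|\leq 10$ is handled identically with $L^2_x$ in place of $L^\infty_x$ throughout.

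I do not expect a genuine obstacle here: the substance is already contained in \lref{a_bound_dv} and \lref{a_bound_d2v} (which themselves rest on \pref{pointwise_estimates_a} and \lref{L1_v_bound_boot}), and this lemma only combines the ``$\part_x$ costs a factor $(1+t)^{-1}$'' mechanism with the ``$\part_v$ costs a power of $\jap{v}$ but buys back $(1+t)^{-1-\delta}$'' mechanism. The only points requiring care are the bookkeeping of the $\delta$-dependent exponents above and the observation that peeling a single $\part_x$ preserves the total derivative count, hence the $L^p_x$ regime in which the two output lemmas must be invoked.
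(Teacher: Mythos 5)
Your argument is correct and is essentially the paper's intended proof: the paper only remarks that this lemma ``follows by appropriate combinations'' of the earlier coefficient bounds, and your peeling of one $\part_x=(1+t)^{-1}(Y-\part_v)$ (using \lref{a_bound_dv} for the $Y$-piece and \lref{a_bound_d2v} for the $\part_v$-piece, with the exponent identity and $\jap{v}^{-1-\gamma}\leq\jap{v}^{-\gamma}$) is exactly that combination, carried out in both the $L^\infty_x$ and $L^2_x$ regimes.
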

\begin{lemma}\label{l.a_bound_dv_Y}
Let $\underline{|\beta|\geq 1}$. If $|\alpha|+|\beta|+|\sigma|\leq 8$, then $$\norm{\jap{x-(t+1)v}^{-1}\jap{v}^{-\gamma}\der \a}_{L^\infty_xL^\infty_v}\leq \eps^{\frac{3}{4}}(1+t)^{-\frac{7}{2}-\delta+|\beta|(1+\delta)}.$$
If $9\leq |\alpha|+|\beta|+|\sigma|\leq 10$, then $$\norm{\jap{x-(t+1)v}^{-1}\jap{v}^{-\gamma}\der \a}_{L^2_xL^\infty_v}\leq \eps^{\frac{3}{4}}(1+t)^{-\frac{7}{2}-\delta+|\beta|(1+\delta)}.$$
\end{lemma}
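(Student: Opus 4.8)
The plan is to combine, in a single pass, the two single-mechanism estimates that precede this lemma: \lref{a_bound_dv} (which peels one velocity derivative off $\a$, gaining a factor $(1+t)^{-1-\delta}$ at the cost of lowering $|\beta|$ by one) and \lref{a_bound_Y} (which trades one power of $|v-v_*|$ for a $\jap{x-(t+1)v}$-weight and an explicit factor $(1+t)^{-1}$). Since the statement assumes $|\beta|\geq 1$, there is a velocity derivative available to peel, which is exactly what makes the two gains compatible.

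First I would use $|\beta|\geq 1$: writing $\part_v^\beta=\part_v^{\beta'}\part_{v_k}$ with $|\beta'|=|\beta|-1$ for an appropriate $k$, and noting that $\part_{v_k}$ commutes with $\part_x^\alpha$, with $Y^\sigma$, and through the velocity convolution defining $\a$, one has $\der\a=\derv{}{'}{}\part_{v_k}\a$. Hence \eref{pw_bound_der_a} of \pref{pointwise_estimates_a}, applied with the index $(\alpha,\beta',\sigma)$, gives
\begin{equation*}
|\der\a|(t,x,v)\lesssim \int |v-v_*|^{1+\gamma}\,|\derv{}{'}{}f|(t,x,v_*)\,\d v_*.
\end{equation*}
Next I would extract the null-structure gain exactly as in \lref{a_bound_Y}, but splitting off just one power of $|v-v_*|$: using $|v-v_*|\leq (1+t)^{-1}(\jap{x-(t+1)v}+\jap{x-(t+1)v_*})$ together with $|v-v_*|^\gamma\lesssim \jap{v}^\gamma\jap{v_*}^\gamma$ (valid since $\gamma\in[0,1)$), one gets
\begin{equation*}
|v-v_*|^{1+\gamma}\lesssim (1+t)^{-1}\,\jap{v}^\gamma\jap{v_*}^\gamma(\jap{x-(t+1)v}+\jap{x-(t+1)v_*}).
\end{equation*}
Dividing by $\jap{v}^\gamma\jap{x-(t+1)v}$, using $\jap{x-(t+1)v}\geq 1$ to bound $\jap{x-(t+1)v}^{-1}\jap{x-(t+1)v_*}\leq\jap{x-(t+1)v_*}$, and then absorbing $\jap{v_*}^\gamma\jap{x-(t+1)v_*}$ into $\jap{v_*}^3\jap{x-(t+1)v_*}^2$, the pointwise estimate becomes
\begin{equation*}
\jap{x-(t+1)v}^{-1}\jap{v}^{-\gamma}\,|\der\a|(t,x,v)\lesssim (1+t)^{-1}\,\norm{\jap{v}^3\jap{x-(t+1)v}^2\,\derv{}{'}{}f}_{L^1_v}(t,x),
\end{equation*}
the right-hand side being independent of $v$. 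Taking the (free) $L^\infty_v$ norm and then $L^\infty_x$ if $|\alpha|+|\beta|+|\sigma|\leq 8$, respectively $L^2_x$ if $9\leq|\alpha|+|\beta|+|\sigma|\leq 10$, and invoking \lref{L1_v_bound_boot} for $\derv{}{'}{}f$ (which carries at most $9$ derivatives), the right-hand side is $\lesssim\eps^{3/4}(1+t)^{-1}(1+t)^{-\frac32+|\beta'|(1+\delta)}$; since $|\beta'|=|\beta|-1$, the exponent equals $-\frac72-\delta+|\beta|(1+\delta)$, which is exactly the claimed rate.

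The argument is essentially bookkeeping — a straightforward composition of two already-established gains — so I do not expect a real obstacle. The one point requiring a little care is the borderline case $|\alpha|+|\beta|+|\sigma|=9$, in which $\derv{}{'}{}f$ carries only $8$ derivatives, so \lref{L1_v_bound_boot} controls the $L^1_v$ quantity above only in $L^\infty_x$ while here we want it in $L^2_x$; this is handled exactly as in \lref{a_bound_dv}, by applying \lref{L2_x} and \lref{exp_bound} and then the bootstrap assumption \eref{boot_assumption_1} directly, which closes because on $8$-derivative terms the weights $\nu_{\alpha,\beta',\sigma}$ and $\omega_{\alpha,\beta',\sigma}$ are at least $8$ and so dominate the $\jap{v}^3$ and $\jap{x-(t+1)v}^4$ that arise along the way — and the slack needed is precisely supplied by the hypothesis $|\beta|\geq 1$. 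The subcase $|\beta|=1$ is itself harmless: then $\beta'=0$ and $\derv{}{'}{}f=\part_x^\alpha Y^\sigma f$, still within the scope of \lref{L1_v_bound_boot}.
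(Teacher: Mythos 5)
Your argument is correct and is exactly the ``appropriate combination'' of \lref{a_bound_dv} and \lref{a_bound_Y} that the paper invokes without writing out: peel one $\part_v$ onto the kernel via \eref{pw_bound_der_a}, split a single factor of $|v-v_*|$ using $(t+1)|v-v_*|\leq |x-(t+1)v|+|x-(t+1)v_*|$ as in \lref{a_bound_Y}, and conclude with \lref{L1_v_bound_boot}; the exponent bookkeeping $-1-\tfrac32+(|\beta|-1)(1+\delta)=-\tfrac72-\delta+|\beta|(1+\delta)$ matches the statement. Your handling of the borderline case $|\alpha|+|\beta|+|\sigma|=9$ (where $\derv{}{'}{}f$ carries only $8$ derivatives, so one gets the $L^2_xL^1_v$ control directly from \lref{L2_x}, \lref{exp_bound} and \eref{boot_assumption_1} rather than from the stated case split of \lref{L1_v_bound_boot}) is a point the paper glosses over, and you resolve it correctly.
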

\begin{lemma}\label{l.a_bound_dx_Y}
Let $\underline{|\alpha|\geq 1}$. If $|\alpha|+|\beta|+|\sigma|\leq 8$, then $$\norm{\jap{x-(t+1)v}^{-1}\jap{v}^{-1-\gamma}\der \a}_{L^\infty_xL^\infty_v}\leq \eps^{\frac{3}{4}}(1+t)^{-\frac{7}{2}+|\beta|(1+\delta)}.$$
If $9\leq |\alpha|+|\beta|+|\sigma|\leq 10$, then $$\norm{\jap{x-(t+1)v}^{-1}\jap{v}^{-1-\gamma}\der \a}_{L^2_xL^\infty_v}\leq \eps^{\frac{3}{4}}(1+t)^{-\frac{7}{2}+|\beta|(1+\delta)}.$$
\end{lemma}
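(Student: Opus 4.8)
The plan is to convert one of the (at least one) $x$-derivatives into a combination of $Y$ and $v$ derivatives and then feed the result into the two mixed-mechanism bounds already established. Precisely, since $|\alpha|\geq 1$ we may write $\part_x=(1+t)^{-1}(Y-\part_v)$ on one of the factors in $\der\a$; because $\part_x$, $\part_v$ and $Y$ pairwise commute (all commutators among $\part_{x_i}$, $\part_{v_i}$, $Y_j=(t+1)\part_{x_j}+\part_{v_j}$ vanish), this expresses $\der\a$ as $(1+t)^{-1}$ times the difference of two terms: the first of the form $\derv{'}{}{'}\a$ with $|\alpha'|=|\alpha|-1$ and $|\sigma'|=|\sigma|+1$ (and $\beta$ unchanged), the second of the form $\derv{'}{'}{}\a$ with $|\alpha'|=|\alpha|-1$ and $|\beta'|=|\beta|+1$ (and $\sigma$ unchanged). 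In both terms the total number of derivatives is unchanged, so the dichotomy $|\alpha|+|\beta|+|\sigma|\leq 8$ versus $9\leq|\alpha|+|\beta|+|\sigma|\leq 10$, and hence the $L^\infty_x$ versus $L^2_x$ distinction, passes through unaltered.

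For the first term I would apply \lref{a_bound_Y}: it already carries the weight $\jap{x-(t+1)v}^{-1}\jap{v}^{-1-\gamma}$ and the decay $(1+t)^{-\frac{5}{2}+|\beta|(1+\delta)}$, so after multiplying by the prefactor $(1+t)^{-1}$ one gets exactly $(1+t)^{-\frac{7}{2}+|\beta|(1+\delta)}$. For the second term, $|\beta'|=|\beta|+1\geq 1$, so \lref{a_bound_dv_Y} applies and yields the weight $\jap{x-(t+1)v}^{-1}\jap{v}^{-\gamma}$ with decay $(1+t)^{-\frac{7}{2}-\delta+(|\beta|+1)(1+\delta)}=(1+t)^{-\frac{5}{2}+|\beta|(1+\delta)}$, the $-\delta$ being absorbed by the shift $|\beta|\mapsto|\beta|+1$; again the prefactor $(1+t)^{-1}$ brings this to $(1+t)^{-\frac{7}{2}+|\beta|(1+\delta)}$. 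Since $\jap{v}^{-1-\gamma}\leq\jap{v}^{-\gamma}$, the second term is also controlled with the target weight $\jap{x-(t+1)v}^{-1}\jap{v}^{-1-\gamma}$. Adding the two contributions via the triangle inequality gives the lemma, with the norm being $L^\infty_xL^\infty_v$ when the order is $\leq 8$ and $L^2_xL^\infty_v$ when it is $9$ or $10$, in line with \lref{a_bound_Y} and \lref{a_bound_dv_Y}.

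I do not expect a genuine obstacle here: the argument is pure bookkeeping, and the only point worth double-checking is that the factor $(1+t)^{-1}$ produced by $\part_x=(1+t)^{-1}(Y-\part_v)$ exactly compensates the extra $(1+t)^{1+\delta}$ cost of turning a $\part_x$ into a $\part_v$ inside \lref{a_bound_dv_Y} (equivalently, inside \lref{L1_v_bound_boot}). As an alternative that bypasses \lref{a_bound_dv_Y}, one could return to the pointwise estimate \eref{pw_bound_a}, split $|v-v_*|^{2+\gamma}=|v-v_*|^{1+\gamma}\,|v-v_*|$ and bound $|v-v_*|\lesssim(1+t)^{-1}(|x-(t+1)v|+|x-(t+1)v_*|)$, then peel the extra $\part_x$ as above and finish with \lref{L1_v_bound_boot}; this makes transparent that the two gained powers of $(1+t)^{-1}$ — one from the null structure of $a_{ij}$ and one from the spatial derivative — are precisely what upgrades the $(1+t)^{-\frac{5}{2}}$ of \lref{a_bound_Y} to the $(1+t)^{-\frac{7}{2}}$ claimed here.
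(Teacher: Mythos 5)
Your proposal is correct and is essentially the argument the paper intends: the paper gives no explicit proof of this lemma, stating only that the mixed-mechanism bounds ``follow by appropriate combinations of \lref{a_bound_dv}, \lref{a_bound_dx} and \lref{a_bound_Y},'' and your decomposition $\part_x=(1+t)^{-1}(Y-\part_v)$ (the mechanism of \lref{a_bound_dx}) followed by \lref{a_bound_Y} on the $Y$-term and \lref{a_bound_dv_Y} on the $\part_v$-term is precisely such a combination, with the weight and decay bookkeeping (including $\jap{v}^{-1-\gamma}\leq\jap{v}^{-\gamma}$ and the preserved derivative count governing the $L^\infty_x$ versus $L^2_x$ dichotomy) checking out.
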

\begin{lemma}\label{l.av_bound}
If $|\alpha|+|\beta|+|\sigma|\leq 8$, then $$\norm{\jap{v}^{-1-\gamma}\der \left(\a \frac{v_i}{\jap{v}}\right)}_{L^\infty_xL^\infty_v}\leq \eps^{\frac{3}{4}}(1+t)^{-\frac{3}{2}+|\beta|(1+\delta)}.$$
If $9\leq |\alpha|+|\beta|+|\sigma|\leq 10$, then $$\norm{\jap{v}^{-1-\gamma}\der \left(\a \frac{v_i}{\jap{v}}\right)}_{L^2_xL^\infty_v}\leq \eps^{\frac{3}{4}}(1+t)^{-\frac{3}{2}+|\beta|(1+\delta)}.$$
\end{lemma}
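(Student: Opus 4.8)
The plan is to reproduce the argument of \lref{a_bound} almost verbatim, replacing the pointwise estimate \eref{pw_bound_a} by \eref{pw_bound_a_v} from \pref{pointwise_estimates_a}. Concretely, \eref{pw_bound_a_v} gives
$$\max_j\left|\der\left(\a\frac{v_i}{\jap{v}}\right)\right|(t,x,v)\lesssim \int |v-v_*|^{1+\gamma}\jap{v_*}|\der f|(t,x,v_*)\d v_*.$$
First I would use the elementary inequality $|v-v_*|^{1+\gamma}\lesssim \jap{v}^{1+\gamma}\jap{v_*}^{1+\gamma}$ (valid since $0\le 1+\gamma\le 2$) to move all the velocity growth onto $\jap{v}^{1+\gamma}$, obtaining
$$\left|\der\left(\a\frac{v_i}{\jap{v}}\right)\right|(t,x,v)\lesssim \jap{v}^{1+\gamma}\int \jap{v_*}^{2+\gamma}|\der f|(t,x,v_*)\d v_*.$$

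Dividing by $\jap{v}^{1+\gamma}$ and taking the $L^\infty_xL^\infty_v$ norm (resp. the $L^2_xL^\infty_v$ norm) then bounds the left-hand side by $\norm{\jap{v}^{2+\gamma}\der f}_{L^\infty_xL^1_v}$ (resp. $\norm{\jap{v}^{2+\gamma}\der f}_{L^2_xL^1_v}$). Since $\gamma<1$ we have $2+\gamma<3$, so this is in turn controlled by $\norm{\jap{v}^3\jap{x-(t+1)v}^2\der f}_{L^\infty_xL^1_v}$ (resp. its $L^2_x$ analogue), and an application of \lref{L1_v_bound_boot} --- in the range $|\alpha|+|\beta|+|\sigma|\le 8$ for the $L^\infty_x$ estimate and $9\le|\alpha|+|\beta|+|\sigma|\le 10$ for the $L^2_x$ estimate --- yields precisely $\eps^{\frac{3}{4}}(1+t)^{-\frac{3}{2}+|\beta|(1+\delta)}$, which is the claim.

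There is no genuine obstacle here; the only thing to watch is the total power of $\jap{v_*}$ that lands on $\der f$, which must not exceed the $3$ permitted by \lref{L1_v_bound_boot}. This is exactly where $\gamma\in[0,1)$ enters: one spends $1+\gamma$ powers of $\jap{v_*}$ to absorb $|v-v_*|^{1+\gamma}$ and picks up one more from the explicit $\jap{v_*}$ in \eref{pw_bound_a_v}, for a total of $2+\gamma<3$. After that the proof is purely mechanical: split into the two multi-index ranges dictated by \lref{L1_v_bound_boot} and read off the stated decay, which is identical to the rate obtained in \lref{a_bound} since the net $\jap{v}$-budget is the same.
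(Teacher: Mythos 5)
Your proof is correct and essentially identical to the paper's: both start from the pointwise bound \eref{pw_bound_a_v}, absorb $|v-v_*|^{1+\gamma}$ via $\jap{v}^{1+\gamma}\jap{v_*}^{1+\gamma}$ to reduce to $\norm{\jap{v}^{2+\gamma}\der f}_{L^p_xL^1_v}$ with $p=\infty$ or $p=2$ according to the derivative count, and conclude with \lref{L1_v_bound_boot}. No further comment needed.
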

\begin{proof}
Using \eref{pw_bound_a_v} from \pref{pointwise_estimates_a} we get,
$$\max_{j}\left|\der \left(\a \frac{v_i}{\jap{v}}\right)\right|\lesssim  \int \jap{v}^{1+\gamma}\jap{v_*}^{2+\gamma}|\der f|(t,x,v_*)\d v_*.$$
Thus we get,
$$\norm{\jap{v}^{-1-\gamma}\der \left(\a \frac{v_i}{\jap{v}}\right)}_{L^p_xL^\infty_v}\lesssim \norm{\jap{v}^3\der f}_{L^p_xL^1_v}.$$
where, again, $p=2$ or $p=\infty$.\\
The required lemma now follows from \lref{L1_v_bound_boot}.
\end{proof}
\begin{lemma}\label{l.av_bound_dv}
Let $|\beta|\geq 1$. If $|\alpha|+|\beta|+|\sigma|\leq 8$, then $$\norm{\jap{v}^{-\gamma}\der \left(\a \frac{v_i}{\jap{v}}\right)}_{L^\infty_xL^\infty_v}\leq \eps^{\frac{3}{4}}(1+t)^{-\frac{5}{2}-\delta+|\beta|(1+\delta)}.$$
If $9\leq |\alpha|+|\beta|+|\sigma|\leq 10$, then $$\norm{\jap{v}^{-\gamma}\der \left(\a \frac{v_i}{\jap{v}}\right)}_{L^2_xL^\infty_v}\leq \eps^{\frac{3}{4}}(1+t)^{-\frac{5}{2}-\delta+|\beta|(1+\delta)}.$$
\end{lemma}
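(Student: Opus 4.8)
The plan is to mirror the proofs of \lref{av_bound} and \lref{a_bound_dv}: since $|\beta|\geq 1$, peel one velocity derivative off the expression so that the sharper pointwise estimate \eref{pw_bound_der_a_v} becomes applicable, and then feed the result into \lref{L1_v_bound_boot}.

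Concretely, I would write $\der\left(\a\frac{v_i}{\jap v}\right)=\derv{}{'}{}\part_{v_k}\left(\a\frac{v_i}{\jap v}\right)$ for some component $k$ and a multi-index $\beta'$ with $|\beta'|=|\beta|-1$, keeping $\alpha$ and $\sigma$ unchanged. Applying \eref{pw_bound_der_a_v} from \pref{pointwise_estimates_a} then gives
$$\left|\der\left(\a\frac{v_i}{\jap v}\right)\right|(t,x,v)\lesssim \jap v^\gamma\int\jap{v_*}^{2+\gamma}|\derv{}{'}{}f|(t,x,v_*)\d v_*.$$
Since $\gamma\in[0,1)$ we have $\jap{v_*}^{2+\gamma}\lesssim\jap{v_*}^3$, so taking the essential supremum in $v$ and then the $L^p_x$ norm yields
$$\norm{\jap v^{-\gamma}\der\left(\a\frac{v_i}{\jap v}\right)}_{L^p_xL^\infty_v}\lesssim \norm{\jap v^3\,\derv{}{'}{}f}_{L^p_xL^1_v},$$
where $p=\infty$ when $|\alpha|+|\beta|+|\sigma|\leq 8$ and $p=2$ when $9\leq|\alpha|+|\beta|+|\sigma|\leq 10$, so that the reduced derivative count $|\alpha|+|\beta'|+|\sigma|$ falls in the range handled by the corresponding case of \lref{L1_v_bound_boot}.

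To conclude, I would apply \lref{L1_v_bound_boot}: since $\jap v^3\,\derv{}{'}{}f\leq \jap v^3\jap{x-(t+1)v}^2\,\derv{}{'}{}f$, that lemma bounds the right-hand side by $\eps^{\frac{3}{4}}(1+t)^{-\frac{3}{2}+|\beta'|(1+\delta)}$, and substituting $|\beta'|=|\beta|-1$ turns the exponent into $-\frac{3}{2}+(|\beta|-1)(1+\delta)=-\frac{5}{2}-\delta+|\beta|(1+\delta)$, exactly the exponent claimed. I do not expect a genuine difficulty here; this is a routine combination of \pref{pointwise_estimates_a} with the bootstrap-consequence \lref{L1_v_bound_boot}. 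The only points needing care are the multi-index bookkeeping when removing $\part_{v_k}$ — in particular picking the correct spatial exponent $p$ so that \lref{L1_v_bound_boot} is invoked in the appropriate regime — and the elementary inequality $2+\gamma<3$ used to absorb the $v_*$-weight produced by \eref{pw_bound_der_a_v} into the $\jap{v_*}^3$ weight appearing in \lref{L1_v_bound_boot}.
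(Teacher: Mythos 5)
Your proposal is correct and is essentially the paper's own proof: the paper proves this lemma exactly as it proves \lref{a_bound_dv}, i.e.\ by peeling one velocity derivative so that \eref{pw_bound_der_a_v} from \pref{pointwise_estimates_a} applies with $|\beta'|=|\beta|-1$, and then invoking \lref{L1_v_bound_boot}, which yields the exponent $-\frac{3}{2}+(|\beta|-1)(1+\delta)=-\frac{5}{2}-\delta+|\beta|(1+\delta)$. The only cosmetic caveat (present implicitly in the paper as well) is that when $|\alpha|+|\beta|+|\sigma|=9$ the reduced count is $8$, so for the $L^2_x$ conclusion one should note that the $L^2_xL^1_v$ bound of \lref{L1_v_bound_boot} also holds at count $8$ (it follows directly from \lref{L2_x}, \lref{exp_bound} and \eref{boot_assumption_1}), rather than literally citing its $9\leq\cdot\leq 10$ case.
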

\begin{proof}
The proof is the same as \lref{a_bound_dv}, with the change that we use \eref{pw_bound_der_a_v}.
\end{proof}
\begin{lemma}\label{l.av_bound_dx}
Let $|\alpha|\geq 1$. If $|\alpha|+|\beta|+|\sigma|\leq 8$, then $$\norm{\jap{v}^{-1-\gamma}\der \left(\a \frac{v_i}{\jap{v}}\right)}_{L^\infty_xL^\infty_v}\leq \eps^{\frac{3}{4}}(1+t)^{-\frac{5}{2}+|\beta|(1+\delta)}.$$
If $9\leq |\alpha|+|\beta|+|\sigma|\leq 10$, then $$\norm{\jap{v}^{-1-\gamma}\der \left(\a \frac{v_i}{\jap{v}}\right)}_{L^2_xL^\infty_v}\leq \eps^{\frac{3}{4}}(1+t)^{-\frac{5}{2}+|\beta|(1+\delta)}.$$
\end{lemma}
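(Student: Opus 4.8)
The plan is to mirror the proof of \lref{a_bound_dx}, exploiting that $|\alpha|\geq 1$ to convert one of the $\part_x$ factors in $\der$ into a factor of $(1+t)^{-1}$ via $\part_x=(1+t)^{-1}(Y-\part_v)$. This gives
\[
\der\Bigl(\a\,\tfrac{v_i}{\jap{v}}\Bigr)=(1+t)^{-1}\Bigl(\derv{'}{}{'}\bigl(\a\,\tfrac{v_i}{\jap{v}}\bigr)-\derv{'}{'}{}\bigl(\a\,\tfrac{v_i}{\jap{v}}\bigr)\Bigr),
\]
where in the first term $|\alpha'|=|\alpha|-1$ and $|\sigma'|=|\sigma|+1$, and in the second $|\alpha'|=|\alpha|-1$ and $|\beta'|=|\beta|+1$. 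In either term the total number of derivatives is unchanged, so the same $L^\infty_xL^\infty_v$ / $L^2_xL^\infty_v$ dichotomy (according to whether $|\alpha|+|\beta|+|\sigma|\leq 8$ or $\geq 9$) carries over.

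For the first term I would apply \lref{av_bound} directly: the newly created derivative is a $Y$, which is already included in $\der$, and $|\beta|$ is unchanged, so we obtain $(1+t)^{-1}\cdot\eps^{3/4}(1+t)^{-3/2+|\beta|(1+\delta)}=\eps^{3/4}(1+t)^{-5/2+|\beta|(1+\delta)}$, exactly the claimed bound. For the second term, now $|\beta'|=|\beta|+1\geq 1$, so \lref{av_bound_dv} applies; since $\jap{v}^{-1-\gamma}\leq\jap{v}^{-\gamma}$ the slightly heavier weight there is harmless, and it yields $(1+t)^{-1}\cdot\eps^{3/4}(1+t)^{-5/2-\delta+(|\beta|+1)(1+\delta)}$. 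The point is that $(|\beta|+1)(1+\delta)-\delta=|\beta|(1+\delta)+1$, so the exponent collapses to $-5/2+|\beta|(1+\delta)$, again matching the claim. Summing the two contributions gives the stated estimate.

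There is no real obstacle here; the only thing requiring care is the exponent arithmetic in the $\part_v$ term, where the extra factor $1+\delta$ generated by incrementing $|\beta|$ must precisely cancel the $-\delta$ gap between \lref{av_bound} and \lref{av_bound_dv}. This cancellation is exactly what makes the single power of $(1+t)^{-1}$ gained from $\part_x=(1+t)^{-1}(Y-\part_v)$ enough to close the estimate, and it is the same mechanism already used in \lref{a_bound_dx}.
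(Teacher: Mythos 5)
Your proposal is correct and follows essentially the same route as the paper: the paper's proof of this lemma is exactly the argument of \lref{a_bound_dx} (writing $\part_x=(1+t)^{-1}(Y-\part_v)$ and invoking the corresponding bounds, here \lref{av_bound} for the $Y$ term and \lref{av_bound_dv} for the $\part_v$ term), with the pointwise estimate \eref{pw_bound_a_v} in place of \eref{pw_bound_a}. Your exponent bookkeeping, including the cancellation $(|\beta|+1)(1+\delta)-\delta=|\beta|(1+\delta)+1$ and the harmless weight relaxation $\jap{v}^{-1-\gamma}\leq\jap{v}^{-\gamma}$, matches what the paper leaves implicit.
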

\begin{proof}
The proof is the same as \lref{a_bound_dx} but we use \eref{pw_bound_a_v} instead of \eref{pw_bound_a}.
\end{proof}
\begin{lemma}\label{l.av_bound_Y}
If $|\alpha|+|\beta|+|\sigma|\leq 8$, then $$\norm{\jap{x-(t+1)v}^{-1}\jap{v}^{-\gamma}\der \left(\a \frac{v_i}{\jap{v}}\right)}_{L^\infty_xL^\infty_v}\leq \eps^{\frac{3}{4}}(1+t)^{-\frac{5}{2}+|\beta|(1+\delta)}.$$
If $9\leq |\alpha|+|\beta|+|\sigma|\leq 10$, then $$\norm{\jap{x-(t+1)v}^{-1}\jap{v}^{-\gamma}\der \left(\a \frac{v_i}{\jap{v}}\right)}_{L^2_xL^\infty_v}\leq \eps^{\frac{3}{4}}(1+t)^{-\frac{5}{2}+|\beta|(1+\delta)}.$$
\end{lemma}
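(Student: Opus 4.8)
The plan is to run the same argument as in the proof of \lref{a_bound_Y}, but starting from the pointwise bound \eref{pw_bound_a_v} of \pref{pointwise_estimates_a} instead of \eref{pw_bound_a}, so that the factor $v_i/\jap{v}$ is already built in. Concretely, I would begin with
\[
\max_j\left|\der\left(\a\,\frac{v_i}{\jap{v}}\right)\right|(t,x,v)\lesssim \int |v-v_*|^{1+\gamma}\jap{v_*}\,|\der f|(t,x,v_*)\,\d v_*.
\]

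The key step is to split the kernel as $|v-v_*|^{1+\gamma}=|v-v_*|\cdot|v-v_*|^{\gamma}$ and handle the two factors by different mechanisms: for the single power $|v-v_*|$ I would invoke the null-structure inequality $|v-v_*|\le (1+t)^{-1}\left(|x-(t+1)v|+|x-(t+1)v_*|\right)\lesssim (1+t)^{-1}\left(\jap{x-(t+1)v}+\jap{x-(t+1)v_*}\right)$, while for $|v-v_*|^{\gamma}$ I would just use $|v-v_*|^{\gamma}\lesssim \jap{v}^{\gamma}\jap{v_*}^{\gamma}$. Dividing by $\jap{x-(t+1)v}\jap{v}^{\gamma}$, discarding the favorable factor $\jap{x-(t+1)v}^{-1}\le 1$, and using $\gamma\le 1$ together with $1+\jap{x-(t+1)v_*}\lesssim \jap{x-(t+1)v_*}^{2}$ and $\jap{v_*}^{1+\gamma}\le\jap{v_*}^{3}$, this yields
\[
\jap{x-(t+1)v}^{-1}\jap{v}^{-\gamma}\left|\der\left(\a\,\frac{v_i}{\jap{v}}\right)\right|\lesssim (1+t)^{-1}\int \jap{v_*}^{3}\jap{x-(t+1)v_*}^{2}\,|\der f|(t,x,v_*)\,\d v_*.
\]

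To finish I would take the $L^\infty_xL^\infty_v$ norm when $|\alpha|+|\beta|+|\sigma|\le 8$ and the $L^2_xL^\infty_v$ norm when $9\le |\alpha|+|\beta|+|\sigma|\le 10$; since the right-hand side is independent of the outer velocity, its $L^p_xL^\infty_v$ norm equals $(1+t)^{-1}\norm{\jap{v}^{3}\jap{x-(t+1)v}^{2}\der f}_{L^p_xL^1_v}$, and \lref{L1_v_bound_boot} bounds this by $\eps^{3/4}(1+t)^{-5/2+|\beta|(1+\delta)}$, which is exactly the claimed estimate.

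There is no real obstacle here; the only thing requiring care is the bookkeeping of powers, namely that exactly one power of $|v-v_*|$ is spent on the null structure (so that we extract the full $(1+t)^{-1}$ gain and the trade of one $\jap{v}$-weight for a $\jap{x-(t+1)v}$-weight), while the residual $\jap{v_*}$ and $\jap{x-(t+1)v_*}$ powers on the $v_*$-side stay inside the range that \lref{L1_v_bound_boot} controls.
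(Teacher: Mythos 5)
Your proposal is correct and is essentially the paper's own argument: the paper proves this lemma by repeating the proof of \lref{a_bound_Y} with \eref{pw_bound_a} replaced by \eref{pw_bound_a_v}, i.e. spending one power of $|v-v_*|$ on the null-structure inequality to gain $(1+t)^{-1}$ and trade a $\jap{v}$-weight for a $\jap{x-(t+1)v_*}$-weight, bounding the leftover $|v-v_*|^{\gamma}\jap{v_*}$ by $\jap{v}^{\gamma}\jap{v_*}^{1+\gamma}$, and closing with \lref{L1_v_bound_boot}. The only (harmless) difference is your slightly more generous bookkeeping of the $\jap{v_*}$ and $\jap{x-(t+1)v_*}$ powers, which stays within what \lref{L1_v_bound_boot} controls.
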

\begin{proof}
The proof is the same as \lref{a_bound_Y}, with the change that we use \eref{pw_bound_a_v}.
\end{proof}
\begin{lemma}\label{l.a2v_bound}
If $|\alpha|+|\beta|+|\sigma|\leq 8$, then $$\norm{\jap{v}^{-\gamma}\der\left(\a \frac{v_iv_j}{\jap{v}^2}\right)}_{L^\infty_xL^\infty_v}\leq \eps^{\frac{3}{4}}(1+t)^{-\frac{3}{2}+|\beta|(1+\delta)}.$$
If $9\leq |\alpha|+|\beta|+|\sigma|\leq 10$, then $$\norm{\jap{v}^{-\gamma}\der \left(\a \frac{v_iv_j}{\jap{v}^2}\right)}_{L^2_xL^\infty_v}\leq \eps^{\frac{3}{4}}(1+t)^{-\frac{3}{2}+|\beta|(1+\delta)}.$$
\end{lemma}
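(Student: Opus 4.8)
The plan is to repeat, almost verbatim, the argument used for \lref{a_bound} and \lref{av_bound}, now feeding in the pointwise bound \eref{pw_bound_a_2v} from \pref{pointwise_estimates_a} in place of \eref{pw_bound_a} or \eref{pw_bound_a_v}. The structural work has already been done inside that proposition: although $a_{ij}(v-v_*)v_iv_j$ grows like $\jap{v}^{4+\gamma}$ in $v$ at face value, the fact that $a_{ij}$ annihilates the direction $v-v_*$ (the computation there reduces it to $|v-v_*|^\gamma(|v|^2|v_*|^2-(v\cdot v_*)^2)$, i.e.\ $|v-v_*|^\gamma|v\times v_*|^2$) combined with the division by $\jap{v}^2$ leaves only $\jap{v}^\gamma$ of $v$-growth in $\a\frac{v_iv_j}{\jap{v}^2}$ and in each of its $\der$-derivatives, at the cost of a fixed polynomial weight in $v_*$ inside the convolution against $\der f$.

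Concretely, I would start from
\begin{equation*}
\left|\der\left(\a\frac{v_iv_j}{\jap{v}^2}\right)\right|(t,x,v)\lesssim \jap{v}^\gamma\int\jap{v_*}^4\,|\der f|(t,x,v_*)\,\d v_*,
\end{equation*}
divide by $\jap{v}^\gamma$, note that the right-hand side no longer depends on $v$ so that taking the $L^\infty_v$ norm is free and produces $\norm{\jap{v}^4\der f}_{L^1_v}(t,x)$, and then take the $L^\infty_x$ norm when $|\alpha|+|\beta|+|\sigma|\le 8$ and the $L^2_x$ norm when $9\le|\alpha|+|\beta|+|\sigma|\le 10$:
\begin{equation*}
\norm{\jap{v}^{-\gamma}\der\left(\a\frac{v_iv_j}{\jap{v}^2}\right)}_{L^p_xL^\infty_v}\lesssim \norm{\jap{v}^4\der f}_{L^p_xL^1_v},\qquad p\in\{2,\infty\}.
\end{equation*}
The conclusion then follows from \lref{L1_v_bound_boot}. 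The one bookkeeping point is that \lref{L1_v_bound_boot} is phrased with the weight $\jap{v}^3$ rather than $\jap{v}^4$; this is harmless because $g=e^{d(t)\jap{v}}f$ and $\jap{v}^ne^{-d(t)\jap{v}}\lesssim_n 1$, so its proof via \lref{exp_bound} (which holds for every $l\in\N$) goes through unchanged with $\jap{v}^3$ replaced by any fixed power of $\jap{v}$.

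Since $\frac{v_iv_j}{\jap{v}^2}$ carries no velocity derivative and no $\jap{x-(t+1)v}$-type factor, there is no additional time decay available, and the bound decays at exactly the rate $(1+t)^{-\frac{3}{2}+|\beta|(1+\delta)}$, identical to \lref{a_bound} and \lref{av_bound}, with the $|\beta|$-loss inherited from the $(1+t)^{|\beta|(1+\delta)}$ in \lref{L1_v_bound_boot}. I do not anticipate any genuine obstacle here: the entire content is the velocity-weight cancellation handled by \pref{pointwise_estimates_a}, followed by the now-routine $L^p_xL^1_v\to L^p_xL^\infty_v$ passage used throughout this section.
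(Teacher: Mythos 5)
Your proposal is correct and is essentially the paper's own proof: the paper disposes of this lemma by repeating the argument of \lref{a_bound} with the pointwise bound \eref{pw_bound_a_2v} in place of \eref{pw_bound_a}, reducing to $\norm{\jap{v}^4\der f}_{L^p_xL^1_v}$ and concluding via \lref{L1_v_bound_boot}. Your bookkeeping remark that \lref{L1_v_bound_boot} must be used with the weight $\jap{v}^4$ rather than the stated $\jap{v}^3$ (harmless, since \lref{exp_bound} holds for every $l$ and the power is absorbed by the exponential) is accurate and addresses a detail the paper leaves implicit.
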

\begin{proof}
The proof proceeds in the same way as \lref{a_bound} but instead of \eref{pw_bound_a}, we use \eref{pw_bound_a_2v}.
\end{proof}
\begin{lemma}\label{l.c_bound}
If $|\alpha|+|\beta|+|\sigma|\leq 8$, then $$\norm{\jap{v}^{-\gamma}\der \cm}_{L^\infty_xL^\infty_v}\leq \eps^{\frac{3}{4}}(1+t)^{-\frac{3}{2}+|\beta|(1+\delta)}.$$
If $9\leq |\alpha|+|\beta|+|\sigma|\leq 10$, then $$\norm{\jap{v}^{-\gamma}\der \cm}_{L^2_xL^\infty_v}\leq \eps^{\frac{3}{4}}(1+t)^{-\frac{7}{2}-2\delta+|\beta|(1+\delta)}.$$
\end{lemma}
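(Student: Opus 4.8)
I would split the argument by the number of derivatives, since only the low-order range follows directly from a pointwise convolution bound for $\cm$.

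\emph{The kernel and the low-order range.} Since $a_{ij}$ is homogeneous of degree $\gamma+2$ and rotationally covariant, $c=\part^2_{z_iz_j}a_{ij}$ is a scalar, rotationally invariant function homogeneous of degree $\gamma$, hence $|c(z)|\lesssim|z|^\gamma$ (in fact $c(z)=-2(\gamma+3)|z|^\gamma$ by a direct computation). Because $c$ is independent of $t$ and $x$, every derivative in $\der\cm=\der(c*f)$ falls on $f$, so, using $\gamma\in[0,1)$ and $|v-v_*|\lesssim\jap{v}\jap{v_*}$,
\[
|\der\cm|(t,x,v)\lesssim\int|v-v_*|^\gamma|\der f|(t,x,v_*)\,\d{v_*}\lesssim\jap{v}^\gamma\int\jap{v_*}^3|\der f|(t,x,v_*)\,\d{v_*},
\]
that is, $\jap{v}^{-\gamma}|\der\cm|\lesssim\norm{\jap{v}^3\der f}_{L^1_v}$ pointwise. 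For $|\alpha|+|\beta|+|\sigma|\leq8$ I would then take $L^\infty_x$ norms and bound
\[
\norm{\jap{v}^{-\gamma}\der\cm}_{L^\infty_xL^\infty_v}\lesssim\norm{\jap{v}^3\der f}_{L^\infty_xL^1_v}\leq\norm{\jap{v}^3\jap{x-(t+1)v}^2\der f}_{L^\infty_xL^1_v},
\]
and conclude with \lref{L1_v_bound_boot}, which gives exactly $\eps^{3/4}(1+t)^{-3/2+|\beta|(1+\delta)}$.

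\emph{The top-order range.} For $9\leq|\alpha|+|\beta|+|\sigma|\leq10$ the same chain, now using the $L^2_x$ half of \lref{L1_v_bound_boot}, only produces $(1+t)^{-3/2+|\beta|(1+\delta)}$, and we need the extra factor $(1+t)^{-2-2\delta}$. For this I would use the identity $\cm=\part_{v_i}\part_{v_j}\a$, which follows from $\part_{v_i}\part_{v_j}(a_{ij}*f)=(\part^2_{z_iz_j}a_{ij})*f=c*f$: at the level of $\der$, $\cm$ therefore carries two velocity derivatives more than $\a$, which opens the way to the mechanism of \lref{a_bound_d2v}. Peeling two $\part_v$'s off through \eref{pw_bound_2der_a} trades a $\jap{v}^2$ weight on $\a$ for the kernel $c$, while the surviving $g$-factor has its $\beta$-index lowered by two, so \lref{L1_v_bound_boot} now contributes $(1+t)^{(|\beta|-2)(1+\delta)}$ rather than $(1+t)^{|\beta|(1+\delta)}$ --- which is precisely the missing $(1+t)^{-2-2\delta}$.

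\emph{Main obstacle.} The delicate step is respecting the order-$10$ derivative budget of $E_T$: the naive identity $\cm=a_{ij}*\part^2_{v_iv_j}f$ would cost two extra derivatives of $f$. One must instead keep the two ``extra'' velocity derivatives on the kernel $a_{ij}(v-v_*)$ --- where they merely lower its homogeneity from $2+\gamma$ to exactly $\gamma$, with no negative power of $|v-v_*|$ appearing --- while moving two of the $\part_v$'s already contained in $\der$ onto $f$; when $|\beta|<2$ one first rewrites $\part_x=(1+t)^{-1}(Y-\part_v)$ to manufacture the required $\part_v$'s and, since $\gamma>0$, uses the null-structure splitting $|v-v_*|^\gamma\lesssim(1+t)^{-\gamma}(\jap{x-(t+1)v}^\gamma+\jap{x-(t+1)v_*}^\gamma)$ as in \lref{a_bound_2Y} to absorb the leftover velocity growth. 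Carrying out this bookkeeping so that $f$ is never differentiated more than ten times while the gain is still realized --- and keeping the $\jap{v}$- and $\jap{x-(t+1)v}$-hierarchies consistent throughout --- is the main difficulty; with it in place, \lref{L1_v_bound_boot} closes the estimate at $\eps^{3/4}(1+t)^{-7/2-2\delta+|\beta|(1+\delta)}$.
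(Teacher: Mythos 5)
Your first paragraph is exactly the paper's argument: the pointwise bound $|\der\cm|\lesssim\int|v-v_*|^\gamma|\der f|\,\d{v_*}$, the crude estimate $|v-v_*|^\gamma\lesssim\jap{v}^\gamma\jap{v_*}^3$, and \lref{L1_v_bound_boot}. Note that the same two lines, run with the $L^2_xL^1_v$ half of \lref{L1_v_bound_boot}, are all the paper does in the top-order range as well; the paper's proof only produces $(1+t)^{-\frac32+|\beta|(1+\delta)}$ there, and that is also the only bound actually invoked later (see Case 2 of the proof of \pref{T4}). The exponent $-\frac72-2\delta$ in the second display of the lemma is not delivered by the paper's proof and appears to be a typo copied from \lref{a_bound_d2v}; for $\gamma=0$ one has $\cm=-6\int f\,\d{v_*}$, and for, say, $\der=\part_x^9$ no mechanism gives more than the dispersive $(1+t)^{-\frac32}$, so the stronger exponent is not attainable in general.

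The attempt you make to actually reach $-\frac72-2\delta$ has a genuine gap. Writing $\cm=\part^2_{v_iv_j}\a$ does not lower the $\beta$-index of the $g$-factor: in $\der(c*f)$ all of $\der$ still falls on $f$ (the kernel is independent of $t,x$, and the two velocity derivatives that reduce $a_{ij}$'s homogeneity from $2+\gamma$ to $\gamma$ are already spent inside $c$ and never touch $f$). So \lref{L1_v_bound_boot} still sees the full $|\beta|$ and returns $(1+t)^{-\frac32+|\beta|(1+\delta)}$; the claimed gain of $(1+t)^{-2-2\delta}$ never materializes. The gain in \lref{a_bound_d2v} comes from the opposite move --- two of the derivatives \emph{contained in} $\der$ are shifted from $f$ onto the kernel $a_{ij}$ --- and to replicate it here you would have to put two of $\der$'s $\part_v$'s onto $c$ itself, producing $|z|^{\gamma-2}$, which is singular on the diagonal and outside the scope of \lref{L1_v_bound_boot} (an $L^1_v$ bound on $\der f$ no longer suffices). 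Your fallback for $|\beta|<2$ fares no better: $\part_x=(1+t)^{-1}(Y-\part_v)$ trades each $(1+t)^{-1}$ for a $\part_v$ that, landing on $f$, costs $(1+t)^{1+\delta}$ in the bootstrap hierarchy, and the splitting $|v-v_*|^\gamma\lesssim(1+t)^{-\gamma}\bigl(\jap{x-(t+1)v}^\gamma+\jap{x-(t+1)v_*}^\gamma\bigr)$ gains only $(1+t)^{-\gamma}$, i.e.\ nothing at all in the Maxwellian case $\gamma=0$. In short: your low-order argument is correct and is the paper's proof, and it also gives the top-order bound that the paper proves and uses; the extra decay you try to establish in the second range cannot be obtained this way, and should instead be read as $(1+t)^{-\frac32+|\beta|(1+\delta)}$.
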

\begin{proof}
Since we have $$|\der \cm|\lesssim \int |v-v_*|^\gamma f(t,x,v_*)\d v_*,$$
we get the desired conclusion using \lref{L1_v_bound_boot}.
\end{proof}
\section{Set-up for the energy estimates}\label{s.set-up}
Since we need to deal with higher derivatives as well, we apply $\der$ to \eref{eq_for_g} to get\\
\begin{equation} \label{e.diff_eq_for_g}
\begin{aligned}
&\partial_t \der g+v_i\partial_{x_i}\der g+\frac{\delta d_0}{(1+t)^{1+\delta}}\jap{v}\der g-\a\der g\\
&=\underbrace{[\partial_t+v_i\partial_{x_i},\der ]g}_{\text{Term } 1}+\underbrace{\frac{\delta d_0}{(1+t)^{1+\delta}}(\der(\jap{v}g)-\jap{v}\der  g)}_{\text{Term } 2}\\
&\quad+\underbrace{\der (\a \part^2_{v_iv_j}g)-\a\part^2_{v_iv_j}\der g}_{\text{Term } 3}-\underbrace{\der(\cm g)}_{\text{Term } 4}-\underbrace{2(d(t))\der \left(\a \frac{v_i}{\jap{v}}g\right)}_{\text{Term } 5}\\
&\quad-\underbrace{d(t)\der \left(\left(\frac{\delta_{ij}}{\jap{v}}-\left(d(t)+\frac{1}{\jap{v}}\right)\frac{v_iv_j}{\jap{v}^2}\right)\a g\right)}_{\text{Term } 6}.
\end{aligned}
\end{equation}
\begin{lemma}\label{l.energy_est_set_up}
Let $g:[0,T_{\boot})\times \R^3\times \R^3\to \R$ be a solution to \eref{eq_for_g}, then we have the following energy estimate\\
\begin{align*}
&\norm{\jap{v}^\nu\jap{x-(t+1)v}^\omega\der g}_{L^\infty([0,T];L^2_xL^2_v)}^2+\norm{(1+t)^{-\frac{1}{2}-\frac{\delta}{2}}\jap{v}^{\frac{1}{2}}\jap{v}^\nu\jap{x-(t+1)v}^\omega\der g}_{L^2([0,T];L^2_xL^2_v)}^2\\
&\lesssim \eps^2+\int_0^T\int \int \jap{v}^{2\nu}\jap{x-(t+1)v}^{2\omega}(\der g)J(t,x,v) \d v \d x\d t \\
&\quad+\mathcal{A}^{\alpha,\beta,\sigma}_1+\mathcal{A}^{\alpha,\beta,\sigma}_2+\mathcal{A}^{\alpha,\beta,\sigma}_3+\mathcal{A}^{\alpha,\beta,\sigma}_4+\mathcal{A}^{\alpha,\beta,\sigma}_5+\mathcal{A}^{\alpha,\beta,\sigma}_6
\end{align*}
where the second term is estimated in \lref{J_term} and is a combination of Terms 1-Terms 6 in \eref{diff_eq_for_g}.
\newpage
\begin{equation}\label{e.a1}
\mathcal{A}^{\alpha,\beta,\sigma}_1:=\norm{\jap{v}^{2\nu_{\alpha,\beta,\sigma}}\jap{x-(t+1)v}^{2\omega_{\alpha,\beta,\sigma}}\part^2_{v_iv_j}\a (\der g)^2}_{L^1([0,T];L^1_xL^1_v)},
\end{equation}
\begin{equation}\label{e.a2}
\mathcal{A}^{\alpha,\beta,\sigma}_2:=\norm{\jap{v}^{2\nu_{\alpha,\beta,\sigma}-1}\jap{x-(t+1)v}^{2\omega_{\alpha,\beta,\sigma}}\part_{v_i}\a (\der g)^2}_{L^1([0,T];L^1_xL^1_v)},
\end{equation}
\begin{equation}\label{e.a3}
\mathcal{A}^{\alpha,\beta,\sigma}_3:=\norm{(1+t)\jap{v}^{2\nu_{\alpha,\beta,\sigma}}\jap{x-(t+1)v}^{2\omega_{\alpha,\beta,\sigma}-1}\part_{v_i}\a (\der g)^2}_{L^1([0,T];L^1_xL^1_v)},
\end{equation}
\begin{equation}\label{e.a4}
\mathcal{A}^{\alpha,\beta,\sigma}_4:=\norm{\jap{v}^{2\nu_{\alpha,\beta,\sigma}-2}\jap{x-(t+1)v}^{2\omega_{\alpha,\beta,\sigma}}\a (\der g)^2}_{L^1([0,T];L^1_xL^1_v)},
\end{equation}
\begin{equation}\label{e.a5}
\mathcal{A}^{\alpha,\beta,\sigma}_5:=\norm{(1+t)\jap{v}^{2\nu_{\alpha,\beta,\sigma}-1}\jap{x-(t+1)v}^{2\omega_{\alpha,\beta,\sigma}-1}\a (\der g)^2}_{L^1([0,T];L^1_xL^1_v)},
\end{equation}
\begin{equation}\label{e.a6}
\mathcal{A}^{\alpha,\beta,\sigma}_6:=\norm{(1+t)^2\jap{v}^{2\nu_{\alpha,\beta,\sigma}}\jap{x-(t+1)v}^{2\omega_{\alpha,\beta,\sigma}-2}\a (\der g)^2}_{L^1([0,T];L^1_xL^1_v)}.
\end{equation}
\end{lemma}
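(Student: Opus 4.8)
The plan is to run a weighted $L^2$ energy estimate on the differentiated equation \eref{diff_eq_for_g}, with the bulk of the work being to identify which terms the velocity integrations by parts produce. Set $w := \jap{v}^{2\nu_{\alpha,\beta,\sigma}}\jap{x-(t+1)v}^{2\omega_{\alpha,\beta,\sigma}}$. The structural fact that makes the weights compatible with the flow is that the transport operator annihilates $w$: since $\jap v$ depends on neither $t$ nor $x$ and $(\partial_t+v_i\partial_{x_i})(x-(t+1)v)=-v+v=0$, we have $(\partial_t+v_i\partial_{x_i})w=0$. So, multiplying \eref{diff_eq_for_g} by $2w\,\der g$ and integrating in $x$ and $v$, the transport terms $2\int\int w\,\der g\,(\partial_t+v_i\partial_{x_i})\der g$ collapse (after discarding the perfect $x$-divergence) to $\frac{d}{dt}\int\int w(\der g)^2$, while the zeroth-order term $\frac{\delta d_0}{(1+t)^{1+\delta}}\jap v\,\der g$ produces $2\delta d_0\int\int(1+t)^{-1-\delta}\jap v\,w\,(\der g)^2$, which upon integrating in time is exactly (a constant multiple of) the second term on the left-hand side of the asserted estimate.

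Next I would treat the diffusion contribution $-2\int\int w\,\a\,\der g\,\partial^2_{v_iv_j}\der g$ by two integrations by parts in $v$. The first yields $2\int\int w\,\a\,\partial_{v_i}\der g\,\partial_{v_j}\der g$, which is non-negative because $\a=a_{ij}*f$ is a non-negative definite matrix (here using $f\geq 0$), so it may simply be dropped from the inequality; the leftover is $2\int\int \partial_{v_i}(w\a)\,\der g\,\partial_{v_j}\der g$, and writing $\der g\,\partial_{v_j}\der g=\tfrac12\partial_{v_j}((\der g)^2)$ and integrating by parts once more converts it into $-\int\int \partial^2_{v_iv_j}(w\a)(\der g)^2$. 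Expanding $\partial^2_{v_iv_j}(w\a)$ by the Leibniz rule produces $w\,\partial^2_{v_iv_j}\a$, the cross terms $(\partial_{v_i}w)(\partial_{v_j}\a)$, and $(\partial^2_{v_iv_j}w)\,\a$. Using $|\partial_{v_k}w|\lesssim \jap v^{-1}w+(1+t)\jap{x-(t+1)v}^{-1}w$ and $|\partial^2_{v_kv_l}w|\lesssim \jap v^{-2}w+(1+t)\jap v^{-1}\jap{x-(t+1)v}^{-1}w+(1+t)^2\jap{x-(t+1)v}^{-2}w$ — each $\partial_v$ landing on $\jap{x-(t+1)v}$ costing a factor $1+t$ — one bounds these three groups after integrating in time by $\mathcal A_1^{\alpha,\beta,\sigma}$, by $\mathcal A_2^{\alpha,\beta,\sigma}+\mathcal A_3^{\alpha,\beta,\sigma}$, and by $\mathcal A_4^{\alpha,\beta,\sigma}+\mathcal A_5^{\alpha,\beta,\sigma}+\mathcal A_6^{\alpha,\beta,\sigma}$ respectively, matching the definitions \eref{a1}--\eref{a6}.

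The remaining right-hand side is $2\int\int w\,\der g\,(\text{Term }1+\cdots+\text{Term }6)$, which is the quantity $J$ of the statement and is postponed to \lref{J_term}. Finally I would integrate the differential inequality from $0$ to an arbitrary $t\in[0,T]$: the initial term $\int\int w(\der g)^2|_{t=0}$ is controlled by the smallness of $f_{\ini}$ in \tref{global}, because at $t=0$ one has $\der=\partial_x^\alpha\partial_v^\beta(\partial_x+\partial_v)^\sigma$, $g(0,\cdot)=e^{2d_0\jap v}f_{\ini}$, and $w|_{t=0}=\jap v^{2\nu_{\alpha,\beta,\sigma}}\jap{x-v}^{2\omega_{\alpha,\beta,\sigma}}$, which is precisely the weight appearing in that hypothesis; taking the supremum over $t\in[0,T]$ gives the $L^\infty_t$ norm and taking $t=T$ gives the coercive $L^2_t$ norm, and summing over $i,j$ finishes. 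The main obstacle is really just the careful bookkeeping in the previous paragraph — confirming that each velocity derivative of $\jap{x-(t+1)v}$ genuinely contributes the factor $(1+t)$ that separates $\mathcal A_3,\mathcal A_5,\mathcal A_6$ from $\mathcal A_2,\mathcal A_4$ — together with the routine justification that the integrations by parts carry no boundary terms, which follows since finiteness of the weighted norms supplies the requisite decay.
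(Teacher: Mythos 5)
Your proposal is correct and follows essentially the same route as the paper: multiply \eref{diff_eq_for_g} by the weighted factor $\der g$, use $(\part_t+v_i\part_{x_i})\jap{x-(t+1)v}^{2\omega}=0$ to collapse the transport part, integrate the diffusion term twice by parts in $v$, drop the non-negative $\int \jap{v}^{2\nu}\jap{x-(t+1)v}^{2\omega}\a\,\part_{v_i}\der g\,\part_{v_j}\der g$ term using $f\geq 0$, and bound the derivatives of the weight to recover $\mathcal{A}^{\alpha,\beta,\sigma}_1$--$\mathcal{A}^{\alpha,\beta,\sigma}_6$. The only difference is presentational: you package the remainder as $-\tfrac12\int\part^2_{v_iv_j}(w\a)(\der g)^2$ and Leibniz-expand, whereas the paper writes out the twelve individual terms $A_1$--$A_{12}$ before grouping them, which is the same computation.
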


\begin{proof}
Let $T$ be as in the statement of the lemma and take $T_*\in (0,T]$ to be arbitrary. The idea is to multiply \eref{diff_eq_for_g} by $\jap{x-(t+1)v}^{2\omega_{\alpha,\beta,\sigma}}\jap{v}^{2\nu_{\alpha,\beta,\sigma}}\der g$, integrate in $[0,T_*]\times \R^3\times \R^3$ and intgrate by parts. First note that we have 
$$\frac{\part}{\part t}+v_i\frac{\part}{\part x_i}(\jap{x-(t+1)v}^{2\omega})=0.$$
(Henceforth, we drop the dependence of the weight function on $\alpha,\beta$ and $\sigma$ for the remainder of the proof).

Hence performing the integration by parts in $t$ and $x$, we get
\begin{align}
&\frac{1}{2}\int \int \jap{v}^{2\nu}\jap{x-(T_*+1)v}^{2\omega}(\der g)^2(T_*,x,v)\d v\d x\nonumber\\
&\qquad-\frac{1}{2}\int \int  \jap{v}^{2\nu}\jap{x-v}^{2\omega}(\der g)^2(0,x,v) \d v \d x\\
&\quad+\int_0^T \int \int  \jap{v}^{2\nu}\jap{x-(t+1)v}^{2\omega}\frac{\delta d_0}{(1+t)^{1+\delta}} \jap{v} (\der g)^2(t,x,v)\d v \d x \d t\\\label{e.a_matrix_term}
 &\quad-\int_0^T \int \int \jap{v}^{2\nu}\jap{x-(t+1)v}^{2\omega}\der g\a\part^2_{v_i v_j}\der g\d v\d x\d t\\\label{e.H_term}
&=\int_0^T \int \int \jap{v}^{2\nu}\jap{x-(t+1)v}^{2\omega}(\der g)J(t,x,v)\d v \d x\d t.
\end{align}

Integrating by parts twice in $v$ for \eref{a_matrix_term} we get (for brevity we drop the integration signs)
\begin{equation*}
\begin{split}
\eref{a_matrix_term}&\equiv \jap{v}^{2\omega_{\alpha,\beta}}\jap{x-(t+1)v}^{2\omega} \part_{v_i}\der g(\a)\part_{v_j}\der g+\frac{1}{2}\jap{v}^{2\nu}\jap{x-(t+1)v}^{2\omega} \part_{v_i}\a\part_{v_j}((\der g)^2)\\
&\quad+\nu v_i \jap{v}^{2\nu-2}\jap{x-(t+1)v}^{2\omega}\a\part_{v_j}((\der g)^2)\\
&\quad-\omega (t+1)(x_i-(t+1)v_i)\jap{v}^{2\nu}\jap{x-(t+1)v}^{2\omega-2}\a\part_{v_j}((\der G)^2)\\
&\equiv \underbrace{\jap{v}^{2\nu}\jap{x-(t+1)v}^{2\omega} \part_{v_i}\der g(\a)\part_{v_j}\der g}_{A_1}- \underbrace{\frac{1}{2}\jap{v}^{2\nu}\jap{x-(t+1)v}^{2\omega}\part_{v_iv_j}^2\a(\der g)^2}_{A_2}\\
&\quad- \underbrace{\nu v_j\jap{v}^{2\nu-2}\jap{x-(t+1)v}^{2\omega}\part_{v_i}\a(\der g)^2}_{A_3}\\
&\quad+ \underbrace{\omega (t+1)(x_j-(t+1)v_j)\jap{v}^{2\nu}\jap{x-(t+1)v}^{2\omega-2}\part_{v_i}\a(\der g)^2}_{A_4}\\
&\quad- \underbrace{\nu\delta_{i,j} \jap{v}^{2\nu-2}\jap{x-(t+1)v}^{2\omega}\a(\der g)^2}_{A_5}- \underbrace{2\nu(\nu-1)v_i v_j\jap{v}^{2\nu-4}\jap{x-(t+1)v}^{2\omega}\a(\der g)^2}_{A_6}\\
&\quad+ \underbrace{2\nu\omega v_i (t+1)(x_j-(t+1)v_j) \jap{v}^{2\nu-2}\jap{x-(t+1)v}^{2\omega-2}(\a)(\der )g^2}_{A_7}\\
&\quad- \underbrace{\nu v_i \jap{v}^{2\nu-2}\jap{x-(t+1)v}^{2\omega}\part_{v_j}\a(\der g)^2}_{A_8}- \underbrace{\omega (t+1)^2 \delta_{ij}\jap{v}^{2\nu}\jap{x-(t+1)v}^{2\omega-2}(\a)(\der g)^2}_{A_9}\\
&\quad+ \underbrace{2\nu\omega (t+1)(x_i-(t+1)v_i)v_j\jap{v}^{2\nu-2}\jap{x-(t+1)v}^{2\omega-2}(\a)(\der g)^2}_{A_{10}}\\
&\quad- \underbrace{2\omega(\omega-1) (t+1)^2(x_i-(t+1)v_i)(x_j-(t+1)v_j)\jap{v}^{2\nu}\jap{x-(t+1)v}^{2\omega-4}(\a)(\der g)^2}_{A_{11}}\\
&\quad+ \underbrace{\omega (t+1) (x_i-tv_i)\jap{v}^{2\nu}\jap{x-(t+1)v}^{2\omega-2}\part_{v_j}\a(\der g)^2}_{A_{12}}.
\end{split}
\end{equation*}
Now we start bounding the various terms above.
First note that due to non-negativity of $f$, $A_1$ is non-negative and hence can be dropped from our analysis.
\begin{itemize}
\item $|A_2|\lesssim \mathcal{A}^{\alpha,\beta,\sigma}_1$.\\
\item $|A_3|+|A_8|\lesssim  \mathcal{A}^{\alpha,\beta,\sigma}_2$.\\
\item $|A_4|+|A_{12}|\lesssim  \mathcal{A}^{\alpha,\beta,\sigma}_3$.\\
\item $|A_5|+|A_6|\lesssim \mathcal{A}^{\alpha,\beta,\sigma}_4$.\\
\item $|A_7|+|A_{10}|\lesssim \mathcal{A}^{\alpha,\beta,\sigma}_5$.\\
\item $|A_{11}+|A_9|\lesssim \mathcal{A}^{\alpha,\beta,\sigma}_6$.
\end{itemize}
\end{proof}
\begin{lemma}\label{l.J_term}
For $|\alpha|+|\beta|\leq 10$, we have that\\
\begin{align*}
 \int_0^T \int \int \jap{v}^{2\nu}\jap{x-(t+1)v}^{2\omega}(\der g)J(t,x,v)\d v \d x\d t &\lesssim T_1^{\alpha,\beta,\sigma}+T_2^{\alpha,\beta,\sigma}+T^{\alpha,\beta}_{3,1}+T^{\alpha,\beta,\sigma}_{3,2}+T^{\alpha,\beta,\sigma}_{3,3}\\
&\quad+T^{\alpha,\beta,\sigma}_{3,4}+T^{\alpha,\beta,\sigma}_{4}+T^{\alpha,\beta,\sigma}_{5,1}+T^{\alpha,\beta,
\sigma}_{5,2}\\
&\quad+T^{\alpha,\beta,\sigma}_{5,3}+T^{\alpha,\beta,\sigma}_{6,1}+T^{\alpha,\beta,\sigma}_{6,2}.
\end{align*}
Such that,
\begin{equation}\label{e.T1}
T^{\alpha,\beta,\sigma}_1:=\sum_{\substack{|\alpha'|\leq |\alpha|+1\\ |\beta'|\leq |\beta|-1}}\norm{\jap{v}^{2\nu_{\alpha,\beta,\sigma}}\jap{x-(t+1)v}^{2\omega_{\alpha,\beta,\sigma}}|\der g|\cdot|\derv{'}{'}{}g|}_{L^1([0,T];L^1_xL^1_v)},
\end{equation}
\begin{equation}\label{e.T2}
T^{\alpha,\beta,\sigma}_2:=\sum_{\substack{|\beta'|\leq |\beta|, |\sigma'|\leq |\sigma|\\ |\beta'|+|\sigma'|\leq |\beta|+|\sigma|-1}}\norm{\frac{1}{(1+t)^{1+\delta}}\jap{v}^{2\nu_{\alpha,\beta,\sigma}}\jap{x-(t+1)v}^{2\omega_{\alpha,\beta,\sigma}}|\der g|\cdot|\derv{}{'}{'}g|}_{L^1([0,T];L^1_xL^1_v)},
\end{equation}
\begin{equation}\label{e.T3_1}
\begin{split}
T_{3,1}^{\alpha,\beta,\sigma}:=\sum_{\substack{|\alpha'|+|\alpha''|+|\alpha'''|\leq 2|\alpha|\\ |\beta'|+|\beta''|+|\beta'''|\leq 2|\beta|+2\\|\sigma'|+|\sigma''|+|\sigma'''|\leq 2|\sigma|\\ |\alpha'''|+|\beta'''|+|\sigma'''|=|\alpha|+|\beta|+|\sigma|\\  2\leq |\alpha'|+|\beta'|+|\sigma'|\leq \min\{8,|\alpha|+|\beta|+|\sigma|\}\\ |\alpha''|+|\beta''|+|\sigma''|\leq |\alpha|+|\beta|+|\sigma|}}&\norm{\jap{v}^{2\nu_{\alpha,\beta,\sigma}}\jap{x-(t+1)v}^{2\omega_{\alpha,\beta,\sigma}}|\derv{'''}{'''}{'''} g|\\
&\times|\derv{'}{'}{'} \a||\derv{''}{''}{''} g|}_{L^1([0,T];L^1_xL^1_v)},
\end{split}
\end{equation}
\begin{equation}\label{e.T3_2}
\begin{split}
T_{3,2}^{\alpha,\beta,\sigma}:=\sum_{\substack{|\alpha'|+|\alpha''|=|\alpha|\\ |\beta'|+|\beta''|=|\beta|+2\\|\sigma'|+|\sigma''|=|\sigma|\\ |\alpha'|+|\beta'|+|\sigma'|\geq 9}}&\norm{\jap{v}^{2\nu_{\alpha,\beta,\sigma}}\jap{x-(t+1)v}^{2\omega_{\alpha,\beta,\sigma}}|\der g|\\
&\times|\derv{'}{'}{'} \a||\derv{''}{''}{''} g|}_{L^1([0,T];L^1_xL^1_v)},
\end{split}
\end{equation}
\begin{equation}\label{e.T3_3}
\begin{split}
T_{3,3}^{\alpha,\beta,\sigma}:=\sum_{\substack{|\alpha'|+|\alpha''|+|\alpha'''|\leq 2|\alpha|\\ |\beta'|+|\beta''|+|\beta'''|\leq 2|\beta|+1\\|\sigma'|+|\sigma''|+|\sigma'''|\leq 2|\sigma|\\ |\alpha'''|+|\beta'''|+|\sigma'''|=|\alpha|+|\beta|+|\sigma|\\  |\alpha'|+|\beta'|+|\sigma'|=1\\ |\alpha''|+|\beta''|+|\sigma''|\leq |\alpha|+|\beta|+|\sigma|}}&\norm{\jap{v}^{2\nu_{\alpha,\beta,\sigma}-1}\jap{x-(t+1)v}^{2\omega_{\alpha,\beta,\sigma}}|\derv{'''}{'''}{'''} g|\\
&\times|\derv{'}{'}{'} \a||\derv{''}{''}{''} g|}_{L^1([0,T];L^1_xL^1_v)},
\end{split}
\end{equation}
\begin{equation}\label{e.T3_4}
\begin{split}
T_{3,4}^{\alpha,\beta,\sigma}:=\sum_{\substack{|\alpha'|+|\alpha''|+|\alpha'''|\leq 2|\alpha|\\ |\beta'|+|\beta''|+|\beta'''|\leq 2|\beta|+1\\|\sigma'|+|\sigma''|+|\sigma'''|\leq 2|\sigma|\\ |\alpha'''|+|\beta'''|+|\sigma'''|=|\alpha|+|\beta|+|\sigma|\\  |\alpha'|+|\beta'|+|\sigma'|=1\\ |\alpha''|+|\beta''|+|\sigma''|\leq |\alpha|+|\beta|+|\sigma|}}&\norm{(t+1)\jap{v}^{2\nu_{\alpha,\beta,\sigma}}\jap{x-(t+1)v}^{2\omega_{\alpha,\beta,\sigma}-1}|\derv{'''}{'''}{'''} g|\\
&\times|\derv{'}{'}{'} \a||\derv{''}{''}{''} g|}_{L^1([0,T];L^1_xL^1_v)},
\end{split}
\end{equation}
\begin{equation}\label{e.T4}
\begin{split}
T_4^{\alpha,\beta,\sigma}:=\sum_{\substack{|\alpha'|+|\alpha''|=|\alpha|\\ |\beta'|+|\beta''|=|\beta|\\|\sigma'|+|\sigma''|=|\sigma|}} \norm{\jap{v}^{2\nu_{\alpha,\beta,\sigma}}\jap{x-(t+1)v}^{2\omega_{\alpha,\beta,\sigma}}|\der g|\times|\derv{'}{'}{'} \cm||\derv{''}{''}{''} g|}_{L^1([0,T];L^1_xL^1_v)},
\end{split}
\end{equation}
\begin{equation}\label{e.T5_1}
\begin{split}
T_{5,1}^{\alpha,\beta,\sigma}:=\sum_{\substack{|\alpha'|+|\alpha''|=|\alpha|\\ |\beta'|+|\beta''|=|\beta|+1\\ 1\leq |\alpha'|+|\beta'|+|\sigma'|\leq |\alpha|+|\beta|+|\sigma|\\|\sigma'|+|\sigma''|=|\sigma|}}&\norm{\jap{v}^{2\nu_{\alpha,\beta,\sigma}}\jap{x-(t+1)v}^{2\omega_{\alpha,\beta,\sigma}}|\der g|\left|\derv{'}{'}{'}\left(\a \frac{v_i}{\jap{v}}\right)\right|\\
&\times|\derv{''}{''}{''}g|}_{L^1([0,T];L^1_xL^1_v)},
\end{split}
\end{equation}
\begin{equation}\label{e.T5_2}
T_{5,2}^{\alpha,\beta,\sigma}:=\max_j\norm{\jap{v}^{2\nu_{\alpha,\beta,\sigma}-1}\jap{x-(t+1)v}^{2\omega_{\alpha,\beta,\sigma}}|\der g|^2\left|\left(\a \frac{v_i}{\jap{v}}\right)\right|}_{L^1([0,T];L^1_xL^1_v)},
\end{equation}
\begin{equation}\label{e.T5_3}
T_{5,3}^{\alpha,\beta,\sigma}:=\max_j\norm{(t+1)\jap{v}^{2\nu_{\alpha,\beta,\sigma}}\jap{x-(t+1)v}^{2\omega_{\alpha,\beta,\sigma}-1}|\der g|^2\left|\left(\a \frac{v_i}{\jap{v}}\right)\right|}_{L^1([0,T];L^1_xL^1_v)},
\end{equation}
\begin{equation}\label{e.T6_1}
T_{6,1}^{\alpha,\beta,\sigma}:=\sum_{\substack{|\alpha'|+|\alpha''|\leq |\alpha|\\ |\beta'|+|\beta''|\leq |\beta|\\|\sigma'|+|\sigma''|\leq|\sigma|\\ |\alpha'|+|\beta'|+|\sigma'|\geq 1}} \norm{\jap{v}^{2\nu_{\alpha,\beta,\sigma}}\jap{x-(t+1)v}^{2\omega_{\alpha,\beta,\sigma}}|\der g|\left|\derv{'}{'}{'}\left(\frac{\bar{a}_{ii}}{\jap{v}}\right)\right||\derv{''}{''}{''} g|}_{L^1([0,T];L^1_xL^1_v)},
\end{equation}
\begin{equation}\label{e.T6_2}
T_{6,2}^{\alpha,\beta,\sigma}:=\sum_{\substack{|\alpha'|+|\alpha''|\leq |\alpha|\\ |\beta'|+|\beta''|\leq |\beta|\\|\sigma'|+|\sigma''|\leq|\sigma|}} \norm{\jap{v}^{2\nu_{\alpha,\beta,\sigma}}\jap{x-(t+1)v}^{2\omega_{\alpha,\beta,\sigma}}|\der g|\left|\derv{'}{'}{'}\left(\frac{\a v_iv_j}{\jap{v}^2} \right)\right||\derv{''}{''}{''} g|}_{L^1([0,T];L^1_xL^1_v)}.
\end{equation}
\end{lemma}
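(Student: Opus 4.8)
The plan is to observe that the function $J$ in \lref{energy_est_set_up} is, by the derivation of \eref{diff_eq_for_g}, precisely $\text{Term }1+\cdots+\text{Term }6$, so that after the triangle inequality it suffices to bound, for each $k=1,\dots,6$, the quantity $\int_0^T\!\int\!\int\jap{v}^{2\nu}\jap{x-(t+1)v}^{2\omega}|\der g|\,|\text{Term }k|\,\d v\,\d x\,\d t$ and to check it is controlled by the asserted pieces. The only tools required are the Leibniz rule for $\der=\partial^\alpha_x\partial^\beta_v Y^\sigma$, the pointwise bounds of \pref{pointwise_estimates_a} for $\a$, $\partial_v\a$, $\partial^2_v\a$ and $\cm$, a single integration by parts in $v$ for one sub-piece of Term $5$, and the algebraic facts that $\partial_x$ and $Y$ commute with $\partial_t+v_i\partial_{x_i}$ while $[v_i\partial_{x_i},\partial_{v_j}]=-\partial_{x_j}$ (so $Y$ also commutes with every $\partial_{v_k}$ and with $\partial^2_{v_iv_j}$).

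First I would dispose of Terms $1$ and $2$. For $\text{Term }1=[\partial_t+v_i\partial_{x_i},\der]g$, pushing the $Y^\sigma$ to the left (it commutes with the transport operator) and commuting $\partial^\alpha_x$ freely reduces the commutator to $\partial^\alpha_x[v_i\partial_{x_i},\partial^\beta_v]Y^\sigma g$, and each elementary commutator trades one $\partial_v$ for one $\partial_x$; this exhibits Term $1$ as a finite sum of $\derv{'}{'}{}g$ with $|\alpha'|\le|\alpha|+1$, $|\beta'|\le|\beta|-1$, hence the contribution is $\lesssim T^{\alpha,\beta,\sigma}_1$ of \eref{T1}. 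For $\text{Term }2=\frac{\delta d_0}{(1+t)^{1+\delta}}(\der(\jap{v}g)-\jap{v}\der g)$, expanding $\der(\jap{v}g)$ by Leibniz cancels the all-on-$g$ term against $\jap{v}\der g$; since $\partial_x$ and the $(t+1)\partial_x$ part of $Y$ kill $\jap{v}$, every surviving term carries a factor $\partial^{\beta_1}_v\jap{v}$ or $\partial^{\sigma_1}_v\jap{v}$ with at least one $v$-derivative (hence $\lesssim 1$) while $g$ loses at least one $\partial_v$ or one $Y$, so the contribution is $\lesssim T^{\alpha,\beta,\sigma}_2$ of \eref{T2}.

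Next I would handle Terms $3$--$6$. For $\text{Term }3=\der(\a\,\partial^2_{v_iv_j}g)-\a\,\partial^2_{v_iv_j}\der g$ I use that $\der$ commutes with $\partial^2_{v_iv_j}$, Leibniz-expand, and cancel the no-derivative-on-$\a$ term; the remaining terms have $\ell\ge1$ derivatives on $\a$, and I split on $\ell$: $2\le\ell\le 8$ gives $T^{\alpha,\beta,\sigma}_{3,1}$ of \eref{T3_1}; $\ell\ge 9$ gives $T^{\alpha,\beta,\sigma}_{3,2}$ of \eref{T3_2} (this split is exactly what keeps the number of derivatives on $g$ at most $10$); and $\ell=1$ gives $T^{\alpha,\beta,\sigma}_{3,3}$ of \eref{T3_3} when the lone derivative on $\a$ is a $\partial_v$ (using $|\partial_{v_k}\a|\lesssim\int|v-v_*|^{1+\gamma}|\partial_{v_k}f|$ from \eref{pw_bound_der_a} to afford the weight $\jap{v}^{2\nu-1}$) and $T^{\alpha,\beta,\sigma}_{3,4}$ of \eref{T3_4} when it is a $\partial_x$ or $Y$ (using $|v-v_*|\lesssim(1+t)^{-1}(\jap{x-(t+1)v}+\jap{x-(t+1)v_*})$ to trade a power of $|v-v_*|$ for $(1+t)^{-1}\jap{x-(t+1)v}$). $\text{Term }4=\der(\cm g)$ is a bare Leibniz expansion and yields $T^{\alpha,\beta,\sigma}_4$ of \eref{T4}. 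For $\text{Term }5=2d(t)\,\der(\a\frac{v_i}{\jap{v}}\partial_{v_j}g)$ (matching \eref{eq_for_g}), the Leibniz terms with at least one derivative on $\a\frac{v_i}{\jap{v}}$ are already of the form $T^{\alpha,\beta,\sigma}_{5,1}$ of \eref{T5_1}; the remaining term $2d(t)\,\a\frac{v_i}{\jap{v}}\,\partial_{v_j}\der g$, after multiplication by $\jap{v}^{2\nu}\jap{x-(t+1)v}^{2\omega}\der g$, I integrate by parts in $v_j$, so that $\partial_{v_j}$ lands on $\jap{v}^{2\nu}$, on $\jap{x-(t+1)v}^{2\omega}$, or on $\a\frac{v_i}{\jap{v}}$, producing $(\der g)^2$-terms absorbed into $T^{\alpha,\beta,\sigma}_{5,2}$, $T^{\alpha,\beta,\sigma}_{5,3}$ (via $|\partial_{v_j}\jap{x-(t+1)v}^{2\omega}|\lesssim(1+t)\jap{x-(t+1)v}^{2\omega-1}$) and $T^{\alpha,\beta,\sigma}_{5,1}$ respectively. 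Finally $\text{Term }6$ is Leibniz-expanded with the coefficient split into its $\frac{\delta_{ij}}{\jap{v}}\a=\frac{\bar a_{ii}}{\jap{v}}$ piece, giving $T^{\alpha,\beta,\sigma}_{6,1}$ of \eref{T6_1}, and its $\frac{\a v_iv_j}{\jap{v}^2}$ piece, giving $T^{\alpha,\beta,\sigma}_{6,2}$ of \eref{T6_2}.

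The whole argument is bookkeeping; the genuinely nontrivial steps are only the commutator computation for Term $1$ and the single integration by parts in $v_j$ for Term $5$. The main thing to be careful about—and the only place an error could hide—is the consistency of the index constraints: I must ensure that in each resulting term one factor is a (possibly differentiated) coefficient that will later be controlled pointwise through \pref{pointwise_estimates_a} and Sobolev embedding, that the two $g$-factors together never exceed ten derivatives (this is precisely why the $\ell\ge 9$ regime of Term $3$ is peeled off into $T^{\alpha,\beta,\sigma}_{3,2}$), and that every $(1+t)$ power and every lost $\jap{v}$ or $\jap{x-(t+1)v}$ weight is recorded precisely, since these are what the null-structure estimates of \sref{errors} will exploit. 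No actual estimation of $T^{\alpha,\beta,\sigma}_1,\dots,T^{\alpha,\beta,\sigma}_{6,2}$ is attempted here; that is deferred to \sref{errors}.
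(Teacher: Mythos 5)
Your treatment of Term 3 has a genuine gap precisely at the case of one derivative on $\a$. After the Leibniz expansion, a term with exactly one derivative on $\a$ has the form $\jap{v}^{2\nu}\jap{x-(t+1)v}^{2\omega}\,\der g\,(\partial\a)\,\partial^2_{v_iv_j}\derv{''}{''}{''}g$ with $|\alpha''|+|\beta''|+|\sigma''|=|\alpha|+|\beta|+|\sigma|-1$, so the second $g$-factor carries $|\alpha|+|\beta|+|\sigma|+1$ derivatives (up to eleven) and, when the lone derivative on $\a$ is $\partial_x$ or $Y$, its $v$-derivative count is $|\beta|+2$. Such a term is \emph{not} bounded by $T^{\alpha,\beta,\sigma}_{3,3}+T^{\alpha,\beta,\sigma}_{3,4}$: those terms require both $g$-factors to carry at most $|\alpha|+|\beta|+|\sigma|$ derivatives with total $v$-count at most $2|\beta|+1$, and they carry the strictly weaker weights $\jap{v}^{2\nu-1}$, respectively $(t+1)\jap{x-(t+1)v}^{2\omega-1}$, which do not dominate the full weight of the raw Leibniz term (pointwise bounds on $\partial_v\a$ cannot be invoked here, since this lemma is pure bookkeeping and the coefficient bounds only enter in \sref{errors}). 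In the paper these one-derivative terms are the whole point of the lemma: one integrates by parts in $v_i$, and then once more in the direction of the lone derivative (using the symmetry of $a_{ij}$ to write the top-order product as a total derivative), so that the leading output carries \emph{two} derivatives on $\a$ and lands in $T^{\alpha,\beta,\sigma}_{3,1}$ or $T^{\alpha,\beta,\sigma}_{3,2}$ — this is exactly the ``at least two derivatives on $\a$'' structure the weight hierarchy relies on — while the terms where derivatives hit $\jap{v}^{2\nu}$ or $\jap{x-(t+1)v}^{2\omega}$ are what actually produce $T^{\alpha,\beta,\sigma}_{3,3}$ and $T^{\alpha,\beta,\sigma}_{3,4}$. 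Your plan, which integrates by parts only in Term 5, therefore fails at this step; the $\ell\geq 2$ part of your split is fine (although the $8$ versus $9$ threshold is there so that $\derv{'}{'}{'}\a$ can later be placed in $L^\infty_x$ or $L^2_x$ by Sobolev embedding, not to keep the derivative count on $g$ below ten).

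A second, smaller omission is in Term 6: the contribution of the piece $\bar a_{ii}/\jap{v}$ with \emph{no} derivatives on the coefficient is excluded from $T^{\alpha,\beta,\sigma}_{6,1}$ (which requires $|\alpha'|+|\beta'|+|\sigma'|\geq 1$) and cannot be absorbed elsewhere, since $\bar a_{ii}/\jap{v}$ grows like $\jap{v}^{1+\gamma}$. The paper disposes of it by sign: because $f\geq 0$ one has $\bar a_{ii}\geq 0$, and this term enters the identity with a favorable sign, so it is simply dropped. Terms 1, 2, 4, your integration by parts in Term 5, and the split of Term 6 into the $\bar a_{ii}/\jap{v}$ and $\a v_iv_j/\jap{v}^2$ pieces otherwise match the paper's argument.
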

\begin{proof}
\emph{Term 1:} We have that $$[\part_t+v_i\part_{x_i},\der]g=\sum_{\substack{|\beta'|+|\beta''|=|\beta|\\ |\beta'|=1}}\part^{\beta'}_{x}\part^{\beta''}_{v}\part^\alpha_x Y^\sigma g.$$
Thus it can be bounded by $T^{\alpha,\beta,\sigma}_1$.

\emph{Term 2:} The commutator term arises from $\part_v$ or $t\part_x+\part_v$ acting on $\jap{v}$. Thus,
$$|\text{Term }2|\lesssim \sum_{\substack{|\beta'|\leq|\beta|,|\sigma'|\leq |\sigma|\\ |\beta'|+|\sigma'|\leq |\beta|+|\sigma|-1}}\frac{\delta d_0}{(1+t)^{1+\delta}}|\derv{}{'}{'} g|.$$
Hence the contribution can be bounded by $T^{\alpha,\beta,\sigma}_2$.

\emph{Term 3:} We need to perform integration by parts to handle this term.
We first consider the case $(|\alpha'|,|\beta'|,|\sigma'|)=(1,0,0)$. This this case, $\part^{\alpha'}_x=\part_{x_l}$ for some $l$.
\begin{align}
&\jap{v}^{2\nu}\jap{x-(t+1)v}^{2\omega}\der g(\part_{x_l}\a)\part^2_{v_iv_j}\derv{''}{}{} g \nonumber\\
&\equiv -\jap{v}^{2\nu}\jap{x-(t+1)v}^{2\omega}\part_{v_i}\part_{x_l}\derv{''}{}{} g(\part_{x_l}\a)\part_{v_j}\derv{''}{}{} g \nonumber\\
&\quad-2\nu v_i\jap{v}^{2\nu-2}\jap{x-(t+1)v}^{2\omega}\der g(\part_{x_l}\a)\part_{v_j}\derv{''}{}{} g \nonumber\\
&\quad+2\omega(t+1)(x_i-(t+1)v_i)\jap{v}^{2\nu}\jap{x-(t+1)v}^{2\omega-2}\der g(\part_{x_l}\a)\part_{v_j}\derv{''}{}{}g \nonumber \\
&\quad -\jap{v}^{2\nu}\jap{x-(t+1)v}^{2\omega}\der g(\part_{v_i}\part_{x_l}\a)\part_{v_j}\derv{''}{}{} g \nonumber\\
&\equiv \frac{1}{2}\jap{v}^{2\nu}\jap{x-(t+1)v}^{2\omega}\part_{v_i}\derv{''}{}{} g(\part^2_{x_l}\a)\part_{v_j}\derv{''}{}{} g \label{e.a_dx_1}\\
&\quad+2\omega(x_l-(t+1)v_l)\jap{v}^{2\nu}\jap{x-(t+1)v}^{2\omega-2}\part_{v_i}\derv{''}{}{} g(\part_{x_l}\a)\part_{v_j}\derv{''}{}{} g\label{e.a_dx_2}\\
&\quad-2\nu v_i\jap{v}^{2\nu-2}\jap{x-(t+1)v}^{2\omega}\der g(\part_{x_l}\a)\part_{v_j}\derv{''}{}{} g\label{e.a_dx_3}\\
&\quad+2\omega(t+1)(x_i-(t+1)v_i)\jap{v}^{2\nu}\jap{x-(t+1)v}^{2\omega-2}\der g(\part_{x_l}\a)\part_{v_j}\derv{''}{}{} g\label{e.a_dx_4}\\
& \quad-\jap{v}^{2\nu}\jap{x-(t+1)v}^{2\omega}\der g(\part_{v_i}\part_{x_l}\a)\part_{v_j}\derv{''}{}{} g\label{e.a_dx_5}.
\end{align}
We have, by definition, $|\eref{a_dx_1}|+|\eref{a_dx_5}|\lesssim T_{3,1}^{\alpha,\beta,\sigma}$ or $|\eref{a_dx_1}|+|\eref{a_dx_5}|\lesssim T_{3,2}^{\alpha,\beta,\sigma}$, $|\eref{a_dx_2}|+|\eref{a_dx_4}|\lesssim T_{3,4}^{\alpha,\beta,\sigma}$ and $|\eref{a_dx_3}|+|\eref{a_dx_5}|\lesssim T_{3,3}^{\alpha,\beta,\sigma}$.

Now we consider the case, $(|\alpha'|,|\beta'|,|\sigma'|)=(0,1,0)$. This this case, $\part^{\beta'}_v=\part_{v_l}$ for some $l$.
\begin{align}
&\jap{v}^{2\nu}\jap{x-(t+1)v}^{2\omega}\der g(\part_{v_l}\a)\part^2_{v_iv_j}\derv{}{''}{} g\nonumber \\
&\equiv -\jap{v}^{2\nu}\jap{x-(t+1)v}^{2\omega}\part_{v_i}\part_{v_l}\derv{}{''}{} g(\part_{v_l}\a)\part_{v_j}\derv{}{''}{} g\nonumber\\
&\quad-2\nu v_i\jap{v}^{2\nu-2}\jap{x-(t+1)v}^{2\omega}\der g(\part_{v_l}\a)\part_{v_j}\derv{}{''}{} g\nonumber\\
&\quad+2\omega(t+1)(x_i-(t+1)v_i)\jap{v}^{2\nu}\jap{x-(t+1)v}^{2\omega-2}\der g(\part_{v_l}\a)\part_{v_j}\derv{}{''}{} g\nonumber\\
&\quad -\jap{v}^{2\nu}\jap{x-(t+1)v}^{2\omega}\der g(\part^2_{v_iv_l}\a)\part_{v_j}\derv{}{''}{} g\nonumber\\
&\equiv \frac{1}{2}\jap{v}^{2\nu}\jap{x-(t+1)v}^{2\omega}\part_{v_i}\derv{}{''}{} g(\part^2_{v_l}\a)\part_{v_j}\derv{}{''}{} g \label{e.a_dv_1}\\
&\quad-(t+1)2\omega(x_l-(t+1)v_l)\jap{v}^{2\nu}\jap{x-(t+1)v}^{2\omega-2}\part_{v_i}\derv{}{''}{} g(\part_{v_l}\a)\part_{v_j}\derv{}{''}{} g\label{e.a_dv_2}\\
&\quad+2\nu v_l\jap{v}^{2\nu-2}\jap{x-(t+1)v}^{2\omega}\part_{v_i}\derv{}{''}{} g(\part_{v_l}\a)\part_{v_j}\derv{}{''}{} g\label{e.a_dv_3}\\
&\quad-2\nu v_i\jap{v}^{2\nu-2}\jap{x-(t+1)v}^{2\omega}\der g(\part_{v_l}\a)\part_{v_j}\derv{}{''}{} g\label{e.a_dv_4}\\
&\quad+2\omega(t+1)(x_i-(t+1)v_i)\jap{v}^{2\nu}\jap{x-(t+1)v}^{2\omega-2}\der g(\part_{v_l}\a)\part_{v_j}\derv{}{''}{} g\label{e.a_dv_5}\\
&\quad -\jap{v}^{2\nu}\jap{x-(t+1)v}^{2\omega}\der g(\part^2_{v_iv_l}\a)\part_{v_j}\derv{}{''}{} g\label{e.a_dv_6}.
\end{align}
Again, by definition, $|\eref{a_dv_1}|+|\eref{a_dv_6}|\lesssim T_{3,1}^{\alpha,\beta,\sigma}$ or $|\eref{a_dv_1}|+|\eref{a_dv_6}|\lesssim T_{3,2}^{\alpha,\beta,\sigma}$, $|\eref{a_dv_2}|+|\eref{a_dv_5}|\lesssim T_{3,4}^{\alpha,\beta,\sigma}$ and $|\eref{a_dv_3}|+|\eref{a_dv_4}|\lesssim T_{3,3}^{\alpha,\beta,\sigma}$.

Finally, we consider the case,  $(|\alpha'|,|\beta'|,|\sigma'|)=(0,0,1)$. This this case, $Y^{\sigma'}=Y_l=(t+1)\part_{x_l}+\part_{v_l}$ for some $l$.
\begin{align}
&\jap{v}^{2\nu}\jap{x-(t+1)v}^{2\omega}\der g(Y_l\a)\part^2_{v_iv_j}\derv{}{}{''} g\nonumber\\
&\equiv -\jap{v}^{2\nu}\jap{x-(t+1)v}^{2\omega}\part_{v_i}Y_l\derv{}{}{''} g(Y_l\a)\part_{v_j}\derv{}{}{''} g\nonumber\\
&\quad-2\nu v_i\jap{v}^{2\nu-2}\jap{x-(t+1)v}^{2\omega}\der g(Y_l\a)\part_{v_j}\derv{}{}{''} g\nonumber\\
&\quad+2\omega(t+1)(x_i-(t+1)v_i)\jap{v}^{2\nu}\jap{x-(t+1)v}^{2\omega-2}\der g(Y_l\a)\part_{v_j}\derv{}{}{''} g\nonumber\\
&\quad -\jap{v}^{2\nu}\jap{x-(t+1)v}^{2\omega}\der g(\part_{v_i}Y_l\a)\part_{v_j}\derv{}{}{''} g\nonumber\\
&\equiv \frac{1}{2}\jap{v}^{2\nu}\jap{x-(t+1)v}^{2\omega}\part_{v_i}\derv{}{}{''} g(Y^2_l\a)\part_{v_j}\derv{}{}{''} g \label{e.a_Y_1}\\
&\quad+2\nu v_l\jap{v}^{2\nu-2}\jap{x-(t+1)v}^{2\omega}\part_{v_i}\derv{}{}{''} g(Y_l\a)\part_{v_j}\derv{}{}{''} g\label{e.a_Y_2}\\
&\quad-2\nu v_i\jap{v}^{2\nu-2}\jap{x-(t+1)v}^{2\omega}\der g(Y_l\a)\part_{v_j}\derv{}{}{''} g\label{e.a_Y_3}\\
&\quad+2\omega(t+1)(x_i-(t+1)v_i)\jap{v}^{2\nu}\jap{x-(t+1)v}^{2\omega-2}\der g(Y_l\a)\part_{v_j}\derv{}{}{''} g\label{e.a_Y_4}\\
&\quad -\jap{v}^{2\nu}\jap{x-(t+1)v}^{2\omega}\der g(\part_{v_i}Y_l\a)\part_{v_j}\derv{}{}{''} g\label{e.a_Y_5}.
\end{align}
 $|\eref{a_Y_1}|+|\eref{a_Y_5}|\lesssim T_{3,1}^{\alpha,\beta,\sigma}$ or $|\eref{a_Y_1}|+|\eref{a_Y_5}|\lesssim T_{3,2}^{\alpha,\beta,\sigma}$, $|\eref{a_Y_4}|\lesssim T_{3,4}^{\alpha,\beta,\sigma}$ and $|\eref{a_dx_2}|+|\eref{a_dx_3}|\lesssim T_{3,3}^{\alpha,\beta,\sigma}$.

\emph{Term 4:} This term can be bounded by $T_4^{\alpha,\beta,\sigma}$.

\emph{Term 5:} We need to perform integration by parts when no derivatives fall on $\a\frac{v_i}{\jap{v}}$.
\begin{equation}\label{e.Term_5}
\begin{split}
&2(d(t))\der  \left(\a \frac{v_i}{\jap{v}}\part_{v_j}g\right)\\
&=2(d(t))\sum_{\substack{|\alpha'|+|\alpha''|=|\alpha|\\ |\beta'|+|\beta''|=|\beta|\\ |\alpha'|+|\beta'|+|\sigma'|\geq 1\\ |\sigma'|+|\sigma''|=|\sigma|}}\left( \derv{'}{'}{'}\left(\a \frac{v_i}{\jap{v}}\right)\right)(\part_{v_j}\derv{''}{''}{''}g)+2(d(t))\a \frac{v_i}{\jap{v}}\part_{v_j}\der g.
\end{split}
\end{equation}
For the second term we do integration by parts to get
\begin{align}
&2(d(t))\jap{v}^{2\nu}\jap{x-(t+1)v}^{2\omega}\der g\left(\a\frac{v_i}{\jap{v}}\right)\part_{v_j}\der g \nonumber\\
&\equiv -(d(t)) \jap{v}^{2\nu}\jap{x-(t+1)v}^{2\omega}\left(\part_{v_j}\left(\a \frac{v_i}{\jap{v}}\right)\right)(\der g)^2 \label{e.a_cont_v_1}\\
&\quad-2d(t)\nu v_j\jap{v}^{2\nu-2}\jap{x-(t+1)v}^{2\omega}\left(\a \frac{v_i}{\jap{v}}\right)(\der g)^2 \label{e.a_cont_v_2}\\
&\quad+2d(t)(t+1)(x_j-(t+1)v_j)\jap{v}^{2\nu}\jap{x-(t+1)v}^{2\omega-2}\left(\a[h] \frac{v_i}{\jap{v}}\right)(\der g)^2 \label{e.a_cont_v_3}.
\end{align}
\emph{Term 6:} For first part of Term 6 note that $\delta_{ij} \a=\bar{a}_{ii}$ and when no derivatives hit $\frac{\bar{a}_{ii}}{\jap{v}}$ then we have the term $$-d(t)\int_0^T\int \int \jap{v}^{2\nu}\jap{x-(t+1)v}^{2\omega}(\der g)^2 \frac{\bar{a}_{ii}}{\jap{v}}\d v \d x\d t.$$ Note, crucially that since $f\geq 0$, we have that $\frac{\bar{a}_{ii}}{\jap{v}}\geq 0$ implying that the whole integral is negative and thus can be dropped.

When at least one derivative fall on $\frac{\bar{a}_{ii}[h]}{\jap{v}}$ then we can bound it by $T^{\alpha,\beta}_{6,1}$.

Similarly the second part of Term 6 can be bounded by $T^{\alpha,\beta}_{6,2}$.
\end{proof}
\begin{remark}
Note that $\mathcal{A}^{\alpha,\beta,\sigma}_1\lesssim T^{\alpha,\beta,\sigma}_{3,1}$, $\mathcal{A}^{\alpha,\beta,\sigma}_2\lesssim T^{\alpha,\beta,\sigma}_{3,3}$ and $\mathcal{A}^{\alpha,\beta,\sigma}_3\lesssim T^{\alpha,\beta,\sigma}_{3,4}$. Thus it suffices to bound the $T^{\alpha,\beta,\sigma}_i$ terms, $\mathcal{A}^{\alpha,\beta,\sigma}_4$, $\mathcal{A}^{\alpha,\beta,\sigma}_5$ and $\mathcal{A}^{\alpha,\beta,\sigma}_6$.
\end{remark}
\section{Error estimates}\label{s.errors}
\begin{lemma}\label{l.L2_time_in_norm} 
For $T\in (0,T_{\boot}]$ and $|\alpha|+|\beta|+|\sigma|\leq 10$,
$$\norm{(1+t)^{-\frac{1}{2}-\delta-|\beta|(1+\delta)}\jap{v}^{\frac{1}{2}}\jap{v}^{\nu_{\alpha,\beta,\sigma}}\jap{x-(t+1)v}^{\omega_{\alpha,\beta,\sigma}}\der g}_{L^2([0,T];L^2_xL^2_v)}\lesssim \eps^{\frac{3}{4}}.$$
\end{lemma}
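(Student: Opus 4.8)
The plan is to deduce this from the bootstrap assumption \eref{boot_assumption_1} by an integration by parts in the time variable, using the crucial fact that \eref{boot_assumption_1} holds not only at time $T$ but at every intermediate time $t'\le T$.

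First I would record what the definition \eref{Energy_norm} of $E_{t'}$ together with $E_{t'}(g)\le\eps^{3/4}$ gives for each $t'\in[0,T]$ and for the fixed multi-indices $(\alpha,\beta,\sigma)$. Setting
\begin{equation*}
G(t'):=\norm{\jap{v}^{\frac12}\jap{x-(t+1)v}^{\omega_{\alpha,\beta,\sigma}}\jap{v}^{\nu_{\alpha,\beta,\sigma}}\der g}_{L^2([0,t'];L^2_xL^2_v)}^2,
\end{equation*}
and keeping only the corresponding single (nonnegative) summand in \eref{Energy_norm}, one reads off $G(t')\le\eps^{3/2}(1+t')^{|\beta|(1+\delta)}$; for $t'<T_{\boot}$ this is precisely \eref{boot_assumption_1}, and for $t'=T_{\boot}$ one lets $t'\uparrow T_{\boot}$. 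The function $G$ is nondecreasing and absolutely continuous, $G(0)=0$, with a.e.\ derivative $G'(t)=\norm{\jap{v}^{\frac12}\jap{x-(t+1)v}^{\omega_{\alpha,\beta,\sigma}}\jap{v}^{\nu_{\alpha,\beta,\sigma}}\der g}_{L^2_xL^2_v}^2(t)$.

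Next I would square the quantity to be estimated, express it as $\int_0^T w(t)\,dG(t)$ with $w(t):=(1+t)^{-1-2\delta-2|\beta|(1+\delta)}$, and integrate by parts in $t$:
\begin{equation*}
\int_0^T w(t)\,dG(t)=w(T)G(T)+\int_0^T\bigl(-w'(t)\bigr)G(t)\,dt .
\end{equation*}
By the bound on $G$, the boundary term satisfies $w(T)G(T)\le\eps^{3/2}(1+T)^{-1-2\delta-|\beta|(1+\delta)}\le\eps^{3/2}$; and since $-w'(t)\lesssim(1+t)^{-2-2\delta-2|\beta|(1+\delta)}$ with implied constant depending only on $\delta$ and $|\beta|$, while $G(t)\le\eps^{3/2}(1+t)^{|\beta|(1+\delta)}$, the remaining term is $\lesssim\eps^{3/2}\int_0^\infty(1+t)^{-2-2\delta}\,dt\lesssim\eps^{3/2}$, the last integral converging because $\delta>0$. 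Taking square roots yields the lemma.

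The argument is elementary; the only points requiring care are that \eref{boot_assumption_1} must be used at every time $t'\le T$ and not at $T$ alone — on its own the weight $(1+t)^{-1-2\delta-2|\beta|(1+\delta)}$ is not integrable, so no estimate would follow from the single-time bound — and that the powers of $(1+t)$ be tracked through the integration by parts so that the surviving time integral has exponent strictly less than $-1$.
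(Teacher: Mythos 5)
Your argument is correct, and it rests on exactly the same two pillars as the paper's proof: the bootstrap assumption \eref{boot_assumption_1} must be invoked at every intermediate time $t'\le T$ (not just at $T$), and the weight in the lemma carries an extra factor $(1+t)^{-\frac{\delta}{2}-|\beta|(1+\delta)}$ beyond what the energy controls, which is what makes the resulting sum/integral converge. The implementation differs: the paper decomposes $[0,T]$ into dyadic intervals $[T_{i-1},T_i]$ with $T_i=2^{i-1}$, applies the bootstrap at each $T_i$ to get $\lesssim \eps^{\frac34}2^{|\beta|(1+\delta)i}$ on each piece, converts the extra weight into $2^{-\delta i-2|\beta|(1+\delta)i}$ per squared norm, and sums the geometric series $\sum_i 2^{-\delta i}$; you instead run the continuous (Abel-summation) version of the same computation, integrating by parts against the monotone cumulative norm $G(t)$. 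Your route gives cleaner bookkeeping of the powers of $(1+t)$ at the trivial cost of noting that $G$ is nondecreasing, absolutely continuous, and vanishes at $t=0$; the dyadic route avoids any measure-theoretic remarks. Two small comments. First, the paper's proof actually uses the bootstrap in the form carrying the dissipative weight $(1+t)^{-\frac12-\frac{\delta}{2}}\jap{v}^{\frac12}$ inside the $L^2$-in-time piece (consistent with the left-hand sides of \lref{energy_est_set_up} and \pref{ene_est}, even though \eref{Energy_norm} as printed omits that factor); your argument adapts verbatim to that version by folding $(1+t)^{-\frac{1+\delta}{2}}$ into $G$ and taking $w(t)=(1+t)^{-\delta-2|\beta|(1+\delta)}$, so nothing is lost either way. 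Second, your closing remark is slightly misstated: the weight $(1+t)^{-1-2\delta-2|\beta|(1+\delta)}$ is integrable; the genuine point is that using the bound only at the endpoint would give $\int_0^T w\,dG\le G(T)\lesssim\eps^{\frac32}(1+T)^{|\beta|(1+\delta)}$, which is not uniform in $T$ once $|\beta|\ge 1$, and it is the use of the bootstrap at all intermediate times that removes this growth.
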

\begin{proof}
We can assume that $T>1$, otherwise the inequality is an immediate consequence of the bootstrap assumtion \eref{boot_assumption_1}.\\
We split the integration in time into dyadic intervals. More precisely, let $k=\lceil\log_2 T\rceil+1$. Define $\{T_i\}_{i=0}^k$ with $T_0<T_1<\dots<T_k$, where $T_0=0$ and $T_i=2^{i-1}$ where $i=1,\dots, k-1$ and $T_k=T$. Now by the bootstrap assumption \eref{boot_assumption_1}, for any $i=1,\dots,k$, 
$$\norm{(1+t)^{-\frac{1}{2}-\frac{\delta}{2}}\jap{v}^{\frac{1}{2}}\jap{v}^{\nu_{\alpha,\beta,\sigma}}\jap{x-(t+1)v}^{\omega_{\alpha,\beta,\sigma}}\der g}_{L^2([T_{i-1},T_i];L^2_xL^2_v)}\lesssim \eps^{\frac{3}{4}}2^{|\beta|(1+\delta)i}.$$
Thus,
\begin{align*}
&\norm{(1+t)^{-\frac{1}{2}-\delta-|\beta|(1+\delta)}\jap{v}^{\frac{1}{2}}\jap{v}^{\nu_{\alpha,\beta,\sigma}}\jap{x-(t+1)v}^{\omega_{\alpha,\beta,\sigma}}\der g}_{L^2([0,T];L^2_xL^2_v)}\\
&\lesssim (\sum_{i=1}^k\norm{(1+t)^{-\frac{1}{2}-\delta-|\beta|(1+\delta)}\jap{v}^{\frac{1}{2}}\jap{v}^{\nu_{\alpha,\beta,\sigma}}\jap{x-(t+1)v}^{\omega_{\alpha,\beta,\sigma}}\der g}^2_{L^2([T_{i-1},T_I];L^2_xL^2_v)})^{\frac{1}{2}}\\
&\lesssim (\sum_{i=1}^k2^{-2|\beta|(1+\delta)i-\delta i}\norm{(1+t)^{-\frac{1}{2}-\frac{\delta}{2}}\jap{v}^{\frac{1}{2}}\jap{v}^{\nu_{\alpha,\beta,\sigma}}\jap{x-(t+1)v}^{\omega_{\alpha,\beta,\sigma}}\der g}^2_{L^2([T_{i-1},T_I];L^2_xL^2_v)})^{\frac{1}{2}}\\
&\lesssim \eps^{\frac{3}{4}}(\sum_{i=1}^k 2^{-2|\beta|(1+\delta)i-\delta i}\cdot 2^{2|\beta|(1+\delta)i})^{\frac{1}{2}}=\eps^{\frac{3}{4}}(\sum_{i=1}^k 2^{-\delta i})^{\frac{1}{2}}\lesssim \eps^{\frac{3}{4}}.
\end{align*}
\end{proof}
\begin{proposition}\label{p.T1}
Let $|\alpha|+|\beta|+|\sigma|\leq 10$. Then for every $\eta>0$, there exists a constant $C_\eta>0$(depending on $\eta$ in addition to $d_0$ and $\gamma$) such that the term $T_1^{\alpha,\beta,\sigma}$ in \eref{T1} is bounded as follows for every $T\in[0,T_{\boot})$.\\
\begin{align*}
T^{\alpha,\beta,\sigma}_1&\leq \eta\norm{(1+t)^{-\frac{1+\delta}{2}}\jap{v}^{\frac{1}{2}}\jap{v}^{\nu_{\alpha,\beta,\sigma}}\jap{x-(t+1)v}^{\omega_{\alpha,\beta,\sigma}}\der g}_{L^2([0,T];L^2_xL^2_v)}^2\\
&\quad+C_\eta (1+T)^{2(1+\delta)}\sum_{\substack{|\alpha'|\leq|\alpha|+1\\ |\beta'|\leq |\beta|-1}} \norm{(1+t)^{-\frac{1+\delta}{2}}\jap{v}^{\frac{1}{2}}\jap{v}^{\nu_{\alpha',\beta',\sigma}}\jap{x-(t+1)v}^{\omega_{\alpha',\beta',\sigma}}\derv{'}{'}{} g}_{L^2([0,T];L^2_xL^2_v)}^2.
\end{align*}
\end{proposition}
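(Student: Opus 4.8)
The plan is to deduce the bound on $T_1^{\alpha,\beta,\sigma}$ from one application of Young's inequality, after distributing the weight $\jap{v}^{2\nu_{\alpha,\beta,\sigma}}\jap{x-(t+1)v}^{2\omega_{\alpha,\beta,\sigma}}$ \emph{asymmetrically} between the two factors $\der g$ and $\derv{'}{'}{}g$ appearing in \eref{T1}. Recall from the treatment of Term 1 in the proof of \lref{J_term} that the commutator $[\partial_t+v_i\partial_{x_i},\der]g$ produces only terms of the shape $\partial_x^{\beta'}\partial_v^{\beta''}\partial_x^\alpha Y^\sigma g$ with $|\beta'|=1$; hence the multi-indices in \eref{T1} obey $|\alpha'|\le|\alpha|+1$, $|\beta'|\le|\beta|-1$, $|\sigma'|=|\sigma|$, and there are only finitely many (a dimensional constant), which I will absorb into the constants below.

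The preliminary step is the elementary weight comparison that makes the hierarchy work. From $\nu_{\alpha,\beta,\sigma}=20-\tfrac32(|\alpha|+|\sigma|)-\tfrac12|\beta|$ and $\omega_{\alpha,\beta,\sigma}=20-\tfrac32|\sigma|-\tfrac12(|\alpha|+|\beta|)$ one checks that, for every admissible $(\alpha',\beta')$ as above,
$$\nu_{\alpha',\beta',\sigma}\ge\nu_{\alpha,\beta,\sigma}-1,\qquad \omega_{\alpha',\beta',\sigma}\ge\omega_{\alpha,\beta,\sigma},$$
the second inequality being exactly where the symmetric dependence of $\omega$ on $|\alpha|$ and $|\beta|$ is used, since the extremal shift $(|\alpha|,|\beta|)\mapsto(|\alpha|+1,|\beta|-1)$ leaves $\omega$ invariant. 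Since $\jap{v}\ge1$ and $\jap{x-(t+1)v}\ge1$, this yields $\jap{v}^{-\frac12+\nu_{\alpha,\beta,\sigma}}\jap{x-(t+1)v}^{\omega_{\alpha,\beta,\sigma}}\le\jap{v}^{\frac12+\nu_{\alpha',\beta',\sigma}}\jap{x-(t+1)v}^{\omega_{\alpha',\beta',\sigma}}$.

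Next I would factor the integrand of each summand of $T_1^{\alpha,\beta,\sigma}$ pointwise as $A\cdot B$ with
$$A:=(1+t)^{-\frac{1+\delta}{2}}\jap{v}^{\frac12+\nu_{\alpha,\beta,\sigma}}\jap{x-(t+1)v}^{\omega_{\alpha,\beta,\sigma}}|\der g|,\quad B:=(1+t)^{\frac{1+\delta}{2}}\jap{v}^{-\frac12+\nu_{\alpha,\beta,\sigma}}\jap{x-(t+1)v}^{\omega_{\alpha,\beta,\sigma}}|\derv{'}{'}{}g|,$$
noting that the product of the two prefactors is precisely $\jap{v}^{2\nu_{\alpha,\beta,\sigma}}\jap{x-(t+1)v}^{2\omega_{\alpha,\beta,\sigma}}$ (the powers of $1+t$ cancel and $\tfrac12+(-\tfrac12)=0$). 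Applying $AB\le\eta A^2+C_\eta B^2$ and integrating over $[0,T]\times\R^3\times\R^3$: since $\der g$, and hence $A$, is the same in every summand, the $A^2$-contributions sum (up to renaming $\eta$) to $\eta$ times the first norm in the claim. For the $B^2$-contributions I invoke $t\le T$ twice — once via $(1+t)^{\frac{1+\delta}{2}}=(1+t)^{-\frac{1+\delta}{2}}(1+t)^{1+\delta}\le(1+T)^{1+\delta}(1+t)^{-\frac{1+\delta}{2}}$, which after squaring produces the prefactor $(1+T)^{2(1+\delta)}$, and once through the weight comparison above, which turns the $\jap{v}$ and $\jap{x-(t+1)v}$ powers of $B$ into those of the target norm indexed by $(\alpha',\beta')$. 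Summing over the finitely many $(\alpha',\beta')$ yields exactly the second term of the proposition with $C_\eta\sim 1/\eta$.

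The only delicate point — really the structural crux rather than an obstacle — is the weight comparison of the second paragraph: the transport-commutator shift must not raise $\omega$, which forces $\omega$ to be symmetric in $|\alpha|,|\beta|$, and the half-power of $\jap{v}$ transferred from $\der g$ onto $\derv{'}{'}{}g$ is precisely what generates the unwanted $(1+t)^{1+\delta}$, controllable only because $(1+T)^{2(1+\delta)}(1+t)^{-(1+\delta)}\ge1$ for $t\le T$. This is the concrete incarnation of the ``$\partial_v$ costs $(1+t)^{1+\delta}$'' discrepancy flagged in point (4) of the introduction and of the $(1+T)^{-|\beta|(1+\delta)}$ prefactors in the energy norm \eref{Energy_norm}.
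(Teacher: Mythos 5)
Your proposal is correct and follows essentially the same route as the paper: the same weight comparisons $\nu_{\alpha',\beta',\sigma}\geq\nu_{\alpha,\beta,\sigma}-1$, $\omega_{\alpha',\beta',\sigma}\geq\omega_{\alpha,\beta,\sigma}$, the same asymmetric split of $\jap{v}^{2\nu_{\alpha,\beta,\sigma}}$ into $\jap{v}^{\frac12}$-weighted factors, the insertion of $(1+t)^{\mp\frac{1+\delta}{2}}$ with $t\leq T$ producing $(1+T)^{2(1+\delta)}$, and Young's inequality (applied pointwise rather than, as in the paper, after a spacetime Cauchy--Schwarz, which is an immaterial difference).
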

\begin{proof}
Since we have that $|\alpha'|\leq |\alpha|+1$ and $|\beta'|\leq |\beta|-1$ we have that $\omega_{\alpha',\beta',\sigma}\geq \omega_{\alpha,\beta,\sigma}$ and $\nu_{\alpha'\beta'\sigma}\geq \nu_{\alpha,\beta,\sigma}-1$.\\
Thus we have\\
\begin{align*}
&\sum_{\substack{|\alpha'|\leq|\alpha|+1\\ |\beta'|\leq |\beta|-1}}\norm{\jap{v}^{2\nu_{\alpha,\beta,\sigma}}\jap{x-(t+1)v}^{\omega_{2\alpha,\beta,\sigma}}|\der g||\derv{'}{'}{} g|}_{L^1([0,T];L^1_xL^1_v)}\\
&\lesssim (1+T)^{1+\delta}\sum_{\substack{|\alpha'|\leq|\alpha|+1\\ |\beta'|\leq |\beta|-1}}\norm{(1+t)^{-\frac{1+\delta}{2}}\jap{v}^{\frac{1}{2}}\jap{v}^{\nu_{\alpha,\beta,\sigma}}\jap{x-(t+1)v}^{\omega_{\alpha,\beta,\sigma}}\der g}_{L^2([0,T];L^2_xL^2_v)}\\
&\quad\times \norm{(1+t)^{-\frac{1+\delta}{2}}\jap{v}^{\frac{1}{2}}\jap{v}^{\nu_{\alpha',\beta',\sigma}}\jap{x-(t+1)v}^{\omega_{\alpha',\beta',\sigma}}\derv{'}{'}{} g}_{L^2([0,T];L^2_xL^2_v)}.
\end{align*}
The conclusion now follows by Young's inequality.
\end{proof}
\begin{proposition}\label{p.T2}
Let $|\alpha|+|\beta|+|\sigma|\leq 10$. Then for every $\eta>0$, there exists a constant $C_\eta>0$(depending on $\eta$ in addition to $d_0$ and $\gamma$) such that the term $T_2^{\alpha,\beta,\sigma}$ in \eref{T2} is bounded as follows for every $T\in[0,T_{\boot})$
\begin{align*}
T^{\alpha,\beta,\sigma}_2\leq &\eta \norm{(1+t)^{-\frac{1}{2}-\frac{\delta}{2}}\jap{v}^{\nu_{\alpha,\beta,\sigma}}\jap{x-(t+1)v}^{\omega_{\alpha,\beta,\sigma}}\der g}^2_{L^2([0,T];L^2_xL^2_v)}\\
&+C_\eta\sum_{\substack{|\beta'|\leq|\beta|, |\sigma'|\leq |\sigma|\\ |\beta'|+|\sigma'|\leq |\beta|+|\sigma|-1}} \norm{(1+t)^{-\frac{1}{2}-\frac{\delta}{2}}\jap{v}^{\nu_{\alpha,\beta',\sigma'}}\jap{x-(t+1)v}^{\omega_{\alpha,\beta',\sigma'}}\derv{}{'}{'} g}^2_{L^2([0,T];L^2_xL^2_v)}.
\end{align*}
\end{proposition}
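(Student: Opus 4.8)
The plan is to mimic the proof of \pref{T1}; the bookkeeping is in fact cleaner here, because Term~$2$ already carries the factor $(1+t)^{-(1+\delta)}$ produced when $\partial_v$- and $Y$-type derivatives hit the time-dependent weight in the penultimate term on the left-hand side of \eref{eq_for_g}. The only structural input I need is the monotonicity of the weight exponents under the index restrictions in \eref{T2}: there the $x$-multi-index $\alpha$ is unchanged while $|\beta'|\leq|\beta|$ and $|\sigma'|\leq|\sigma|$, so the definitions
$$\nu_{\alpha,\beta,\sigma}=20-\tfrac{3}{2}(|\alpha|+|\sigma|)-\tfrac{1}{2}|\beta|,\qquad \omega_{\alpha,\beta,\sigma}=20-\tfrac{3}{2}|\sigma|-\tfrac{1}{2}(|\alpha|+|\beta|)$$
give at once $\nu_{\alpha,\beta',\sigma'}\geq\nu_{\alpha,\beta,\sigma}$ and $\omega_{\alpha,\beta',\sigma'}\geq\omega_{\alpha,\beta,\sigma}$. (The strict drop $|\beta'|+|\sigma'|\leq|\beta|+|\sigma|-1$ is not used for this particular estimate; it only ensures that the terms generated on the right are of strictly lower order, which is what lets the full hierarchy close.)

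Granting this, for each summand I would split the weights symmetrically: using $\jap{v}\geq1$, $\jap{x-(t+1)v}\geq1$ and the monotonicity above,
$$\jap{v}^{2\nu_{\alpha,\beta,\sigma}}\jap{x-(t+1)v}^{2\omega_{\alpha,\beta,\sigma}}\leq\big(\jap{v}^{\nu_{\alpha,\beta,\sigma}}\jap{x-(t+1)v}^{\omega_{\alpha,\beta,\sigma}}\big)\big(\jap{v}^{\nu_{\alpha,\beta',\sigma'}}\jap{x-(t+1)v}^{\omega_{\alpha,\beta',\sigma'}}\big),$$
and I would factor the time weight as $(1+t)^{-(1+\delta)}=(1+t)^{-\frac12-\frac{\delta}{2}}\cdot(1+t)^{-\frac12-\frac{\delta}{2}}$. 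A Cauchy--Schwarz in $L^2([0,T];L^2_xL^2_v)$ then bounds the summand by
\begin{multline*}
\norm{(1+t)^{-\frac12-\frac{\delta}{2}}\jap{v}^{\nu_{\alpha,\beta,\sigma}}\jap{x-(t+1)v}^{\omega_{\alpha,\beta,\sigma}}\der g}_{L^2([0,T];L^2_xL^2_v)}\\
\times\norm{(1+t)^{-\frac12-\frac{\delta}{2}}\jap{v}^{\nu_{\alpha,\beta',\sigma'}}\jap{x-(t+1)v}^{\omega_{\alpha,\beta',\sigma'}}\derv{}{'}{'} g}_{L^2([0,T];L^2_xL^2_v)},
\end{multline*}
after which Young's inequality $ab\leq\frac{\eta}{N}a^2+\frac{N}{4\eta}b^2$, applied with $a$ the common $\der g$-factor and $b$ running over the $N$ lower-order factors, then summed over the $N$ summands, yields exactly the claimed bound with $C_\eta\sim N/\eta$ (here $N$ is the fixed, purely combinatorial, number of pairs $(\beta',\sigma')$ in \eref{T2}).

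I do not expect a genuine obstacle: the entire content is the verification that splitting $\jap{v}^{2\nu_{\alpha,\beta,\sigma}}$ and $\jap{x-(t+1)v}^{2\omega_{\alpha,\beta,\sigma}}$ between the two factors costs nothing, i.e.\ the monotonicity statement in the first paragraph. Two remarks are worth making. First, in contrast with \pref{T1}, no growing factor $(1+T)^{\#}$ is generated, because the $(1+t)^{-(1+\delta)}$ intrinsic to Term~$2$ is precisely what is needed to manufacture the two copies of $(1+t)^{-\frac12-\frac{\delta}{2}}$. Second, and for the same reason, the dissipative weight $\jap{v}^{1/2}$ need not appear on the right-hand side, so the conclusion is in fact slightly stronger than the one obtained for Term~$1$.
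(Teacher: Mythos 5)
Your proposal is correct and follows essentially the same route as the paper: the paper's proof likewise observes that the index restrictions give $\nu_{\alpha,\beta',\sigma'}\geq\nu_{\alpha,\beta,\sigma}$ and $\omega_{\alpha,\beta',\sigma'}\geq\omega_{\alpha,\beta,\sigma}$ and then concludes directly by Cauchy--Schwarz and Young's inequality, with the intrinsic factor $(1+t)^{-1-\delta}$ split evenly as two copies of $(1+t)^{-\frac12-\frac\delta2}$ exactly as you do. Your side remarks (no $(1+T)$ growth and no $\jap{v}^{\frac12}$ weight needed, in contrast with Term~1) are accurate and consistent with the stated bound.
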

\begin{proof}
Under the conditions for $\beta'$ and $\sigma'$, we have that $\nu_{\alpha,\beta',\sigma'}\geq \nu_{\alpha,\beta,\sigma}$ and $\omega_{\alpha,\beta',\sigma'}\geq \omega_{\alpha,\beta,\sigma}$. Thus the required lemma is an easy consequence of the Cauchy-Schwartz inequality and Young's inequality
\end{proof}
\begin{proposition}\label{p.T3_1}
Let $|\alpha|+|\beta|+|\sigma|\leq 10$. Then the term $T^{\alpha,\beta,\sigma}_{3,1}$ is bounded as follows for all $T\in [0,T_{\boot})$,
$$T^{\alpha,\beta,\sigma}_{3,1}\lesssim \eps^2(1+T)^{2|\beta|(1+\delta)}.$$
\end{proposition}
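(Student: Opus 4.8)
I would expand $T^{\alpha,\beta,\sigma}_{3,1}$ into its finite sum and bound a single summand; fix multi-indices $(\alpha',\beta',\sigma')$, $(\alpha'',\beta'',\sigma'')$, $(\alpha''',\beta''',\sigma''')$ obeying the constraints in \eref{T3_1}, so that $\derv{'}{'}{'}\a$ carries between two and eight derivatives, $\derv{'''}{'''}{'''}g$ carries the full order $|\alpha|+|\beta|+|\sigma|$, and $\derv{''}{''}{''}g$ carries order at most $|\alpha|+|\beta|+|\sigma|$. That $\a$ never acquires more than eight derivatives here is what makes $T_{3,1}$ the mildest piece of Term $3$: one may always use the $L^\infty_xL^\infty_v$ coefficient bounds of \sref{coef} rather than the $L^2_xL^\infty_v$ ones. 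The plan is to pull $\derv{'}{'}{'}\a$ out in $L^\infty_xL^\infty_v$, split the weights onto the two copies of $g$ and apply Cauchy--Schwarz in $(x,v)$, and finally close the time integral against two energy norms via \lref{L2_time_in_norm}.

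\emph{The coefficient factor.} Since $2\le|\alpha'|+|\beta'|+|\sigma'|\le 8$, one of \lref{a_bound_d2v}, \lref{a_bound_dv_dx}, \lref{a_bound_dv_Y}, \lref{a_bound_d2x}, \lref{a_bound_dx_Y}, \lref{a_bound_2Y} applies to $\derv{'}{'}{'}\a$, the choice dictated by how the $\partial_x$, $\partial_v$, $Y$ derivatives split across $\a$; each gives, in $L^\infty_xL^\infty_v$,
$$|\derv{'}{'}{'}\a|(t,x,v)\lesssim\eps^{\frac34}(1+t)^{-\frac72+|\beta'|(1+\delta)}\jap{v}^{b}\jap{x-(t+1)v}^{c}$$
with $b\le 2+\gamma$ and $0\le c\le 2$. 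The decisive point is that the hierarchy is designed so that $b$ and $c$ are offset by the gains in $\nu_{\alpha'',\beta'',\sigma''}+\nu_{\alpha''',\beta''',\sigma'''}$ and $\omega_{\alpha'',\beta'',\sigma''}+\omega_{\alpha''',\beta''',\sigma'''}$ over $2\nu_{\alpha,\beta,\sigma}$ and $2\omega_{\alpha,\beta,\sigma}$ produced by Leibniz and by integrating $\partial^2_{v_iv_j}$ by parts; tracking indices exactly as in the derivation of \eref{a_dx_1}--\eref{a_Y_5} one checks in each case that
$$2\nu_{\alpha,\beta,\sigma}+b\le\nu_{\alpha'',\beta'',\sigma''}+\nu_{\alpha''',\beta''',\sigma'''}+1,\qquad 2\omega_{\alpha,\beta,\sigma}+c\le\omega_{\alpha'',\beta'',\sigma''}+\omega_{\alpha''',\beta''',\sigma'''}.$$
The surplus $\jap{v}^{1}$ in the first inequality is the product of the two half-powers $\jap{v}^{1/2}$ carried by the $L^2([0,T];L^2_xL^2_v)$ component of the energy norm \eref{Energy_norm}; this is the only place where $\gamma<1$ is used, and it forces both copies of $g$ to be estimated in the $L^2_t$ norm.

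\emph{Time integration and conclusion.} Pulling $\derv{'}{'}{'}\a$ out in $L^\infty_xL^\infty_v$, distributing the remaining weights according to the two inequalities above, and using Cauchy--Schwarz in $(x,v)$, the summand is controlled by
$$\eps^{\frac34}\int_0^T(1+t)^{-\frac72+|\beta'|(1+\delta)}\bigl\|\jap{v}^{\frac12}\jap{v}^{\nu_{\alpha''',\beta''',\sigma'''}}\jap{x-(t+1)v}^{\omega_{\alpha''',\beta''',\sigma'''}}\derv{'''}{'''}{'''}g\bigr\|_{L^2_xL^2_v}(t)\bigl\|\jap{v}^{\frac12}\jap{v}^{\nu_{\alpha'',\beta'',\sigma''}}\jap{x-(t+1)v}^{\omega_{\alpha'',\beta'',\sigma''}}\derv{''}{''}{''}g\bigr\|_{L^2_xL^2_v}(t)\,dt.$$
Inserting $(1+t)^{\frac12+\delta+|\beta'''|(1+\delta)}(1+t)^{\frac12+\delta+|\beta''|(1+\delta)}$ to reconstruct the integrands of \lref{L2_time_in_norm}, the leftover power of $(1+t)$ is $-\tfrac52+2\delta+(|\beta'|+|\beta''|+|\beta'''|)(1+\delta)$, which by the constraint $|\beta'|+|\beta''|+|\beta'''|\le 2|\beta|+2$ from \eref{T3_1} is at most $2|\beta|(1+\delta)+4\delta-\tfrac12<2|\beta|(1+\delta)$ since $\delta<\tfrac18$; hence its supremum over $[0,T]$ is $\le(1+T)^{2|\beta|(1+\delta)}$. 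Pulling that supremum out and applying \lref{L2_time_in_norm} to each of the two remaining $L^2([0,T];L^2_xL^2_v)$-norms gives
$$T^{\alpha,\beta,\sigma}_{3,1}\lesssim\eps^{\frac34}\cdot\eps^{\frac34}\cdot\eps^{\frac34}\,(1+T)^{2|\beta|(1+\delta)}=\eps^{\frac94}(1+T)^{2|\beta|(1+\delta)}\le\eps^{2}(1+T)^{2|\beta|(1+\delta)},$$
one factor $\eps^{3/4}$ coming from the coefficient bound and one from each energy norm via \eref{boot_assumption_1}. Summing the finitely many summands finishes the proof.

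The step I expect to be the actual work is the weight bookkeeping in the second paragraph: verifying, case by case over the shapes of the error terms in Term $3$ and the types of derivatives on $\a$, that the two inequalities for $\nu$ and $\omega$ hold --- this is precisely what pins down $\nu_{\alpha,\beta,\sigma}$ and $\omega_{\alpha,\beta,\sigma}$ and the reservation of the $\jap{v}^{1/2}$ in the energy norm. The time arithmetic near $|\beta'|+|\beta''|+|\beta'''|=2|\beta|+2$ is the other place with no slack, and it is there that $\delta<\tfrac18$ is consumed.
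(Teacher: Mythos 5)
Your proposal is correct and takes essentially the same route as the paper's proof: the same case split according to which derivatives fall on $\a$ (packaged in your two master weight inequalities, which do hold case by case given the hierarchy relations), the same $L^\infty_xL^\infty_v$ coefficient lemmas of \sref{coef}, both copies of $g$ placed in the $\jap{v}^{\frac12}$-weighted $L^2_t$ energy via \lref{L2_time_in_norm} and the bootstrap assumption, and the same $4\delta<\tfrac12$ time arithmetic. No gaps beyond the case-by-case verification you correctly identify as the main bookkeeping, which is exactly what the paper carries out.
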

\begin{proof}
We fix a typical term with $\alpha',\alpha'',\alpha''',\beta',\beta'',\beta''',\sigma',\sigma'',\sigma'''$ satisfying the required conditions. Note that there are less than $8$ deriviatives hitting $\a$ thus we estimate it in $L^\infty_x$. Further note that since there are atleast two derivatives hitting $\a$, there are $6$ different scenarios to be considered. Before we begin our case-by-case analysis we note some relations for the weights.
\begin{align*}
\nu_{\alpha'',\beta'',\sigma''}+\nu_{\alpha''',\beta''',\sigma'''}&=40-\frac{3}{2}(|\alpha''|+|\alpha'''|)-\frac{1}{2}(|\beta''|+|\beta'''|)-\frac{3}{2}(|\sigma''|+|\sigma'''|)\\
&\geq 40-\frac{3}{2}(2|\alpha|-|\alpha'|)-\frac{1}{2}(2|\beta|-|\beta'|+2)-\frac{3}{2}(2|\sigma|-|\sigma'|)\\
&=2\nu_{\alpha,\beta,\sigma}+\frac{3|\alpha'|+|\beta'|+3|\sigma'|}{2}-1.
\end{align*}
Similarly, $$\omega_{\alpha'',\beta'',\sigma''}+\omega_{\alpha''',\beta''',\sigma'''}\geq 2\omega_{\alpha,\beta,\sigma}+\frac{|\alpha'|+|\beta'|+3|\sigma'|}{2}-1.$$
\emph{Case 1:} $|\beta'|\geq 2$.\\
In this case we have that $\nu_{\alpha'',\beta'',\sigma''}+\nu_{\alpha''',\beta''',\sigma'''}\geq2\nu_{\alpha,\beta,\sigma}$ and  $\omega_{\alpha'',\beta'',\sigma''}+\omega_{\alpha''',\beta''',\sigma'''}\geq2\omega_{\alpha,\beta,\sigma}$.\\
We apply \lref{a_bound_d2v} and \lref{L2_time_in_norm} and H\"{o}lder's inequality to get
\begin{align*}
&\norm{\jap{v}^{2\nu_{\alpha,\beta,\sigma}}\jap{x-(t+1)v}^{2\omega_{\alpha,\beta,\sigma}}|\derv{'''}{'''}{'''} g||\derv{'}{'}{'} \a||\derv{''}{''}{''} g|}_{L^1([0,T];L^1_xL^1_v)}\\
&\lesssim (1+T)^{2|\beta|(1+\delta)}\norm{(1+t)^{-\frac{1}{2}-\delta-|\beta'''|(1+\delta)}\jap{v}^{\frac{1}{2}}\jap{v}^{\nu_{\alpha''',\beta''',\sigma'''}}\jap{x-(t+1)v}^{\omega_{\alpha''',\beta''',\sigma'''}}\derv{'''}{'''}{'''}g}_{L^2([0,T];L^2_xL^2_v)}\\
&\quad\times \norm{(1+t)^{1+2\delta-(|\beta'|-2)(1+\delta)}\jap{v}^{-1}\derv{'}{'}{'}\a}_{L^\infty([0,T];L^\infty_xL^\infty_v)}\\
&\quad\times \norm{(1+t)^{-\frac{1}{2}-\delta-|\beta''|(1+\delta)}\jap{v}^{\frac{1}{2}}\jap{v}^{\nu_{\alpha'',\beta'',\sigma''}}\jap{x-(t+1)v}^{\omega_{\alpha'',\beta'',\sigma''}}\derv{''}{''}{''}g}_{L^2([0,T];L^2_xL^2_v)}\\
&\lesssim (1+T)^{2|\beta|(1+\delta)}\eps^{\frac{3}{4}}\times(\sup_{t\in [0,T]}\eps^{\frac{3}{4}}(1+t)^{1+2\delta-(|\beta'|-2)(1+\delta)}(1+t)^{-\frac{7}{2}+|\beta'|(1+\delta)})\eps^{\frac{3}{4}}\\
&=\eps^{\frac{9}{4}}(1+T)^{2|\beta|(1+\delta)},
\end{align*}
where we used $\underline{4\delta<\frac{1}{2}}$ in the last line.\\
\emph{Case 2:} $|\beta'|=1$.\\
We break this into two furhther subcases.\\
\emph{Subcase 2a):} $|\alpha'|\geq 1$.

In this case we have  $\nu_{\alpha'',\beta'',\sigma''}+\nu_{\alpha''',\beta''',\sigma'''}\geq2\nu_{\alpha,\beta,\sigma}+1$ and  $\omega_{\alpha'',\beta'',\sigma''}+\omega_{\alpha''',\beta''',\sigma'''}\geq2\omega_{\alpha,\beta,\sigma}$.

We apply \lref{a_bound_dv_dx} and \lref{L2_time_in_norm}, to obtain
\begin{align*}
&\norm{\jap{v}^{2\nu_{\alpha,\beta,\sigma}}\jap{x-(t+1)v}^{2\omega_{\alpha,\beta,\sigma}}|\derv{'''}{'''}{'''} g||\derv{'}{'}{'} \a||\derv{''}{''}{''} g|}_{L^1([0,T];L^1_xL^1_v)}\\
&\lesssim \norm{\jap{v}^{\nu_{\alpha'',\beta'',\sigma''}+\nu_{\alpha''',\beta''',\sigma'''}-1}\jap{x-(t+1)v}^{2\omega_{\alpha,\beta,\sigma}}|\derv{'''}{'''}{'''} g||\derv{'}{'}{'} \a||\derv{''}{''}{''} g|}_{L^1([0,T];L^1_xL^1_v)}\\
&\lesssim (1+T)^{2|\beta|(1+\delta)}\norm{(1+t)^{-\frac{1}{2}-\delta-|\beta'''|(1+\delta)}\jap{v}^{\frac{1}{2}}\jap{v}^{\nu_{\alpha''',\beta''',\sigma'''}}\jap{x-(t+1)v}^{\omega_{\alpha''',\beta''',\sigma'''}}\derv{'''}{'''}{'''}g}_{L^2([0,T];L^2_xL^2_v)}\\
&\quad\times \norm{(1+t)^{1+2\delta-(|\beta'|-2)(1+\delta)}\jap{v}^{-2}\derv{'}{'}{'}\a}_{L^\infty([0,T];L^\infty_xL^\infty_v)}\\
&\quad\times \norm{(1+t)^{-\frac{1}{2}-\delta-|\beta''|(1+\delta)}\jap{v}^{\frac{1}{2}}\jap{v}^{\nu_{\alpha'',\beta'',\sigma''}}\jap{x-(t+1)v}^{\omega_{\alpha'',\beta'',\sigma''}}\derv{''}{''}{''}g}_{L^2([0,T];L^2_xL^2_v)}\\
&\lesssim (1+T)^{2|\beta|(1+\delta)}\eps^{\frac{3}{4}}\times(\sup_{t\in [0,T]}\eps^{\frac{3}{4}}(1+t)^{1+2\delta-(|\beta'|-2)(1+\delta)}(1+t)^{-\frac{7}{2}+|\beta'|(1+\delta)})\eps^{\frac{3}{4}}\\
&=\eps^{\frac{9}{4}}(1+T)^{2|\beta|(1+\delta)}.
\end{align*}
\emph{Subcase 2b):} $|\sigma'|\geq 1$.

In this case we have  $\nu_{\alpha'',\beta'',\sigma''}+\nu_{\alpha''',\beta''',\sigma'''}\geq2\nu_{\alpha,\beta,\sigma}+1$ and  $\omega_{\alpha'',\beta'',\sigma''}+\omega_{\alpha''',\beta''',\sigma'''}\geq2\omega_{\alpha,\beta,\sigma}+1$.\\
We apply \lref{a_bound_dv_Y} and \lref{L2_time_in_norm} to the following as in the above cases
\begin{align*}
&\norm{\jap{v}^{2\nu_{\alpha,\beta,\sigma}}\jap{x-(t+1)v}^{2\omega_{\alpha,\beta,\sigma}}|\derv{'''}{'''}{'''} g||\derv{'}{'}{'} \a||\derv{''}{''}{''} g|}_{L^1([0,T];L^1_xL^1_v)}\\
&\lesssim \norm{\jap{v}^{2\nu_{\alpha,\beta,\sigma}}\jap{x-(t+1)v}^{\omega_{\alpha'',\beta'',\sigma''}+\omega_{\alpha''',\beta''',\sigma'''}-1}|\derv{'''}{'''}{'''} g||\derv{'}{'}{'} \a||\derv{''}{''}{''} g|}_{L^1([0,T];L^1_xL^1_v)}\\
&\lesssim (1+T)^{2|\beta|(1+\delta)}\norm{(1+t)^{-\frac{1}{2}-\delta-|\beta'''|(1+\delta)}\jap{v}^{\frac{1}{2}}\jap{v}^{\nu_{\alpha''',\beta''',\sigma'''}}\jap{x-(t+1)v}^{\omega_{\alpha''',\beta''',\sigma'''}}\derv{'''}{'''}{'''}g}_{L^2([0,T];L^2_xL^2_v)}\\
&\quad\times \norm{(1+t)^{1+2\delta-(|\beta'|-2)(1+\delta)}\jap{x-(t+1)v}^{-1}\jap{v}^{-1}\derv{'}{'}{'}\a}_{L^\infty([0,T];L^\infty_xL^\infty_v)}\\
&\quad\times \norm{(1+t)^{-\frac{1}{2}-\delta-|\beta''|(1+\delta)}\jap{v}^{\frac{1}{2}}\jap{v}^{\nu_{\alpha'',\beta'',\sigma''}}\jap{x-(t+1)v}^{\omega_{\alpha'',\beta'',\sigma''}}\derv{''}{''}{''}g}_{L^2([0,T];L^2_xL^2_v)}\\
&\lesssim\eps^{\frac{9}{4}}(1+T)^{2|\beta|(1+\delta)}.
\end{align*}
\emph{Case 3:} $|\beta=0|$.\\
We have three subcases:\\
\emph{Subcase 3a):} $|\alpha'|\geq 2$.

In this case we have $\nu_{\alpha'',\beta'',\sigma''}+\nu_{\alpha''',\beta''',\sigma'''}\geq2\nu_{\alpha,\beta,\sigma}+2$ and  $\omega_{\alpha'',\beta'',\sigma''}+\omega_{\alpha''',\beta''',\sigma'''}\geq2\omega_{\alpha,\beta,\sigma}$.\\
We apply \lref{a_bound_d2x} and \lref{L2_time_in_norm} to the following
\begin{align*}
&\norm{\jap{v}^{2\nu_{\alpha,\beta,\sigma}}\jap{x-(t+1)v}^{2\omega_{\alpha,\beta,\sigma}}|\derv{'''}{'''}{'''} g||\derv{'}{'}{'} \a||\derv{''}{''}{''} g|}_{L^1([0,T];L^1_xL^1_v)}\\
&\lesssim \norm{\jap{v}^{\nu_{\alpha'',\beta'',\sigma''}+\nu_{\alpha''',\beta''',\sigma'''}-2}\jap{x-(t+1)v}^{2\omega_{\alpha,\beta,\sigma}}|\derv{'''}{'''}{'''} g||\derv{'}{'}{'} \a||\derv{''}{''}{''} g|}_{L^1([0,T];L^1_xL^1_v)}\\
&\lesssim (1+T)^{2|\beta|(1+\delta)}\norm{(1+t)^{-\frac{1}{2}-\delta-|\beta'''|(1+\delta)}\jap{v}^{\frac{1}{2}}\jap{v}^{\nu_{\alpha''',\beta''',\sigma'''}}\jap{x-(t+1)v}^{\omega_{\alpha''',\beta''',\sigma'''}}\derv{'''}{'''}{'''}g}_{L^2([0,T];L^2_xL^2_v)}\\
&\quad\times \norm{(1+t)^{1+2\delta-(|\beta'|-2)(1+\delta)}\jap{v}^{-3}\derv{'}{'}{'}\a}_{L^\infty([0,T];L^\infty_xL^\infty_v)}\\
&\quad\times \norm{(1+t)^{-\frac{1}{2}-\delta-|\beta''|(1+\delta)}\jap{v}^{\frac{1}{2}}\jap{v}^{\nu_{\alpha'',\beta'',\sigma''}}\jap{x-(t+1)v}^{\omega_{\alpha'',\beta'',\sigma''}}\derv{''}{''}{''}g}_{L^2([0,T];L^2_xL^2_v)}\\
&\lesssim\eps^{\frac{9}{4}}(1+T)^{2|\beta|(1+\delta)}.
\end{align*}
\emph{Subcase 3b):} $|\alpha'|\geq 1$ and $|\sigma'|\geq 1$.

In this case we have $\nu_{\alpha'',\beta'',\sigma''}+\nu_{\alpha''',\beta''',\sigma'''}\geq2\nu_{\alpha,\beta,\sigma}+2$ and  $\omega_{\alpha'',\beta'',\sigma''}+\omega_{\alpha''',\beta''',\sigma'''}\geq2\omega_{\alpha,\beta,\sigma}+1$.\\
We apply \lref{a_bound_dx_Y} and \lref{L2_time_in_norm} to the following
\begin{align*}
&\norm{\jap{v}^{2\nu_{\alpha,\beta,\sigma}}\jap{x-(t+1)v}^{2\omega_{\alpha,\beta,\sigma}}|\derv{'''}{'''}{'''} g||\derv{'}{'}{'} \a||\derv{''}{''}{''} g|}_{L^1([0,T];L^1_xL^1_v)}\\
&\lesssim \norm{\jap{v}^{\nu_{\alpha'',\beta'',\sigma''}+\nu_{\alpha''',\beta''',\sigma'''}-1}\jap{x-(t+1)v}^{\omega_{\alpha'',\beta'',\sigma''}+\omega_{\alpha''',\beta''',\sigma'''}-1}|\derv{'''}{'''}{'''} g|\\
&\times|\derv{'}{'}{'} \a||\derv{''}{''}{''} g|}_{L^1([0,T];L^1_xL^1_v)}\\
&\lesssim (1+T)^{2|\beta|(1+\delta)}\norm{(1+t)^{-\frac{1}{2}-\delta-|\beta'''|(1+\delta)}\jap{v}^{\frac{1}{2}}\jap{v}^{\nu_{\alpha''',\beta''',\sigma'''}}\jap{x-(t+1)v}^{\omega_{\alpha''',\beta''',\sigma'''}}\derv{'''}{'''}{'''}g}_{L^2([0,T];L^2_xL^2_v)}\\
&\quad\times \norm{(1+t)^{1+2\delta-(|\beta'|-2)(1+\delta)}\jap{v}^{-2}\jap{x-(t+1)v}^{-1}\derv{'}{'}{'}\a}_{L^\infty([0,T];L^\infty_xL^\infty_v)}\\
&\quad\times \norm{(1+t)^{-\frac{1}{2}-\delta-|\beta''|(1+\delta)}\jap{v}^{\frac{1}{2}}\jap{v}^{\nu_{\alpha'',\beta'',\sigma''}}\jap{x-(t+1)v}^{\omega_{\alpha'',\beta'',\sigma''}}\derv{''}{''}{''}g}_{L^2([0,T];L^2_xL^2_v)}\\
&\lesssim\eps^{\frac{9}{4}}(1+T)^{2|\beta|(1+\delta)}.
\end{align*}
\emph{Subcase 3c):}$|\sigma'|\geq 2$.

In this case we have $\nu_{\alpha'',\beta'',\sigma''}+\nu_{\alpha''',\beta''',\sigma'''}\geq2\nu_{\alpha,\beta,\sigma}$ and  $\omega_{\alpha'',\beta'',\sigma''}+\omega_{\alpha''',\beta''',\sigma'''}\geq2\omega_{\alpha,\beta,\sigma}+2$.\\
We apply \lref{a_bound_2Y} and \lref{L2_time_in_norm} to the following
\begin{align*}
&\norm{\jap{v}^{2\nu_{\alpha,\beta,\sigma}}\jap{x-(t+1)v}^{2\omega_{\alpha,\beta,\sigma}}|\derv{'''}{'''}{'''} g||\derv{'}{'}{'} \a||\derv{''}{''}{''} g|}_{L^1([0,T];L^1_xL^1_v)}\\
&\lesssim \norm{\jap{v}^{2\nu_{\alpha,\beta,\sigma}}\jap{x-(t+1)v}^{\omega_{\alpha'',\beta'',\sigma''}+\omega_{\alpha''',\beta''',\sigma'''}-2}|\derv{'''}{'''}{'''} g||\derv{'}{'}{'} \a||\derv{''}{''}{''} g|}_{L^1([0,T];L^1_xL^1_v)}\\
&\lesssim (1+T)^{2|\beta|(1+\delta)}\norm{(1+t)^{-\frac{1}{2}-\delta-|\beta'''|(1+\delta)}\jap{v}^{\frac{1}{2}}\jap{v}^{\nu_{\alpha''',\beta''',\sigma'''}}\jap{x-(t+1)v}^{\omega_{\alpha''',\beta''',\sigma'''}}\derv{'''}{'''}{'''}g}_{L^2([0,T];L^2_xL^2_v)}\\
&\quad\times \norm{(1+t)^{1+2\delta-(|\beta'|-2)(1+\delta)}\jap{v}^{-1}\jap{x-(t+1)v}^{-2}\derv{'}{'}{'}\a}_{L^\infty([0,T];L^\infty_xL^\infty_v)}\\
&\quad\times \norm{(1+t)^{-\frac{1}{2}-\delta-|\beta''|(1+\delta)}\jap{v}^{\frac{1}{2}}\jap{v}^{\nu_{\alpha'',\beta'',\sigma''}}\jap{x-(t+1)v}^{\omega_{\alpha'',\beta'',\sigma''}}\derv{''}{''}{''}g}_{L^2([0,T];L^2_xL^2_v)}\\
&\lesssim\eps^{\frac{9}{4}}(1+T)^{2|\beta|(1+\delta)}.
\end{align*}
\end{proof}
\begin{proposition}\label{p.T3_2}
Let $|\alpha|+|\beta|+|\sigma|\leq 10$. Then the term $T^{\alpha,\beta,\sigma}_{3,2}$ is bounded as follows for all $T\in [0,T_{\boot})$,
$$T^{\alpha,\beta,\sigma}_{3,2}\lesssim \eps^2(1+T)^{2|\beta|(1+\delta)}.$$
\end{proposition}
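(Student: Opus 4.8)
The plan is to follow the scheme of \pref{T3_1}, adapted to the fact that in $T^{\alpha,\beta,\sigma}_{3,2}$ the coefficient factor $\derv{'}{'}{'}\a$ carries between $9$ and $10$ derivatives (since $9\le|\alpha'|+|\beta'|+|\sigma'|\le|\alpha|+|\beta|+|\sigma|\le 10$), while the remaining factor $\derv{''}{''}{''}g$ carries at most $|\alpha|+|\beta|+|\sigma|+2-9\le 3$ derivatives, with $|\beta'|+|\beta''|=|\beta|+2$. Accordingly the estimate should be a three–factor H\"older inequality placing $\der g$ in $L^2([0,T];L^2_xL^2_v)$, $\derv{'}{'}{'}\a$ in $L^\infty([0,T];L^2_xL^\infty_v)$, and $\derv{''}{''}{''}g$ in $L^2([0,T];L^\infty_xL^2_v)$; the two $L^2_x$ factors pair against the $L^\infty_x$ factor, the $L^2_v,L^\infty_v,L^2_v$ factors pair to $L^1_v$, and the $L^2_t,L^\infty_t,L^2_t$ factors pair to $L^1_t$. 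Concretely I would first insert $\jap{v}^{1/2}$ and the time weights $(1+t)^{\frac12+\delta+|\beta|(1+\delta)}$ on $\der g$ and $(1+t)^{\frac12+\delta+|\beta''|(1+\delta)}$ on $\derv{''}{''}{''}g$; then pass $\derv{''}{''}{''}g$ to $L^\infty_xL^2_v$ by Sobolev embedding in $x$ using \lref{L_infty_x_L2_v} (this costs at most two extra $\part_x$ derivatives, so the total derivative count stays $\le 5\le 10$); and finally control both $g$–factors in $L^2([0,T];L^2_xL^2_v)$ by $\lesssim\eps^{3/4}$ via \lref{L2_time_in_norm}. For the coefficient factor I would use \lref{a_bound_2Y}, which gives $\norm{\jap{x-(1+t)v}^{-2}\jap{v}^{-\gamma}\derv{'}{'}{'}\a}_{L^2_xL^\infty_v}\lesssim\eps^{3/4}(1+t)^{-\frac72+|\beta'|(1+\delta)}$ and holds for every admissible distribution of derivatives on $\a$, so no case split in $\alpha',\beta',\sigma'$ is needed.

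Next I would check that the weight hierarchy closes. Splitting $\jap{v}^{2\nu_{\alpha,\beta,\sigma}}$ and $\jap{x-(t+1)v}^{2\omega_{\alpha,\beta,\sigma}}$ so that $\der g$ and $\derv{''}{''}{''}g$ receive the exponents their energy norms demand (for the latter reduced by the bounded shifts the two Sobolev derivatives induce in $\nu$ and $\omega$), the residual $\jap{v}$–exponent left on $\derv{'}{'}{'}\a$ works out to $3-\tfrac32(|\alpha'|+|\sigma'|)-\tfrac12|\beta'|\le 3-\tfrac12(|\alpha'|+|\beta'|+|\sigma'|)\le 3-\tfrac92=-\tfrac32\le-\gamma$, while the residual $\jap{x-(t+1)v}$–exponent is $\le 2-\tfrac92=-\tfrac52<-2$. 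Hence both residuals, together with the two extra $\jap{v}^{1/2}$'s, are absorbed into the $\jap{v}^{-\gamma}\jap{x-(1+t)v}^{-2}$ produced by \lref{a_bound_2Y}; the ample negative room is precisely the payoff of $\a$ carrying $\ge 9$ derivatives. (The companion inequalities $\nu_{\alpha'',\beta'',\sigma''}\ge\nu_{\alpha''+\ast,\beta'',\sigma''}$ and $\omega_{\alpha'',\beta'',\sigma''}\ge\omega_{\alpha''+\ast,\beta'',\sigma''}$ for $|\ast|\le 2$ are what let \lref{L2_time_in_norm} be applied after the Sobolev step.)

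Finally I would collect the powers of $(1+t)$ sitting outside the $L^2_t$ norms: the inserted $(1+t)^{\frac12+\delta+|\beta|(1+\delta)}$ and $(1+t)^{\frac12+\delta+|\beta''|(1+\delta)}$ together with $(1+t)^{-\frac72+|\beta'|(1+\delta)}$ from \lref{a_bound_2Y}. Since $|\beta|+|\beta'|+|\beta''|=2|\beta|+2$, their product equals $(1+t)^{2|\beta|(1+\delta)}\,(1+t)^{-\frac12+4\delta}$, and \eref{delta} gives $4\delta<\tfrac12$, so $(1+t)^{2|\beta|(1+\delta)-\frac12+4\delta}\le(1+T)^{2|\beta|(1+\delta)}$ for all $t\in[0,T]$. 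Pulling this factor out and applying \lref{L2_time_in_norm} to both $g$–factors then yields $T^{\alpha,\beta,\sigma}_{3,2}\lesssim\eps^{9/4}(1+T)^{2|\beta|(1+\delta)}\lesssim\eps^2(1+T)^{2|\beta|(1+\delta)}$. I expect this time bookkeeping to be the only delicate point: it is essential that $\a$ with $\ge 9$ derivatives decays by the full $(1+t)^{-7/2}$ relative to $(1+t)^{|\beta'|(1+\delta)}$ — equivalently that there are always at least two ``good'' derivatives on $\a$, which \lref{a_bound_2Y} supplies unconditionally — so that the surviving exponent $-\tfrac12+4\delta$ stays negative under $\delta<\tfrac18$.
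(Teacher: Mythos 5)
Your proposal is correct, and it follows the paper's overall scheme for this term — a three-factor H\"older estimate with the coefficient in $L^\infty_t L^2_xL^\infty_v$ (because $\ge 9$ derivatives fall on $\a$), the low-order $g$-factor passed to $L^\infty_xL^2_v$ by \lref{L_infty_x_L2_v} at the cost of two spatial derivatives, both $g$-factors controlled by \lref{L2_time_in_norm}, the factor $(1+T)^{2|\beta|(1+\delta)}$ pulled out, and the condition $4\delta<\tfrac12$ closing the time bookkeeping — but with one genuine difference: the paper splits into six cases according to the composition of the derivatives on $\a$ (Cases 1, 2a, 2b, 3a, 3b, 3c, invoking \lref{a_bound_d2v}, \lref{a_bound_dv_dx}, \lref{a_bound_dv_Y}, \lref{a_bound_d2x}, \lref{a_bound_dx_Y} and the two-$Y$ bound respectively), whereas you invoke \lref{a_bound_2Y} uniformly. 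This is legitimate: that lemma has no structural hypothesis on $(\alpha',\beta',\sigma')$, its weight cost $\jap{v}^{-\gamma}\jap{x-(t+1)v}^{-2}$ is affordable since with $k=|\alpha'|+|\beta'|+|\sigma'|\ge 9$ both surpluses satisfy $\nu_{\alpha'',\beta'',\sigma''}-\nu_{\alpha,\beta,\sigma}\ge \tfrac k2-1\ge\tfrac72$ and $\omega_{\alpha'',\beta'',\sigma''}-\omega_{\alpha,\beta,\sigma}\ge\tfrac k2-1\ge\tfrac72$, comfortably above the $2+\gamma$ resp.\ $3$ required after the Sobolev shifts of $3$ in $\nu$ and $1$ in $\omega$, and its weaker decay $(1+t)^{-\frac72+|\beta'|(1+\delta)}$ still leaves the total exponent $-\tfrac12+4\delta<0$. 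So your route trades the paper's sharper, case-adapted coefficient bounds for a single uniform one, buying a shorter proof; note that this shortcut is special to $T^{\alpha,\beta,\sigma}_{3,2}$ (it is exactly the large surplus coming from $k\ge 9$ that pays for the $\jap{x-(t+1)v}^{2}$ and $\jap{v}^{\gamma}$ costs) and would not work for $T^{\alpha,\beta,\sigma}_{3,1}$, where the paper's case analysis is genuinely needed. One cosmetic remark: your residual exponent $3-\tfrac32(|\alpha'|+|\sigma'|)-\tfrac12|\beta'|$ already accounts for the two inserted $\jap{v}^{1/2}$ factors, so the subsequent sentence about absorbing them again is redundant; either way the margin ($\le-\tfrac32$ against the needed $-\gamma$) covers it, so nothing in the argument breaks.
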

\begin{proof}
We fix a typical term with $\alpha',\alpha'',\beta',\beta'',\sigma',\sigma''$ satisfying the required conditions. Note that there are more than $8$ deriviatives hitting $\a$ thus we have to estimate it in $L^2_x$. Further note that since there are atleast two derivatives hitting $\a$, there are again $6$ different scenarios to be considered. By our conditions we have the following relations between the weights
$$\nu_{\alpha'',\beta'',\sigma''}=\nu_{\alpha,\beta,\sigma}+3\frac{|\alpha'|+|\beta'|+|\sigma'|}{2}-|\beta'|-1$$
and
$$\omega_{\alpha'',\beta'',\sigma''}=\omega_{\alpha,\beta,\sigma}+3\frac{|\alpha'|+|\beta'|+|\sigma'|}{2}-(|\beta'|+|\alpha'|)-1.$$
\emph{Case 1:} $|\beta'|\geq 2$.

Since the total number of derivatives are atleast $9$, in this case we have that $\nu_{\alpha'',\beta'',\sigma''}\geq\nu_{\alpha,\beta,\sigma}+3$ and  $\omega_{\alpha'',\beta'',\sigma''}\geq\omega_{\alpha,\beta,\sigma}+3$.\\
We apply \lref{L_infty_x_L2_v}, \lref{a_bound_d2v} and \lref{L2_time_in_norm} and H\"{o}lder's inequality to get
\begin{align*}
&\norm{\jap{v}^{2\nu_{\alpha,\beta,\sigma}}\jap{x-(t+1)v}^{2\omega_{\alpha,\beta,\sigma}}|\der g||\derv{'}{'}{'} \a||\derv{''}{''}{''} g|}_{L^1([0,T];L^1_xL^1_v)}\\
&\lesssim (1+T)^{2|\beta|(1+\delta)}\norm{(1+t)^{-\frac{1}{2}-\delta-|\beta|(1+\delta)}\jap{v}^{\frac{1}{2}}\jap{v}^{\nu_{\alpha,\beta,\sigma}}\jap{x-(t+1)v}^{\omega_{\alpha,\beta,\sigma}}\der g}_{L^2([0,T];L^2_xL^2_v)}\\
&\quad\times \norm{(1+t)^{1+2\delta-(|\beta'|-2)(1+\delta)}\jap{v}^{-1}\derv{'}{'}{'}\a}_{L^\infty([0,T];L^2_xL^\infty_v)}\\
&\quad\times \norm{(1+t)^{-\frac{1}{2}-\delta-|\beta''|(1+\delta)}\jap{v}^{\frac{1}{2}}\jap{v}^{\nu_{\alpha'',\beta'',\sigma''}-3}\jap{x-(t+1)v}^{\omega_{\alpha'',\beta'',\sigma''}-3}\derv{''}{''}{''}}_{L^2([0,T];L^\infty_xL^2_v)}\\
&\lesssim (1+T)^{2|\beta|(1+\delta)}\eps^{\frac{3}{4}}\times(\sup_{t\in [0,T]}\eps^{\frac{3}{4}}(1+t)^{1+2\delta-(|\beta'|-2)(1+\delta)}(1+t)^{-\frac{7}{2}+|\beta'|(1+\delta)})\\
&\quad\times  \norm{(1+t)^{-\frac{1}{2}-\delta-|\beta''|(1+\delta)}\jap{v}^{\frac{1}{2}}\jap{v}^{\nu_{\alpha'',\beta'',\sigma''}-3}\jap{x-(t+1)v}^{\omega_{\alpha'',\beta'',\sigma''}-3}\derv{'''}{''}{''}}_{L^2([0,T];L^2_xL^2_v)}\\
&\lesssim (1+T)^{2|\beta|(1+\delta)}\eps^{\frac{3}{4}}\times \eps^{\frac{3}{4}}\times \eps^{\frac{3}{4}}=\eps^{\frac{9}{4}}(1+T)^{2|\beta|(1+\delta)},
\end{align*}
where $|\alpha'''|\leq |\alpha''|+2$, thus $\nu_{\alpha''',\beta'',\sigma''}\geq\nu_{\alpha'',\beta'',\sigma''}-3$ and $\omega_{\alpha''',\beta'',\sigma''}\geq\omega_{\alpha'',\beta'',\sigma''}-1$.\\
\emph{Case 2:} $|\beta'|=1$.\\
We break this into two further subcases.\\
\emph{Subcase 2a):} $|\alpha'|\geq 1$.

For both $|\alpha'|+|\beta'|+|\sigma|=\{9,10\}$, we have in this case $\nu_{\alpha'',\beta'',\sigma''}\geq \nu_{\alpha,\beta,\sigma}+4$ and $\omega_{\alpha'',\beta'',\sigma''}\geq\omega_{\alpha,\beta,\sigma}+3$.\\
We apply \lref{a_bound_dv_dx} and \lref{L2_time_in_norm}, to obtain
\begin{align*}
&\norm{\jap{v}^{2\nu_{\alpha,\beta,\sigma}}\jap{x-(t+1)v}^{2\omega_{\alpha,\beta,\sigma}}|\der g||\derv{'}{'}{'} \a||\derv{''}{''}{''} g|}_{L^1([0,T];L^1_xL^1_v)}\\
&\lesssim (1+T)^{2|\beta|(1+\delta)}\norm{(1+t)^{-\frac{1}{2}-\delta-|\beta|(1+\delta)}\jap{v}^{\frac{1}{2}}\jap{v}^{\nu_{\alpha,\beta,\sigma}}\jap{x-(t+1)v}^{\omega_{\alpha,\beta,\sigma}}\der g}_{L^2([0,T];L^2_xL^2_v)}\\
&\quad\times \norm{(1+t)^{1+2\delta-(|\beta'|-2)(1+\delta)}\jap{v}^{-2}\derv{'}{'}{'}\a}_{L^\infty([0,T];L^2_xL^\infty_v)}\\
&\quad\times \norm{(1+t)^{-\frac{1}{2}-\delta-|\beta''|(1+\delta)}\jap{v}^{\frac{1}{2}}\jap{v}^{\nu_{\alpha'',\beta'',\sigma''}-3}\jap{x-(t+1)v}^{\omega_{\alpha'',\beta'',\sigma''}-3}\derv{''}{''}{''}}_{L^2([0,T];L^\infty_xL^2_v)}.
\end{align*}
Now we proceed in the same way as in Case 1.\\
\emph{Subcase 2b):} $|\sigma'|\geq 1$.

In this case we have  $\nu_{\alpha'',\beta'',\sigma''}\geq\nu_{\alpha,\beta,\sigma}+3$ and  $\omega_{\alpha'',\beta'',\sigma''}\geq\omega_{\alpha,\beta,\sigma}+3$.\\
We apply \lref{a_bound_dv_Y} and \lref{L2_time_in_norm} to the following as in the above cases
\begin{align*}
&\norm{\jap{v}^{2\nu_{\alpha,\beta,\sigma}}\jap{x-(t+1)v}^{2\omega_{\alpha,\beta,\sigma}}|\der g||\derv{'}{'}{'} \a||\derv{''}{''}{''} g|}_{L^1([0,T];L^1_xL^1_v)}\\
&\lesssim (1+T)^{2|\beta|(1+\delta)}\norm{(1+t)^{-\frac{1}{2}-\delta-|\beta|(1+\delta)}\jap{v}^{\frac{1}{2}}\jap{v}^{\nu_{\alpha,\beta,\sigma}}\jap{x-(t+1)v}^{\omega_{\alpha,\beta,\sigma}}\der g}_{L^2([0,T];L^2_xL^2_v)}\\
&\quad\times \norm{(1+t)^{1+2\delta-(|\beta'|-2)(1+\delta)}\jap{x-(t+1)v}^{-1}\jap{v}^{-1}\derv{'}{'}{'}\a}_{L^\infty([0,T];L^2_xL^\infty_v)}\\
&\quad\times \norm{(1+t)^{-\frac{1}{2}-\delta-|\beta''|(1+\delta)}\jap{v}^{\frac{1}{2}}\jap{v}^{\nu_{\alpha'',\beta'',\sigma''}-3}\jap{x-(t+1)v}^{\omega_{\alpha'',\beta'',\sigma''}-2}\derv{''}{''}{''}}_{L^2([0,T];L^\infty_xL^2_v)}.
\end{align*}
We again proceed as in Case 1.\\
\emph{Case 3:} $|\beta'|=0$.\\
We have three subcases:\\
\emph{Subcase 3a):} $|\alpha'|\geq 2$.

For both $|\alpha'|+|\beta'|+|\sigma|=\{9,10\}$, we have in this case $\nu_{\alpha'',\beta'',\sigma''}\geq \nu_{\alpha,\beta,\sigma}+5$ and $\omega_{\alpha'',\beta'',\sigma''}\geq\omega_{\alpha,\beta,\sigma}+3$.\\
We apply \lref{a_bound_d2x} and \lref{L2_time_in_norm} to the following
\begin{align*}
&\norm{\jap{v}^{2\nu_{\alpha,\beta,\sigma}}\jap{x-(t+1)v}^{2\omega_{\alpha,\beta,\sigma}}|\der g||\derv{'}{'}{'} \a||\derv{''}{''}{''} g|}_{L^1([0,T];L^1_xL^1_v)}\\
&\lesssim (1+T)^{2|\beta|(1+\delta)}\norm{(1+t)^{-\frac{1}{2}-\delta-|\beta|(1+\delta)}\jap{v}^{\frac{1}{2}}\jap{v}^{\nu_{\alpha,\beta,\sigma}}\jap{x-(t+1)v}^{\omega_{\alpha,\beta,\sigma}}\der g}_{L^2([0,T];L^2_xL^2_v)}\\
&\quad\times \norm{(1+t)^{1+2\delta-(|\beta'|-2)(1+\delta)}\jap{v}^{-3}\derv{'}{'}{'}\a}_{L^\infty([0,T];L^2_xL^\infty_v)}\\
&\quad\times \norm{(1+t)^{-\frac{1}{2}-\delta-|\beta''|(1+\delta)}\jap{v}^{\frac{1}{2}}\jap{v}^{\nu_{\alpha'',\beta'',\sigma''}-3}\jap{x-(t+1)v}^{\omega_{\alpha'',\beta'',\sigma''}-3}\derv{''}{''}{''}}_{L^2([0,T];L^\infty_xL^2_v)}.
\end{align*}
\emph{Subcase 3b):} $|\alpha'|\geq 1$ and $|\sigma'|\geq 1$.

For both $|\alpha'|+|\beta'|+|\sigma|=\{9,10\}$, we have in this case $\nu_{\alpha'',\beta'',\sigma''}\geq \nu_{\alpha,\beta,\sigma}+4$ and\\ $\omega_{\alpha'',\beta'',\sigma''}\geq\omega_{\alpha,\beta,\sigma}+3$.\\
We apply \lref{a_bound_d2x} and \lref{L2_time_in_norm} to the following
\begin{align*}
&\norm{\jap{v}^{2\nu_{\alpha,\beta,\sigma}}\jap{x-(t+1)v}^{2\omega_{\alpha,\beta,\sigma}}|\der g||\derv{'}{'}{'} \a||\derv{''}{''}{''} g|}_{L^1([0,T];L^1_xL^1_v)}\\
&\lesssim (1+T)^{2|\beta|(1+\delta)}\norm{(1+t)^{-\frac{1}{2}-\delta-|\beta|(1+\delta)}\jap{v}^{\frac{1}{2}}\jap{v}^{\nu_{\alpha,\beta,\sigma}}\jap{x-(t+1)v}^{\omega_{\alpha,\beta,\sigma}}\der g}_{L^2([0,T];L^2_xL^2_v)}\\
&\quad\times \norm{(1+t)^{1+2\delta-(|\beta'|-2)(1+\delta)}\jap{x-(t+1)v}^{-1}\jap{v}^{-2}\derv{'}{'}{'}\a}_{L^\infty([0,T];L^2_xL^\infty_v)}\\
&\quad\times \norm{(1+t)^{-\frac{1}{2}-\delta-|\beta''|(1+\delta)}\jap{v}^{\frac{1}{2}}\jap{v}^{\nu_{\alpha'',\beta'',\sigma''}-3}\jap{x-(t+1)v}^{\omega_{\alpha'',\beta'',\sigma''}-2}\derv{''}{''}{''}}_{L^2([0,T];L^\infty_xL^2_v)}.
\end{align*}
We finish off in the same way as in Case 1.\\
\emph{Subcase 3c):}$|\sigma'|\geq 2$.

In this case we have  $\nu_{\alpha'',\beta'',\sigma''}\geq\nu_{\alpha,\beta,\sigma}+3$ and  $\omega_{\alpha'',\beta'',\sigma''}\geq\omega_{\alpha,\beta,\sigma}+3$.\\
We apply \lref{a_bound_dv_Y} and \lref{L2_time_in_norm} to the following as in the above cases
\begin{align*}
&\norm{\jap{v}^{2\nu_{\alpha,\beta,\sigma}}\jap{x-(t+1)v}^{2\omega_{\alpha,\beta,\sigma}}|\der g||\derv{'}{'}{'} \a||\derv{''}{''}{''} g|}_{L^1([0,T];L^1_xL^1_v)}\\
&\lesssim (1+T)^{2|\beta|(1+\delta)}\norm{(1+t)^{-\frac{1}{2}-\delta-|\beta|(1+\delta)}\jap{v}^{\frac{1}{2}}\jap{v}^{\nu_{\alpha,\beta,\sigma}}\jap{x-(t+1)v}^{\omega_{\alpha,\beta,\sigma}}\der g}_{L^2([0,T];L^2_xL^2_v)}\\
&\quad\times \norm{(1+t)^{1+2\delta-(|\beta'|-2)(1+\delta)}\jap{x-(t+1)v}^{-2}\jap{v}^{-1}\derv{'}{'}{'}\a}_{L^\infty([0,T];L^2_xL^\infty_v)}\\
&\quad\times \norm{(1+t)^{-\frac{1}{2}-\delta-|\beta''|(1+\delta)}\jap{v}^{\frac{1}{2}}\jap{v}^{\nu_{\alpha'',\beta'',\sigma''}-2}\jap{x-(t+1)v}^{\omega_{\alpha'',\beta'',\sigma''}-1}\derv{''}{''}{''}}_{L^2([0,T];L^\infty_xL^2_v)}.
\end{align*}
We again proceed as in Case 1.\\
\end{proof}
\begin{proposition}\label{p.T3_3}
Let $|\alpha|+|\beta|+|\sigma|\leq 10$. Then the term $T^{\alpha,\beta,\sigma}_{3,3}$ is bounded as follows for all $T\in [0,T_{\boot})$,
$$T^{\alpha,\beta,\sigma}_{3,3}\lesssim \eps^2(1+T)^{2|\beta|(1+\delta)}$$
\end{proposition}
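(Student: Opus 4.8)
The structure of $T_{3,3}^{\alpha,\beta,\sigma}$ in \eref{T3_3} is the same as $T_{3,1}^{\alpha,\beta,\sigma}$, except that exactly one derivative (in $\partial_x$, $\partial_v$, or $Y$) falls on $\a$, there is an extra factor $\jap{v}^{-1}$ in the weight, and the $\jap{x-(t+1)v}$ weight is unchanged. So the plan is to follow the case analysis of \pref{T3_1} essentially verbatim, with the single bookkeeping difference that we now only have \emph{one} derivative on $\a$, which means we use the ``one derivative'' coefficient estimates (\lref{a_bound_dv}, \lref{a_bound_dx}, \lref{a_bound_Y}) instead of the ``two derivative'' ones; the extra $\jap{v}^{-1}$ in the weight of $T_{3,3}^{\alpha,\beta,\sigma}$ is exactly what allows us to absorb the $\jap{v}^{1+\gamma}$ growth of $\partial_{v}\a$ (or $\partial_x\a$, or $Y\a$), since those single-derivative bounds come with a $\jap{v}^{1+\gamma}$ rather than $\jap{v}^{2+\gamma}$.

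First I would record the weight arithmetic. With $|\alpha'|+|\beta'|+|\sigma'|=1$, $|\alpha'''|+|\beta'''|+|\sigma'''|=|\alpha|+|\beta|+|\sigma|$, and the constraints on the double-primed indices, one checks as in \pref{T3_1} that
$$\nu_{\alpha'',\beta'',\sigma''}+\nu_{\alpha''',\beta''',\sigma'''}\geq 2\nu_{\alpha,\beta,\sigma}+\tfrac{3|\alpha'|+|\beta'|+3|\sigma'|}{2}-1,$$
$$\omega_{\alpha'',\beta'',\sigma''}+\omega_{\alpha''',\beta''',\sigma'''}\geq 2\omega_{\alpha,\beta,\sigma}+\tfrac{|\alpha'|+|\beta'|+3|\sigma'|}{2}-1.$$
Then I split into three cases according to which of $\alpha',\beta',\sigma'$ is nonzero. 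If $|\beta'|=1$: use \lref{a_bound_dv}, which bounds $\jap{v}^{-1-\gamma}\der\a$ by $\eps^{3/4}(1+t)^{-5/2-\delta+|\beta'|(1+\delta)}$ in $L^\infty_xL^\infty_v$ (for $\leq 8$ derivatives on $\a$; here always $\leq 1$, so this applies), and the $\jap{v}^{-1}$ in the target weight (plus $\jap{v}^\gamma$ spare, since $\gamma\ge 0$ and we only need a $\jap{v}^{1+\gamma}$ cushion total against the $\jap{v}^{1+\gamma}$ of $\partial_v\a$) lets us split the two $g$-factors off in $L^2_xL^2_v$ via \lref{L2_time_in_norm}; the $t$-powers match because the exponent relation above gives $\nu_{\alpha'',\beta'',\sigma''}+\nu_{\alpha''',\beta''',\sigma'''}-1\ge 2\nu_{\alpha,\beta,\sigma}$. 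If $|\alpha'|=1$: write $\partial_x=(1+t)^{-1}(Y-\partial_v)$ and use \lref{a_bound_dx}; or directly, since we do not need to convert, use \lref{a_bound_Y}-type reasoning — actually \lref{a_bound_dx} gives $\jap{v}^{-2-\gamma}\der\a\lesssim \eps^{3/4}(1+t)^{-5/2+|\beta|(1+\delta)}$, and again the weight arithmetic and \lref{L2_time_in_norm} close it. If $|\sigma'|=1$: use \lref{a_bound_Y}, which gives $\jap{x-(1+t)v}^{-1}\jap{v}^{-1-\gamma}\der\a\lesssim \eps^{3/4}(1+t)^{-5/2+|\beta|(1+\delta)}$, and now the gain of one $\jap{x-(t+1)v}$ weight is exactly what the $\omega$-arithmetic supplies (the $\tfrac{3|\sigma'|}{2}-1=\tfrac12$ surplus covers the one unit we borrow). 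In every case the product of the three time factors is $\eps^{9/4}(1+t)^{\text{(nonpositive)}}(1+T)^{2|\beta|(1+\delta)}$, integrable in $t$ thanks to $4\delta<\tfrac12$, so the bound $\eps^2(1+T)^{2|\beta|(1+\delta)}$ follows (absorbing $\eps^{9/4}\le \eps^2$ for $\eps$ small).

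There is one subtlety to handle carefully: the split into $L^\infty_xL^\infty_v$ versus $L^2_xL^\infty_v$ for $\der\a$ depending on whether the number of derivatives on $\a$ is $\le 8$ or $\ge 9$. Here, since exactly one derivative hits $\a$, we are always in the $\le 8$ regime, so we always place $\der\a$ in $L^\infty_xL^\infty_v$ and the two $g$-factors in $L^2_xL^2_v$ (after a Sobolev embedding \lref{L_infty_x_L2_v} on whichever $g$-factor carries $\ge 9$ derivatives, at the cost of two extra $\partial_x$'s, which the surplus $\tfrac32$ per $\partial_x$ in the $\nu$ and $\tfrac12$ per $\partial_x$ in the $\omega$ hierarchy — combined with the $\jap{v}^{-1}$ cushion — comfortably absorb, exactly as in Case 1 of \pref{T3_2}). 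The main obstacle, such as it is, is purely combinatorial: verifying in each of the three cases that the surplus in the $\nu$- and $\omega$-exponents is at least the amount ($1$ extra $\jap{v}$, or $1$ extra $\jap{x-(t+1)v}$, or $2$ extra $\partial_x$ worth) that the chosen coefficient estimate and Sobolev embedding demand. This is a routine check given the hierarchy definitions, and the argument is otherwise a transcription of \pref{T3_1}; I would present it compactly by pointing to that proof and only spelling out the modified exponent count.
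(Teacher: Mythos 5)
Your overall route is the same as the paper's: split according to which single derivative ($\partial_v$, $\partial_x$, or $Y$) falls on $\a$, estimate the coefficient in $L^\infty_xL^\infty_v$ via \lref{a_bound_dv}, \lref{a_bound_dx}, \lref{a_bound_Y} (only one derivative ever hits $\a$, so the $\leq 8$ regime always applies), and put both $g$-factors into the weighted $L^2([0,T];L^2_xL^2_v)$ norms through \lref{L2_time_in_norm}. However, your weight arithmetic is copied from $T^{\alpha,\beta,\sigma}_{3,1}$ and is wrong for $T^{\alpha,\beta,\sigma}_{3,3}$: in \eref{T3_3} the constraint is $|\beta'|+|\beta''|+|\beta'''|\leq 2|\beta|+1$ (not $2|\beta|+2$), so the correct relations are
$$\nu_{\alpha'',\beta'',\sigma''}+\nu_{\alpha''',\beta''',\sigma'''}\geq 2\nu_{\alpha,\beta,\sigma}+\tfrac{3|\alpha'|+|\beta'|+3|\sigma'|}{2}-\tfrac{1}{2},\qquad \omega_{\alpha'',\beta'',\sigma''}+\omega_{\alpha''',\beta''',\sigma'''}\geq 2\omega_{\alpha,\beta,\sigma}+\tfrac{|\alpha'|+|\beta'|+3|\sigma'|}{2}-\tfrac{1}{2},$$
i.e.\ with $-\tfrac12$ rather than your $-1$. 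This half unit is not cosmetic: with your stated relations the budgets do not close. For $|\beta'|=1$ your relation only guarantees $\omega_{\alpha'',\beta'',\sigma''}+\omega_{\alpha''',\beta''',\sigma'''}\geq 2\omega_{\alpha,\beta,\sigma}-\tfrac12$, while the left-hand side of \eref{T3_3} carries the full $\jap{x-(t+1)v}^{2\omega_{\alpha,\beta,\sigma}}$; in the $|\sigma'|=1$ case you assert that a surplus of $\tfrac12$ ``covers the one unit we borrow'' for \lref{a_bound_Y}, which is false --- you need a full unit, and it is exactly the corrected relation (giving $\omega_{\alpha'',\beta'',\sigma''}+\omega_{\alpha''',\beta''',\sigma'''}\geq 2\omega_{\alpha,\beta,\sigma}+1$) that supplies it; similarly your $\jap{v}$ count in the $|\beta'|=1$ and $|\alpha'|=1$ cases only closes for $\gamma\leq\tfrac12$ as stated. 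With the correct relations (which is what the paper records, with equality) every case closes for all $\gamma\in[0,1)$ exactly along the lines you describe, so the fix is a one-line recomputation, but as written the key exponent check fails; also the sentence ``the $t$-powers match because $\nu_{\alpha'',\beta'',\sigma''}+\nu_{\alpha''',\beta''',\sigma'''}-1\geq 2\nu_{\alpha,\beta,\sigma}$'' does not follow from your own stated relation and conflates the $\jap{v}$ and time bookkeeping.

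A secondary point: the parenthetical about applying \lref{L_infty_x_L2_v} ``on whichever $g$-factor carries $\geq 9$ derivatives'' is both unnecessary and would fail if carried out. In $T^{\alpha,\beta,\sigma}_{3,3}$ each $g$-factor carries at most $|\alpha|+|\beta|+|\sigma|\leq 10$ derivatives and goes directly into $L^2_xL^2_v$, since the coefficient (with its single derivative) is the factor taken in $L^\infty_xL^\infty_v$; embedding a $g$-factor with $9$--$10$ derivatives would cost two more $\partial_x$ and exceed the ten derivatives controlled by \eref{boot_assumption_1}. The Sobolev step is only ever needed (as in \pref{T3_2} or \pref{T4}) when at least nine derivatives fall on the coefficient, which cannot happen here.
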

\begin{proof}
We fix a typical term with $\alpha',\alpha'',\alpha''',\beta',\beta'',\beta''',\sigma',\sigma'',\sigma'''$ satisfying the required conditions. Note that there are less than $8$ deriviatives hitting $\a$ thus we estimate it in $L^\infty_x$. Further note that although we just have one derivative hitting $\a$, there is one less $\jap{v}$ weight to worry about. We split the proof into three cases. We have the following relations for the weights
$$\nu_{\alpha'',\beta'',\sigma''}+\nu_{\alpha''',\beta''',\sigma'''}=2\nu_{\alpha,\beta,\sigma}+\frac{3|\alpha'|+|\beta'|+3|\sigma'|}{2}-\frac{1}{2}$$
and 
$$\omega_{\alpha'',\beta'',\sigma''}+\omega_{\alpha''',\beta''',\sigma'''}=2\omega_{\alpha,\beta,\sigma}+\frac{|\alpha'|+|\beta'|+3|\sigma'|}{2}-\frac{1}{2}.$$
\emph{Case 1:} $|\beta'|=1$.

In this case we have $\nu_{\alpha'',\beta'',\sigma''}+\nu_{\alpha''',\beta''',\sigma'''}=2\nu_{\alpha,\beta,\sigma}$ and $\omega_{\alpha'',\beta'',\sigma''}+\omega_{\alpha''',\beta''',\sigma'''}=2\omega_{\alpha,\beta,\sigma}$.\\
We apply \lref{a_bound_dv} and \lref{L2_time_in_norm} and H\"{o}lder's inequality to get
\begin{align*}
&\norm{\jap{v}^{2\nu_{\alpha,\beta,\sigma}-1}\jap{x-(t+1)v}^{2\omega_{\alpha,\beta,\sigma}}|\derv{'''}{'''}{'''} g||\derv{'}{'}{'} \a||\derv{''}{''}{''} g|}_{L^1([0,T];L^1_xL^1_v)}\\
&\lesssim (1+T)^{2|\beta|(1+\delta)}\norm{(1+t)^{-\frac{1}{2}-\delta-|\beta'''|(1+\delta)}\jap{v}^{\frac{1}{2}}\jap{v}^{\nu_{\alpha''',\beta''',\sigma'''}}\jap{x-(t+1)v}^{\omega_{\alpha''',\beta''',\sigma'''}}\derv{'''}{'''}{'''}g}_{L^2([0,T];L^2_xL^2_v)}\\
&\quad\times \norm{(1+t)^{1+2\delta-(|\beta'|-1)(1+\delta)}\jap{v}^{-2}\derv{'}{'}{'}\a}_{L^\infty([0,T];L^\infty_xL^\infty_v)}\\
&\quad\times \norm{(1+t)^{-\frac{1}{2}-\delta-|\beta''|(1+\delta)}\jap{v}^{\frac{1}{2}}\jap{v}^{\nu_{\alpha'',\beta'',\sigma''}}\jap{x-(t+1)v}^{\omega_{\alpha'',\beta'',\sigma''}}\derv{''}{''}{''}g}_{L^2([0,T];L^2_xL^2_v)}\\
&\lesssim (1+T)^{2|\beta|(1+\delta)}\eps^{\frac{3}{4}}\times(\sup_{t\in [0,T]}\eps^{\frac{3}{4}}(1+t)^{1+2\delta-(|\beta'|-1)(1+\delta)}(1+t)^{-\frac{5}{2}+|\beta'|(1+\delta)})\eps^{\frac{3}{4}}\\
&=\eps^{\frac{9}{4}}(1+T)^{2|\beta|(1+\delta)}.
\end{align*}
\emph{Case 2:} $|\alpha'|=1$.

In this case we have $\nu_{\alpha'',\beta'',\sigma''}+\nu_{\alpha''',\beta''',\sigma'''}=2\nu_{\alpha,\beta,\sigma}+1$ and $\omega_{\alpha'',\beta'',\sigma''}+\omega_{\alpha''',\beta''',\sigma'''}=2\omega_{\alpha,\beta,\sigma}$.\\
We apply \lref{a_bound_dv} and \lref{L2_time_in_norm} and H\"{o}lder's inequality to get
\begin{align*}
&\norm{\jap{v}^{2\nu_{\alpha,\beta,\sigma}-1}\jap{x-(t+1)v}^{2\omega_{\alpha,\beta,\sigma}}|\derv{'''}{'''}{'''} g||\derv{'}{'}{'} \a||\derv{''}{''}{''} g|}_{L^1([0,T];L^1_xL^1_v)}\\
&\lesssim (1+T)^{2|\beta|(1+\delta)}\norm{(1+t)^{-\frac{1}{2}-\delta-|\beta'''|(1+\delta)}\jap{v}^{\frac{1}{2}}\jap{v}^{\nu_{\alpha''',\beta''',\sigma'''}}\jap{x-(t+1)v}^{\omega_{\alpha''',\beta''',\sigma'''}}\derv{'''}{'''}{'''}g}_{L^2([0,T];L^2_xL^2_v)}\\
&\quad\times \norm{(1+t)^{1+2\delta-(|\beta'|-1)(1+\delta)}\jap{v}^{-3}\derv{'}{'}{'}\a}_{L^\infty([0,T];L^\infty_xL^\infty_v)}\\
&\quad\times \norm{(1+t)^{-\frac{1}{2}-\delta-|\beta''|(1+\delta)}\jap{v}^{\frac{1}{2}}\jap{v}^{\nu_{\alpha'',\beta'',\sigma''}}\jap{x-(t+1)v}^{\omega_{\alpha'',\beta'',\sigma''}}\derv{''}{''}{''}g}_{L^2([0,T];L^2_xL^2_v)}\\
&\lesssim (1+T)^{2|\beta|(1+\delta)}\eps^{\frac{3}{4}}\times(\sup_{t\in [0,T]}\eps^{\frac{3}{4}}(1+t)^{1+2\delta-(|\beta'|-1)(1+\delta)}(1+t)^{-\frac{5}{2}+|\beta'|(1+\delta)})\eps^{\frac{3}{4}}\\
&=\eps^{\frac{9}{4}}(1+T)^{2|\beta|(1+\delta)}.
\end{align*}
\emph{Case 3:} $|\sigma'|=1$.

In this case we have $\nu_{\alpha'',\beta'',\sigma''}+\nu_{\alpha''',\beta''',\sigma'''}=2\nu_{\alpha,\beta,\sigma}+1$ and $\omega_{\alpha'',\beta'',\sigma''}+\omega_{\alpha''',\beta''',\sigma'''}=2\omega_{\alpha,\beta,\sigma}+1$.\\
We apply \lref{a_bound_Y} and \lref{L2_time_in_norm} and H\"{o}lder's inequality to get
\begin{align*}
&\norm{\jap{v}^{2\nu_{\alpha,\beta,\sigma}-1}\jap{x-(t+1)v}^{2\omega_{\alpha,\beta,\sigma}}|\derv{'''}{'''}{'''} g||\derv{'}{'}{'} \a||\derv{''}{''}{''} g|}_{L^1([0,T];L^1_xL^1_v)}\\
&\lesssim (1+T)^{2|\beta|(1+\delta)}\norm{(1+t)^{-\frac{1}{2}-\delta-|\beta'''|(1+\delta)}\jap{v}^{\frac{1}{2}}\jap{v}^{\nu_{\alpha''',\beta''',\sigma'''}}\jap{x-(t+1)v}^{\omega_{\alpha''',\beta''',\sigma'''}}\derv{'''}{'''}{'''}g}_{L^2([0,T];L^2_xL^2_v)}\\
&\quad\times \norm{(1+t)^{1+2\delta-(|\beta'|-1)(1+\delta)}\jap{v}^{-2}\jap{x-(t+1)v}^{-1}\derv{'}{'}{'}\a}_{L^\infty([0,T];L^\infty_xL^\infty_v)}\\
&\quad\times \norm{(1+t)^{-\frac{1}{2}-\delta-|\beta''|(1+\delta)}\jap{v}^{\frac{1}{2}}\jap{v}^{\nu_{\alpha'',\beta'',\sigma''}}\jap{x-(t+1)v}^{\omega_{\alpha'',\beta'',\sigma''}}\derv{''}{''}{''}g}_{L^2([0,T];L^2_xL^2_v)}\\
&\lesssim (1+T)^{2|\beta|(1+\delta)}\eps^{\frac{3}{4}}\times(\sup_{t\in [0,T]}\eps^{\frac{3}{4}}(1+t)^{1+2\delta-(|\beta'|-1)(1+\delta)}(1+t)^{-\frac{5}{2}+|\beta'|(1+\delta)})\eps^{\frac{3}{4}}\\
&=\eps^{\frac{9}{4}}(1+T)^{2|\beta|(1+\delta)}.
\end{align*}
\end{proof}
\begin{proposition}\label{p.T3_4}
Let $|\alpha|+|\beta|+|\sigma|\leq 10$. Then the term $T^{\alpha,\beta,\sigma}_{3,4}$ is bounded as follows for all $T\in [0,T_{\boot})$,
$$T^{\alpha,\beta,\sigma}_{3,4}\lesssim \eps^2(1+T)^{2|\beta|(1+\delta)}.$$
\end{proposition}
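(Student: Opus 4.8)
The plan is to follow the proof of \pref{T3_3} essentially verbatim, the only new feature being the factor $(t+1)$ together with the reduced exponent $\jap{x-(t+1)v}^{2\omega_{\alpha,\beta,\sigma}-1}$ appearing in \eref{T3_4}. Since the summation in \eref{T3_4} forces $|\alpha'|+|\beta'|+|\sigma'|=1$, exactly one derivative lands on $\a$; being fewer than $8$, $\derv{'}{'}{'}\a$ will be estimated in $L^\infty_xL^\infty_v$, while $\derv{'''}{'''}{'''}g$ and $\derv{''}{''}{''}g$ are kept in $L^2_xL^2_v$ and controlled by \lref{L2_time_in_norm}. First I would record, from $|\alpha''|+|\alpha'''|\le 2|\alpha|-|\alpha'|$, $|\beta''|+|\beta'''|\le 2|\beta|+1-|\beta'|$, $|\sigma''|+|\sigma'''|\le 2|\sigma|-|\sigma'|$ and the definitions of $\nu_{\cdot,\cdot,\cdot}$, $\omega_{\cdot,\cdot,\cdot}$, the inequalities
\[
\nu_{\alpha'',\beta'',\sigma''}+\nu_{\alpha''',\beta''',\sigma'''}\ge 2\nu_{\alpha,\beta,\sigma}+\tfrac32(|\alpha'|+|\sigma'|)+\tfrac12|\beta'|-\tfrac12,
\]
\[
\omega_{\alpha'',\beta'',\sigma''}+\omega_{\alpha''',\beta''',\sigma'''}\ge 2\omega_{\alpha,\beta,\sigma}+\tfrac32|\sigma'|+\tfrac12(|\alpha'|+|\beta'|)-\tfrac12,
\]
and then treat the three cases $|\beta'|=1$, $|\alpha'|=1$, $|\sigma'|=1$ separately.

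The one point that needs attention — and hence the main obstacle — is that the plain coefficient bounds used for $T^{\alpha,\beta,\sigma}_{3,3}$ no longer close: the extra $(t+1)$ must be paid for in time decay, and, since the $\omega$-sum above exceeds $2\omega_{\alpha,\beta,\sigma}$ by only $0$ (when $|\beta'|=1$ or $|\alpha'|=1$) or $1$ (when $|\sigma'|=1$), distributing the weight $\jap{x-(t+1)v}^{2\omega_{\alpha,\beta,\sigma}-1}$ between the three factors already \emph{forces} us to pull a power $\jap{x-(t+1)v}^{-1}$ (resp.\ $\jap{x-(t+1)v}^{-2}$) off the $\a$-factor. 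Both difficulties are resolved at once by the null-structure coefficient bounds: I would use \lref{a_bound_dv_Y} when $|\beta'|=1$, \lref{a_bound_dx_Y} when $|\alpha'|=1$, and \lref{a_bound_2Y} when $|\sigma'|=1$. These supply exactly the missing powers of $\jap{x-(t+1)v}$ and, crucially, the decay $(1+t)^{-7/2+|\beta'|(1+\delta)}$ (and in the $|\beta'|=1$ case even $(1+t)^{-7/2-\delta+|\beta'|(1+\delta)}$), i.e.\ one extra factor $(1+t)^{-1}$ compared with the plain bounds — precisely what the extra $(t+1)$ costs. The $\jap{v}$-budget is never the constraint (the $\nu$-sum already exceeds $2\nu_{\alpha,\beta,\sigma}$, and $\gamma<1$), so the velocity weight in each of these lemmas is absorbed without loss.

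The rest is bookkeeping. In each case I would write
\[
(t+1)\jap{v}^{2\nu_{\alpha,\beta,\sigma}}\jap{x-(t+1)v}^{2\omega_{\alpha,\beta,\sigma}-1}|\derv{'''}{'''}{'''}g|\,|\derv{'}{'}{'}\a|\,|\derv{''}{''}{''}g|
\]
as $(t+1)$ times the two $g$-slot weights $\jap{v}^{\frac12+\nu_{\alpha''',\beta''',\sigma'''}}\jap{x-(t+1)v}^{\omega_{\alpha''',\beta''',\sigma'''}}|\derv{'''}{'''}{'''}g|$ and $\jap{v}^{\frac12+\nu_{\alpha'',\beta'',\sigma''}}\jap{x-(t+1)v}^{\omega_{\alpha'',\beta'',\sigma''}}|\derv{''}{''}{''}g|$, times the weighted $\a$-factor appearing in the relevant lemma, times a remaining product of non-positive powers of $\jap{v}$ and $\jap{x-(t+1)v}$ (non-positivity being exactly the content of the two weight relations above); then apply H\"older in $(x,v)$ with exponents $(2,\infty,2)$, insert the time weights $(1+t)^{\frac12+\delta+|\beta'''|(1+\delta)}$ and $(1+t)^{\frac12+\delta+|\beta''|(1+\delta)}$ so that the two $g$-factors match the norms of \lref{L2_time_in_norm}, pull out $(1+T)^{2|\beta|(1+\delta)}$ using $|\beta''|+|\beta'''|\le 2|\beta|+1-|\beta'|$, and finish with the Cauchy--Schwarz inequality in $t$. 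This yields
\[
T^{\alpha,\beta,\sigma}_{3,4}\lesssim (1+T)^{2|\beta|(1+\delta)}\,\eps^{9/4}\,\sup_{t\in[0,T]}(1+t)^{P},
\]
where a direct count of the time exponent gives $P=-\tfrac12+2\delta$ when $|\beta'|=1$ and $P=-\tfrac12+3\delta$ when $|\beta'|=0$. Since $\delta<\tfrac18$ forces $P<0$, the supremum is $\lesssim 1$, and, as $\eps\le\eps_0\le 1$, we conclude $T^{\alpha,\beta,\sigma}_{3,4}\lesssim\eps^2(1+T)^{2|\beta|(1+\delta)}$, as claimed.
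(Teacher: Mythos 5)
Your proposal is correct and follows essentially the same route as the paper's proof: the same case split $|\beta'|=1$, $|\alpha'|=1$, $|\sigma'|=1$, the same key coefficient bounds (\lref{a_bound_dv_Y}, \lref{a_bound_dx_Y}, \lref{a_bound_2Y}) to absorb the extra $(t+1)$ together with the reduced $\jap{x-(t+1)v}$ weight, and the same use of \lref{L2_time_in_norm} with H\"older in $(2,\infty,2)$; your weight relations and time-exponent counts ($-\tfrac12+2\delta$, resp. $-\tfrac12+3\delta$) match the paper's bookkeeping.
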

\begin{proof}
We fix a typical term with $\alpha',\alpha'',\alpha''',\beta',\beta'',\beta''',\sigma',\sigma'',\sigma'''$ satisfying the required conditions. Note that there are less than $8$ deriviatives hitting $\a$ thus we estimate it in $L^\infty_x$. Further note that although we just have one derivative hitting $\a$, there is one less $\jap{x-(t+1)v}$ weight to worry about. We split the proof into three cases. We have the following relations for the weights
$$\nu_{\alpha'',\beta'',\sigma''}+\nu_{\alpha''',\beta''',\sigma'''}=2\nu_{\alpha,\beta,\sigma}+\frac{3|\alpha'|+|\beta'|+3|\sigma'|}{2}-\frac{1}{2}$$
and 
$$\omega_{\alpha'',\beta'',\sigma''}+\omega_{\alpha''',\beta''',\sigma'''}=2\omega_{\alpha,\beta,\sigma}+\frac{|\alpha'|+|\beta'|+3|\sigma'|}{2}-\frac{1}{2}.$$
\emph{Case 1:} $|\beta'|=1$.

In this case we have $\nu_{\alpha'',\beta'',\sigma''}+\nu_{\alpha''',\beta''',\sigma'''}=2\nu_{\alpha,\beta,\sigma}$ and $\omega_{\alpha'',\beta'',\sigma''}+\omega_{\alpha''',\beta''',\sigma'''}=2\omega_{\alpha,\beta,\sigma}$.\\
We apply \lref{a_bound_dv_Y} and \lref{L2_time_in_norm} and H\"{o}lder's inequality to get
\begin{align*}
&\norm{(1+t)\jap{v}^{2\nu_{\alpha,\beta,\sigma}}\jap{x-(t+1)v}^{2\omega_{\alpha,\beta,\sigma}-1}|\derv{'''}{'''}{'''} g||\derv{'}{'}{'} \a||\derv{''}{''}{''} g|}_{L^1([0,T];L^1_xL^1_v)}\\
&\lesssim (1+T)^{2|\beta|(1+\delta)}\norm{(1+t)^{-\frac{1}{2}-\delta-|\beta'''|(1+\delta)}\jap{v}^{\frac{1}{2}}\jap{v}^{\nu_{\alpha''',\beta''',\sigma'''}}\jap{x-(t+1)v}^{\omega_{\alpha''',\beta''',\sigma'''}}\derv{'''}{'''}{'''}g}_{L^2([0,T];L^2_xL^2_v)}\\
&\quad\times \norm{(1+t)^{1+2\delta-(|\beta'|-2)(1+\delta)}\jap{v}^{-1}\jap{x-(t+1)v}^{-1}\derv{'}{'}{'}\a}_{L^\infty([0,T];L^\infty_xL^\infty_v)}\\
&\quad\times \norm{(1+t)^{-\frac{1}{2}-\delta-|\beta''|(1+\delta)}\jap{v}^{\frac{1}{2}}\jap{v}^{\nu_{\alpha'',\beta'',\sigma''}}\jap{x-(t+1)v}^{\omega_{\alpha'',\beta'',\sigma''}}\derv{''}{''}{''}g}_{L^2([0,T];L^2_xL^2_v)}\\
&\lesssim (1+T)^{2|\beta|(1+\delta)}\eps^{\frac{3}{4}}\times(\sup_{t\in [0,T]}\eps^{\frac{3}{4}}(1+t)^{1+2\delta-(|\beta'|-2)(1+\delta)}(1+t)^{-\frac{7}{2}+|\beta'|(1+\delta)})\eps^{\frac{3}{4}}\\
&=\eps^{\frac{9}{4}}(1+T)^{2|\beta|(1+\delta)}.
\end{align*}
\emph{Case 2:} $|\alpha'|=1$.

In this case we have $\nu_{\alpha'',\beta'',\sigma''}+\nu_{\alpha''',\beta''',\sigma'''}=2\nu_{\alpha,\beta,\sigma}+1$ and $\omega_{\alpha'',\beta'',\sigma''}+\omega_{\alpha''',\beta''',\sigma'''}=2\omega_{\alpha,\beta,\sigma}$.\\
We apply \lref{a_bound_dx_Y} and \lref{L2_time_in_norm} and H\"{o}lder's inequality to get
\begin{align*}
&\norm{\jap{v}^{2\nu_{\alpha,\beta,\sigma}}\jap{x-(t+1)v}^{2\omega_{\alpha,\beta,\sigma}-1}|\derv{'''}{'''}{'''} g||\derv{'}{'}{'} \a||\derv{''}{''}{''} g|}_{L^1([0,T];L^1_xL^1_v)}\\
&\lesssim (1+T)^{2|\beta|(1+\delta)}\norm{(1+t)^{-\frac{1}{2}-\delta-|\beta'''|(1+\delta)}\jap{v}^{\frac{1}{2}}\jap{v}^{\nu_{\alpha''',\beta''',\sigma'''}}\jap{x-(t+1)v}^{\omega_{\alpha''',\beta''',\sigma'''}}\derv{'''}{'''}{'''}g}_{L^2([0,T];L^2_xL^2_v)}\\
&\quad\times \norm{(1+t)^{1+2\delta-(|\beta'|-2)(1+\delta)}\jap{v}^{-2}\jap{x-(t+1)v}^{-1}\derv{'}{'}{'}\a}_{L^\infty([0,T];L^\infty_xL^\infty_v)}\\
&\quad\times \norm{(1+t)^{-\frac{1}{2}-\delta-|\beta''|(1+\delta)}\jap{v}^{\frac{1}{2}}\jap{v}^{\nu_{\alpha'',\beta'',\sigma''}}\jap{x-(t+1)v}^{\omega_{\alpha'',\beta'',\sigma''}}\derv{''}{''}{''}g}_{L^2([0,T];L^2_xL^2_v)}\\
&\lesssim (1+T)^{2|\beta|(1+\delta)}\eps^{\frac{3}{4}}\times(\sup_{t\in [0,T]}\eps^{\frac{3}{4}}(1+t)^{1+2\delta-(|\beta'|-2)(1+\delta)}(1+t)^{-\frac{7}{2}+|\beta'|(1+\delta)})\eps^{\frac{3}{4}}\\
&=\eps^{\frac{9}{4}}(1+T)^{2|\beta|(1+\delta)}.
\end{align*}
\emph{Case 3:} $|\sigma'|=1$.

In this case we have $\nu_{\alpha'',\beta'',\sigma''}+\nu_{\alpha''',\beta''',\sigma'''}=2\nu_{\alpha,\beta,\sigma}+1$ and $\omega_{\alpha'',\beta'',\sigma''}+\omega_{\alpha''',\beta''',\sigma'''}=2\omega_{\alpha,\beta,\sigma}+1$.\\
We apply \lref{a_bound_2Y} and \lref{L2_time_in_norm} and H\"{o}lder's inequality to get
\begin{align*}
&\norm{\jap{v}^{2\nu_{\alpha,\beta,\sigma}}\jap{x-(t+1)v}^{2\omega_{\alpha,\beta,\sigma}-1}|\derv{'''}{'''}{'''} g||\derv{'}{'}{'} \a||\derv{''}{''}{''} g|}_{L^1([0,T];L^1_xL^1_v)}\\
&\lesssim (1+T)^{2|\beta|(1+\delta)}\norm{(1+t)^{-\frac{1}{2}-\delta-|\beta'''|(1+\delta)}\jap{v}^{\frac{1}{2}}\jap{v}^{\nu_{\alpha''',\beta''',\sigma'''}}\jap{x-(t+1)v}^{\omega_{\alpha''',\beta''',\sigma'''}}\derv{'''}{'''}{'''}g}_{L^2([0,T];L^2_xL^2_v)}\\
&\quad\times \norm{(1+t)^{1+2\delta-(|\beta'|-2)(1+\delta)}\jap{v}^{-1}\jap{x-(t+1)v}^{-2}\derv{'}{'}{'}\a}_{L^\infty([0,T];L^\infty_xL^\infty_v)}\\
&\quad\times \norm{(1+t)^{-\frac{1}{2}-\delta-|\beta''|(1+\delta)}\jap{v}^{\frac{1}{2}}\jap{v}^{\nu_{\alpha'',\beta'',\sigma''}}\jap{x-(t+1)v}^{\omega_{\alpha'',\beta'',\sigma''}}\derv{''}{''}{''}g}_{L^2([0,T];L^2_xL^2_v)}\\
&\lesssim (1+T)^{2|\beta|(1+\delta)}\eps^{\frac{3}{4}}\times(\sup_{t\in [0,T]}\eps^{\frac{3}{4}}(1+t)^{1+2\delta-(|\beta'|-2)(1+\delta)}(1+t)^{-\frac{7}{2}+|\beta'|(1+\delta)})\eps^{\frac{3}{4}}\\
&=\eps^{\frac{9}{4}}(1+T)^{2|\beta|(1+\delta)}.
\end{align*}
\end{proof}
\begin{proposition}\label{p.T4}
Let $|\alpha|+|\beta|+|\sigma|\leq 10$. Then the term $T^{\alpha,\beta,\sigma}_{4}$ is bounded as follows for all $T\in [0,T_{\boot})$,
$$T^{\alpha,\beta,\sigma}_{4}\lesssim \eps^2(1+T)^{2|\beta|(1+\delta)}.$$
\end{proposition}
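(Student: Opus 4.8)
The plan is to read $T_4^{\alpha,\beta,\sigma}$ directly off Term~4 of \eref{diff_eq_for_g}: since $\der(\cm g)$ contains no second-order velocity derivatives of $g$, \emph{no integration by parts is required}, so after the Leibniz rule every summand of \eref{T4} is a genuine pointwise product $\jap{v}^{2\nu_{\alpha,\beta,\sigma}}\jap{x-(t+1)v}^{2\omega_{\alpha,\beta,\sigma}}\,|\der g|\,|\derv{'}{'}{'}\cm|\,|\derv{''}{''}{''}g|$ with $\alpha'+\alpha''=\alpha$, $\beta'+\beta''=\beta$, $\sigma'+\sigma''=\sigma$. First I would split according to the number of derivatives falling on $\cm$: the subcase $|\alpha'|+|\beta'|+|\sigma'|\le 8$, where $\derv{'}{'}{'}\cm$ is taken in $L^\infty_xL^\infty_v$ (itself further split into $|\alpha'|+|\beta'|+|\sigma'|=0$ and $\ge 1$), and the subcase $9\le|\alpha'|+|\beta'|+|\sigma'|\le 10$, where $\derv{'}{'}{'}\cm$ is taken in $L^2_xL^\infty_v$.

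The engine is \lref{c_bound}, which gives $\|\jap{v}^{-\gamma}\derv{'}{'}{'}\cm\|_{L^\infty_xL^\infty_v}\lesssim\eps^{3/4}(1+t)^{-3/2+|\beta'|(1+\delta)}$ for $\le 8$ derivatives and $\|\jap{v}^{-\gamma}\derv{'}{'}{'}\cm\|_{L^2_xL^\infty_v}\lesssim\eps^{3/4}(1+t)^{-7/2-2\delta+|\beta'|(1+\delta)}$ for $9$--$10$. Having pulled out $\jap{v}^\gamma$, I would assign $\jap{v}^{\nu_{\alpha,\beta,\sigma}+1/2}\jap{x-(t+1)v}^{\omega_{\alpha,\beta,\sigma}}$ to $\der g$ and $\jap{v}^{\nu_{\alpha'',\beta'',\sigma''}+1/2}\jap{x-(t+1)v}^{\omega_{\alpha'',\beta'',\sigma''}}$ to $\derv{''}{''}{''}g$, and in the $9$--$10$ subcase reserve in addition $\jap{v}^3\jap{x-(t+1)v}$ to pay for the Sobolev embedding \lref{L_infty_x_L2_v} applied to the low-order factor $\derv{''}{''}{''}g$ (which carries $\le 1$ derivative). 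The leftover $\jap{v}$-exponent is then $\nu_{\alpha,\beta,\sigma}-\nu_{\alpha'',\beta'',\sigma''}+\gamma-1=-\tfrac32(|\alpha'|+|\sigma'|)-\tfrac12|\beta'|+\gamma-1$ (with $\gamma-1$ replaced by $\gamma+2$ after the reservation in the $9$--$10$ subcase), which is $\le 0$: in the differentiated $\le 8$ subcase $\tfrac32(|\alpha'|+|\sigma'|)+\tfrac12|\beta'|\ge\tfrac12$ and $\gamma<1$, while in the $9$--$10$ subcase $\tfrac32(|\alpha'|+|\sigma'|)+\tfrac12|\beta'|\ge\tfrac92$, so the exponent is $\le\gamma-\tfrac52$; likewise the leftover $\jap{x-(t+1)v}$-exponent is $-\tfrac32|\sigma'|-\tfrac12(|\alpha'|+|\beta'|)$ ($+1$ after reservation), again $\le 0$. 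The undifferentiated subcase is even cheaper: the summand is $|\cm|(\der g)^2$ and $\jap{v}^{2\nu_{\alpha,\beta,\sigma}}|\cm|\le\eps^{3/4}(1+t)^{-3/2}\jap{v}^{2\nu_{\alpha,\beta,\sigma}+1}$ because $\gamma<1$. Then Cauchy--Schwarz in $(x,v)$ (or, in the $9$--$10$ subcase, H\"older with exponents $(2,2,\infty)$ in $x$ and $(2,\infty,2)$ in $v$, followed by \lref{L_infty_x_L2_v}) reduces each $g$-factor to the $L^2_tL^2_xL^2_v$-quantity controlled by \lref{L2_time_in_norm}, which I would write as $\eps^{3/4}(1+t)^{1/2+\delta+|\beta|(1+\delta)}\tilde A(t)$ with $\|\tilde A\|_{L^2([0,T])}\lesssim\eps^{3/4}$, and similarly for the $\beta''$-factor. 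Multiplying the three time factors and using $|\beta'|+|\beta''|=|\beta|$, the net power of $(1+t)$ is $2|\beta|(1+\delta)+\big(-\tfrac12+2\delta\big)$ in the $\le 8$ subcases and $2|\beta|(1+\delta)-\tfrac52$ in the $9$--$10$ subcase; since $\delta<\tfrac18$ by \eref{delta}, both prefactors are $\lesssim(1+T)^{2|\beta|(1+\delta)}$, and together with $\eps^{3/4}\cdot\eps^{3/4}\cdot\eps^{3/4}=\eps^{9/4}\le\eps^2$ this yields $T_4^{\alpha,\beta,\sigma}\lesssim\eps^2(1+T)^{2|\beta|(1+\delta)}$.

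There is no genuine obstacle here: $T_4$ is the easiest of the error terms, since $\cm$ transfers no derivative loss onto $g$, requires no integration by parts, and its undifferentiated decay rate $(1+t)^{-3/2}$ is already integrable --- so, unlike Term~3, no exploitation of the null structure is needed to close it. The only point at which the argument is not a near-verbatim transcription of the proofs of \pref{T3_3} and \pref{T3_2} is the weight bookkeeping when $\cm$ carries $9$--$10$ derivatives: one must check that, after also paying the $\jap{v}^3\jap{x-(t+1)v}$ cost of the Sobolev embedding on the low-order $g$-factor, the residual $\jap{v}$- and $\jap{x-(t+1)v}$-exponents remain nonpositive. As computed above the surplus in that regime (at least $\tfrac92$ powers of $\jap{v}$ and at least $\tfrac92$ powers of $\jap{x-(t+1)v}$) comfortably exceeds what the embedding consumes, so this check is immediate.
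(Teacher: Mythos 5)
Your proposal is correct and takes essentially the same route as the paper's proof: split according to whether at most $8$ or at least $9$ derivatives fall on $\cm$, estimate $\derv{'}{'}{'}\cm$ by \lref{c_bound} in $L^\infty_xL^\infty_v$ resp.\ $L^2_xL^\infty_v$, use the weight relations $\nu_{\alpha'',\beta'',\sigma''}\geq\nu_{\alpha,\beta,\sigma}$, $\omega_{\alpha'',\beta'',\sigma''}\geq\omega_{\alpha,\beta,\sigma}$ (with ample surplus in the $9$--$10$ case to pay for the Sobolev embedding \lref{L_infty_x_L2_v} on the low-order $g$-factor), and close with H\"older in $t,x,v$ and \lref{L2_time_in_norm}, exactly as in the paper. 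The only deviations (separating out the undifferentiated subcase, tracking a $\geq\tfrac92$ weight surplus instead of the paper's $+3$, and the redundant $\eps^{\frac34}$ prefactor in your packaging of the $g$-factors, which does not affect the final count $\eps^{\frac94}\lesssim\eps^2$) are cosmetic.
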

\begin{proof}
The proof proceeds by analyzing two cases- when less than 9 derivatives fall on $\cm$ and when more than $8$ fall on it.

The weights satify the relations 
$$\nu_{\alpha'',\beta'',\sigma''}=\nu_{\alpha,\beta,\sigma}+\frac{3|\alpha'|+|\beta'|+3|\sigma'|}{2}$$
and
$$\omega_{\alpha'',\beta'',\sigma''}=2\omega_{\alpha,\beta,\sigma}+\frac{|\alpha'|+|\beta'|+3|\sigma'|}{2}.$$
 We use \lref{c_bound} and \lref{L2_time_in_norm} in both the following cases.\\
\emph{Case 1:} $|\alpha'|+|\beta'|+|\sigma'|\leq 8$.

In this case $\nu_{\alpha'',\beta'',\sigma''}\geq\nu_{\alpha,\beta,\sigma}$ and $\omega_{\alpha'',\beta'',\sigma''}\geq\omega_{\alpha,\beta,\sigma}$.
\begin{align*}
&\norm{\jap{v}^{2\nu_{\alpha,\beta,\sigma}}\jap{x-(t+1)v}^{2\omega_{\alpha,\beta,\sigma}}|\der g||\derv{'}{'}{'} \cm||\derv{''}{''}{''} g|}_{L^1([0,T];L^1_xL^1_v)}\\
&\lesssim (1+T)^{2|\beta|(1+\delta)}\norm{(1+t)^{-\frac{1}{2}-\delta-|\beta|(1+\delta)}\jap{v}^{\frac{1}{2}}\jap{v}^{\nu_{\alpha,\beta,\sigma}}\jap{x-(t+1)v}^{\omega_{\alpha,\beta,\sigma}}\der g}_{L^2([0,T];L^2_xL^2_v)}\\
&\quad\times \norm{(1+t)^{1+2\delta-|\beta'|(1+\delta)}\jap{v}^{-1}\derv{'}{'}{'}\cm}_{L^\infty([0,T];L^\infty_xL^\infty_v)}\\
&\quad\times \norm{(1+t)^{-\frac{1}{2}-\delta-|\beta''|(1+\delta)}\jap{v}^{\frac{1}{2}}\jap{v}^{\nu_{\alpha'',\beta'',\sigma''}}\jap{x-(t+1)v}^{\omega_{\alpha'',\beta'',\sigma''}}\derv{''}{''}{''}g}_{L^2([0,T];L^2_xL^2_v)}\\
&\lesssim (1+T)^{2|\beta|(1+\delta)}\eps^{\frac{3}{4}}\times(\sup_{t\in [0,T]}\eps^{\frac{3}{4}}(1+t)^{1+2\delta-|\beta'|(1+\delta)}(1+t)^{-\frac{3}{2}+|\beta'|(1+\delta)})\eps^{\frac{3}{4}}\\
&=\eps^{\frac{9}{4}}(1+T)^{2|\beta|(1+\delta)}.
\end{align*}
\emph{Case 2:} $|\alpha'|+|\beta'|+|\sigma'|\geq 9$.

In this case $\nu_{\alpha'',\beta'',\sigma''}\geq\nu_{\alpha,\beta,\sigma}+3$ and $\omega_{\alpha'',\beta'',\sigma''}\geq\omega_{\alpha,\beta,\sigma}+3$.\\
We use \lref{L_infty_x_L2_v} to go from $L^\infty_x$ to $L^2_x$ for the third term.\\
\begin{align*}
&\norm{\jap{v}^{2\nu_{\alpha,\beta,\sigma}}\jap{x-(t+1)v}^{2\omega_{\alpha,\beta,\sigma}}|\der g||\derv{'}{'}{'} \cm||\derv{''}{''}{''} g|}_{L^1([0,T];L^1_xL^1_v)}\\
&\lesssim (1+T)^{2|\beta|(1+\delta)}\norm{(1+t)^{-\frac{1}{2}-\delta-|\beta|(1+\delta)}\jap{v}^{\frac{1}{2}}\jap{v}^{\nu_{\alpha,\beta,\sigma}}\jap{x-(t+1)v}^{\omega_{\alpha,\beta,\sigma}}\der g}_{L^2([0,T];L^2_xL^2_v)}\\
&\quad\times \norm{(1+t)^{1+2\delta-|\beta'|(1+\delta)}\jap{v}^{-1}\derv{'}{'}{'}\cm}_{L^\infty([0,T];L^2_xL^\infty_v)}\\
&\quad\times \norm{(1+t)^{-\frac{1}{2}-\delta-|\beta''|(1+\delta)}\jap{v}^{\frac{1}{2}}\jap{v}^{\nu_{\alpha'',\beta'',\sigma''}-3}\jap{x-(t+1)v}^{\omega_{\alpha'',\beta'',\sigma''}-3}\derv{''}{''}{''}g}_{L^2([0,T];L^2_xL^2_v)}\\
&\lesssim (1+T)^{2|\beta|(1+\delta)}\eps^{\frac{3}{4}}\times(\sup_{t\in [0,T]}\eps^{\frac{3}{4}}(1+t)^{1+2\delta-|\beta'|(1+\delta)}(1+t)^{-\frac{3}{2}+|\beta'|(1+\delta)})\eps^{\frac{3}{4}}\\
&=\eps^{\frac{9}{4}}(1+T)^{2|\beta|(1+\delta)}.
\end{align*}
We proceeded in the same way Case 1 of \pref{T3_2} to deduce the penultimate inequality.
\end{proof}
\begin{proposition}\label{p.T5_1}
Let $|\alpha|+|\beta|+|\sigma|\leq 10$. Then the term $T^{\alpha,\beta,\sigma}_{5,1}$ is bounded as follows for all $T\in [0,T_{\boot})$,
$$T^{\alpha,\beta,\sigma}_{5,1}\lesssim \eps^2(1+T)^{2|\beta|(1+\delta)}.$$
\end{proposition}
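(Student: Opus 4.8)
The plan is to follow the template already used for $T^{\alpha,\beta,\sigma}_{3,3}$ and $T^{\alpha,\beta,\sigma}_{3,4}$ in \pref{T3_3} and \pref{T3_4}, with the coefficient $\a\frac{v_i}{\jap{v}}$ in place of $\a$ and with Lemmas \lref{av_bound}, \lref{av_bound_dv}, \lref{av_bound_dx}, \lref{av_bound_Y} replacing the corresponding bounds on $\a$. First I would fix a typical summand of \eref{T5_1}, indexed by multi-indices with $|\alpha'|+|\alpha''|=|\alpha|$, $|\beta'|+|\beta''|=|\beta|+1$, $|\sigma'|+|\sigma''|=|\sigma|$ and $1\le|\alpha'|+|\beta'|+|\sigma'|$, and split the three factors by H\"older: the factors $\der g$ and $\derv{''}{''}{''}g$ are placed in $L^2([0,T];L^2_xL^2_v)$ against the weights $(1+t)^{-\frac{1}{2}-\delta-|\beta|(1+\delta)}\jap{v}^{\frac{1}{2}}\jap{v}^{\nu_{\alpha,\beta,\sigma}}\jap{x-(t+1)v}^{\omega_{\alpha,\beta,\sigma}}$ and $(1+t)^{-\frac{1}{2}-\delta-|\beta''|(1+\delta)}\jap{v}^{\frac{1}{2}}\jap{v}^{\nu_{\alpha'',\beta'',\sigma''}}\jap{x-(t+1)v}^{\omega_{\alpha'',\beta'',\sigma''}}$ respectively, both of which are $\lesssim\eps^{\frac{3}{4}}$ by \lref{L2_time_in_norm}; the factor $\derv{'}{'}{'}\!\left(\a\frac{v_i}{\jap{v}}\right)$ carries the leftover $\jap{v}$ and $\jap{x-(t+1)v}$ weights together with the time power needed to compensate the two $g$-factors and is estimated in $L^\infty([0,T];L^\infty_xL^\infty_v)$ when $|\alpha'|+|\beta'|+|\sigma'|\le8$, and in $L^\infty([0,T];L^2_xL^\infty_v)$, after moving two $x$-derivatives onto $\derv{''}{''}{''}g$ via \lref{L_infty_x_L2_v}, when $|\alpha'|+|\beta'|+|\sigma'|\ge9$.

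Next I would record the hierarchy identities, valid under the above constraints,
\begin{align*}
\nu_{\alpha'',\beta'',\sigma''}&=\nu_{\alpha,\beta,\sigma}+\tfrac{3}{2}(|\alpha'|+|\sigma'|)+\tfrac{1}{2}|\beta'|-\tfrac{1}{2},\\
\omega_{\alpha'',\beta'',\sigma''}&=\omega_{\alpha,\beta,\sigma}+\tfrac{3}{2}|\sigma'|+\tfrac{1}{2}(|\alpha'|+|\beta'|)-\tfrac{1}{2},
\end{align*}
which, since $|\alpha'|+|\beta'|+|\sigma'|\ge1$, guarantee $\nu_{\alpha'',\beta'',\sigma''}\ge\nu_{\alpha,\beta,\sigma}$ and $\omega_{\alpha'',\beta'',\sigma''}\ge\omega_{\alpha,\beta,\sigma}$, so that after $\der g$ and $\derv{''}{''}{''}g$ absorb their shares the coefficient is left with nonpositive powers of $\jap{v}$ and $\jap{x-(t+1)v}$; since $\gamma<1$ these leftover powers are at least as negative as $\jap{v}^{-1-\gamma}$, resp.\ $\jap{x-(t+1)v}^{-1}\jap{v}^{-\gamma}$, which is exactly the headroom demanded by Lemmas \lref{av_bound_dv}--\lref{av_bound_Y}. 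I would then run the case split. If $|\beta'|\ge1$, apply \lref{av_bound_dv}; then $|\beta''|=|\beta|+1-|\beta'|\le|\beta|$, and after pulling $(1+T)^{2|\beta|(1+\delta)}$ out in front the residual time exponent is $-\frac{1}{2}+2\delta<0$ by \eref{delta}. If $|\beta'|=0$, then $|\alpha'|+|\sigma'|\ge1$ and $|\beta''|=|\beta|+1$, so $\derv{''}{''}{''}g$ costs an extra $(1+t)^{1+\delta}$; this is paid for by the extra decay $(1+t)^{-\frac{5}{2}}$ coming from \lref{av_bound_dx} (if $|\alpha'|\ge1$) or \lref{av_bound_Y} (if $|\sigma'|\ge1$), and the residual time exponent works out to $-\frac{1}{2}+3\delta<0$, again by $\delta<\frac{1}{8}$. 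In each of the $|\alpha'|+|\beta'|+|\sigma'|\ge9$ subcases one uses in addition that $|\beta''|\le1$ forces $|\alpha''|+|\sigma''|$ to be large, so that the hierarchy comfortably absorbs the two $x$-derivatives spent on the Sobolev embedding while still leaving $\nu,\omega$-headroom above $\nu_{\alpha,\beta,\sigma},\omega_{\alpha,\beta,\sigma}$.

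Summing over the (finitely many) summands then gives $T^{\alpha,\beta,\sigma}_{5,1}\lesssim\eps^{\frac{9}{4}}(1+T)^{2|\beta|(1+\delta)}\lesssim\eps^2(1+T)^{2|\beta|(1+\delta)}$ for $\eps$ small, which is the claim. The one genuinely delicate point — as in \pref{T3_2} — is the bookkeeping in the $|\beta'|=0$, $|\alpha'|+|\beta'|+|\sigma'|\ge9$ regime: there one must simultaneously retain enough $\jap{v}$- and $\jap{x-(t+1)v}$-headroom on the coefficient for \lref{av_bound_dx}/\lref{av_bound_Y}, keep $\nu_{\alpha''',\beta'',\sigma''}\ge\nu_{\alpha,\beta,\sigma}$ and $\omega_{\alpha''',\beta'',\sigma''}\ge\omega_{\alpha,\beta,\sigma}$ after the two-derivative Sobolev step, and still close the time budget with $-\frac{1}{2}+3\delta<0$; everything else is a direct transcription of the case analysis already carried out for $T^{\alpha,\beta,\sigma}_{3,3}$ and $T^{\alpha,\beta,\sigma}_{3,4}$.
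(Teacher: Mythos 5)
Your proposal is correct and follows essentially the same route as the paper's proof: the same split into $|\alpha'|+|\beta'|+|\sigma'|\le 8$ versus $\ge 9$, the same subcases according to whether $\part_v$, $\part_x$ or $Y$ hits the coefficient with \lref{av_bound_dv}, \lref{av_bound_dx}, \lref{av_bound_Y} respectively, combined with \lref{L2_time_in_norm} and (in the high-derivative regime) \lref{L_infty_x_L2_v}, and the same weight identities and residual time exponents $-\frac12+2\delta$, $-\frac12+3\delta$. One small verbal slip: in the $\ge 9$ regime what matters is that $|\alpha''|+|\beta''|+|\sigma''|\le 2$ is \emph{small} (so the Sobolev step stays within ten derivatives) while the increments $\nu_{\alpha'',\beta'',\sigma''}-\nu_{\alpha,\beta,\sigma}\ge 4$ and $\omega_{\alpha'',\beta'',\sigma''}-\omega_{\alpha,\beta,\sigma}\ge 4$ supply the headroom, which is in fact what your final bookkeeping uses.
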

\begin{proof}
We need to break into two cases- when less than 9 derivatives fall on the coefficient and when more than 8 fall on it. Each of these cases is further subdivided into three cases depending on which derivative falls on the coefficient.\\
The weights satify the relations 
$$\nu_{\alpha'',\beta'',\sigma''}=\nu_{\alpha,\beta,\sigma}+\frac{3|\alpha'|+|\beta'|+3|\sigma'|}{2}-\frac{1}{2}$$
and
$$\omega_{\alpha'',\beta'',\sigma''}=2\omega_{\alpha,\beta,\sigma}+\frac{|\alpha'|+|\beta'|+3|\sigma'|}{2}-\frac{1}{2}.$$
\emph{Case 1}: $|\alpha'|+|\beta'|+|\sigma'|\leq 8$.\\
\emph{Subcase 1a):} $|\beta'|\geq 1$.

In this case we have $\nu_{\alpha'',\beta'',\sigma''}\geq\nu_{\alpha,\beta,\sigma}$ and $\omega_{\alpha'',\beta'',\sigma''}\geq\omega_{\alpha,\beta,\sigma}$.\\
We use \lref{av_bound_dv} and \lref{L2_time_in_norm} to obtain
\begin{align*}
&\norm{\jap{v}^{2\nu_{\alpha,\beta,\sigma}}\jap{x-(t+1)v}^{2\omega_{\alpha,\beta,\sigma}}|\der g||\derv{'}{'}{'} \left(\a \frac{v_i}{\jap{v}}\right)||\derv{''}{''}{''} g|}_{L^1([0,T];L^1_xL^1_v)}\\
&\lesssim (1+T)^{2|\beta|(1+\delta)}\norm{(1+t)^{-\frac{1}{2}-\delta-|\beta|(1+\delta)}\jap{v}^{\frac{1}{2}}\jap{v}^{\nu_{\alpha,\beta,\sigma}}\jap{x-(t+1)v}^{\omega_{\alpha,\beta,\sigma}}\der g}_{L^2([0,T];L^2_xL^2_v)}\\
&\quad\times \norm{(1+t)^{1+2\delta-(|\beta'|-1)(1+\delta)}\jap{v}^{-1}\derv{'}{'}{'}\left(\a \frac{v_i}{\jap{v}}\right)}_{L^\infty([0,T];L^\infty_xL^\infty_v)}\\
&\quad\times \norm{(1+t)^{-\frac{1}{2}-\delta-|\beta''|(1+\delta)}\jap{v}^{\frac{1}{2}}\jap{v}^{\nu_{\alpha'',\beta'',\sigma''}}\jap{x-(t+1)v}^{\omega_{\alpha'',\beta'',\sigma''}}\derv{''}{''}{''}g}_{L^2([0,T];L^2_xL^2_v)}\\
&\lesssim (1+T)^{2|\beta|(1+\delta)}\eps^{\frac{3}{4}}\times(\sup_{t\in [0,T]}\eps^{\frac{3}{4}}(1+t)^{1+2\delta-(|\beta'|-1)(1+\delta)}(1+t)^{-\frac{5}{2}+|\beta'|(1+\delta)})\eps^{\frac{3}{4}}\\
&=\eps^{\frac{9}{4}}(1+T)^{2|\beta|(1+\delta)}.
\end{align*}
\emph{Subcase 1b):} $|\alpha'|\geq 1$.

In this case we have $\nu_{\alpha'',\beta'',\sigma''}\geq\nu_{\alpha,\beta,\sigma}+1$ and $\omega_{\alpha'',\beta'',\sigma''}\geq\omega_{\alpha,\beta,\sigma}$.\\
We use \lref{av_bound_dx} to obtain
\begin{align*}
&\norm{\jap{v}^{2\nu_{\alpha,\beta,\sigma}}\jap{x-(t+1)v}^{2\omega_{\alpha,\beta,\sigma}}|\der g||\derv{'}{'}{'} \left(\a \frac{v_i}{\jap{v}}\right)||\derv{''}{''}{''} g|}_{L^1([0,T];L^1_xL^1_v)}\\
&\lesssim (1+T)^{2|\beta|(1+\delta)}\norm{(1+t)^{-\frac{1}{2}-\delta-|\beta|(1+\delta)}\jap{v}^{\frac{1}{2}}\jap{v}^{\nu_{\alpha,\beta,\sigma}}\jap{x-(t+1)v}^{\omega_{\alpha,\beta,\sigma}}\der g}_{L^2([0,T];L^2_xL^2_v)}\\
&\quad\times \norm{(1+t)^{1+2\delta-(|\beta'|-1)(1+\delta)}\jap{v}^{-2}\derv{'}{'}{'}\left(\a \frac{v_i}{\jap{v}}\right)}_{L^\infty([0,T];L^\infty_xL^\infty_v)}\\
&\quad\times \norm{(1+t)^{-\frac{1}{2}-\delta-|\beta''|(1+\delta)}\jap{v}^{\frac{1}{2}}\jap{v}^{\nu_{\alpha'',\beta'',\sigma''}}\jap{x-(t+1)v}^{\omega_{\alpha'',\beta'',\sigma''}}\derv{''}{''}{''}g}_{L^2([0,T];L^2_xL^2_v)}\\
&\lesssim (1+T)^{2|\beta|(1+\delta)}\eps^{\frac{3}{4}}\times(\sup_{t\in [0,T]}\eps^{\frac{3}{4}}(1+t)^{1+2\delta-(|\beta'|-1)(1+\delta)}(1+t)^{-\frac{5}{2}+|\beta'|(1+\delta)})\eps^{\frac{3}{4}}\\
&=\eps^{\frac{9}{4}}(1+T)^{2|\beta|(1+\delta)}.
\end{align*}
\emph{Subcase 1c):} $|\sigma'|\geq 1$.

In this case we have $\nu_{\alpha'',\beta'',\sigma''}\geq\nu_{\alpha,\beta,\sigma}+1$ and $\omega_{\alpha'',\beta'',\sigma''}\geq\omega_{\alpha,\beta,\sigma}+1$.\\
We use \lref{av_bound_Y} to obtain
\begin{align*}
&\norm{\jap{v}^{2\nu_{\alpha,\beta,\sigma}}\jap{x-(t+1)v}^{2\omega_{\alpha,\beta,\sigma}}|\der g||\derv{'}{'}{'} \left(\a \frac{v_i}{\jap{v}}\right)||\derv{''}{''}{''} g|}_{L^1([0,T];L^1_xL^1_v)}\\
&\lesssim (1+T)^{2|\beta|(1+\delta)}\norm{(1+t)^{-\frac{1}{2}-\delta-|\beta|(1+\delta)}\jap{v}^{\frac{1}{2}}\jap{v}^{\nu_{\alpha,\beta,\sigma}}\jap{x-(t+1)v}^{\omega_{\alpha,\beta,\sigma}}\der g}_{L^2([0,T];L^2_xL^2_v)}\\
&\quad\times \norm{(1+t)^{1+2\delta-(|\beta'|-1)(1+\delta)}\jap{x-(t+1)v}^{-1}\jap{v}^{-1}\derv{'}{'}{'}\left(\a \frac{v_i}{\jap{v}}\right)}_{L^\infty([0,T];L^\infty_xL^\infty_v)}\\
&\quad\times \norm{(1+t)^{-\frac{1}{2}-\delta-|\beta''|(1+\delta)}\jap{v}^{\frac{1}{2}}\jap{v}^{\nu_{\alpha'',\beta'',\sigma''}}\jap{x-(t+1)v}^{\omega_{\alpha'',\beta'',\sigma''}}\derv{''}{''}{''}g}_{L^2([0,T];L^2_xL^2_v)}\\
&\lesssim (1+T)^{2|\beta|(1+\delta)}\eps^{\frac{3}{4}}\times(\sup_{t\in [0,T]}\eps^{\frac{3}{4}}(1+t)^{1+2\delta-(|\beta'|-1)(1+\delta)}(1+t)^{-\frac{5}{2}+|\beta'|(1+\delta)})\eps^{\frac{3}{4}}\\
&=\eps^{\frac{9}{4}}(1+T)^{2|\beta|(1+\delta)}.
\end{align*}
\emph{Case 2:} $|\alpha'|+|\beta'|+|\sigma'|\geq 9$.

In each case we have $\nu_{\alpha'',\beta'',\sigma''}\geq\nu_{\alpha,\beta,\sigma}+4$ and $\omega_{\alpha'',\beta'',\sigma''}\geq\omega_{\alpha,\beta,\sigma}+4$.\\
\emph{Subcase 2a):} $|\beta'|\geq 1$.\\
We use \lref{av_bound_dv}, \lref{L2_time_in_norm} and \lref{L_infty_x_L2_v} to obtain
\begin{align*}
&\norm{\jap{v}^{2\nu_{\alpha,\beta,\sigma}}\jap{x-(t+1)v}^{2\omega_{\alpha,\beta,\sigma}}|\der g||\derv{'}{'}{'} \left(\a \frac{v_i}{\jap{v}}\right)||\derv{''}{''}{''} g|}_{L^1([0,T];L^1_xL^1_v)}\\
&\lesssim (1+T)^{2|\beta|(1+\delta)}\norm{(1+t)^{-\frac{1}{2}-\delta-|\beta|(1+\delta)}\jap{v}^{\frac{1}{2}}\jap{v}^{\nu_{\alpha,\beta,\sigma}}\jap{x-(t+1)v}^{\omega_{\alpha,\beta,\sigma}}\der g}_{L^2([0,T];L^\infty_xL^2_v)}\\
&\quad\times \norm{(1+t)^{1+2\delta-(|\beta'|-1)(1+\delta)}\jap{v}^{-1}\derv{'}{'}{'}\left(\a \frac{v_i}{\jap{v}}\right)}_{L^\infty([0,T];L^2_xL^\infty_v)}\\
&\quad\times \norm{(1+t)^{-\frac{1}{2}-\delta-|\beta''|(1+\delta)}\jap{v}^{\frac{1}{2}}\jap{v}^{\nu_{\alpha'',\beta'',\sigma''}-4}\jap{x-(t+1)v}^{\omega_{\alpha'',\beta'',\sigma''}-4}\derv{''}{''}{''}g}_{L^\infty([0,T];L^\infty_xL^2_v)}\\
&\lesssim (1+T)^{2|\beta|(1+\delta)}\eps^{\frac{3}{4}}\times(\sup_{t\in [0,T]}\eps^{\frac{3}{4}}(1+t)^{1+2\delta-(|\beta'|-1)(1+\delta)}(1+t)^{-\frac{5}{2}+|\beta'|(1+\delta)})\\
&\times \norm{(1+t)^{-\frac{1}{2}-\delta-|\beta''|(1+\delta)}\jap{v}^{\frac{1}{2}}\jap{v}^{\nu_{\alpha''',\beta'',\sigma''}}\jap{x-(t+1)v}^{\omega_{\alpha''',\beta'',\sigma''}}\derv{'''}{''}{''}g}_{L^2([0,T];L^2_xL^2_v)}\\
&\lesssim \eps^{\frac{9}{4}}(1+T)^{2|\beta|(1+\delta)}.
\end{align*}
Here $|\alpha'''|\leq |\alpha''|+2$. Thus $\nu_{\alpha''',\beta'',\sigma''}\geq \nu_{\alpha'',\beta'',\sigma''}-3$ and $\omega_{\alpha''',\beta'',\sigma''}\geq \omega_{\alpha'',\beta'',\sigma''}-1$.\\
\emph{Subcase 2b):} $|\alpha'|\geq 1$.\\
We use \lref{av_bound_dx}, \lref{L2_time_in_norm} and \lref{L_infty_x_L2_v} to obtain
\begin{align*}
&\norm{\jap{v}^{2\nu_{\alpha,\beta,\sigma}}\jap{x-(t+1)v}^{2\omega_{\alpha,\beta,\sigma}}|\der g||\derv{'}{'}{'} \left(\a \frac{v_i}{\jap{v}}\right)||\derv{''}{''}{''} g|}_{L^1([0,T];L^1_xL^1_v)}\\
&\lesssim (1+T)^{2|\beta|(1+\delta)}\norm{(1+t)^{-\frac{1}{2}-\delta-|\beta|(1+\delta)}\jap{v}^{\frac{1}{2}}\jap{v}^{\nu_{\alpha,\beta,\sigma}}\jap{x-(t+1)v}^{\omega_{\alpha,\beta,\sigma}}\der g}_{L^2([0,T];L^\infty_xL^2_v)}\\
&\quad\times \norm{(1+t)^{1+2\delta-(|\beta'|-1)(1+\delta)}\jap{v}^{-2}\derv{'}{'}{'}\left(\a \frac{v_i}{\jap{v}}\right)}_{L^\infty([0,T];L^2_xL^\infty_v)}\\
&\quad\times \norm{(1+t)^{-\frac{1}{2}-\delta-|\beta''|(1+\delta)}\jap{v}^{\frac{1}{2}}\jap{v}^{\nu_{\alpha'',\beta'',\sigma''}-3}\jap{x-(t+1)v}^{\omega_{\alpha'',\beta'',\sigma''}-4}\derv{''}{''}{''}g}_{L^\infty([0,T];L^\infty_xL^2_v)}.
\end{align*}
Now we proceed as above.\\
\emph{Subcase 2c):} $|\sigma'|\geq 1$.\\
We use \lref{av_bound_Y}, \lref{L2_time_in_norm} and \lref{L_infty_x_L2_v} to obtain
\begin{align*}
&\norm{\jap{v}^{2\nu_{\alpha,\beta,\sigma}}\jap{x-(t+1)v}^{2\omega_{\alpha,\beta,\sigma}}|\der g||\derv{'}{'}{'} \left(\a \frac{v_i}{\jap{v}}\right)||\derv{''}{''}{''} g|}_{L^1([0,T];L^1_xL^1_v)}\\
&\lesssim (1+T)^{2|\beta|(1+\delta)}\norm{(1+t)^{-\frac{1}{2}-\delta-|\beta|(1+\delta)}\jap{v}^{\frac{1}{2}}\jap{v}^{\nu_{\alpha,\beta,\sigma}}\jap{x-(t+1)v}^{\omega_{\alpha,\beta,\sigma}}\der g}_{L^2([0,T];L^\infty_xL^2_v)}\\
&\quad\times \norm{(1+t)^{1+2\delta-(|\beta'|-1)(1+\delta)}\jap{x-(t+1)v}^{-1}\jap{v}^{-1}\derv{'}{'}{'}\left(\a \frac{v_i}{\jap{v}}\right)}_{L^\infty([0,T];L^2_xL^\infty_v)}\\
&\quad\times \norm{(1+t)^{-\frac{1}{2}-\delta-|\beta''|(1+\delta)}\jap{v}^{\frac{1}{2}}\jap{v}^{\nu_{\alpha'',\beta'',\sigma''}-3}\jap{x-(t+1)v}^{\omega_{\alpha'',\beta'',\sigma''}-4}\derv{''}{''}{''}g}_{L^\infty([0,T];L^\infty_xL^2_v)}.
\end{align*}
\end{proof}
\begin{proposition}\label{p.T5_2}
Let $|\alpha|+|\beta|+|\sigma|\leq 10$. Then the term $T^{\alpha,\beta,\sigma}_{5,2}$ is bounded as follows for all $T\in [0,T_{\boot})$,
$$T^{\alpha,\beta,\sigma}_{5,2}\lesssim \eps^2(1+T)^{2|\beta|(1+\delta)}.$$
\end{proposition}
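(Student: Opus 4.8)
The key feature of $T_{5,2}^{\alpha,\beta,\sigma}$ is that no derivative falls on the coefficient, so the plan is to place $\a\frac{v_i}{\jap{v}}$ in $L^\infty_xL^\infty_v$. Applying \lref{av_bound} with zero derivatives (so that the index $|\beta|$ there vanishes) gives
$$\max_j\left|\a\frac{v_i}{\jap{v}}\right|(t,x,v)\lesssim \eps^{\frac34}(1+t)^{-\frac32}\jap{v}^{1+\gamma}.$$
Since $\gamma\in[0,1)$ and $\jap{v}\geq 1$ we have $\jap{v}^{2\nu_{\alpha,\beta,\sigma}-1}\jap{v}^{1+\gamma}=\jap{v}^{2\nu_{\alpha,\beta,\sigma}+\gamma}\leq\jap{v}^{2\nu_{\alpha,\beta,\sigma}+1}$, and this spare power of $\jap{v}$ is precisely what is needed to distribute one factor $\jap{v}^{\frac12}$ onto each of the two copies of $\der g$. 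Hence
$$T_{5,2}^{\alpha,\beta,\sigma}\lesssim \eps^{\frac34}\norm{(1+t)^{-\frac32}\jap{v}^{2\nu_{\alpha,\beta,\sigma}+1}\jap{x-(t+1)v}^{2\omega_{\alpha,\beta,\sigma}}(\der g)^2}_{L^1([0,T];L^1_xL^1_v)}.$$

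Next I would split the time factor so as to recover the weight appearing in \lref{L2_time_in_norm}. Writing
$$(1+t)^{-\frac32}=(1+t)^{-1-2\delta-2|\beta|(1+\delta)}\cdot(1+t)^{-\frac12+2\delta+2|\beta|(1+\delta)}$$
and using $\delta<\frac18$, so that $-\frac12+2\delta<0$, we get
$$\sup_{t\in[0,T]}(1+t)^{-\frac12+2\delta+2|\beta|(1+\delta)}\lesssim (1+T)^{2|\beta|(1+\delta)}.$$
Therefore
$$T_{5,2}^{\alpha,\beta,\sigma}\lesssim \eps^{\frac34}(1+T)^{2|\beta|(1+\delta)}\norm{(1+t)^{-\frac12-\delta-|\beta|(1+\delta)}\jap{v}^{\frac12}\jap{v}^{\nu_{\alpha,\beta,\sigma}}\jap{x-(t+1)v}^{\omega_{\alpha,\beta,\sigma}}\der g}^2_{L^2([0,T];L^2_xL^2_v)}.$$

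Finally, applying \lref{L2_time_in_norm} bounds the remaining norm by $\eps^{\frac34}$, so its square is $\lesssim\eps^{\frac32}$, and we conclude
$$T_{5,2}^{\alpha,\beta,\sigma}\lesssim \eps^{\frac34}\cdot\eps^{\frac32}(1+T)^{2|\beta|(1+\delta)}=\eps^{\frac94}(1+T)^{2|\beta|(1+\delta)}\lesssim \eps^2(1+T)^{2|\beta|(1+\delta)}$$
for $\eps\leq\eps_0$ small. There is no genuine obstacle in this term: since no derivative hits the coefficient there is no case split over the vector fields $\part_v,\part_x,Y$, and the velocity-weight hierarchy leaves a full unit of room because $1+\gamma<2$; the only bookkeeping point is that the time-weight split must produce exactly the factor $(1+T)^{2|\beta|(1+\delta)}$ with no residual growth, which is where the hypothesis $\delta<\frac18$ is used.
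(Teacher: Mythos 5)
Your proof is correct and follows essentially the same route as the paper: the coefficient $\a\frac{v_i}{\jap{v}}$ (with no derivatives) is bounded in $L^\infty_xL^\infty_v$ via \lref{av_bound}, the spare $\jap{v}$ room (since $1+\gamma<2$) supplies a $\jap{v}^{\frac12}$ for each copy of $\der g$, and both factors are absorbed by \lref{L2_time_in_norm} after the time-weight split, giving $\eps^{\frac94}(1+T)^{2|\beta|(1+\delta)}$. The only cosmetic difference is that the paper applies H\"older directly with the weight $(1+t)^{1+2\delta}\jap{v}^{-2}$ on the coefficient rather than splitting the time factor pointwise first.
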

\begin{proof}
Although we don't have any derivative hitting the coefficient term, we have one less $\jap{v}$ term. Thus,
\begin{align*}
&\max_j\norm{\jap{v}^{2\nu_{\alpha,\beta,\sigma}-1}\jap{x-(t+1)v}^{2\omega_{\alpha,\beta,\sigma}}|\der g|^2\left|\left(\a \frac{v_i}{\jap{v}}\right)\right|}_{L^1([0,T];L^1_xL^1_v)}\\
&\lesssim\max_{j}(1+T)^{2|\beta|(1+\delta)}\norm{(1+t)^{-\frac{1}{2}-\delta-|\beta|(1+\delta)}\jap{v}^{\nu_{\alpha,\beta,\sigma}}\jap{x-(t+1)v}^{\omega_{\alpha,\beta,\sigma}}\jap{v}^{\frac{1}{2}}\der g}_{L^2([0,T];L^2_xL^2_v)}^2\\
&\quad\times\norm{(1+t)^{1+2\delta}\jap{v}^{-2}\left(\a \frac{v_i}{\jap{v}}\right)}_{L^\infty([0,T];L^\infty_xL^\infty_v)}\\
&\lesssim (1+T)^{2|\beta|(1+\delta)}\eps^{\frac{3}{2}}\times (\sup_{t\in [0,T]}\eps^{\frac{3}{4}}(1+t)^{1+2\delta}(1+t)^{-\frac{3}{2}})=\eps^{\frac{9}{4}}(1+T)^{2|\beta|(1+\delta)}.
\end{align*}
\end{proof}
\begin{proposition}\label{p.T5_3}
Let $|\alpha|+|\beta|+|\sigma|\leq 10$. Then the term $T^{\alpha,\beta,\sigma}_{5,3}$ is bounded as follows for all $T\in [0,T_{\boot})$,
$$T^{\alpha,\beta,\sigma}_{5,3}\lesssim \eps^2(1+T)^{2|\beta|(1+\delta)}.$$
\end{proposition}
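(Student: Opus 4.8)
The plan is to copy the proof of \pref{T5_2} almost verbatim, since $T_{5,3}^{\alpha,\beta,\sigma}$ has no derivative falling on the coefficient and is therefore a single weighted integral of $|\der g|^{2}$ against $\a\frac{v_i}{\jap{v}}$. The only structural change relative to $T_{5,2}^{\alpha,\beta,\sigma}$ is that the weight is $(t+1)\jap{v}^{2\nu_{\alpha,\beta,\sigma}}\jap{x-(t+1)v}^{2\omega_{\alpha,\beta,\sigma}-1}$ rather than $\jap{v}^{2\nu_{\alpha,\beta,\sigma}-1}\jap{x-(t+1)v}^{2\omega_{\alpha,\beta,\sigma}}$; that is, one power of $\jap{x-(t+1)v}$ together with one extra power of $(t+1)$ replace one power of $\jap{v}$. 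This is precisely the trade-off furnished by \lref{av_bound_Y}, which spends a power of $\jap{x-(t+1)v}$ to gain an additional factor $(1+t)^{-1}$ of decay over \lref{av_bound}. So I would run the same H\"{o}lder split and use \lref{av_bound_Y} with no derivatives in place of \lref{av_bound}.

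Concretely, I would write the weight as $\bigl(\jap{v}^{\frac12+\nu_{\alpha,\beta,\sigma}}\jap{x-(t+1)v}^{\omega_{\alpha,\beta,\sigma}}\bigr)^{2}\cdot(t+1)\jap{v}^{-1}\jap{x-(t+1)v}^{-1}$, place one factor $\jap{v}^{\frac12+\nu_{\alpha,\beta,\sigma}}\jap{x-(t+1)v}^{\omega_{\alpha,\beta,\sigma}}$ on each copy of $\der g$, leave $(t+1)\jap{v}^{-1}\jap{x-(t+1)v}^{-1}$ on $\a\frac{v_i}{\jap{v}}$, and redistribute powers of $(1+t)$ so that each copy of $\der g$ carries exactly the weight appearing in \lref{L2_time_in_norm}. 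Using $(1+t)^{2|\beta|(1+\delta)}\le(1+T)^{2|\beta|(1+\delta)}$ on $[0,T]$ this gives
\begin{align*}
T_{5,3}^{\alpha,\beta,\sigma}&\lesssim (1+T)^{2|\beta|(1+\delta)}\Bigl\|(1+t)^{-\frac12-\delta-|\beta|(1+\delta)}\jap{v}^{\frac12}\jap{v}^{\nu_{\alpha,\beta,\sigma}}\jap{x-(t+1)v}^{\omega_{\alpha,\beta,\sigma}}\der g\Bigr\|_{L^2([0,T];L^2_xL^2_v)}^{2}\\
&\quad\times\Bigl\|(1+t)^{2+2\delta}\jap{v}^{-1}\jap{x-(t+1)v}^{-1}\a\tfrac{v_i}{\jap{v}}\Bigr\|_{L^\infty([0,T];L^\infty_xL^\infty_v)}.
\end{align*}
The squared norm is $\lesssim\eps^{3/2}$ by \lref{L2_time_in_norm}. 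For the remaining factor, $\jap{v}^{-1}\le\jap{v}^{-\gamma}$ since $\gamma\in[0,1)$, so \lref{av_bound_Y} with $|\alpha|+|\beta|+|\sigma|=0$ bounds it by $\eps^{3/4}\sup_{t\in[0,T]}(1+t)^{2+2\delta}(1+t)^{-5/2}=\eps^{3/4}\sup_{t\in[0,T]}(1+t)^{-\frac12+2\delta}\lesssim\eps^{3/4}$, where the last estimate uses $\delta<\frac18$. Multiplying the three pieces yields $T_{5,3}^{\alpha,\beta,\sigma}\lesssim\eps^{9/4}(1+T)^{2|\beta|(1+\delta)}\lesssim\eps^{2}(1+T)^{2|\beta|(1+\delta)}$, since $\eps\le\eps_0\le1$.

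I do not expect a genuine obstacle: the whole argument is a single H\"{o}lder split followed by the two already-proved lemmas. The one point that needs care is the time-weight accounting, namely verifying that the extra factor $(t+1)$ in the weight of $T_{5,3}^{\alpha,\beta,\sigma}$ is precisely cancelled by the extra $(1+t)^{-1}$ that \lref{av_bound_Y} provides over \lref{av_bound}, and that the leftover exponent $-\frac12+2\delta$ is negative so that the supremum in time is finite; this is exactly where the hypothesis $\delta<\frac18$ from \eref{delta} enters.
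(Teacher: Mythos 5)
Your proposal is correct and follows essentially the same route as the paper: a single H\"{o}lder split placing $\jap{v}^{\frac12+\nu_{\alpha,\beta,\sigma}}\jap{x-(t+1)v}^{\omega_{\alpha,\beta,\sigma}}$ with the time factor from \lref{L2_time_in_norm} on each copy of $\der g$, and the leftover $(1+t)\jap{v}^{-1}\jap{x-(t+1)v}^{-1}$ on $\a\frac{v_i}{\jap{v}}$, controlled by the zero-derivative case of \lref{av_bound_Y} (using $\gamma\in[0,1)$ and $\delta<\tfrac18$ so the residual exponent $-\tfrac12+2\delta$ is negative). In fact your time-weight accounting, $(1+t)^{2+2\delta}$ against the $(1+t)^{-5/2}$ decay of \lref{av_bound_Y}, is exactly the intended cancellation of the extra $(t+1)$ in $T^{\alpha,\beta,\sigma}_{5,3}$, which the paper's displayed computation records with a small typographical slip but with the same final exponent.
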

\begin{proof}
Since we have one less $\jap{x-(t+1)v}$ weight, we get
\begin{align*}
&\max_j\norm{\jap{v}^{(1+t)2\nu_{\alpha,\beta,\sigma}}\jap{x-(t+1)v}^{2\omega_{\alpha,\beta,\sigma}-1}|\der g|^2\left|\left(\a \frac{v_i}{\jap{v}}\right)\right|}_{L^1([0,T];L^1_xL^1_v)}\\
&\lesssim\max_{j}(1+T)^{2|\beta|(1+\delta)}\norm{(1+t)^{-\frac{1}{2}-\delta-|\beta|(1+\delta)}\jap{v}^{\nu_{\alpha,\beta,\sigma}}\jap{x-(t+1)v}^{\omega_{\alpha,\beta,\sigma}}\jap{v}^{\frac{1}{2}}\der g}_{L^2([0,T];L^2_xL^2_v)}^2\\
&\quad\times\norm{(1+t)^{1+2\delta}\jap{v}^{-1}\jap{x-(t+1)v}^{-1}\left(\a \frac{v_i}{\jap{v}}\right)}_{L^\infty([0,T];L^\infty_xL^\infty_v)}\\
&\lesssim (1+T)^{2|\beta|(1+\delta)}\eps^{\frac{3}{2}}\times (\sup_{t\in [0,T]}\eps^{\frac{3}{4}}(1+t)^{1+2\delta}(1+t)^{-\frac{3}{2}})=\eps^{\frac{9}{4}}(1+T)^{2|\beta|(1+\delta)}.
\end{align*}
\end{proof}
\begin{proposition}\label{p.T6_1}
Let $|\alpha|+|\beta|+|\sigma|\leq 10$. Then the term $T^{\alpha,\beta,\sigma}_{6,1}$ is bounded as follows for all $T\in [0,T_{\boot})$,
$$T^{\alpha,\beta,\sigma}_{6,1}\lesssim \eps^2(1+T)^{2|\beta|(1+\delta)}.$$
\end{proposition}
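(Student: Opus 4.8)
The strategy is to transcribe the proof of \pref{T5_1}, which bounds the structurally identical term $T^{\alpha,\beta,\sigma}_{5,1}$. Two differences occur: here no velocity derivative is transferred onto $g$, so the combinatorial constraints in \eref{T6_1} (namely $|\alpha'|+|\alpha''|\le|\alpha|$, $|\beta'|+|\beta''|\le|\beta|$, $|\sigma'|+|\sigma''|\le|\sigma|$, with $|\alpha'|+|\beta'|+|\sigma'|\ge1$) are strictly more generous than those in \eref{T5_1}; and the coefficient is $\frac{\bar{a}_{ii}}{\jap{v}}$ rather than $\a\frac{v_i}{\jap{v}}$, multiplied by the bounded factor $d(t)$, which is harmless.

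\textbf{Step 1: coefficient bounds for $\frac{\bar{a}_{ii}}{\jap{v}}$.} First I would record that $\frac{\bar{a}_{ii}}{\jap{v}}$ satisfies the exact analogues of \lref{av_bound}, \lref{av_bound_dv}, \lref{av_bound_dx} and \lref{av_bound_Y}, proved by the same argument. Since $\bar{a}_{ii}=2\,|{\cdot}|^{\gamma+2}\ast f$, the convolution identities from the proof of \pref{pointwise_estimates_a} give, after distributing the derivatives in $\der$ between the kernel and $f$ and using homogeneity, $|\der(\tfrac{\bar{a}_{ii}}{\jap{v}})|\lesssim\jap{v}^{1+\gamma}\int\jap{v_*}^{2+\gamma}|\der f|\,\d v_*$ (the power $\jap{v}^{1+\gamma}$ comes from $|v-v_*|^{\gamma+2}\lesssim\jap{v}^{\gamma+2}\jap{v_*}^{\gamma+2}$ together with the $\jap{v}^{-1}$), with the $\jap{v}$-power or the $(1+t)$-decay improved — exactly as in \lref{av_bound_dv}, \lref{av_bound_dx}, \lref{av_bound_Y} — when a $\partial_v$ or $Y$ derivative hits the coefficient; here one uses $|\partial_{v_k}(|v-v_*|^{\gamma+2})|\lesssim|v-v_*|^{\gamma+1}$, $|\partial_v^{\beta'}\jap{v}^{-1}|\lesssim\jap{v}^{-1-|\beta'|}$ for the derivatives landing on $\jap{v}^{-1}$ (noting $Y\jap{v}^{-1}=\partial_v\jap{v}^{-1}$), and $|v-v_*|\le(t+1)^{-1}(|x-(t+1)v|+|x-(t+1)v_*|)$ when $Y$ falls on the coefficient. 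Combining with \lref{L1_v_bound_boot} and \lref{L_infty_x_L2_v} exactly as in \sref{coef} yields the claimed $L^\infty_xL^\infty_v$ (resp. $L^2_xL^\infty_v$) estimates, with the same $(1+t)$-powers as those available for $\a\frac{v_i}{\jap{v}}$.

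\textbf{Step 2: case analysis.} With these in hand, I would split $T^{\alpha,\beta,\sigma}_{6,1}$ according to whether $|\alpha'|+|\beta'|+|\sigma'|\le8$ (estimate the coefficient in $L^\infty_xL^\infty_v$) or $|\alpha'|+|\beta'|+|\sigma'|\ge9$ (estimate it in $L^2_xL^\infty_v$ and move two $x$-derivatives onto a $g$-factor by \lref{L_infty_x_L2_v}, as in Case 2 of \pref{T5_1}), and in each case subdivide according to whether $|\beta'|\ge1$, $|\alpha'|\ge1$ or $|\sigma'|\ge1$ — at least one holds since $|\alpha'|+|\beta'|+|\sigma'|\ge1$. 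From the constraints one reads off $\nu_{\alpha'',\beta'',\sigma''}\ge\nu_{\alpha,\beta,\sigma}+\tfrac{3|\alpha'|+|\beta'|+3|\sigma'|}{2}$ and $\omega_{\alpha'',\beta'',\sigma''}\ge\omega_{\alpha,\beta,\sigma}+\tfrac{|\alpha'|+|\beta'|+3|\sigma'|}{2}$, which is exactly the bookkeeping of \pref{T5_1} minus the $\beta$-shift, hence with strictly more room to spare. Then H\"older's inequality in $(t,x,v)$, placing the two $g$-factors in $L^2([0,T];L^2_xL^2_v)$ via \lref{L2_time_in_norm} and the coefficient in $L^\infty([0,T];L^p_xL^\infty_v)$ via Step 1, together with $4\delta<\tfrac12$ (used exactly as in \pref{T3_1}) to see that the powers of $(1+t)$ multiply to an integrable quantity, gives $T^{\alpha,\beta,\sigma}_{6,1}\lesssim\eps^{9/4}(1+T)^{2|\beta|(1+\delta)}\lesssim\eps^2(1+T)^{2|\beta|(1+\delta)}$.

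\textbf{Main obstacle.} The only genuinely new ingredient relative to \pref{T5_1}, and hence the point to check carefully, is Step 1: one must confirm that dividing $\bar{a}_{ii}$ by $\jap{v}$ and differentiating does not cost any $\jap{v}$-decay compared with $\a$ (it does not — derivatives landing on $\jap{v}^{-1}$ only improve matters) and that the resulting decay rates coincide with those used for $\a\frac{v_i}{\jap{v}}$, so that the $(1+t)$-exponents still close in every subcase. Once that is verified, the six subcases are a verbatim repetition of those in \pref{T5_1}.
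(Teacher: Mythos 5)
Your proposal is correct and follows essentially the same route as the paper: split according to whether at most $8$ or at least $9$ derivatives land on $\frac{\bar a_{ii}}{\jap{v}}$, subdivide by whether $\partial_v$, $\partial_x$ or $Y$ hits the coefficient, bound the coefficient by the analogues of the $\a$-lemmas (the paper cites \lref{a_bound_dv}, \lref{a_bound_dx}, \lref{a_bound_Y} ``with the appropriate change'' for the $\jap{v}^{-1}$ factor, which is exactly your Step 1), and close with H\"older, \lref{L2_time_in_norm}, and \lref{L_infty_x_L2_v} in the high-derivative case as in Case 2 of \pref{T5_1}. Your observation that derivatives falling on $\jap{v}^{-1}$ only improve both the $\jap{v}$-weight and the time decay is the correct justification for the modified coefficient bounds, and the weight bookkeeping (no $\beta$-shift, hence more room than in \pref{T5_1}) matches the paper.
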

\begin{proof}
We need to break into two cases- when less than 9 derivatives fall on the coefficient and when more than 8 fall on it. Each of these cases is further subdivided into three cases depending on which derivative falls on the coefficient.\\
The weights satify the relations 
$$\nu_{\alpha'',\beta'',\sigma''}=\nu_{\alpha,\beta,\sigma}+\frac{3|\alpha'|+|\beta'|+3|\sigma'|}{2}$$
and
$$\omega_{\alpha'',\beta'',\sigma''}=2\omega_{\alpha,\beta,\sigma}+\frac{|\alpha'|+|\beta'|+3|\sigma'|}{2}.$$
\emph{Case 1:} $|\alpha'|+|\beta'|+|\sigma'|\leq 8$\\
\emph{Subcase 1a):} $|\beta'|\geq 1$.

In this case we have $\nu_{\alpha'',\beta'',\sigma''}\geq\nu_{\alpha,\beta,\sigma}$ and $\omega_{\alpha'',\beta'',\sigma''}\geq\omega_{\alpha,\beta,\sigma}$.\\
We use \lref{a_bound_dv} (with the appropriate change now that we have a $\jap{v}$ weight in the denominator) and \lref{L2_time_in_norm} to obtain
\begin{align*}
 &\norm{\jap{v}^{2\nu_{\alpha,\beta,\sigma}}\jap{x-(t+1)v}^{2\omega_{\alpha,\beta,\sigma}}|\der g|\left|\derv{'}{'}{'}\left(\frac{\bar{a}_{ii}}{\jap{v}}\right)\right||\derv{''}{''}{''} g|}_{L^1([0,T];L^1_xL^1_v)}\\
&\lesssim (1+T)^{2|\beta|(1+\delta)}\norm{(1+t)^{-\frac{1}{2}-\delta-|\beta|(1+\delta)}\jap{v}^{\frac{1}{2}}\jap{v}^{\nu_{\alpha,\beta,\sigma}}\jap{x-(t+1)v}^{\omega_{\alpha,\beta,\sigma}}\der g}_{L^2([0,T];L^2_xL^2_v)}\\
&\quad\times \norm{(1+t)^{1+2\delta-|\beta'|(1+\delta)}\jap{v}^{-1}\derv{'}{'}{'}\left(\frac{\bar{a}_{ii}}{\jap{v}}\right)}_{L^\infty([0,T];L^\infty_xL^\infty_v)}\\
&\quad\times \norm{(1+t)^{-\frac{1}{2}-\delta-|\beta''|(1+\delta)}\jap{v}^{\frac{1}{2}}\jap{v}^{\nu_{\alpha'',\beta'',\sigma''}}\jap{x-(t+1)v}^{\omega_{\alpha'',\beta'',\sigma''}}\derv{''}{''}{''}g}_{L^2([0,T];L^2_xL^2_v)}\\
&\lesssim (1+T)^{2|\beta|(1+\delta)}\eps^{\frac{3}{4}}\times(\sup_{t\in [0,T]}\eps^{\frac{3}{4}}(1+t)^{1+2\delta-|\beta'|(1+\delta)}(1+t)^{-\frac{5}{2}+|\beta'|(1+\delta)})\eps^{\frac{3}{4}}\\
&=\eps^{\frac{9}{4}}(1+T)^{2|\beta|(1+\delta)}.
\end{align*}
\emph{Subcase 1b):} $|\alpha'|\geq 1$.

In this case we have $\nu_{\alpha'',\beta'',\sigma''}\geq\nu_{\alpha,\beta,\sigma}+1$ and $\omega_{\alpha'',\beta'',\sigma''}\geq\omega_{\alpha,\beta,\sigma}$.\\
We use \lref{a_bound_dx} (with the appropriate changes) and \lref{L2_time_in_norm} to obtain
\begin{align*}
 &\norm{\jap{v}^{2\nu_{\alpha,\beta,\sigma}}\jap{x-(t+1)v}^{2\omega_{\alpha,\beta,\sigma}}|\der g|\left|\derv{'}{'}{'}\left(\frac{\bar{a}_{ii}}{\jap{v}}\right)\right||\derv{''}{''}{''} g|}_{L^1([0,T];L^1_xL^1_v)}\\
&\lesssim (1+T)^{2|\beta|(1+\delta)}\norm{(1+t)^{-\frac{1}{2}-\delta-|\beta|(1+\delta)}\jap{v}^{\frac{1}{2}}\jap{v}^{\nu_{\alpha,\beta,\sigma}}\jap{x-(t+1)v}^{\omega_{\alpha,\beta,\sigma}}\der g}_{L^2([0,T];L^2_xL^2_v)}\\
&\quad\times \norm{(1+t)^{1+2\delta-|\beta'|(1+\delta)}\jap{v}^{-2}\derv{'}{'}{'}\left(\frac{\bar{a}_{ii}}{\jap{v}}\right)}_{L^\infty([0,T];L^\infty_xL^\infty_v)}\\
&\quad\times \norm{(1+t)^{-\frac{1}{2}-\delta-|\beta''|(1+\delta)}\jap{v}^{\frac{1}{2}}\jap{v}^{\nu_{\alpha'',\beta'',\sigma''}}\jap{x-(t+1)v}^{\omega_{\alpha'',\beta'',\sigma''}}\derv{''}{''}{''}g}_{L^2([0,T];L^2_xL^2_v)}\\
&\lesssim (1+T)^{2|\beta|(1+\delta)}\eps^{\frac{3}{4}}\times(\sup_{t\in [0,T]}\eps^{\frac{3}{4}}(1+t)^{1+2\delta-|\beta'|(1+\delta)}(1+t)^{-\frac{5}{2}+|\beta'|(1+\delta)})\eps^{\frac{3}{4}}=\eps^{\frac{9}{4}}(1+T)^{2|\beta|(1+\delta)}.
\end{align*}
\emph{Subcase 1c):} $|\sigma'|\geq 1$.

In this case we have $\nu_{\alpha'',\beta'',\sigma''}\geq\nu_{\alpha,\beta,\sigma}$ and $\omega_{\alpha'',\beta'',\sigma''}\geq\omega_{\alpha,\beta,\sigma}+1$.\\
We use \lref{a_bound_Y} (with the appropriate changes) and \lref{L2_time_in_norm} to obtain
\begin{align*}
 &\norm{\jap{v}^{2\nu_{\alpha,\beta,\sigma}}\jap{x-(t+1)v}^{2\omega_{\alpha,\beta,\sigma}}|\der g|\left|\derv{'}{'}{'}\left(\frac{\bar{a}_{ii}}{\jap{v}}\right)\right||\derv{''}{''}{''} g|}_{L^1([0,T];L^1_xL^1_v)}\\
&\lesssim (1+T)^{2|\beta|(1+\delta)}\norm{(1+t)^{-\frac{1}{2}-\delta-|\beta|(1+\delta)}\jap{v}^{\frac{1}{2}}\jap{v}^{\nu_{\alpha,\beta,\sigma}}\jap{x-(t+1)v}^{\omega_{\alpha,\beta,\sigma}}\der g}_{L^2([0,T];L^2_xL^2_v)}\\
&\quad\times \norm{(1+t)^{1+2\delta-|\beta'|(1+\delta)}\jap{v}^{-1}\jap{x-(t+1)v}^{-1}\derv{'}{'}{'}\left(\frac{\bar{a}_{ii}}{\jap{v}}\right)}_{L^\infty([0,T];L^\infty_xL^\infty_v)}\\
&\quad\times \norm{(1+t)^{-\frac{1}{2}-\delta-|\beta''|(1+\delta)}\jap{v}^{\frac{1}{2}}\jap{v}^{\nu_{\alpha'',\beta'',\sigma''}}\jap{x-(t+1)v}^{\omega_{\alpha'',\beta'',\sigma''}}\derv{''}{''}{''}g}_{L^2([0,T];L^2_xL^2_v)}\\
&\lesssim (1+T)^{2|\beta|(1+\delta)}\eps^{\frac{3}{4}}\times(\sup_{t\in [0,T]}\eps^{\frac{3}{4}}(1+t)^{1+2\delta-|\beta'|(1+\delta)}(1+t)^{-\frac{5}{2}+|\beta'|(1+\delta)})\eps^{\frac{3}{4}}=\eps^{\frac{9}{4}}(1+T)^{2|\beta|(1+\delta)}.
\end{align*}
\emph{Case 2:} $|\alpha'|+|\beta'|+|\sigma'|\geq 9$.\\
In each case we have $\nu_{\alpha'',\beta'',\sigma''}\geq\nu_{\alpha,\beta,\sigma}+4$ and $\omega_{\alpha'',\beta'',\sigma''}\geq\omega_{\alpha,\beta,\sigma}+4$.\\
We get the required result by proceeding the same way as in Case 2 of \pref{T5_1} but using the relevant lemmas for $\derv{'}{'}{'} \left(\frac{\bar{a}_{ii}}{\jap{v}}\right)$.
\end{proof}
\begin{proposition}\label{p.T6_2}
Let $|\alpha|+|\beta|+|\sigma|\leq 10$. Then the term $T^{\alpha,\beta,\sigma}_{6,2}$ is bounded as follows for all $T\in [0,T_{\boot})$,
$$T^{\alpha,\beta,\sigma}_{6,2}\lesssim \eps^2(1+T)^{2|\beta|(1+\delta)}.$$
\end{proposition}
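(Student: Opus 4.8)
The plan is to mirror the proofs of \pref{T4} and \pref{T6_1}: the only new ingredient needed is control of $\derv{'}{'}{'}\bigl(\tfrac{\a v_iv_j}{\jap{v}^2}\bigr)$, which is exactly \lref{a2v_bound}, supplemented by the $\part_v$-, $\part_x$- and $Y$-adapted variants one obtains from it in the same way \lref{a_bound_dv}, \lref{a_bound_dx}, \lref{a_bound_Y} (and the mixed lemmas \lref{a_bound_dv_dx}, \lref{a_bound_dv_Y}, \lref{a_bound_dx_Y}) follow from \lref{a_bound}: write $\part_x=(1+t)^{-1}(Y-\part_v)$ to turn an $x$-derivative into a factor $(1+t)^{-1}$, and bound $|v-v_*|\le(1+t)^{-1}(\jap{x-(t+1)v}+\jap{x-(t+1)v_*})$ to turn a $Y$-derivative into decay plus a $\jap{x-(t+1)v}$ weight. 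I would also remark that the genuine multiplier in Term $6$ is $(d(t)+\jap{v}^{-1})\tfrac{\a v_iv_j}{\jap{v}^2}$, but $d(t)+\jap{v}^{-1}$ together with all its $v$-derivatives is bounded uniformly in $(t,v)$, hence harmless, and that --- unlike Term $5$ --- this term is of order zero in $g$, so no further integration by parts is required.

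Fixing $\alpha',\alpha'',\beta',\beta'',\sigma',\sigma''$ as in \eref{T6_2}, I would first record the weight bookkeeping: since $|\alpha''|\le|\alpha|$, $|\beta''|\le|\beta|$, $|\sigma''|\le|\sigma|$, we have $\nu_{\alpha'',\beta'',\sigma''}\ge\nu_{\alpha,\beta,\sigma}$ and $\omega_{\alpha'',\beta'',\sigma''}\ge\omega_{\alpha,\beta,\sigma}$, with a surplus of $\tfrac32(|\alpha'|+|\sigma'|)+\tfrac12|\beta'|$ in $\nu$ and of $\tfrac32|\sigma'|+\tfrac12(|\alpha'|+|\beta'|)$ in $\omega$. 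Then I would split into three cases, exactly as in \pref{T5_1}. \emph{No derivative on the coefficient} ($|\alpha'|+|\beta'|+|\sigma'|=0$): here \lref{a2v_bound} gives $\norm{\jap{v}^{-\gamma}\tfrac{\a v_iv_j}{\jap{v}^2}}_{L^\infty_xL^\infty_v}\lesssim\eps^{3/4}(1+t)^{-3/2}$; since $\gamma<1$ the loss $\jap{v}^\gamma$ is absorbed into the $\jap{v}^{1/2}\cdot\jap{v}^{1/2}$ coming from squaring $\der g$ in the $L^2_t$ part of the energy norm, and since $\delta<\tfrac14$ the decay $(1+t)^{-3/2}$ dominates $(1+t)^{-1-2\delta}$, so \lref{L2_time_in_norm}, applied to the two copies of $\der g$, closes the bound at $\eps^{9/4}(1+T)^{2|\beta|(1+\delta)}$. \emph{$1\le|\alpha'|+|\beta'|+|\sigma'|\le8$}: estimate the coefficient in $L^\infty_xL^\infty_v$, subdividing by which of $\part_v,\part_x,Y$ carries the most derivatives landing on $\tfrac{\a v_iv_j}{\jap{v}^2}$; the corresponding variant of \lref{a2v_bound} then furnishes $\eps^{3/4}(1+t)^{-\frac32-\kappa+|\beta'|(1+\delta)}$ together with a velocity loss $\jap{v}^\rho$, $\rho\le1$, and (if a $Y$ hits the coefficient) an extra $\jap{x-(t+1)v}^{-1}$, all compensated by the $\nu$- and $\omega$-surpluses above and, when $\part_x$ or $Y$ hits the coefficient, by $\derv{''}{''}{''}g$ carrying one fewer such derivative; H\"older plus \lref{L2_time_in_norm} for the two copies of $g$, and assembling the $(1+t)$ exponents (using $4\delta<\tfrac12$), again yields $\eps^{9/4}(1+T)^{2|\beta|(1+\delta)}$. \emph{$|\alpha'|+|\beta'|+|\sigma'|\ge9$}: now $\derv{''}{''}{''}g$ carries at most one derivative while $\nu_{\alpha'',\beta'',\sigma''}\ge\nu_{\alpha,\beta,\sigma}+3$ and $\omega_{\alpha'',\beta'',\sigma''}\ge\omega_{\alpha,\beta,\sigma}+3$; estimate the coefficient in $L^2_xL^\infty_v$ by \lref{a2v_bound}, keep $\der g$ in $L^2_xL^2_v$, and use the Sobolev embedding \lref{L_infty_x_L2_v} to put $\derv{''}{''}{''}g$ into $L^\infty_xL^2_v$ at the cost of two extra $x$-derivatives, which the surplus $+3$ in $\nu$ and $+3$ in $\omega$ more than absorbs, proceeding verbatim as in Case $2$ of \pref{T5_1} and of \pref{T4} to reach $\eps^{9/4}(1+T)^{2|\beta|(1+\delta)}$. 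Summing the finitely many contributions gives $T^{\alpha,\beta,\sigma}_{6,2}\lesssim\eps^{9/4}(1+T)^{2|\beta|(1+\delta)}\le\eps^2(1+T)^{2|\beta|(1+\delta)}$ for $\eps\le\eps_0$ small.

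The only place I expect genuine work is the middle case, and even there it is bookkeeping: checking, for each of the three subcases, that the gain $\kappa$, the velocity loss $\rho\le1$ and the $\jap{x-(t+1)v}^{-1}$ produced by the coefficient estimate are matched by $\nu_{\alpha'',\beta'',\sigma''}-\nu_{\alpha,\beta,\sigma}$, $\omega_{\alpha'',\beta'',\sigma''}-\omega_{\alpha,\beta,\sigma}$ and by the reduced derivative count on $\derv{''}{''}{''}g$, so that the exponents of $(1+t)$ assemble into the advertised power of $(1+T)$. There is no new conceptual difficulty: the $v_iv_j$-part of Term $6$ is zeroth order in $g$, and $\tfrac{\a v_iv_j}{\jap{v}^2}$ is the most velocity-benign of the coefficients --- its velocity growth is only $\jap{v}^\gamma$, i.e.\ it behaves like $\part^2_{v_iv_j}\a$ --- so the moment-loss margin is widest for $T_{6,2}$.
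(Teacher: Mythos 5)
Your proposal is correct and follows the same skeleton as the paper's proof: H\"older with the coefficient measured in $L^\infty_xL^\infty_v$ (respectively $L^2_xL^\infty_v$ plus the Sobolev embedding \lref{L_infty_x_L2_v} when at least nine derivatives land on it), the two copies of $g$ absorbed via \lref{L2_time_in_norm}, and the single extra power of $\jap{v}$ (at most $\jap{v}^\gamma$, $\gamma<1$) eaten by the $\jap{v}^{\frac12}\cdot\jap{v}^{\frac12}$ surplus. The one place you deviate is the middle case $1\le|\alpha'|+|\beta'|+|\sigma'|\le 8$: you subdivide according to whether $\part_v$, $\part_x$ or $Y$ hits the coefficient and invoke $\part_v$-, $\part_x$- and $Y$-adapted variants of \lref{a2v_bound}, which are not stated in the paper and would have to be proved (they need new pointwise bounds in the spirit of \eref{pw_bound_der_a_v} for the kernel $a_{ij}(v-v_*)\tfrac{v_iv_j}{\jap{v}^2}$). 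The paper shows this refinement is unnecessary: since the term is zeroth order in $g$, no gain from derivatives falling on the coefficient is needed, and the plain bound of \lref{a2v_bound}, $(1+t)^{-\frac32+|\beta'|(1+\delta)}$, already beats the $(1+t)^{1+2\delta-|\beta'|(1+\delta)}$ weight (leaving $(1+t)^{-\frac12+2\delta}$, bounded since $\delta<\tfrac18$), with the hierarchy giving $\nu_{\alpha'',\beta'',\sigma''}\ge\nu_{\alpha,\beta,\sigma}$ and $\omega_{\alpha'',\beta'',\sigma''}\ge\omega_{\alpha,\beta,\sigma}$ in all of Case 1 (which also subsumes your zero-derivative case) and a surplus of at least $3$ in each weight in the high-derivative case. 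So your argument closes, but at the cost of extra case analysis and auxiliary lemmas that the paper's leaner two-case proof avoids.
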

\begin{proof}
\emph{Case 1:} $|\alpha'|+|\beta'|+|\sigma'|\leq 8$.

In this case we have $\nu_{\alpha'',\beta'',\sigma''}\geq\nu_{\alpha,\beta,\sigma}$ and $\omega_{\alpha'',\beta'',\sigma''}\geq\omega_{\alpha,\beta,\sigma}$.\\
We use \lref{a2v_bound} and \lref{L2_time_in_norm} to get
\begin{align*}
 &\norm{\jap{v}^{2\nu_{\alpha,\beta,\sigma}}\jap{x-(t+1)v}^{2\omega_{\alpha,\beta,\sigma}}|\der g|\left|\derv{'}{'}{'}\left(\frac{\a v_iv_j}{\jap{v}^2} \right)\right||\derv{''}{''}{''} g|}_{L^1([0,T];L^1_xL^1_v)}\\
&\lesssim (1+T)^{2|\beta|(1+\delta)}\norm{(1+t)^{-\frac{1}{2}-\delta-|\beta|(1+\delta)}\jap{v}^{\frac{1}{2}}\jap{v}^{\nu_{\alpha,\beta,\sigma}}\jap{x-(t+1)v}^{\omega_{\alpha,\beta,\sigma}}\der g}_{L^2([0,T];L^2_xL^2_v)}\\
&\quad\times \norm{(1+t)^{1+2\delta-|\beta'|(1+\delta)}\jap{v}^{-1}\derv{'}{'}{'}\derv{'}{'}{'}\left(\frac{\a v_iv_j}{\jap{v}^2} \right)}_{L^\infty([0,T];L^\infty_xL^\infty_v)}\\
&\quad\times \norm{(1+t)^{-\frac{1}{2}-\delta-|\beta''|(1+\delta)}\jap{v}^{\frac{1}{2}}\jap{v}^{\nu_{\alpha'',\beta'',\sigma''}}\jap{x-(t+1)v}^{\omega_{\alpha'',\beta'',\sigma''}}\derv{''}{''}{''}g}_{L^2([0,T];L^2_xL^2_v)}\\
&\lesssim (1+T)^{2|\beta|(1+\delta)}\eps^{\frac{3}{4}}\times(\sup_{t\in [0,T]}\eps^{\frac{3}{4}}(1+t)^{1+2\delta-|\beta'|(1+\delta)}(1+t)^{-\frac{3}{2}+|\beta'|(1+\delta)})\eps^{\frac{3}{4}}=\eps^{\frac{9}{4}}(1+T)^{2|\beta|(1+\delta)}.
\end{align*}
\emph{Case 2:} $|\alpha'|+|\beta'|+|\sigma'|\geq 8$.

In this case we have $\nu_{\alpha'',\beta'',\sigma''}\geq\nu_{\alpha,\beta,\sigma}+4$ and $\omega_{\alpha'',\beta'',\sigma''}\geq\omega_{\alpha,\beta,\sigma}+4$.\\
We use \lref{a2v_bound} and \lref{L2_time_in_norm} to get
\begin{align*}
 &\norm{\jap{v}^{2\nu_{\alpha,\beta,\sigma}}\jap{x-(t+1)v}^{2\omega_{\alpha,\beta,\sigma}}|\der g|\left|\derv{'}{'}{'}\left(\frac{\a v_iv_j}{\jap{v}^2} \right)\right||\derv{''}{''}{''} g|}_{L^1([0,T];L^1_xL^1_v)}\\
&\lesssim (1+T)^{2|\beta|(1+\delta)}\norm{(1+t)^{-\frac{1}{2}-\delta-|\beta|(1+\delta)}\jap{v}^{\frac{1}{2}}\jap{v}^{\nu_{\alpha,\beta,\sigma}}\jap{x-(t+1)v}^{\omega_{\alpha,\beta,\sigma}}\der g}_{L^2([0,T];L^2_xL^2_v)}\\
&\quad\times \norm{(1+t)^{1+2\delta-|\beta'|(1+\delta)}\jap{v}^{-1}\derv{'}{'}{'}\derv{'}{'}{'}\left(\frac{\a v_iv_j}{\jap{v}^2} \right)}_{L^\infty([0,T];L^2_xL^\infty_v)}\\
&\quad\times \norm{(1+t)^{-\frac{1}{2}-\delta-|\beta''|(1+\delta)}\jap{v}^{\frac{1}{2}}\jap{v}^{\nu_{\alpha'',\beta'',\sigma''}-3}\jap{x-(t+1)v}^{\omega_{\alpha'',\beta'',\sigma''}-3}\derv{''}{''}{''}g}_{L^2([0,T];L^\infty_xL^2_v)}\\
&\lesssim (1+T)^{2|\beta|(1+\delta)}\eps^{\frac{3}{4}}\times(\sup_{t\in [0,T]}\eps^{\frac{3}{4}}(1+t)^{1+2\delta-|\beta'|(1+\delta)}(1+t)^{-\frac{3}{2}+|\beta'|(1+\delta)})\eps^{\frac{3}{4}}=\eps^{\frac{9}{4}}(1+T)^{2|\beta|(1+\delta)}.
\end{align*}

The penultimate inequality follows because we apply Sobolev embedding in space at the cost of two spatial derivatives but we have enough room in our weights to accomodate them.
\end{proof}
\begin{proposition}\label{p.A4}
Let $|\alpha|+|\beta|+|\sigma|\leq 10$. Then the term $\mathcal{A}^{\alpha,\beta,\sigma}_{4}$ is bounded as follows for all $T\in [0,T_{\boot})$,
$$\mathcal{A}^{\alpha,\beta,\sigma}_{4}\lesssim \eps^2(1+T)^{2|\beta|(1+\delta)}.$$
\end{proposition}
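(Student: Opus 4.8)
The quantity to estimate is
$$\mathcal{A}^{\alpha,\beta,\sigma}_4=\norm{\jap{v}^{2\nu_{\alpha,\beta,\sigma}-2}\jap{x-(t+1)v}^{2\omega_{\alpha,\beta,\sigma}}\a (\der g)^2}_{L^1([0,T];L^1_xL^1_v)},$$
so the only coefficient appearing is $\a$ itself with \emph{no} derivatives on it. The plan is to put the two copies of $\der g$ in the $L^2$-in-time norm that the energy controls, and to absorb $\a$ using \lref{a_bound} (the estimate with $0\leq|\alpha|+|\beta|+|\sigma|\leq 8$ case, since here no derivatives fall on $\a$). First I would split the $\jap{v}^{2\nu_{\alpha,\beta,\sigma}-2}$ weight as $\jap{v}^{\nu_{\alpha,\beta,\sigma}}\cdot\jap{v}^{\nu_{\alpha,\beta,\sigma}-2}$ and similarly $\jap{x-(t+1)v}^{2\omega_{\alpha,\beta,\sigma}}=\jap{x-(t+1)v}^{\omega_{\alpha,\beta,\sigma}}\cdot\jap{x-(t+1)v}^{\omega_{\alpha,\beta,\sigma}}$, and also peel off two factors of $\jap{v}^{1/2}$ from each copy so as to match the $\jap{v}^{1/2}$-weighted $L^2_tL^2_xL^2_v$ norm that appears in \lref{L2_time_in_norm}. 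Concretely, writing $\jap{v}^{2\nu_{\alpha,\beta,\sigma}-2}=\jap{v}\cdot\jap{v}^{\nu_{\alpha,\beta,\sigma}}\cdot\jap{v}^{\nu_{\alpha,\beta,\sigma}}\cdot\jap{v}^{-3}$, the $\jap{v}^{-3}$ is used to tame $\a$ via \lref{a_bound} (which gives $|\a|\lesssim\jap{v}^{2+\gamma}\eps^{3/4}(1+t)^{-3/2}$ in $L^\infty_xL^\infty_v$ when $|\alpha|=|\beta|=|\sigma|=0$; here $2+\gamma-3<0$), and the two $\jap{v}^{\nu_{\alpha,\beta,\sigma}+1/2}\jap{x-(t+1)v}^{\omega_{\alpha,\beta,\sigma}}\der g$ factors go into two copies of the weighted $L^2([0,T];L^2_xL^2_v)$ norm.

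The execution then parallels, e.g., \pref{T5_2} exactly. By Hölder's inequality (placing $\a$ in $L^\infty_tL^\infty_xL^\infty_v$ and the two $g$-factors in $L^2_tL^2_xL^2_v$),
\begin{align*}
\mathcal{A}^{\alpha,\beta,\sigma}_4&\lesssim(1+T)^{2|\beta|(1+\delta)}\norm{(1+t)^{-\frac12-\delta-|\beta|(1+\delta)}\jap{v}^{\frac12}\jap{v}^{\nu_{\alpha,\beta,\sigma}}\jap{x-(t+1)v}^{\omega_{\alpha,\beta,\sigma}}\der g}^2_{L^2([0,T];L^2_xL^2_v)}\\
&\quad\times\norm{(1+t)^{1+2\delta}\jap{v}^{-2-\gamma}\a}_{L^\infty([0,T];L^\infty_xL^\infty_v)}.
\end{align*}
By \lref{L2_time_in_norm} each of the two $g$-norms is $\lesssim\eps^{3/4}$, and by \lref{a_bound} (with no derivatives on $\a$, so $|\beta|=0$ there) the last factor is $\lesssim\eps^{3/4}\sup_{t\in[0,T]}(1+t)^{1+2\delta}(1+t)^{-3/2}$, which is bounded since $1+2\delta-\tfrac32=2\delta-\tfrac12<0$ by \eref{delta}. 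Multiplying, $\mathcal{A}^{\alpha,\beta,\sigma}_4\lesssim\eps^{9/4}(1+T)^{2|\beta|(1+\delta)}\leq\eps^2(1+T)^{2|\beta|(1+\delta)}$ for $\eps\leq\eps_0$ small, which is the claim.

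The only point requiring any care is the bookkeeping of the velocity weights: one must check that after extracting $\jap{v}^{-2-\gamma}$ to absorb $\a$ and $\jap{v}^{1/2}$ twice to feed \lref{L2_time_in_norm}, the remaining exponent is exactly $2\nu_{\alpha,\beta,\sigma}$, i.e. that $2\nu_{\alpha,\beta,\sigma}-2=(\nu_{\alpha,\beta,\sigma}+\tfrac12)+(\nu_{\alpha,\beta,\sigma}+\tfrac12)-(2+\gamma)-(1-\gamma)$ holds — which it does, with a harmless extra nonnegative power $1-\gamma\geq0$ of $\jap{v}$ to spare (one can simply discard it, or keep it and it only helps). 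Since $\gamma\in[0,1)$ this is always available. The time-weight matching is identical to \pref{T5_2} and the spatial weight $\jap{x-(t+1)v}^{2\omega_{\alpha,\beta,\sigma}}$ splits evenly with nothing left over. There is no integration by parts, no Sobolev embedding in $x$, and no case analysis needed, so this is among the simplest of the error estimates; the main (very mild) obstacle is just confirming $\delta<\tfrac18$ gives the needed decay, which it does with room to spare.
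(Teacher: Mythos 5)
Your proposal is correct and follows essentially the same route as the paper's proof: H\"older's inequality placing $\a$ in $L^\infty_tL^\infty_xL^\infty_v$ via \lref{a_bound} (no derivatives on $\a$) and the two copies of $\der g$ in the weighted $L^2([0,T];L^2_xL^2_v)$ norm controlled by \lref{L2_time_in_norm}, with the decay closing because $1+2\delta-\tfrac32<0$. If anything, your velocity-weight bookkeeping (extracting $\jap{v}^{-2-\gamma}$ and noting the spare $\jap{v}^{1-\gamma}$) is slightly more careful than the paper's, which writes $\jap{v}^{-2}\a$ in the $L^\infty$ factor.
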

\begin{proof}
We use \lref{a_bound} and \lref{L2_time_in_norm} to get
\begin{align*}
&\norm{\jap{v}^{2\nu_{\alpha,\beta,\sigma}-2}\jap{x-(t+1)v}^{2\omega_{\alpha,\beta,\sigma}}\a (\der g)^2}_{L^1([0,T];L^1_xL^1_v)}\\
&\lesssim(1+T)^{|\beta|(1+\delta)}\norm{(1+t)^{-\frac{1}{2}-\delta-|\beta|(1+\delta)}\jap{v}^{\nu_{\alpha,\beta,\sigma}}\jap{x-(t+1)v}^{\omega_{\alpha,\beta,\sigma}}\der g}_{L^2([0,T];L^2_xL^2_v)}^2\\
&\quad\times \norm{(1+t)^{1+2\delta}\jap{v}^{-2}\a}_{L^\infty([0,T];L^\infty_xL^\infty_v)}\\
&\lesssim (1+T)^{|\beta|(1+\delta)}\eps^{\frac{3}{2}}\times (\sup_{t\in[0,T]}\eps^{\frac{3}{4}}(1+t)^{1+2\delta}(1+t)^{-\frac{3}{2}})\\
&\lesssim \eps^{\frac{9}{4}}(1+T)^{|\beta|(1+\delta)}.
\end{align*}
\end{proof}
\begin{proposition}\label{p.A5}
Let $|\alpha|+|\beta|+|\sigma|\leq 10$. Then the term $\mathcal{A}^{\alpha,\beta,\sigma}_{5}$ is bounded as follows for all $T\in [0,T_{\boot})$,
$$\mathcal{A}^{\alpha,\beta,\sigma}_{5}\lesssim \eps^2(1+T)^{2|\beta|(1+\delta)}.$$
\end{proposition}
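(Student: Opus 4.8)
The plan is to treat $\mathcal{A}^{\alpha,\beta,\sigma}_5$ exactly as $\mathcal{A}^{\alpha,\beta,\sigma}_4$ was treated in \pref{A4}: the only structural change is that the velocity weight $\jap{v}^{2\nu_{\alpha,\beta,\sigma}-2}$ of $\mathcal{A}^{\alpha,\beta,\sigma}_4$ is here replaced by $(1+t)\,\jap{v}^{2\nu_{\alpha,\beta,\sigma}-1}\,\jap{x-(t+1)v}^{-1}$, which is precisely the trade of one $\jap{v}$ for a $(1+t)\jap{x-(t+1)v}^{-1}$ factor quantified by \lref{a_bound_Y}. So I would symmetrize the quadratic factor $(\der g)^2$, place the coefficient $\a$ in $L^\infty_tL^\infty_xL^\infty_v$ using \lref{a_bound_Y} with no derivatives, and apply \lref{L2_time_in_norm} to each of the two resulting copies of $\der g$.

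Concretely, I would first observe that, since $0\le\gamma<1$ and since $2\nu_{\alpha,\beta,\sigma}-1\ge 0$ and $2\omega_{\alpha,\beta,\sigma}-1\ge 0$ whenever $|\alpha|+|\beta|+|\sigma|\le 10$, the (nonnegative) integrand of $\mathcal{A}^{\alpha,\beta,\sigma}_5$ is pointwise dominated on $[0,T]\times\R^3\times\R^3$ by $F\,G^2$, where
\[
F:=(1+T)^{2|\beta|(1+\delta)}(1+t)^{2+2\delta}\jap{v}^{-1-\gamma}\jap{x-(t+1)v}^{-1}\a,\qquad G:=(1+t)^{-\frac{1}{2}-\delta-|\beta|(1+\delta)}\jap{v}^{\frac{1}{2}}\jap{v}^{\nu_{\alpha,\beta,\sigma}}\jap{x-(t+1)v}^{\omega_{\alpha,\beta,\sigma}}\der g.
\]
Indeed, in $F\,G^2$ the $\jap{x-(t+1)v}$ powers combine to exactly $\jap{x-(t+1)v}^{2\omega_{\alpha,\beta,\sigma}-1}$, the $\jap{v}$ powers combine to $\jap{v}^{2\nu_{\alpha,\beta,\sigma}-\gamma}\ge\jap{v}^{2\nu_{\alpha,\beta,\sigma}-1}$ (the leftover $\jap{v}^{\gamma}$ being absorbed by the two $\jap{v}^{1/2}$ factors since $\gamma<1$), and the time powers give $(1+T)^{2|\beta|(1+\delta)}(1+t)^{1-2|\beta|(1+\delta)}\ge(1+t)$ because $(1+t)^{-2|\beta|(1+\delta)}\ge(1+T)^{-2|\beta|(1+\delta)}$ on $[0,T]$. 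Then I would apply H\"older's inequality in the form $\norm{F\,G^2}_{L^1([0,T];L^1_xL^1_v)}\le\norm{F}_{L^\infty([0,T];L^\infty_xL^\infty_v)}\norm{G}_{L^2([0,T];L^2_xL^2_v)}^2$.

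For the $L^\infty$ factor, \lref{a_bound_Y} with $|\alpha|=|\beta|=|\sigma|=0$ gives $\norm{\jap{x-(1+t)v}^{-1}\jap{v}^{-1-\gamma}\a}_{L^\infty_xL^\infty_v}\lesssim\eps^{\frac{3}{4}}(1+t)^{-\frac{5}{2}}$, whence $\norm{F}_{L^\infty([0,T];L^\infty_xL^\infty_v)}\lesssim(1+T)^{2|\beta|(1+\delta)}\eps^{\frac{3}{4}}\sup_{t\in[0,T]}(1+t)^{2+2\delta-\frac{5}{2}}=(1+T)^{2|\beta|(1+\delta)}\eps^{\frac{3}{4}}$, the supremum being finite precisely because $2+2\delta-\tfrac{5}{2}=-\tfrac{1}{2}+2\delta<0$ thanks to \eref{delta}. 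For the $L^2$ factor, \lref{L2_time_in_norm} gives $\norm{G}_{L^2([0,T];L^2_xL^2_v)}\lesssim\eps^{\frac{3}{4}}$. Multiplying, $\mathcal{A}^{\alpha,\beta,\sigma}_5\lesssim\eps^{\frac{9}{4}}(1+T)^{2|\beta|(1+\delta)}\le\eps^{2}(1+T)^{2|\beta|(1+\delta)}$ once $\eps_0\le 1$, which is the claim.

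I do not expect a genuine obstacle here: the estimate is structurally identical to \pref{A4} and to the bound for $T^{\alpha,\beta,\sigma}_{5,3}$. The one place where care is needed, and where a sign could silently go wrong, is the bookkeeping of the time powers: one must remember the intrinsic $(1+t)$ in $\mathcal{A}^{\alpha,\beta,\sigma}_5$ in addition to the two copies of $(1+t)^{-\frac{1}{2}-\delta-|\beta|(1+\delta)}$ produced by the symmetrization, so the coefficient has to carry $(1+t)^{2+2\delta}$ (rather than $(1+t)^{1+2\delta}$ as in \pref{A4}), and it is exactly here that the hypothesis $\delta<\tfrac{1}{8}$ is used to keep the net time exponent negative. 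The other two points --- the harmless $\jap{v}^{\gamma}$ discrepancy (fine because $\gamma<1$) and the nonnegativity of $2\nu_{\alpha,\beta,\sigma}-1$ and $2\omega_{\alpha,\beta,\sigma}-1$ (immediate from $|\alpha|+|\beta|+|\sigma|\le 10$) --- are routine.
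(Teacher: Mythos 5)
Your proposal is correct and follows essentially the same route as the paper: H\"older with the coefficient placed in $L^\infty_{t,x,v}$ via \lref{a_bound_Y} (with no derivatives) and the two copies of $\der g$ estimated through \lref{L2_time_in_norm}, with the net time power $-\tfrac12+2\delta<0$ guaranteed by \eref{delta}. Your bookkeeping is in fact slightly more careful than the paper's (absorbing the leftover $\jap{v}^{\gamma}$ into the two $\jap{v}^{1/2}$ factors and carrying the full $(1+t)^{2+2\delta}$ on the coefficient), but the argument is the same.
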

\begin{proof}
We use \lref{a_bound_Y} and \lref{L2_time_in_norm} to get
\begin{align*}
&\norm{(1+t)\jap{v}^{2\nu_{\alpha,\beta,\sigma}-1}\jap{x-(t+1)v}^{2\omega_{\alpha,\beta,\sigma}-1}\a (\der g)^2}_{L^1([0,T];L^1_xL^1_v)}\\
&\lesssim(1+T)^{|\beta|(1+\delta)}\norm{(1+t)^{-\frac{1}{2}-\delta-|\beta|(1+\delta)}\jap{v}^{\nu_{\alpha,\beta,\sigma}}\jap{x-(t+1)v}^{\omega_{\alpha,\beta,\sigma}}\der g}_{L^2([0,T];L^2_xL^2_v)}^2\\
&\quad\times \norm{(1+t)^{2+2\delta}\jap{x-(t+1)v}^{-1}\jap{v}^{-1}\a}_{L^\infty([0,T];L^\infty_xL^\infty_v)}\\
&\lesssim (1+T)^{|\beta|(1+\delta)}\eps^{\frac{3}{2}}\times (\sup_{t\in[0,T]}\eps^{\frac{3}{4}}(1+t)^{1+2\delta}(1+t)^{-\frac{5}{2}})\\
&\lesssim \eps^{\frac{9}{4}}(1+T)^{|\beta|(1+\delta)}.
\end{align*}
\end{proof}
\begin{proposition}\label{p.A6}
Let $|\alpha|+|\beta|+|\sigma|\leq 10$. Then the term $\mathcal{A}^{\alpha,\beta,\sigma}_{6}$ is bounded as follows for all $T\in [0,T_{\boot})$,
$$\mathcal{A}^{\alpha,\beta,\sigma}_{6}\lesssim \eps^2(1+T)^{2|\beta|(1+\delta)}.$$
\end{proposition}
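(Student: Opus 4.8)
The plan is to treat $\mathcal{A}^{\alpha,\beta,\sigma}_6$ exactly as in the proofs of \pref{A4} and \pref{A5}: no derivatives fall on $\a$ in this term, so the whole estimate factors into a single $L^\infty$ bound on a suitably weighted version of $\a$ paired against one squared copy of the dissipative part of $E_T$, which is supplied by \lref{L2_time_in_norm}. Concretely, I would split the integrand of $\mathcal{A}^{\alpha,\beta,\sigma}_6$ as
\[
(1+t)^2\jap{v}^{2\nu_{\alpha,\beta,\sigma}}\jap{x-(t+1)v}^{2\omega_{\alpha,\beta,\sigma}-2}\a(\der g)^2
=\Bigl((1+t)^{3+2\delta}\jap{x-(t+1)v}^{-2}\jap{v}^{-\gamma}\a\Bigr)\Bigl((1+t)^{-1-2\delta}\jap{v}^{2\nu_{\alpha,\beta,\sigma}+\gamma}\jap{x-(t+1)v}^{2\omega_{\alpha,\beta,\sigma}}(\der g)^2\Bigr),
\]
and then use $\gamma<1$, i.e. $\jap{v}^{2\nu_{\alpha,\beta,\sigma}+\gamma}\le\jap{v}^{2\nu_{\alpha,\beta,\sigma}+1}$, so that the leftover velocity weight on $(\der g)^2$ is precisely the $\jap{v}^{1/2}$ occurring in the $L^2$-in-time part of the energy norm.

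For the first factor I would invoke \lref{a_bound_2Y} with $|\alpha|=|\beta|=|\sigma|=0$, which gives $\norm{\jap{x-(t+1)v}^{-2}\jap{v}^{-\gamma}\a}_{L^\infty_xL^\infty_v}\lesssim\eps^{3/4}(1+t)^{-7/2}$, and hence
\[
\norm{(1+t)^{3+2\delta}\jap{x-(t+1)v}^{-2}\jap{v}^{-\gamma}\a}_{L^\infty([0,T];L^\infty_xL^\infty_v)}\lesssim\eps^{3/4}\sup_{t\in[0,T]}(1+t)^{-\frac12+2\delta}\lesssim\eps^{3/4},
\]
the supremum being finite because $\delta<\tfrac18<\tfrac14$; this is the only point at which the upper bound on $\delta$ is used, and it is the exact analogue of the $(1+t)^{-1/2+2\delta}$ accounting in \pref{A5}. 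For the second factor I would pull out $(1+T)^{2|\beta|(1+\delta)}$, writing $(1+t)^{-1-2\delta}=(1+t)^{-1-2\delta-2|\beta|(1+\delta)}(1+t)^{2|\beta|(1+\delta)}$ and bounding $(1+t)^{2|\beta|(1+\delta)}\le(1+T)^{2|\beta|(1+\delta)}$, which leaves
\[
(1+T)^{2|\beta|(1+\delta)}\norm{(1+t)^{-\frac12-\delta-|\beta|(1+\delta)}\jap{v}^{\frac12}\jap{v}^{\nu_{\alpha,\beta,\sigma}}\jap{x-(t+1)v}^{\omega_{\alpha,\beta,\sigma}}\der g}_{L^2([0,T];L^2_xL^2_v)}^2\lesssim(1+T)^{2|\beta|(1+\delta)}\eps^{3/2}
\]
by \lref{L2_time_in_norm}.

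Combining the two factors via H\"older's inequality in $(t,x,v)$ then gives
\[
\mathcal{A}^{\alpha,\beta,\sigma}_6\lesssim\eps^{3/4}\cdot\eps^{3/2}(1+T)^{2|\beta|(1+\delta)}=\eps^{9/4}(1+T)^{2|\beta|(1+\delta)}\le\eps^2(1+T)^{2|\beta|(1+\delta)}
\]
for $\eps\le1$, which is the assertion. I do not expect a genuine obstacle here; the only thing to watch is that, relative to $\mathcal{A}^{\alpha,\beta,\sigma}_4$, the extra factor $(1+t)^2$ — produced by the two copies of $(t+1)$ generated when $\part^2_{v_iv_j}$ differentiates $\jap{x-(t+1)v}^{2\omega_{\alpha,\beta,\sigma}}$ twice — costs two powers of $t$, but this is exactly compensated by the two extra $\jap{x-(t+1)v}^{-1}$ weights, which through \lref{a_bound_2Y} buy back $(1+t)^{-2}$ of decay; the net effect is merely the harmless shift from the $(1+t)^{-1/2}$ of \pref{A4} to $(1+t)^{-1/2+2\delta}$, still bounded because $\delta<\tfrac14$.
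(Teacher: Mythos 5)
Your argument is correct and is essentially the paper's own proof: both peel two $\jap{x-(t+1)v}$ weights and a $\jap{v}^{\gamma}$ off $\a$, bound the resulting factor $(1+t)^{3+2\delta}\jap{x-(t+1)v}^{-2}\jap{v}^{-\gamma}\a$ in $L^\infty$ via \lref{a_bound_2Y} (with no derivatives, giving $(1+t)^{-7/2}$), and pair the remainder with the square of the dissipative norm from \lref{L2_time_in_norm} via H\"older. Your write-up is in fact slightly more careful than the paper's displayed computation about absorbing $\jap{v}^{\gamma}$ into the two $\jap{v}^{1/2}$ factors and about the exact time-power bookkeeping, but the route is the same.
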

\begin{proof}
We use \lref{a_bound_2Y} and \lref{L2_time_in_norm} to get
\begin{align*}
&\norm{(1+t)^2\jap{v}^{2\nu_{\alpha,\beta,\sigma}}\jap{x-(t+1)v}^{2\omega_{\alpha,\beta,\sigma}-2}\a (\der g)^2}_{L^1([0,T];L^1_xL^1_v)}\\
&\lesssim(1+T)^{|\beta|(1+\delta)}\norm{(1+t)^{-\frac{1}{2}-\delta-|\beta|(1+\delta)}\jap{v}^{\nu_{\alpha,\beta,\sigma}}\jap{x-(t+1)v}^{\omega_{\alpha,\beta,\sigma}}\der g}_{L^2([0,T];L^2_xL^2_v)}^2\\
&\quad\times \norm{(1+t)^{3+2\delta}\jap{x-(t+1)v}^{-1}\jap{v}^{-2}\a}_{L^\infty([0,T];L^\infty_xL^\infty_v)}\\
&\lesssim (1+T)^{|\beta|(1+\delta)}\eps^{\frac{3}{2}}\times (\sup_{t\in[0,T]}\eps^{\frac{3}{4}}(1+t)^{1+2\delta}(1+t)^{-\frac{7}{2}})\\
&\lesssim \eps^{\frac{9}{4}}(1+T)^{|\beta|(1+\delta)}.
\end{align*}
\end{proof}
\subsection{Putting things together} We now combine the propositions in \sref{errors} to obtain,
\begin{proposition}\label{p.int_est}
Let $|\alpha|+|\beta|+|\sigma|\leq 10$. Then for every $\eta>0$, there is a constant $C_\eta>0$(depending on $\eta$, $d_0$ and $\gamma$) such that the following estimate holds for all $T\in [0,T_{\boot})$:
\begin{align*}
&\norm{\jap{v}^{\nu_{\alpha,\beta,\sigma}}\jap{x-(t+1)v}^{\omega_{\alpha,\beta,\sigma}}\der g}_{L^\infty([0,T];L^2_xL^2_v)}^2\\
&\quad+\norm{(1+t)^{-\frac{1}{2}-\frac{\delta}{2}}\jap{v}^{\frac{1}{2}}\jap{v}^{\nu_{\alpha,\beta,\sigma}}\jap{x-(t+1)v}^{\omega_{\alpha,\beta,\sigma}}\der g}_{L^2([0,T];L^2_xL^2_v)}^2\\
&\leq C_\eta(\eps^2(1+T)^{2|\beta|(1+\delta)}+\sum_{\substack{|\beta'|\leq|\beta|, |\sigma'|\leq |\sigma|\\ |\beta'|+|\sigma'|\leq |\beta|+|\sigma|-1}} \norm{(1+t)^{-\frac{1+\delta}{2}}\jap{v}^{\nu_{\alpha,\beta',\sigma'}}\jap{x-(t+1)v}^{\omega_{\alpha,\beta',\sigma'}}\derv{}{'}{'} g}^2_{L^2([0,T];L^2_xL^2_v)}\\
&\quad+(1+T)^{2(1+\delta)}\sum_{\substack{|\alpha'|\leq|\alpha|+1\\ |\beta'|\leq |\beta|-1}} \norm{(1+t)^{-\frac{1+\delta}{2}}\jap{v}^{\frac{1}{2}}\jap{v}^{\nu_{\alpha',\beta',\sigma}}\jap{x-(t+1)v}^{\omega_{\alpha',\beta',\sigma}}\derv{'}{'}{} g}_{L^2([0,T];L^2_xL^2_v)}^2)\\
&\quad+\eta \norm{(1+t)^{-\frac{1+\delta}{2}}\jap{v}^{\nu_{\alpha,\beta,\sigma}}\jap{x-(t+1)v}^{\omega_{\alpha,\beta,\sigma}}\der g}^2_{L^2([0,T];L^2_xL^2_v)}\\
&\quad+\eta\norm{(1+t)^{-\frac{1+\delta}{2}}\jap{v}^{\frac{1}{2}}\jap{v}^{\nu_{\alpha,\beta,\sigma}}\jap{x-(t+1)v}^{\omega_{\alpha,\beta,\sigma}}\der g}_{L^2([0,T];L^2_xL^2_v)}^2.
\end{align*}
\end{proposition}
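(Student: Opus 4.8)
The plan is to chain the energy identity of \lref{energy_est_set_up}, the decomposition of the inhomogeneity in \lref{J_term}, and the term-by-term bounds of Propositions~\ref{p.T1}--\ref{p.A6}; the work here is purely the bookkeeping that assembles these pieces.

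First I would apply \lref{energy_est_set_up} with the multi-index $(\alpha,\beta,\sigma)$ fixed, which bounds the left-hand side of the claimed estimate by a constant multiple of
$$\eps^2+\int_0^T\int\int \jap{v}^{2\nu_{\alpha,\beta,\sigma}}\jap{x-(t+1)v}^{2\omega_{\alpha,\beta,\sigma}}(\der g)\,J\,\d v\,\d x\,\d t+\sum_{k=1}^{6}\mathcal{A}^{\alpha,\beta,\sigma}_k.$$
Then I would invoke \lref{J_term} to bound the middle term by a constant multiple of $T^{\alpha,\beta,\sigma}_1+T^{\alpha,\beta,\sigma}_2+T^{\alpha,\beta,\sigma}_{3,1}+T^{\alpha,\beta,\sigma}_{3,2}+T^{\alpha,\beta,\sigma}_{3,3}+T^{\alpha,\beta,\sigma}_{3,4}+T^{\alpha,\beta,\sigma}_4+T^{\alpha,\beta,\sigma}_{5,1}+T^{\alpha,\beta,\sigma}_{5,2}+T^{\alpha,\beta,\sigma}_{5,3}+T^{\alpha,\beta,\sigma}_{6,1}+T^{\alpha,\beta,\sigma}_{6,2}$, and then use the remark following \lref{J_term} (namely $\mathcal{A}^{\alpha,\beta,\sigma}_1\lesssim T^{\alpha,\beta,\sigma}_{3,1}$, $\mathcal{A}^{\alpha,\beta,\sigma}_2\lesssim T^{\alpha,\beta,\sigma}_{3,3}$, $\mathcal{A}^{\alpha,\beta,\sigma}_3\lesssim T^{\alpha,\beta,\sigma}_{3,4}$) to discard $\mathcal{A}_1,\mathcal{A}_2,\mathcal{A}_3$, so that only the twelve $T$-terms together with $\mathcal{A}_4,\mathcal{A}_5,\mathcal{A}_6$ survive.

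Next I would substitute the individual estimates. \pref{T1} contributes the $\eta$-piece $\eta\norm{(1+t)^{-\frac{1+\delta}{2}}\jap{v}^{\frac12}\jap{v}^{\nu_{\alpha,\beta,\sigma}}\jap{x-(t+1)v}^{\omega_{\alpha,\beta,\sigma}}\der g}_{L^2([0,T];L^2_xL^2_v)}^2$ together with exactly the $C_\eta(1+T)^{2(1+\delta)}$-sum over $|\alpha'|\le|\alpha|+1$, $|\beta'|\le|\beta|-1$ that appears on the right-hand side of the proposition; \pref{T2} contributes the $\eta$-piece $\eta\norm{(1+t)^{-\frac12-\frac{\delta}{2}}\jap{v}^{\nu_{\alpha,\beta,\sigma}}\jap{x-(t+1)v}^{\omega_{\alpha,\beta,\sigma}}\der g}_{L^2([0,T];L^2_xL^2_v)}^2$ together with exactly the $C_\eta$-sum over $|\beta'|+|\sigma'|\le|\beta|+|\sigma|-1$ there; Propositions~\ref{p.T3_1}, \ref{p.T3_2}, \ref{p.T3_3}, \ref{p.T3_4}, \ref{p.T4}, \ref{p.T5_1}, \ref{p.T5_2}, \ref{p.T5_3}, \ref{p.T6_1}, \ref{p.T6_2} each bound their term by $\lesssim\eps^2(1+T)^{2|\beta|(1+\delta)}$; and Propositions~\ref{p.A4}, \ref{p.A5}, \ref{p.A6} bound $\mathcal{A}_4,\mathcal{A}_5,\mathcal{A}_6$ by $\lesssim\eps^2(1+T)^{|\beta|(1+\delta)}$. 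Since $\eps\le\eps_0\le1$ and $(1+T)^{|\beta|(1+\delta)}\le(1+T)^{2|\beta|(1+\delta)}$, the leading $\eps^2$ and all of $T_{3,1},\dots,T_{6,2},\mathcal{A}_4,\mathcal{A}_5,\mathcal{A}_6$ collapse into a single $C\eps^2(1+T)^{2|\beta|(1+\delta)}$, while retaining the two distinct $\eta$-pieces from \pref{T1} and \pref{T2} (and relabelling the arbitrary parameter $\eta$) produces precisely the stated inequality.

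I do not expect any real obstacle; the only point needing care is the verbatim matching of the weighted norms and index ranges. Specifically one uses that $-\tfrac{1+\delta}{2}=-\tfrac12-\tfrac\delta2$, so that the $\jap{v}^{1/2}$-weighted $\eta$-piece from \pref{T1} coincides exactly with the second quantity on the left-hand side, and one recalls the weight monotonicities $\nu_{\alpha',\beta',\sigma}\ge\nu_{\alpha,\beta,\sigma}-1$, $\omega_{\alpha',\beta',\sigma}\ge\omega_{\alpha,\beta,\sigma}$ for $|\alpha'|\le|\alpha|+1$, $|\beta'|\le|\beta|-1$, and $\nu_{\alpha,\beta',\sigma'}\ge\nu_{\alpha,\beta,\sigma}$, $\omega_{\alpha,\beta',\sigma'}\ge\omega_{\alpha,\beta,\sigma}$ for $|\beta'|+|\sigma'|\le|\beta|+|\sigma|-1$, which is what makes the two lower-order sums on the right-hand side absorb the pieces that feed into them.
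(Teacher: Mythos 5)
Your proposal is correct and follows exactly the route the paper intends: the paper gives no separate argument for this proposition beyond "combine the propositions of Section 7," i.e. apply \lref{energy_est_set_up}, bound the $J$-term via \lref{J_term}, discard $\mathcal{A}_1,\mathcal{A}_2,\mathcal{A}_3$ by the remark, and insert Propositions~\ref{p.T1}--\ref{p.A6}, keeping the two $\eta$-pieces and the two lower-order sums while absorbing everything else into $C_\eta\eps^2(1+T)^{2|\beta|(1+\delta)}$ (with the relabelling of $\eta$ to handle the implicit constant). Your bookkeeping of the weights, the identity $-\tfrac{1+\delta}{2}=-\tfrac12-\tfrac\delta2$, and the index ranges matches the statement, so nothing further is needed.
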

\begin{proposition}\label{p.ene_est_1}
Let $|\alpha|+|\beta|+|\sigma|\leq 10$. Then the following estimate holds for all $T\in [0,T_{\boot})$:
\begin{align*}
&\norm{\jap{v}^{\nu_{\alpha,\beta,\sigma}}\jap{x-(t+1)v}^{\omega_{\alpha,\beta,\sigma}}\der g}_{L^\infty([0,T];L^2_xL^2_v)}^2\\
&\quad+\norm{(1+t)^{-\frac{1}{2}-\frac{\delta}{2}}\jap{v}^{\frac{1}{2}}\jap{v}^{\nu_{\alpha,\beta,\sigma}}\jap{x-(t+1)v}^{\omega_{\alpha,\beta,\sigma}}\der g}_{L^2([0,T];L^2_xL^2_v)}^2\\
&\lesssim \eps^2(1+T)^{2|\beta|(1+\delta)}+\sum_{\substack{|\beta'|\leq|\beta|, |\sigma'|\leq |\sigma|\\ |\beta'|+|\sigma'|\leq |\beta|+|\sigma|-1}} \norm{(1+t)^{-\frac{1+\delta}{2}}\jap{v}^{\nu_{\alpha,\beta',\sigma'}}\jap{x-(t+1)v}^{\omega_{\alpha,\beta',\sigma'}}\derv{}{'}{'} g}^2_{L^\infty([0,T];L^2_xL^2_v)}\\
&\quad+(1+T)^{2(1+\delta)}\sum_{\substack{|\alpha'|\leq|\alpha|+1\\ |\beta'|\leq |\beta|-1}} \norm{(1+t)^{-\frac{1+\delta}{2}}\jap{v}^{\frac{1}{2}}\jap{v}^{\nu_{\alpha',\beta',\sigma}}\jap{x-(t+1)v}^{\omega_{\alpha',\beta',\sigma}}\derv{'}{'}{} g}_{L^2([0,T];L^2_xL^2_v)}^2.
\end{align*}
Here, by our convention, if $|\beta|+|\sigma|=0$, then the last two terms on the RHS are not present.
\end{proposition}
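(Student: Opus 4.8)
The plan is to derive \pref{ene_est_1} as an essentially formal consequence of \pref{int_est}, via two cleanup operations: absorbing the two $\eta$-dependent error terms into the left-hand side, and trading the first hierarchy sum on the right-hand side from an $L^2$-in-time norm to an $L^\infty$-in-time norm. First I would fix $\alpha,\beta,\sigma$ with $|\alpha|+|\beta|+|\sigma|\leq 10$ and apply \pref{int_est} with a parameter $\eta>0$ still to be chosen as a function of $d_0$ and $\gamma$ only. This already bounds the sum of the two left-hand quantities of \pref{ene_est_1} by $C_\eta$ times $\eps^2(1+T)^{2|\beta|(1+\delta)}$, the two hierarchy sums, and the two error terms $\eta\norm{(1+t)^{-(1+\delta)/2}\jap{v}^{\nu_{\alpha,\beta,\sigma}}\jap{x-(t+1)v}^{\omega_{\alpha,\beta,\sigma}}\der g}^2_{L^2([0,T];L^2_xL^2_v)}$ and $\eta\norm{(1+t)^{-(1+\delta)/2}\jap{v}^{1/2}\jap{v}^{\nu_{\alpha,\beta,\sigma}}\jap{x-(t+1)v}^{\omega_{\alpha,\beta,\sigma}}\der g}^2_{L^2([0,T];L^2_xL^2_v)}$.

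Next I would absorb these two $\eta$-terms. The key observation is that $(1+t)^{-\frac12-\frac\delta2}=(1+t)^{-\frac{1+\delta}{2}}$, so the second $\eta$-term is \emph{exactly} $\eta$ times the second left-hand quantity of \pref{ene_est_1}, while the first $\eta$-term is dominated by the second because $\jap{v}^{1/2}\geq 1$. Hence the total contribution of both error terms to the right-hand side is at most $2\eta$ times the second left-hand quantity. Choosing $\eta=\eta(d_0,\gamma)>0$ small enough that $2\eta\leq \frac12$, I would move this to the left, retaining at least half of the second left-hand quantity; after dividing through, the now-fixed constant $C_\eta$ depends only on $d_0$ and $\gamma$ and is absorbed into the symbol $\lesssim$.

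Finally I would convert the first hierarchy sum, taken over $|\beta'|\leq|\beta|$, $|\sigma'|\leq|\sigma|$, $|\beta'|+|\sigma'|\leq|\beta|+|\sigma|-1$, into an $L^\infty$-in-time expression. Here the point is that the time weight $(1+t)^{-(1+\delta)}$ is integrable: since $\delta>0$ one has $\int_0^\infty(1+t)^{-(1+\delta)}\,dt=\delta^{-1}<\infty$, so for any Banach-space–valued $\psi$,
$$\norm{(1+t)^{-\frac{1+\delta}{2}}\psi}^2_{L^2([0,T];X)}=\int_0^T(1+t)^{-(1+\delta)}\norm{\psi(t)}_X^2\,dt\leq \delta^{-1}\norm{\psi}^2_{L^\infty([0,T];X)}.$$
Applying this with $\psi=\jap{v}^{\nu_{\alpha,\beta',\sigma'}}\jap{x-(t+1)v}^{\omega_{\alpha,\beta',\sigma'}}\derv{}{'}{'}g$ and $X=L^2_xL^2_v$ turns each summand of the first hierarchy sum into $\delta^{-1}$ times the corresponding $L^\infty([0,T];L^2_xL^2_v)$ norm — i.e.\ exactly (up to the bounded time weight $(1+t)^{-(1+\delta)/2}\leq 1$ written there) the first sum on the right-hand side of \pref{ene_est_1}. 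The second hierarchy sum, carrying the prefactor $(1+T)^{2(1+\delta)}$ and the extra $\jap{v}^{1/2}$, I would leave untouched, exactly as it appears in \pref{ene_est_1}.

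I do not expect a genuine obstacle in this proposition — it is a repackaging of \pref{int_est} rather than a new estimate. The two things that need care are that the $\eta$-absorption is performed once and for all with $\eta$ a function of $d_0,\gamma$ (so that $C_\eta$ becomes an admissible constant in $\lesssim$), and that the $L^2_t\to L^\infty_t$ trade is genuinely powered by $\delta>0$ through the finiteness of $\int_0^\infty(1+t)^{-(1+\delta)}\,dt$; were $\delta=0$ this step would fail. The convention that the last two terms on the right-hand side disappear when $|\beta|+|\sigma|=0$ is inherited verbatim from \pref{int_est}.
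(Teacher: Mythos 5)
Your core argument is the same as the paper's proof and is correct: apply \pref{int_est} with a fixed $\eta$ depending only on $d_0,\gamma$ (the paper simply takes $\eta=\frac14$), observe that $(1+t)^{-\frac12-\frac\delta2}=(1+t)^{-\frac{1+\delta}{2}}$ and that the first $\eta$-term is dominated by the second because $\jap{v}^{\frac12}\geq 1$, absorb the resulting $2\eta$-multiple of the dissipation term into the left-hand side, and then absorb the fixed constant $C_\eta$ into $\lesssim$. The treatment of the convention for $|\beta|+|\sigma|=0$ is also as in the paper.

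The one place you go beyond the paper is the conversion of the first hierarchy sum from $L^2$ in time to $L^\infty$ in time, and that step does not do what you claim. Your inequality $\norm{(1+t)^{-\frac{1+\delta}{2}}\psi}^2_{L^2([0,T];L^2_xL^2_v)}\leq \delta^{-1}\norm{\psi}^2_{L^\infty([0,T];L^2_xL^2_v)}$ is fine, but its right-hand side is the \emph{unweighted} supremum, whereas the term printed in \pref{ene_est_1} is $\norm{(1+t)^{-\frac{1+\delta}{2}}\psi}^2_{L^\infty([0,T];L^2_xL^2_v)}$, which is smaller since the weight is $\leq 1$; the parenthetical ``up to the bounded time weight'' therefore goes in the wrong direction, and in the present situation the discrepancy is real: $\norm{\psi(t)}_{L^2_xL^2_v}$ grows like $(1+t)^{|\beta'|(1+\delta)}$, so the unweighted and weighted suprema differ by a factor $(1+T)^{1+\delta}$ whenever $|\beta'|\geq1$. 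In fact the paper never performs such a conversion: its proof of \pref{ene_est_1} keeps the first hierarchy sum in $L^2([0,T];L^2_xL^2_v)$ exactly as it comes out of \pref{int_est}, and that is also the form quoted and used in the induction proving \pref{ene_est}; the $L^\infty$ in the printed statement of \pref{ene_est_1} is evidently a misprint for $L^2$. With that reading, your proof minus the conversion step is precisely the paper's argument; as written, the conversion step should be deleted (or else you only obtain the weaker inequality with the unweighted $L^\infty_t$ norm, not the statement as printed).
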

\begin{proof}
We just apply \pref{int_est} with $\eta=\frac{1}{4}$. Then the following term is absorbed on the LHS
\begin{align*}
\frac{1}{2}\norm{(1+t)^{-\frac{1+\delta}{2}}\jap{v}^{\frac{1}{2}}\jap{v}^{\nu_{\alpha,\beta,\sigma}}\jap{x-(t+1)v}^{\omega_{\alpha,\beta,\sigma}}\der g}_{L^2([0,T];L^2_xL^2_v)}^2.
\end{align*}
Since $\eta$ is fixed, $C_\eta$ is just a constant depending on $d_0$ and $\gamma$. We thus get the desired inequality.
\end{proof}
\begin{proposition}\label{p.ene_est}
Let $|\alpha|+|\beta|+|\sigma|\leq 10$. Then the following estimate holds for all $T\in [0,T_{\boot})$:
\begin{align*}
&\norm{\jap{v}^{\nu_{\alpha,\beta,\sigma}}\jap{x-(t+1)v}^{\omega_{\alpha,\beta,\sigma}}\der g}_{L^\infty([0,T];L^2_xL^2_v)}^2\\
&+\norm{(1+t)^{-\frac{1}{2}-\frac{\delta}{2}}\jap{v}^{\frac{1}{2}}\jap{v}^{\nu_{\alpha,\beta,\sigma}}\jap{x-(t+1)v}^{\omega_{\alpha,\beta,\sigma}}\der g}_{L^2([0,T];L^2_xL^2_v)}^2\lesssim \eps^2(1+T)^{2|\beta|(1+\delta)}.
\end{align*}
\end{proposition}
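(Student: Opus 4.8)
The plan is to deduce \pref{ene_est} from \pref{ene_est_1} by a finite strong induction on the integer $n:=|\beta|+|\sigma|$, which ranges over $\{0,1,\dots,10\}$; at the $n$-th stage I establish the asserted bound simultaneously for all triples $(\alpha,\beta,\sigma)$ with $|\alpha|+|\beta|+|\sigma|\leq 10$ and $|\beta|+|\sigma|=n$. This ordering is the natural one because, on the right-hand side of \pref{ene_est_1}, every term other than the admissible term $\eps^2(1+T)^{2|\beta|(1+\delta)}$ involves a field $\derv{}{'}{'}g$ with $|\beta'|+|\sigma'|\leq|\beta|+|\sigma|-1$ (and the same $\alpha$) or a field $\derv{'}{'}{}g$ with $|\beta'|\leq|\beta|-1$, $|\alpha'|\leq|\alpha|+1$ (and the same $\sigma$); in both cases the number of $(\partial_v,Y)$-derivatives strictly drops while the total order stays $\leq 10$, so each such term is controlled by the induction hypothesis.

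For the base case $n=0$, one has $|\beta|=|\sigma|=0$, the last two sums in \pref{ene_est_1} are empty by the convention stated there, and \pref{ene_est_1} reduces immediately to the bound $\lesssim\eps^2=\eps^2(1+T)^{2|\beta|(1+\delta)}$. For the inductive step I fix $n\geq1$, assume \pref{ene_est} for all admissible triples with strictly smaller $|\beta|+|\sigma|$, pick an admissible $(\alpha,\beta,\sigma)$ with $|\beta|+|\sigma|=n$, and apply \pref{ene_est_1}. Its first term is already of the right form. In the second sum, each index $(\alpha,\beta',\sigma')$ is admissible with $|\beta'|+|\sigma'|<n$, so the $L^\infty_t$ part of the induction hypothesis yields $\norm{\jap{v}^{\nu_{\alpha,\beta',\sigma'}}\jap{x-(t+1)v}^{\omega_{\alpha,\beta',\sigma'}}\derv{}{'}{'}g}^2_{L^\infty([0,T];L^2_xL^2_v)}\lesssim\eps^2(1+T)^{2|\beta'|(1+\delta)}$; inserting the factor $(1+t)^{-\frac{1+\delta}{2}}\leq1$ and using $|\beta'|\leq|\beta|$, this contribution is $\lesssim\eps^2(1+T)^{2|\beta|(1+\delta)}$. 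In the third sum, each index $(\alpha',\beta',\sigma)$ is admissible with $|\beta'|+|\sigma|<n$, so the $L^2_t$ part of the induction hypothesis — whose time weight $(1+t)^{-\frac12-\frac\delta2}$ coincides with $(1+t)^{-\frac{1+\delta}{2}}$ — yields $\norm{(1+t)^{-\frac{1+\delta}{2}}\jap{v}^{\frac12}\jap{v}^{\nu_{\alpha',\beta',\sigma}}\jap{x-(t+1)v}^{\omega_{\alpha',\beta',\sigma}}\derv{'}{'}{}g}^2_{L^2([0,T];L^2_xL^2_v)}\lesssim\eps^2(1+T)^{2|\beta'|(1+\delta)}\leq\eps^2(1+T)^{2(|\beta|-1)(1+\delta)}$, and multiplying by the prefactor $(1+T)^{2(1+\delta)}$ restores $\eps^2(1+T)^{2|\beta|(1+\delta)}$. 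Since all sums are finite, summing the three groups closes the inductive step, and hence the proof.

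No genuine analytic difficulty remains at this point — all of it has already been absorbed into the coefficient bounds of \sref{coef}, the energy setup of \sref{set-up}, and the error estimates of \sref{errors} that feed into \pref{ene_est_1} — so the only point requiring care is the exponent bookkeeping, that is, the identity $2(1+\delta)+2(|\beta|-1)(1+\delta)=2|\beta|(1+\delta)$. It says that the $(1+T)^{2(1+\delta)}$ loss carried by the term coming from commuting $\partial_v$ with the free transport operator (Term 1 of \eref{diff_eq_for_g}, i.e.\ $T_1^{\alpha,\beta,\sigma}$ of \eref{T1}, in which one $\partial_v$ is traded for one $\partial_x$) is exactly recovered by the gain of a $\partial_v$-derivative — a drop of one in $|\beta'|$ — inside the induction hypothesis. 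This is the $E_T$-level incarnation of the heuristic from point (4) of the proof strategy that each $\partial_v$ must be charged $(1+t)^{1+\delta}$, and it is precisely why the weight $(1+T)^{-|\beta|(1+\delta)}$ is built into the energy norm \eref{Energy_norm}. The remaining checks — that $(1+t)^{-\frac{1+\delta}{2}}\leq1$ for $t\geq0$ and that every triple appearing on the right-hand side of \pref{ene_est_1} still has total order $\leq10$ — are immediate from the summation constraints.
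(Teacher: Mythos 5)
Your proof is correct and follows essentially the same route as the paper: induction on $|\beta|+|\sigma|$, with the base case read off from Proposition \ref{p.ene_est_1} (empty sums) and the inductive step closed by feeding the lower-order bounds back into Proposition \ref{p.ene_est_1}, the key bookkeeping being $2(1+\delta)+2(|\beta|-1)(1+\delta)=2|\beta|(1+\delta)$ exactly as you note. The only cosmetic difference is that you control the middle sum via the $L^\infty_t$ part of the induction hypothesis (consistent with the statement of Proposition \ref{p.ene_est_1}) whereas the paper invokes the $L^2_t$ part; both are valid since $(1+t)^{-\frac{1+\delta}{2}}\leq 1$ and $\jap{v}^{\frac12}\geq 1$.
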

\begin{proof}
The proof proceeds by induction on $|\beta|+|\sigma|$.\\
\emph{Step 1: Base Case: $|\beta|+|\sigma|=0$.} Applying \pref{ene_est_1} when $|\beta|+|\sigma|=0$, the last two terms on the RHS are not present. Hence we immediately have
\begin{align*}
&\norm{\jap{v}^{\nu_{\alpha,0,0}}\jap{x-(t+1)v}^{\omega_{\alpha,0,0}}\part^{\alpha}_x g}_{L^\infty([0,T];L^2_xL^2_v)}^2\\
&+\norm{(1+t)^{-\frac{1}{2}-\frac{\delta}{2}}\jap{v}^{\frac{1}{2}}\jap{v}^{\nu_{\alpha,0,0}}\jap{x-(t+1)v}^{\omega_{\alpha,0,0}}\part^{\alpha}_x g}_{L^2([0,T];L^2_xL^2_v)}^2\lesssim \eps^2.
\end{align*}
\emph{Step 2: Inductive step.} Assume by induction that there exists a $B\in \N$ such that whenever $|\alpha|+|\beta|+|\sigma|\leq 10$ and $|\beta|+|\sigma|\leq B-1$,
\begin{align*}
&\norm{\jap{v}^{\nu_{\alpha,\beta,\sigma}}\jap{x-(t+1)v}^{\omega_{\alpha,\beta,\sigma}}\der g}_{L^\infty([0,T];L^2_xL^2_v)}^2\\
&+\norm{(1+t)^{-\frac{1}{2}-\frac{\delta}{2}}\jap{v}^{\frac{1}{2}}\jap{v}^{\nu_{\alpha,\beta,\sigma}}\jap{x-(t+1)v}^{\omega_{\alpha,\beta,\sigma}}\der g}_{L^2([0,T];L^2_xL^2_v)}^2\lesssim \eps^2(1+T)^{2|\beta|(1+\delta)}.
\end{align*}
Now take some multi-indices $\alpha,$ $\beta$ and $\sigma$ such that $|\alpha|+|\beta|+|\sigma|\leq 10$ and $|\beta|+|\sigma|=B$. We will show that the estimate as in the statement of the proposition holds for this choice of $(\alpha,\beta,\sigma)$.\\
By  \pref{ene_est_1} and the inductive hypothesis,
\begin{align*}
&\norm{\jap{v}^{\nu_{\alpha,\beta,\sigma}}\jap{x-(t+1)v}^{\omega_{\alpha,\beta,\sigma}}\der g}_{L^\infty([0,T];L^2_xL^2_v)}^2\\
&\quad+\norm{(1+t)^{-\frac{1}{2}-\frac{\delta}{2}}\jap{v}^{\frac{1}{2}}\jap{v}^{\nu_{\alpha,\beta,\sigma}}\jap{x-(t+1)v}^{\omega_{\alpha,\beta,\sigma}}\der g}_{L^2([0,T];L^2_xL^2_v)}^2\\
&\lesssim \eps^2(1+T)^{2|\beta|(1+\delta)}+\sum_{\substack{|\beta'|\leq|\beta|, |\sigma'|\leq |\sigma|\\ |\beta'|+|\sigma'|\leq |\beta|+|\sigma|-1}} \norm{(1+t)^{-\frac{1+\delta}{2}}\jap{v}^{\nu_{\alpha,\beta',\sigma'}}\jap{x-(t+1)v}^{\omega_{\alpha,\beta',\sigma'}}\derv{}{'}{'} g}^2_{L^2([0,T];L^2_xL^2_v)}\\
&\quad+(1+T)^{2(1+\delta)}\sum_{\substack{|\alpha'|\leq|\alpha|+1\\ |\beta'|\leq |\beta|-1}} \norm{(1+t)^{-\frac{1+\delta}{2}}\jap{v}^{\frac{1}{2}}\jap{v}^{\nu_{\alpha',\beta',\sigma}}\jap{x-(t+1)v}^{\omega_{\alpha',\beta',\sigma}}\derv{'}{'}{} g}_{L^2([0,T];L^2_xL^2_v)}^2\\
&\lesssim \eps^2(1+T)^{2|\beta|(1+\delta)}+\eps^2(\sum_{|\beta'|\leq |\beta|}(1+T)^{2|\beta'|(1+\delta)})+\eps^2(\sum_{|\beta'|\leq |\beta|-1}(1+T)^{2(1+\delta)}(1+T)^{2|\beta'|(1+\delta)})\\
&\lesssim \eps^2(1+T)^{2|\beta|(1+\delta)}.
\end{align*}
We thus get the desired result by induction.
\end{proof}
\pref{ene_est} also completes the proof of \tref{boot}.
\section{Proof of \tref{global}}
\begin{proof}[Proof of \tref{global}] Let 
\begin{align*}
T_{\max}:=\sup\{T\in[0,\infty):&\text{ there exists a unique solution } f:[0,T]\times \R^3\times \R^3 \text{ to } \eref{landau} \text{ with}\\
& f\geq 0,\: f|_{t=0}=f_{\ini} \text{ and satisfying }\eref{local_est} \text{ for }N_{\alpha,\beta}=20-\frac{1}{2}(|\alpha|+|\beta|)\\
& \text{ such that the bootstrap assumption } \eref{boot_assumption_1} \text{ holds}\}.
\end{align*}
Note that by \cref{local_cor}, $T_{\max}>0$.

We will prove that $T_{\max}=\infty$. Assume for the sake of contradiction that $T_{\max}<\infty$.\\
From the definition of $T_{\max}$, we have that the assumptions of \tref{boot} hold for $T_{\boot}=T_{\max}$.\\
Therefore, by \tref{boot} (with $|\sigma|=0$) we get 
\begin{equation}\label{e.uni}
\sum \limits_{|\alpha|+|\beta|\leq 10}\norm{\jap{x-(t+1)v}^{20-\frac{1}{2}(|\alpha|+|\beta|)}\jap{v}^{20-\frac{3}{2}|\alpha|-\frac{1}{2}|\beta|}\part^{\alpha}_x\part^\beta_v(e^{d(t)\jap{v}}f)}_{L^\infty([0,T_{\max});L^2_xL^2_v)}\lesssim \eps.
\end{equation}

Take an increasing sequence $\{t_n\}_{n=1}^\infty\subset [0,T_{\max})$ such that  $t_n \to T_{\max}$. By the uniform bound \eref{uni} and the local existence result \cref{local_cor}, there exists $T_{\sm}$ such that the unique solution exists on $[0,t_n+T_{\sm}]\times \R^3\times \R^3$. In particular, taking $n$ sufficiently large, we have constructed a solution beyod the time $T_{\max}$, up to, $T_{\max}+\frac{1}{2}T_{\sm}$. The solution moveover satisfies \eref{local_est} with $N_{\alpha,\beta}=20-\frac{1}{2}(|\alpha|+|\beta|)$.

We next prove that the estimate \eref{boot_assumption_1} holds slightly beyond $T_{\max}$. To that end we employ \tref{boot},
\begin{equation}\label{e.boot_cont}
\text{the estimate \eref{boot_assumption_1} holds in }[0,T_{\max}) \text{ with } \eps^{\frac{3}{4}} \text{ replaced by }C_{d_0,\gamma}\eps.
\end{equation}

By the local existence result in \cref{local_cor}, for $|\alpha|+|\beta|\leq 10$, $$\jap{x-(t+1)v}^{20-\frac{1}{2}(|\alpha|+|\beta|)}\jap{v}^{20-\frac{3}{2}|\alpha|-\frac{1}{2}|\beta|}\part^\alpha_x\part^\beta_v g(t,x,v)\in C^0([0,T_{\max}+\frac{1}{2}T_{\sm}];L^2_xL^2_v).$$
Since $Y=(t+1)\part_x+\part_v$, for all $|\alpha|+|\beta|+|\sigma|\leq 10$, we also have  $$\jap{x-(t+1)v}^{20-\frac{1}{2}(|\alpha|+|\beta|)-\frac{3}{2}|\sigma|}\jap{v}^{20-\frac{3}{2}(|\alpha|+|\sigma|)-\frac{1}{2}|\beta|}\der g(t,x,v)\in C^0([0,T_{\max}+\frac{1}{2}T_{\sm}];L^2_xL^2_v).$$

Using \eref{uni}, after choosing $ \eps_0$ smaller (so that $\eps$ is sufficiently small) if necessary, there exists $T_{\text{ext}}\in (T_{\max},T_{\max}+\frac{1}{2}T_{\sm}]$ such that \eref{boot_assumption_1} hold upto $T_{\text{ext}}$.\\
This is a contradiction to the definition of $T_{\max}$. Thus we deduce that $T_{\max}=+\infty$.
\end{proof}
\newpage


\begin{thebibliography}{10}

%\bibitem{AlDeViWe00}
%R.~Alexandre, L.~Desvillettes, C.~Villani, and B.~Wennberg.
%\newblock Entropy dissipation and long-range interactions.
%\newblock {\em Arch. Ration. Mech. Anal.}, 152(4):327--355, 2000.

\bibitem{AMUXY12.3}
R.~Alexandre, Y.~Morimoto, S.~Ukai, C.-J. Xu, and T.~Yang.
\newblock The {B}oltzmann equation without angular cutoff in the whole space:
  {II}, {G}lobal existence for hard potential.
\newblock {\em Anal. Appl. (Singap.)}, 9(2):113--134, 2011.

\bibitem{AMUXY12}
R.~Alexandre, Y.~Morimoto, S.~Ukai, C.-J. Xu, and T.~Yang.
\newblock Global existence and full regularity of the {B}oltzmann equation
  without angular cutoff.
\newblock {\em Comm. Math. Phys.}, 304(2):513--581, 2011.

\bibitem{AMUXY12.2}
R.~Alexandre, Y.~Morimoto, S.~Ukai, C.-J. Xu, and T.~Yang.
\newblock The {B}oltzmann equation without angular cutoff in the whole space:
  {I}, {G}lobal existence for soft potential.
\newblock {\em J. Funct. Anal.}, 262(3):915--1010, 2012.

%\bibitem{AlVi04}
%R.~Alexandre and C.~Villani.
%\newblock On the {L}andau approximation in plasma physics.
%\newblock {\em Ann. Inst. H. Poincar\'e Anal. Non Lin\'eaire}, 21(1):61--95,
%  2004.

\bibitem{AlLiLi15}
Radjesvarane Alexandre, Jie Liao, and Chunjin Lin.
\newblock Some a priori estimates for the homogeneous {L}andau equation with
  soft potentials.
\newblock {\em Kinet. Relat. Models}, 8(4):617--650, 2015.

%\bibitem{AMUXY10}
%Radjesvarane Alexandre, Yoshinori Morimoto, Seiji Ukai, Chao-Jiang Xu, and Tong
%  Yang.
%\newblock Regularizing effect and local existence for the non-cutoff
%  {B}oltzmann equation.
%\newblock {\em Arch. Ration. Mech. Anal.}, 198(1):39--123, 2010.

%\bibitem{AMUXY11}
%Radjesvarane Alexandre, Yoshinori Morimoto, Seiji Ukai, Chao-Jiang Xu, and Tong
%  Yang.
%\newblock Bounded solutions of the {B}oltzmann equation in the whole space.
%\newblock {\em Kinet. Relat. Models}, 4(1):17--40, 2011.

\bibitem{AMUXY13}
Radjesvarane Alexandre, Yoshinori Morimoto, Seiji Ukai, Chao-Jiang Xu, and Tong
  Yang.
\newblock Local existence with mild regularity for the {B}oltzmann equation.
\newblock {\em Kinet. Relat. Models}, 6(4):1011--1041, 2013.

\bibitem{AlGa09}
Ricardo~J. Alonso and Irene~M. Gamba.
\newblock Distributional and classical solutions to the {C}auchy {B}oltzmann
  problem for soft potentials with integrable angular cross section.
\newblock {\em J. Stat. Phys.}, 137(5-6):1147--1165, 2009.

\bibitem{ArPe77}
A.~A. Arsen'ev and N.~V. Peskov.
\newblock The existence of a generalized solution of {L}andau's equation.
\newblock {\em \v Z. Vy\v cisl. Mat. i Mat. Fiz.}, 17(4):1063--1068, 1096,
  1977.

\bibitem{Ar11}
Diogo Ars\'enio.
\newblock On the global existence of mild solutions to the {B}oltzmann equation
  for small data in {$L^D$}.
\newblock {\em Comm. Math. Phys.}, 302(2):453--476, 2011.

\bibitem{BaDe85}
C.~Bardos and P.~Degond.
\newblock Global existence for the {V}lasov-{P}oisson equation in {$3$} space
  variables with small initial data.
\newblock {\em Ann. Inst. H. Poincar\'e Anal. Non Lin\'eaire}, 2(2):101--118,
  1985.

\bibitem{BaDeGo84}
C.~Bardos, P.~Degond, and F.~Golse.
\newblock A priori estimates and existence results for the {V}lasov and
  {B}oltzmann equations.
\newblock In {\em Nonlinear systems of partial differential equations in
  applied mathematics, {P}art 2 ({S}anta {F}e, {N}.{M}., 1984)}, volume~23 of
  {\em Lectures in Appl. Math.}, pages 189--207. Amer. Math. Soc., Providence,
  RI, 1986.

\bibitem{BaGaGoLe16}
Claude Bardos, Irene~M. Gamba, Fran{\c{c}}ois Golse, and C.~David Levermore.
\newblock Global solutions of the {B}oltzmann equation over {$\mathbb R^D$}
  near global {M}axwellians with small mass.
\newblock {\em Communications in Mathematical Physics}, 346(2):435--467, Sep
  2016.

\bibitem{BeTo85}
N.~Bellomo and G.~Toscani.
\newblock On the {C}auchy problem for the nonlinear {B}oltzmann equation:
  global existence, uniqueness and asymptotic stability.
\newblock {\em J. Math. Phys.}, 26(2):334--338, 1985.

\bibitem{Bi17}
L\'eo Bigorgne.
\newblock Asymptotic properties of small data solutions of the {V}lasov-{M}axwell
  system in high dimensions.
\newblock {\em arXiv:1712.09698, preprint}, 2017.

\bibitem{Bi18}
L\'eo Bigorgne.
\newblock Sharp asymptotics for the solutions of the three-dimensional massless {V}lasov-{M}axwell system with small data.
\newblock {\em arXiv:1812.09716, preprint}, 2018.

\bibitem{Bi19.1}
L\'eo Bigorgne.
\newblock Sharp asymptotic behavior of solutions of the 3d {V}lasov-{M}axwell system with small data.
\newblock {\em arXiv:1812.11897 , preprint}, 2019.

\bibitem{Bi19.2}
L\'eo Bigorgne.
\newblock Asymptotic properties of the solutions to the {V}lasov-{M}axwell system in the exterior of a light cone.
\newblock {\em arXiv:1902.00764 , preprint}, 2019.

\bibitem{Bi19.3}
L\'eo Bigorgne.
\newblock A vector field method for massless relativistic transport equations and applications.
\newblock {\em arXiv:1907.03121 , preprint}, 2019.

\bibitem{CaSiSn18}
Stephen Cameron, Luis Silvestre, and Stanley Snelson.
\newblock Global a priori estimates for the inhomogeneous {L}andau equation
  with moderately soft potentials.
\newblock {\em Ann. Inst. H. Poincar\'e Anal. Non Lin\'eaire}, 35(3):625--642,
  2018.

\bibitem{Cha19}
Sanchit Chaturvedi.
\newblock {L}ocal existence for the {L}andau Equation with hard potentials.
\newblock {arXiv:1910.11866, preprint}, 2019

%\bibitem{CaMi17}
%K.~Carrapatoso and S.~Mischler.
%\newblock Landau equation for very soft and {C}oulomb potentials near
%  {M}axwellians.
%\newblock {\em Ann. PDE}, 3(1):Art. 1, 65, 2017.

%\bibitem{CaTrWu17}
%Kleber Carrapatoso, Isabelle Tristani, and Kung-Chien Wu.
%\newblock Cauchy problem and exponential stability for the inhomogeneous
%  {L}andau equation.
%\newblock {\em Arch. Ration. Mech. Anal.}, 221(1):363--418, 2016.

%\bibitem{CaTrWuErratum17}
%Kleber Carrapatoso, Isabelle Tristani, and Kung-Chien Wu.
%\newblock Erratum to: {C}auchy problem and exponential stability for the
%  inhomogeneous {L}andau equation [ {MR}3483898].
%\newblock {\em Arch. Ration. Mech. Anal.}, 223(2):1035--1037, 2017.

%\bibitem{ChDeHe09}
%Yemin Chen, Laurent Desvillettes, and Lingbing He.
%\newblock Smoothing effects for classical solutions of the full {L}andau
%  equation.
%\newblock {\em Arch. Ration. Mech. Anal.}, 193(1):21--55, 2009.

\bibitem{De15}
L.~Desvillettes.
\newblock Entropy dissipation estimates for the {L}andau equation in the
  {C}oulomb case and applications.
\newblock {\em J. Funct. Anal.}, 269(5):1359--1403, 2015.

\bibitem{DeVi00}
Laurent Desvillettes and C\'edric Villani.
\newblock On the spatially homogeneous {L}andau equation for hard potentials.
  {I}. {E}xistence, uniqueness and smoothness.
\newblock {\em Comm. Partial Differential Equations}, 25(1-2):179--259, 2000.

\bibitem{DeVi00.2}
Laurent Desvillettes and C\'edric Villani.
\newblock On the spatially homogeneous {L}andau equation for hard potentials.
  {II}. {$H$}-theorem and applications.
\newblock {\em Comm. Partial Differential Equations}, 25(1-2):261--298, 2000.

\bibitem{FaJoSm17}
David Fajman, J\'er\'emie Joudioux, and Jacques Smulevici.
\newblock The stability of the {M}inkowski space for the {E}instein--{V}lasov
  system.
\newblock {\em arXiv:1707.06141, preprint}, 2017.

\bibitem{FaJoSm17.1}
David Fajman, J\'er\'emie Joudioux, and Jacques Smulevici.
\newblock A vector field method for relativistic transport equations with
  applications.
\newblock {\em Anal. PDE}, 10(7):1539--1612, 2017.

\bibitem{Fo10}
Nicolas Fournier.
\newblock Uniqueness of bounded solutions for the homogeneous {L}andau equation
  with a {C}oulomb potential.
\newblock {\em Comm. Math. Phys.}, 299(3):765--782, 2010.

\bibitem{FoGu09}
Nicolas Fournier and H\'el\`ene Gu\'erin.
\newblock Well-posedness of the spatially homogeneous {L}andau equation for
  soft potentials.
\newblock {\em J. Funct. Anal.}, 256(8):2542--2560, 2009.

\bibitem{GlSc88}
R.~T. Glassey and J.~W. Schaeffer.
\newblock Global existence for the relativistic {V}lasov-{M}axwell system with
  nearly neutral initial data.
\newblock {\em Comm. Math. Phys.}, 119(3):353--384, 1988.

\bibitem{GlSt87}
Robert~T. Glassey and Walter~A. Strauss.
\newblock Absence of shocks in an initially dilute collisionless plasma.
\newblock {\em Comm. Math. Phys.}, 113(2):191--208, 1987.

\bibitem{GoImMoVa16}
F.~Golse, Cyril Imbert, Cl\'ement Mouhot, and A.~Vasseur.
\newblock Harnack inequality for kinetic {F}okker--{P}lanck equations with
  rough coefficients and application to the {L}andau equation.
\newblock {\em Annali della Scuola Normale Superiore di Pisa, Classe di Scienze.}, PP. 253-295 | Vol. XIX, issue 1, 2019.

%\bibitem{Go16}
%Fran\c{c}ois Golse.
%\newblock The {B}oltzmann equation over {$\mathbb R^{\rm D}$}: dispersion
%  versus dissipation.
%\newblock In {\em From particle systems to partial differential equations.
%  {III}}, volume 162 of {\em Springer Proc. Math. Stat.}, pages 145--166.
%  Springer, [Cham], 2016.
%
\bibitem{Go97}
T.~Goudon.
\newblock Generalized invariant sets for the {B}oltzmann equation.
\newblock {\em Math. Models Methods Appl. Sci.}, 7(4):457--476, 1997.

%\bibitem{GrKrSt12}
%Philip~T. Gressman, Joachim Krieger, and Robert~M. Strain.
%\newblock A non-local inequality and global existence.
%\newblock {\em Adv. Math.}, 230(2):642--648, 2012.

\bibitem{GrSt11}
Philip~T. Gressman and Robert~M. Strain.
\newblock Global classical solutions of the {B}oltzmann equation without
  angular cut-off.
\newblock {\em J. Amer. Math. Soc.}, 24(3):771--847, 2011.

%\bibitem{GuGu16}
%Maria~Pia Gualdani and Nestor Guillen.
%\newblock Estimates for radial solutions of the homogeneous {L}andau equation
%  with {C}oulomb potential.
%\newblock {\em Anal. PDE}, 9(8):1772--1809, 2016.

\bibitem{Guo01}
Yan Guo.
\newblock The {V}lasov-{P}oisson-{B}oltzmann system near vacuum.
\newblock {\em Comm. Math. Phys.}, 218(2):293--313, 2001.

\bibitem{Guo02.1}
Yan Guo.
\newblock The {L}andau equation in a periodic box.
\newblock {\em Communications in Mathematical Physics}, 231(3):391–434,
2002.

\bibitem{Guo02}
Yan Guo.
\newblock The {V}lasov-{P}oisson-{B}oltzmann system near {M}axwellians.
\newblock {\em Comm. Pure Appl. Math.}, 55(9):1104--1135, 2002.

\bibitem{Guo03.2}
Yan Guo.
\newblock Classical solutions to the {B}oltzmann equation for molecules with an
  angular cutoff.
\newblock {\em Arch. Ration. Mech. Anal.}, 169(4):305--353, 2003.

\bibitem{Guo03}
Yan Guo.
\newblock The {V}lasov-{M}axwell-{B}oltzmann system near {M}axwellians.
\newblock {\em Invent. Math.}, 153(3):593--630, 2003.

\bibitem{Guo12}
Yan Guo.
\newblock The {V}lasov-{P}oisson-{L}andau system in a periodic box.
\newblock {\em J. Amer. Math. Soc.}, 25(3):759--812, 2012.

\bibitem{Ha85}
Kamel Hamdache.
\newblock Existence in the large and asymptotic behaviour for the {B}oltzmann
  equation.
\newblock {\em Japan J. Appl. Math.}, 2(1):1--15, 1985.

\bibitem{HeJi17}
Lingbing He and Jin-Cheng Jiang.
\newblock Well-posedness and scattering for the {B}oltzmann equations: soft
  potential with cut-off.
\newblock {\em J. Stat. Phys.}, 168(2):470--481, 2017.

\bibitem{HeSn17}
Christopher Henderson and Stanley Snelson.
\newblock {$C^\infty$} smoothing for weak solutions of the inhomogeneous
  {L}andau equation.
\newblock {\em Arch Rational Mech Anal}, doi:10.1007/s00205-019-01465-7, 2019.

\bibitem{HeSnTa17}
Christopher Henderson, Stanley Snelson, and Andrei Tarfulea.
\newblock Local existence, lower mass bounds, and a new continuation criterion
  for the {L}andau equation.
\newblock {\em Journal of Differential Equations}, https://doi.org/10.1016/j.jde.2018.08.005, 2019.

\bibitem{HeSnTa19.1}
Christopher Henderson, Stanley Snelson, and Andrei Tarfulea.
\newblock Local Solutions of the {L}andau Equation with Rough, Slowly Decaying Initial Data.
\newblock {\em arXiv:1909.05914, preprint}, 2019.

\bibitem{HeSnTa19.2}
Christopher Henderson, Stanley Snelson, and Andrei Tarfulea.
\newblock Local well-posedness of the {B}oltzmann equation with polynomially decaying initial data.
\newblock {\em arXiv:1910.07138, preprint}, 2019.

\bibitem{IlSh84}
Reinhard Illner and Marvin Shinbrot.
\newblock The {B}oltzmann equation: global existence for a rare gas in an
  infinite vacuum.
\newblock {\em Comm. Math. Phys.}, 95(2):217--226, 1984.

%\bibitem{ImMo18}
%Cyril Imbert and Cl\'ement Mouhot.
%\newblock A toy nonlinear model in kinetic theory.
%\newblock {\em arXiv:1712.07111, preprint}, 2017.
%
%\bibitem{KrSt12}
%Joachim Krieger and Robert~M. Strain.
%\newblock Global solutions to a non-local diffusion equation with quadratic
%  non-linearity.
%\newblock {\em Comm. Partial Differential Equations}, 37(4):647--689, 2012.
%
\bibitem{Levermore}
C.~David Levermore.
\newblock Global {M}axwellians over all space and their relation to conserved
  quantites of classical kinetic equations.
\newblock {\em preprint, available online}, 2012.

\bibitem{LiTa17}
Hans Lindblad and Martin Taylor.
\newblock Global stability of {M}inkowski space for the {E}instein--{V}lasov
  system in the harmonic gauge.
\newblock {\em Ann. PDE 3, 9},doi:10.1007/s40818-017-0026-8, 2017.

%\bibitem{Li94}
%P.-L. Lions.
%\newblock On {B}oltzmann and {L}andau equations.
%\newblock {\em Philos. Trans. Roy. Soc. London Ser. A}, 346(1679):191--204,
%  1994.

\bibitem{Lu18}
Jonathan Luk.
\newblock {S}tability of vacuum for the {L}andau equation with moderately soft potentials.
\newblock {Ann. PDE}, 5: 11. https://doi.org/10.1007/s40818-019-0067-2, 2019.

\bibitem{Po88}
Jacek Polewczak.
\newblock Classical solution of the nonlinear {B}oltzmann equation in all
  {${\bf R}^3$}: asymptotic behavior of solutions.
\newblock {\em J. Statist. Phys.}, 50(3-4):611--632, 1988.

\bibitem{Si17}
Luis Silvestre.
\newblock Upper bounds for parabolic equations and the {L}andau equation.
\newblock {\em J. Differential Equations}, 262(3):3034--3055, 2017.

\bibitem{Sm16}
Jacques Smulevici.
\newblock Small data solutions of the {V}lasov-{P}oisson system and the vector
  field method.
\newblock {\em Ann. PDE}, 2(2):Art. 11, 55, 2016.

\bibitem{Sn18}
Stanley Snelson.
\newblock The inhomogeneous {L}andau equation with hard potentials.
\newblock {\em arXiv:1805.10264, preprint}, 2018.

\bibitem{StGu04}
Robert~M. Strain and Yan Guo.
\newblock Stability of the relativistic {M}axwellian in a collisional plasma.
\newblock {\em Comm. Math. Phys.}, 251(2):263--320, 2004.

\bibitem{StGu06}
Robert~M. Strain and Yan Guo.
\newblock Almost exponential decay near {M}axwellian.
\newblock {\em Comm. Partial Differential Equations}, 31(1-3):417--429, 2006.

\bibitem{StGu08}
Robert~M. Strain and Yan Guo.
\newblock Exponential decay for soft potentials near {M}axwellian.
\newblock {\em Arch. Ration. Mech. Anal.}, 187(2):287--339, 2008.

\bibitem{StZh13}
Robert~M. Strain and Keya Zhu.
\newblock The {V}lasov-{P}oisson-{L}andau system in {$\mathbb{R}^3_x$}.
\newblock {\em Arch. Ration. Mech. Anal.}, 210(2):615--671, 2013.

\bibitem{Ta17}
Martin Taylor.
\newblock The global nonlinear stability of {M}inkowski space for the massless
  {E}instein-{V}lasov system.
\newblock {\em Ann. PDE}, 3(1):Art. 9, 177, 2017.

\bibitem{To86}
G.~Toscani.
\newblock On the nonlinear {B}oltzmann equation in unbounded domains.
\newblock {\em Arch. Rational Mech. Anal.}, 95(1):37--49, 1986.

\bibitem{To87}
G.~Toscani.
\newblock {$H$}-theorem and asymptotic trend of the solution for a rarefied gas
  in the vacuum.
\newblock {\em Arch. Rational Mech. Anal.}, 100(1):1--12, 1987.

\bibitem{To88}
G.~Toscani.
\newblock Global solution of the initial value problem for the {B}oltzmann
  equation near a local {M}axwellian.
\newblock {\em Arch. Rational Mech. Anal.}, 102(3):231--241, 1988.

\bibitem{ToBe84}
G.~Toscani and N.~Bellomo.
\newblock Global existence, uniqueness and stability of the nonlinear
  {B}oltzmann equation with almost general gas-particles interaction potential.
\newblock In {\em Proceedings of the conference commemorating the 1st
  centennial of the {C}ircolo {M}atematico di {P}alermo ({I}talian) ({P}alermo,
  1984)}, number~8, pages 419--433, 1985.

\bibitem{Vi98}
C.~Villani.
\newblock On the spatially homogeneous {L}andau equation for {M}axwellian
  molecules.
\newblock {\em Math. Models Methods Appl. Sci.}, 8(6):957--983, 1998.

%\bibitem{Vi96}
%C\'edric Villani.
%\newblock On the {C}auchy problem for {L}andau equation: sequential stability,
%  global existence.
%\newblock {\em Adv. Differential Equations}, 1(5):793--816, 1996.
%
\bibitem{Vi98.2}
C\'edric Villani.
\newblock On a new class of weak solutions to the spatially homogeneous
  {B}oltzmann and {L}andau equations.
\newblock {\em Arch. Rational Mech. Anal.}, 143(3):273--307, 1998.

\bibitem{Wa18.2}
Xuecheng Wang.
\newblock Decay estimates for the 3{D} relativistic and non-relativistic
  {V}lasov--{P}oisson systems.
\newblock {\em arXiv:1805.10837, preprint}, 2018.

\bibitem{Wa18}
Xuecheng Wang.
\newblock Propagation of regularity and long time behavior of the 3{D} massive
  relativistic transport equation {I}: {V}lasov--{N}ordstr\o m system.
\newblock {\em arXiv:1804.06560, preprint}, 2018.

\bibitem{Wa18.1}
Xuecheng Wang.
\newblock Propagation of regularity and long time behavior of the 3{D} massive
  relativistic transport equation {II}: {V}lasov--{M}axwell system.
\newblock {\em arXiv:1804.06566, preprint}, 2018.

\bibitem{Wo18}
Willie Wai~Yeung Wong.
\newblock A commuting-vector-field approach to some dispersive estimates.
\newblock {\em Arch. Math. (Basel)}, 110(3):273--289, 2018.

\bibitem{Wu14}
Kung-Chien Wu.
\newblock Global in time estimates for the spatially homogeneous {L}andau
  equation with soft potentials.
\newblock {\em J. Funct. Anal.}, 266(5):3134--3155, 2014.

\end{thebibliography}
\end{document}